\newcommand{\CF}{{\mathcal {F}}}
\newcommand{\CK}{{\mathcal {K}}}
\newcommand{\CR}{{\mathcal {R}}}
\newcommand{\CS}{{\mathcal {S}}}
\newcommand{\CT}{{\mathcal {T}}}
\newcommand{\CW}{{\mathcal {W}}}
\newcommand{\RA}{{\mathrm {A}}}
\newcommand{\RE}{{\mathrm {E}}}
\newcommand{\RF}{{\mathrm {F}}}
\newcommand{\RK}{{\mathrm {K}}}
\newcommand{\RN}{{\mathrm {N}}}
\newcommand{\RQ}{{\mathrm {Q}}}
\newcommand{\disc}{{\mathrm{disc}}}
\newcommand{\GL}{{\mathrm{GL}}}
\newcommand{\Hom}{{\mathrm{Hom}}}
\newcommand{\rank}{{\mathrm{rank}}}
\newcommand{\Sp}{{\mathrm{Sp}}}
\newcommand{\cover}[1]{\widetilde{#1}}
\newcommand{\vsp}{{\vspace{0.2in}}}
\newcommand{\vvsp}{{\vspace{0.1in}}}
\newcommand{\Irr}{\operatorname{Irr}}
\newcommand{\con}{\textit{C}}
\newcommand{\oH}{\operatorname{H}}
\newcommand{\oO}{\operatorname{O}}
\newcommand{\oU}{\operatorname{U}}
\newcommand{\oG}{\operatorname{G}}
\newcommand{\oP}{\operatorname{P}}
\newcommand{\oJ}{\operatorname{J}}
\newcommand{\oGb}{\operatorname{\bar{G}}}
\newcommand{\oJb}{\operatorname{\bar{J}}}
\newcommand{\oPb}{\operatorname{\bar{P}}}
\newcommand{\oHil}{\operatorname{Hil}}
\newcommand{\Z}{\mathbb{Z}}
\newcommand{\C}{\mathbb{C}}
\newcommand{\R}{\mathbb R}
\renewcommand{\H}{\mathbb{H}}
\newcommand{\abs}[1]{\lvert#1\rvert}
\newcommand{\rD}{\mathrm{D}}
\newcommand{\rE}{\mathrm{E}}
\newcommand{\rF}{\mathrm{F}}
\newcommand{\rK}{\mathrm{K}}
\newcommand{\la}{\langle}
\newcommand{\ra}{\rangle}
\newcommand{\be}{\begin {equation}}
\newcommand{\ee}{\end {equation}}
\newcommand{\bee}{\begin {equation*}}
\newcommand{\eee}{\end {equation*}}
\newcommand{\cf}{\emph{cf.}~}
\theoremstyle{Theorem}
\theoremstyle{Theorem}
\newtheorem{thm}{Theorem}[section]
\newtheorem{prpt}[thm]{Proposition}
\theoremstyle{Theorem}
\newtheorem{lem}{Lemma}[section]
\newtheorem{corl}[lem]{Corollary}
\newtheorem{thml}[lem]{Theorem}
\newtheorem{prpl}[lem]{Proposition}
\theoremstyle{Theorem}
\theoremstyle{Plain}
\theoremstyle{Definition}
\newtheorem{defn}{Definition}[section]
\newtheorem{dfnl}[lem]{Definition}
\theoremstyle{Theorem}
\newtheorem{thmd}[defn]{Theorem}
\newtheorem{lemd}[defn]{Lemma}
\newtheorem{prpd}[defn]{Proposition}
\newtheorem{conjd}[defn]{Conjecture}
\title[Conservation relations]{Conservation relations for local theta correspondence}
\author{Binyong Sun}
\address{Hua Loo-Keng Key Laboratory of Mathematics, Institute of Mathematics, AMSS\\
Chinese Academy of Sciences\\
Beijing, 100190, P.R. China} \email{sun@math.ac.cn}
\author{Chen-Bo Zhu}
\address{Department of Mathematics\\
National University of Singapore\\
Block S17, 10 Lower Kent Ridge Road, Singapore 119076}
\email{matzhucb@nus.edu.sg}
\begin{document}

\subjclass[2010]{22E46, 22E50 (Primary)} \keywords{local theta
correspondence, conservation relation, oscillator representation,
degenerate principal series}

\dedicatory{To the memory of Stephen Rallis}

\begin{abstract} We prove Kudla-Rallis conjecture on first occurrences of
local theta correspondence, for all irreducible dual pairs of type I
and all local fields of characteristic zero.
\end{abstract}

\maketitle

\section{Introduction}

The main goal of this article is to prove ``conservation
relations" for local theta correspondence, which was first conjectured by
Kudla and Rallis in the mid 1990's.

\subsection{Dual pairs of type I}
\label{sub1.1}

Fix a triple $(\rF, \rD, \epsilon)$ where $\epsilon=\pm 1$; $\rF$ is a local field of characteristic zero; and $\rD$ is either $\rF$, or a quadratic field extension of $\rF$, or a
central division quaternion algebra over $\rF$. Denote by $\iota$
the involutive anti-automorphism of $\mathrm D$ which is
respectively the identity map, the non-trivial Galois element, or
the main involution.

Let $U$ be an $\epsilon$-Hermitian right
$\mathrm D$-vector space, namely, $U$ is a finite dimensional right
$\mathrm D$-vector space, equipped with a non-degenerate $\mathrm
F$-bilinear map
\[
  \la\,,\,\ra_U : U\times U\rightarrow \mathrm D
\]
satisfying
\[
  \la u,u'a\ra_U=\la u,u'\ra_U\, a,\quad u,u'\in U, \,a\in \mathrm D,
\]
and
\[
   \la u,u'\ra_U=\epsilon \la u',u\ra_U^\iota,\quad u,u'\in U.
\]
Denote by $\oG(U)$ the isometry group of $U$, namely the group of all
$\rD$-linear automorphisms of $U$ preserving the form
$\la\,,\,\ra_U$. It is naturally a locally compact topological group and is a so-called classical
group as summarized in Table \ref{tableg}.
\begin{table}[h]
\caption{The classical group $\oG(U)$}\label{tableg}
\centering 
\begin{tabular}{c c c c c c c} 
\hline
$\rD$ & \vline & $\rF$ & quadratic extension & quaternion algebra\\
\hline
$\epsilon=1$ & \vline & orthogonal group & unitary group  & quaternionic symplectic group\\ 
\hline 
$\epsilon=-1$ & \vline & symplectic group & unitary group  & quaternionic orthogonal group \\
\hline 
\end{tabular}
\label{table:nonlin} 
\end{table}

Let $V$ be a $-\epsilon$-Hermitian left $\mathrm D$-vector space,
defined in an analogous way. Denote by $\oG(V)$ the isometry group
of $V$. Following Howe \cite{Ho1}, we call $(\oG(U), \oG(V))$ an
irreducible dual pair of type I. The tensor product $U\otimes_\rD V$
is a symplectic space over $\rF$ under the bilinear form
\begin{equation}
\label{tensor-form}
  \la u\otimes v, u'\otimes v'\ra:=\frac{\la u,u'\ra_U \,\la v,v'\ra_V^\iota+\la
  v,v'\ra_V\,\la u,u'\ra_U^\iota }{2},\quad u,u'\in U,\,v,v'\in V.
\end{equation}
Here $\la \,,\,\ra_V$ denotes the underlying $-\epsilon$-Hermitian
form on $V$ (similar notations will be used without further
explanation).

\begin{defn}\label{defheis}
The Heisenberg group $\oH(W)$ attached to a (finite dimensional) symplectic space $W$ over $\rF$ is the topological group which equals $W\times \rF$ as a topological space, and whose group multiplication is given by
\[
  (w,t)(w',t'):=(w+w', t+t'+\la w,w'\ra_{W}), \quad
  w,w'\in W,\,\,t,t'\in \rF.
\]
\end{defn}

The group $\oG(U)\times \oG(V)$ acts continuously on the Heisenberg group  $\oH(U\otimes_\rD V)$ as group
automorphisms by
\be \label{actgh}
  (g,h)\cdot \left(\sum_{i=1}^r u_i\otimes v_i,\,t\right):=\left(\sum_{i=1}^r g(u_i)\otimes
h(v_i),\,t\right),
\ee
for all $g\in \oG(U)$, $h\in \oG(V)$, $r\geq 0$, $u_1, u_2, \cdots,
u_r\in U$,  $v_1, v_2, \cdots, v_r\in V$ and $t\in \rF$. Using this
action, we form the Jacobi group
\[
  \oJ(U,V):=(\oG(U)\times \oG(V))\ltimes \oH(U\otimes_\rD V).
\]
For the study of local theta correspondence, it is natural to introduce the following modification of $\oG(U)$:
\[
   \oGb(U):=\left\{
              \begin{array}{ll}
                \cover{\Sp}(U), & \hbox{if $U$ is a symplectic space, namely $(\rD,\epsilon)=(\rF, -1)$;} \\
                \oG(U), & \hbox{otherwise.}
              \end{array}
            \right.
\]
Here $\cover{\Sp}(U)$ denotes the metaplectic group: it is the unique topological central extension of the symplectic group $\Sp(U)$ by $\{\pm 1\}$ which does not split unless $U=0$ or $\rF \cong \C$. Define $\oGb(V)$ analogously. Using the covering homomorphism $\oGb(U)\times \oGb(V)\rightarrow \oG(U)\times \oG(V)$ and the action \eqref{actgh}, we form the modified Jacobi group
\[
  \oJb(U,V):=(\oGb(U)\times \oGb(V))\ltimes \oH(U\otimes_\rD V).
\]

\subsection{Local theta correspondence}
The center of the Heisenberg group $\oH(W)$ of Definition \ref{defheis} is obviously identified with $\rF$. Fix a non-trivial unitary character
$\psi :\rF\rightarrow \C^{\times}$ throughout the article. As usual, a superscript ``$^\times$" over a ring indicates its multiplicative group of invertible elements.

From now on until Section \ref{NOtrivial}, we assume that $\rF$ is non-archimedean.  The smooth version of Stone-von Neumann Theorem asserts the following:

\begin{thmd}\label{stone}(\cf \cite[2.I.2]{MVW})
For any symplectic space $W$ over $\rF$, there exists a unique (up to isomorphism) irreducible smooth
representation of $\oH(W)$ with central character $\psi$.
\end{thmd}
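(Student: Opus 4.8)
The plan is to fix, once and for all, a complete polarization $W = X \oplus Y$ into transverse Lagrangian subspaces, and to prove existence and uniqueness separately; the uniqueness is the substantive part. Throughout write $\mathcal{S}(X) = C_c^\infty(X)$ for the space of locally constant, compactly supported complex functions on $X$, and likewise $\mathcal{S}(W)$. It is also convenient to keep in mind the maximal abelian subgroup $A \subset \oH(W)$ which is the image of $\{0\}\times Y\times \rF$ (a subgroup because $\la y,y'\ra_W = 0$ for $y,y'\in Y$), together with the character $\psi_Y$ of $A$ extending $\psi$ by $1$ on $Y$.

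For \emph{existence}, I would realize the representation on $\mathcal{S}(X)$ by the Schr\"odinger model: decomposing $(w,t) = (x_0+y_0,t)$ with $x_0\in X$, $y_0\in Y$, $t\in\rF$, the action is determined by
\[
 (\rho(x_0,0)\phi)(x) = \phi(x+x_0),\qquad (\rho(y_0,0)\phi)(x) = \psi(2\la x,y_0\ra_W)\,\phi(x),\qquad \rho(0,t)\phi = \psi(t)\,\phi .
\]
That $\rho$ is a homomorphism is a direct check of the multiplication law of Definition \ref{defheis}; smoothness is immediate because every $\phi\in\mathcal{S}(X)$ is fixed by a compact open subgroup of $\oH(W)$; the central character is visibly $\psi$. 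For irreducibility one notes that a nonzero invariant subspace is stable under all translations by $X$ and all multiplications by the characters $x\mapsto\psi(2\la x,y_0\ra_W)$, and a routine Fourier argument over the totally disconnected field $\rF$ then builds, from any nonzero element, the characteristic function of an arbitrarily small neighbourhood of an arbitrary point of $X$; hence the invariant subspace is everything. (Equivalently, $\rho\cong\mathrm{ind}_A^{\oH(W)}\psi_Y$, compactly induced and realized on functions on $\oH(W)/A\cong X$.)

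For \emph{uniqueness}, let $(\pi,\mathcal{V})$ be any irreducible smooth representation with central character $\psi$. For $f\in\mathcal{S}(W)$ put $\pi(f):=\int_W f(w)\,\pi(w,0)\,dw$, a well-defined operator on $\mathcal{V}$ since the integrand is a compactly supported locally constant $\mathcal{V}$-valued function. Because the section $w\mapsto(w,0)$ fails to be a homomorphism by precisely the amount recorded in the group law, one obtains $\pi(f)\pi(g) = \pi(f\natural g)$ for the $\psi$-twisted convolution $(f\natural g)(w) = \int_W f(w')\,g(w-w')\,\psi(\la w',w-w'\ra_W)\,dw'$, so $\mathcal{V}$ becomes a non-degenerate module over the algebra $(\mathcal{S}(W),\natural)$. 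The crux is to produce a \emph{rank-one idempotent} in this algebra: choosing mutually dual compact open subgroups $X_0\subset X$ and $Y_0\subset Y$ (mutual exact annihilators under $(x,y)\mapsto\psi(\la x,y\ra_W)$) and setting $f_0 = \mathrm{vol}(X_0+Y_0)^{-1}\mathbf{1}_{X_0+Y_0}$, a short computation with the phase in $\natural$ gives $f_0\natural f_0 = f_0$ and $f_0\natural f\natural f_0 = \lambda(f)\,f_0$ for every $f\in\mathcal{S}(W)$, where $\lambda$ is the linear functional computing the matrix coefficient of the Schr\"odinger model against the line $\rho(f_0)\mathcal{S}(X)$. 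Granting this, $e:=\pi(f_0)$ is idempotent and nonzero (translating and twisting $f_0$ and taking linear combinations shows $f_0$ generates $(\mathcal{S}(W),\natural)$ as a two-sided ideal, so $\pi(f_0)=0$ would force $\pi(\mathcal{S}(W))=0$, hence $\mathcal{V}=0$); from $e\pi(f)e = \lambda(f)e$ and irreducibility one gets $\dim e\mathcal{V}=1$; and since $\mathcal{V}=\pi(\mathcal{S}(W))v_0$ for any nonzero $v_0\in e\mathcal{V}$, the assignment $\rho(g)\phi_0\mapsto\pi(g)v_0$ (for a fixed generator $\phi_0$ of the line $\rho(f_0)\mathcal{S}(X)$) is a well-defined, nonzero, $\oH(W)$-equivariant map $\mathcal{S}(X)\to\mathcal{V}$, hence an isomorphism by Schur. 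In particular $\pi\cong\rho$, which also shows $\rho$ is independent of the polarization.

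The main obstacle is this uniqueness argument, and within it the construction and verification of $f_0$ together with the identity $f_0\natural f\natural f_0 = \lambda(f)f_0$ — equivalently, the assertion that $(\mathcal{S}(W),\natural)$, under partial Fourier transform in the $Y$-variable, is isomorphic to the algebra of smooth finite-rank kernel operators on $\mathcal{S}(X)$, whose unique non-degenerate irreducible module is $\mathcal{S}(X)$ itself; the existence and irreducibility of the Schr\"odinger model are comparatively routine. The full details are carried out in \cite[Chapter 2]{MVW}.
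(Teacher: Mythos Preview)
Your proposal is correct and in fact goes well beyond what the paper does: the paper does not prove this theorem at all, but simply cites \cite[2.I.2]{MVW} for the full statement and then writes down the Schr\"odinger realization on $\CS(X)$ (your existence argument, essentially verbatim) with the remark that irreducibility is ``easy to check.'' Your uniqueness argument via the $\psi$-twisted convolution algebra on $\CS(W)$ and a rank-one idempotent is the standard one, and is precisely the route taken in \cite[Chapter 2]{MVW}, so your sketch is consistent with the paper's reference; there is nothing to correct.
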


For any totally disconnected, locally
compact, Hausdorff topological space $Z$, denote by $\con^\infty(Z)$ the space of locally constant $\C$-valued functions on $Z$, and by $\CS(Z)$
the space of all functions in  $\con^\infty(Z)$ with compact support. Taking a complete polarization $W=X\oplus Y$ of a symplectic space $W$ over $\rF$, let $\oH(W)$ act on  $\CS(X)$ by
\be\label{acth}
  ((x_0+ y_0, t)\cdot \phi)(x):=\phi(x+x_0)\, \psi(t+\la 2x+x_0,
y_0\ra_W),
\ee
for all $\phi\in \CS(X)$, $x, x_0\in X$, $y_0\in Y$ and $t\in
\rF$. It is easy to check that this defines an  irreducible smooth
representation of $\oH(W)$ with central character $\psi$.

\begin{defn}\label{defso}
Let $J$ be a totally disconnected, locally compact, Hausdorff topological group. Assume that $J$ contains the Heisenberg group $\oH(W)$ as a closed normal subgroup,
where $W$ is a symplectic space over $\rF$. A smooth representation of $J$ is called a smooth oscillator representation (associated to $\psi$, unless otherwise specified) if its restriction to $\oH(W)$ is irreducible and has central character $\psi$.
\end{defn}

Dixmier's version of Schur's Lemma \cite[0.5.2]{Wa1} implies that smooth oscillator representations are unique up to twisting by characters:

\begin{lemd}\label{uniqueo}
For any smooth oscillator representations  $\omega$ and $\omega'$ of $J$ as in Definition \ref{defso}, there exists a  unique character $\chi$ on $J$ which
is trivial on $\oH(W)$ such that  $\omega'\cong \chi\otimes \omega$.
\end{lemd}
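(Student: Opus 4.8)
The plan is to combine the Stone--von Neumann Theorem over the Heisenberg subgroup with a Schur-lemma argument relative to $\oH(W)$ to measure the discrepancy between $\omega$ and $\omega'$ over the rest of $J$. \emph{First}, restrict both representations to $\oH(W)$: they are irreducible smooth representations with central character $\psi$, so by Theorem~\ref{stone} there is an isomorphism $T\colon \omega \to \omega'$ of $\oH(W)$-representations. Composition with $T$ identifies $\Hom_{\oH(W)}(\omega,\omega')$ with $\End_{\oH(W)}(\omega)$, and the latter is $\C$ by Dixmier's version of Schur's Lemma \cite[0.5.2]{Wa1}; hence $T$ is unique up to a nonzero scalar, and $\Hom_{\oH(W)}(\omega,\omega')=\C\,T$.

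\emph{Second}, for each $j\in J$ consider $T_j := \omega'(j)^{-1}\,T\,\omega(j)$, a linear automorphism-type operator from the space of $\omega$ to that of $\omega'$. Using that $\oH(W)$ is normal in $J$ (so $jhj^{-1}\in\oH(W)$ for $h\in\oH(W)$) and that $T$ intertwines $\omega$ and $\omega'$ on $\oH(W)$, one checks directly that $T_j\,\omega(h)=\omega'(h)\,T_j$ for all $h\in\oH(W)$, i.e. $T_j\in\Hom_{\oH(W)}(\omega,\omega')$. By the previous step $T_j=\chi(j)^{-1}T$ for a unique $\chi(j)\in\C^\times$, that is,
\[
  T\,\omega(j) \;=\; \chi(j)^{-1}\,\omega'(j)\,T, \qquad j\in J .
\]
Transporting $\omega$ along the isomorphism $T$, this says exactly $\omega'\cong\chi\otimes\omega$, provided $\chi$ is a character trivial on $\oH(W)$.

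\emph{Third}, I verify the properties of $\chi$. Multiplicativity follows by applying the displayed relation to a product $j_1j_2$ and to $j_1$ then $j_2$, and cancelling the injective operator $\omega'(j_1j_2)\,T$. For $h\in\oH(W)$ the relation $T\omega(h)=\omega'(h)T$ gives $\chi(h)=1$. To see that $\chi$ is locally constant, fix a nonzero vector $v$ in the space of $\omega$ and a linear functional $\mu$ on the space of $\omega'$; then $\mu(T\omega(j)v)=\chi(j)^{-1}\mu(\omega'(j)Tv)$, and since $\omega$ and $\omega'$ are smooth, the maps $j\mapsto\omega(j)v$ and $j\mapsto\omega'(j)Tv$ are locally constant. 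Given any $j_0$, the vector $\omega'(j_0)Tv$ is nonzero (as $\omega'(j_0)T$ is an isomorphism), so choosing $\mu$ with $\mu(\omega'(j_0)Tv)\ne 0$ shows that $\chi$ equals the ratio of two locally constant, locally nonvanishing functions near $j_0$, hence is locally constant. Thus $\chi$ is a (smooth) character of $J$ trivial on $\oH(W)$.

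\emph{Finally}, for uniqueness, suppose $\chi_1\otimes\omega\cong\chi_2\otimes\omega$ with both $\chi_i$ trivial on $\oH(W)$; an intertwining isomorphism is then an $\oH(W)$-automorphism of $\omega$, hence a scalar by Dixmier--Schur, which forces $\chi_1=\chi_2$. The argument is essentially formal once Theorem~\ref{stone} and Dixmier's Schur Lemma are available; the only points requiring care are checking the $\oH(W)$-equivariance of $T_j$ (where normality of $\oH(W)$ is used) and extracting local constancy of $\chi$ from smoothness, so I do not expect a genuine obstacle.
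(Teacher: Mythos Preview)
Your proof is correct and follows exactly the approach the paper has in mind: the paper does not write out a proof but simply attributes the lemma to Dixmier's version of Schur's Lemma, and your argument is the standard way to unpack that attribution (Stone--von Neumann gives an $\oH(W)$-intertwiner $T$, Dixmier--Schur forces $\Hom_{\oH(W)}(\omega,\omega')=\C T$, normality of $\oH(W)$ yields the character $\chi$, and uniqueness again comes from Schur). There is nothing to add.
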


For the dual pair of a non-trivial symplectic group and an odd orthogonal group, smooth oscillator representations of $\oJ(U,V)$ do not exist. Nevertheless, in all cases, there exists a smooth oscillator representation $\omega_{U,V}$ of $\oJb(U,V)$ which is genuine in the following sense:
\begin{itemize}
 \item when $U$ is a symplectic space,  $\varepsilon_U$ acts through the scalar multiplication by $(-1)^{\dim V}$ in $\omega_{U,V}$; and likewise for $V$ when $V$ is a symplectic space.
\end{itemize}
Here and henceforth, for a symplectic space $U$, $\varepsilon_U$ denotes the non-trivial element of the kernel of the covering homomorphism  $\oGb(U)\rightarrow \oG(U)$.
See Lemma \ref{extension00}.

Denote by $\Irr(\oGb(U))$ the set of all isomorphism classes of
irreducible admissible smooth representations of $\oGb(U)$. Define
$\Irr(\oGb(V))$ similarly. For a genuine smooth oscillator representation $\omega_{U,V}$ of $\oJb(U,V)$, put
\begin{eqnarray*}
 \mathcal R_{\omega_{U,V}}(U) &:=& \{\pi\in \Irr(\oGb(U))\mid \Hom_{\oGb(U)}(\omega_{U,V}, \pi)\neq 0\},\\
\mathcal R_{\omega_{U,V}}(V) &:=& \{\pi'\in \Irr(\oGb(V))\mid \Hom_{\oGb(V)}(\omega_{U,V}, \pi')\neq 0\},
 \end{eqnarray*}
and
\[
  \mathcal R_{\omega_{U,V}}(U,V):=\{(\pi,\pi')\in \Irr(\oGb(U))\times \Irr(\oGb(V))\mid
  \Hom_{\oGb(U)\times \oGb(V)}(\omega_{U,V}, \pi\otimes \pi')\neq 0\}.
\]
The theory of local theta correspondence begins with the following Howe Duality Conjecture:

\begin{conjd}\label{howed}
The set $\mathcal R_{\omega_{U,V}}(U,V)$ is the graph of a
bijection between $\mathcal R_{\omega_{U,V}}(U)$ and $\mathcal R_{\omega_{U,V}}(V)$.
\end{conjd}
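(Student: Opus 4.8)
The plan is to reformulate the conjecture in terms of the ``big theta lift'', reduce to supercuspidal $\pi$, and run an induction along a Witt tower whose base case is an irreducibility statement at the first occurrence, proved by a see-saw argument feeding into the structure theory of degenerate principal series. For $\pi\in\Irr(\oGb(U))$ let $\Theta_{U,V}(\pi)$ be the maximal $\pi$-isotypic quotient of $\omega_{U,V}$, regarded as a smooth representation of $\oGb(V)$, and define $\Theta_{V,U}(\pi')$ for $\pi'\in\Irr(\oGb(V))$ symmetrically; then $\pi\in\mathcal R_{\omega_{U,V}}(U)$ iff $\Theta_{U,V}(\pi)\neq 0$, and $(\pi,\pi')\in\mathcal R_{\omega_{U,V}}(U,V)$ iff $\pi'$ is a quotient of $\Theta_{U,V}(\pi)$. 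The first input is Kudla's finiteness theorem: $\Theta_{U,V}(\pi)$ is finitely generated (and the analysis below will show it is admissible of finite length), proved by Frobenius reciprocity against a degenerate principal series of a doubled group together with exactness of the Jacquet functor; using this and the MVW involution (\cf\cite{MVW}) one obtains the equivalence ``$\pi'$ is a quotient of $\Theta_{U,V}(\pi)$'' $\iff$ ``$\pi$ is a quotient of $\Theta_{V,U}(\pi')$'' (up to contragredient). Consequently Conjecture \ref{howed} reduces to the assertion that whenever $\Theta_{U,V}(\pi)\neq 0$ it admits a \emph{unique} irreducible quotient $\theta_{U,V}(\pi)$, and symmetrically for $\oGb(V)$: granted this, $\theta_{U,V}$ and $\theta_{V,U}$ are maps between $\mathcal R_{\omega_{U,V}}(U)$ and $\mathcal R_{\omega_{U,V}}(V)$, and the identity $\theta_{V,U}(\theta_{U,V}(\pi))=\pi$ (since $\pi$ occurs as a quotient of $\Theta_{V,U}(\theta_{U,V}(\pi))$) shows they are mutually inverse, so $\mathcal R_{\omega_{U,V}}(U,V)$ is the graph of a bijection.

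Next I would set up the Witt-tower induction. Fix a tower $\{V_n\}_{n\ge 0}$ of $-\epsilon$-Hermitian left $\rD$-spaces of a fixed anisotropic kernel, write $\omega_n:=\omega_{U,V_n}$, $\Theta_n(\pi):=\Theta_{U,V_n}(\pi)$, and $\theta_n(\pi)$ for the irreducible quotient of $\Theta_n(\pi)$ once uniqueness is known; recall Kudla's persistence principle, $\Theta_n(\pi)\neq 0\Rightarrow\Theta_{n+1}(\pi)\neq 0$, which produces a first occurrence index $n_0=n_0(\pi)$. The engine is Kudla's filtration of the normalized Jacquet module of $\omega_n$ along a maximal parabolic $Q\subset\oGb(V_n)$ with Levi $\GL_j(\rD)\times\oGb(V_{n-j})$: its successive subquotients are normalized inductions, from parabolic subgroups of the Levi, of representations of the form (an induced representation or character of general linear groups) $\otimes\,\omega_{n-j-i}$. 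Applying the exact functor ``maximal $\pi$-isotypic quotient'' and Frobenius reciprocity gives a filtration of the Jacquet modules of $\Theta_n(\pi)$ controlled by $\Theta_{n-j}(\pi),\Theta_{n-j-1}(\pi),\dots$. Two consequences: first, $\Theta$ is compatible with parabolic induction, which via the Langlands classification reduces the whole problem to $\pi$ supercuspidal; second, for $\pi$ supercuspidal one reads off that $\Theta_{n_0}(\pi)$ is supercuspidal (hence of finite length and semisimple, since all its Jacquet modules involve $\Theta_m(\pi)$ with $m<n_0$), while for $n>n_0$ every irreducible subquotient of $\Theta_n(\pi)$ is a subquotient of a representation parabolically induced from $\theta_{n-1}(\pi)$ or from a lower lift --- the mechanism by which uniqueness of the irreducible quotient propagates up the tower once established at the bottom.

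The heart of the matter --- and where I expect the real difficulty to lie --- is the base case: for $\pi$ supercuspidal, $\Theta_{n_0}(\pi)$ is \emph{irreducible}. I would argue as follows. If $\sigma,\sigma'\in\Irr(\oGb(V_{n_0}))$ are irreducible quotients of $\Theta_{n_0}(\pi)$, then $\pi$ is a quotient of both $\Theta_{V_{n_0},U}(\sigma)$ and $\Theta_{V_{n_0},U}(\sigma')$; a see-saw argument for the dual pairs $(\oGb(U)\times\oGb(U),\ \oGb(V_{n_0})\times\oGb(V_{n_0}))$ and $(\oGb(U)^\Delta,\ \oGb(V_{n_0}\oplus V_{n_0}^-))$ --- taking a suitable contragredient so that the diagonal pairing $\pi\otimes\pi^\vee\to\trivial$ is available --- then relates the coincidence of $\sigma$ and $\sigma'$ to the composition structure, as a $\oGb(V_{n_0})\times\oGb(V_{n_0})$-module, of the theta lift to $\oGb(V_{n_0}\oplus V_{n_0}^-)$ of a representation having $\pi$ as a quotient; this theta lift is a subquotient of a degenerate principal series of $\oGb(V_{n_0}\oplus V_{n_0}^-)$ induced from its Siegel parabolic. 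The submodule structure and reducibility points of those degenerate principal series were determined by Kudla--Rallis and completed by Yamana; $n_0$ is exactly a point at which the relevant constituent is distinguished, so the local multiplicity-one property underlying the doubling method applies, forcing $\sigma\cong\sigma'$ and pinning its multiplicity in $\Theta_{n_0}(\pi)$ to one, whence irreducibility. The delicate aspects --- which is why this is the bottleneck --- are: matching the first-occurrence index $n_0$ with the reducibility points of the degenerate principal series (a bookkeeping of exactly conservation-relation type, for which the Kudla--Rallis inequality suffices), handling the genuine characters of the metaplectic covers consistently through the see-saw, and the residue characteristic two case, where both Kudla's filtration and the even/odd orthogonal dichotomy require additional care.

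Finally I would assemble the global statement: for $n>n_0$, Kudla's filtration together with the supercuspidal base case shows every irreducible quotient of $\Theta_n(\pi)$ is the Langlands quotient of a standard module determined by $\theta_{n-1}(\pi)$, with the constituents that could a priori interfere excluded by the induction hypothesis and the Kudla--Rallis bounds on first occurrence, so $\Theta_n(\pi)$ has a unique irreducible quotient for all $n\ge n_0$; running this for both members of the pair and invoking the formal argument of the first paragraph yields the bijection. In the archimedean case (deferred to Section \ref{NOtrivial}) the same skeleton applies with Jacquet modules replaced by the corresponding functors on Harish-Chandra modules, Kudla's finiteness by Howe's, and the uniqueness inputs by Howe's original $(\g,K)$-module arguments, with the passage between smooth and Harish-Chandra module categories supplied by the Casselman--Wallach theory.
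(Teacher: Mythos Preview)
The statement you are attempting to prove is labeled \texttt{conjd} in the paper --- it is the Howe Duality \emph{Conjecture}, not a theorem, and the paper does not prove it. Immediately after stating it the authors write: ``Waldspurger proves the above conjecture when $\rF$ has odd residue characteristic. (We will not assume the Howe duality conjecture, and thus there will not be any assumption on the residue characteristic of $\rF$.)'' The main result of the paper is Theorem~\ref{main} (conservation relations), whose proof is logically independent of Conjecture~\ref{howed}. There is therefore no proof in the paper to compare your proposal against.

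As a separate matter, your outline has substantive gaps even as a sketch. You invoke the MVW involution to pass between ``$\pi'$ is a quotient of $\Theta_{U,V}(\pi)$'' and its symmetric counterpart, but as the paper itself notes (see the remark following Lemma~\ref{cn1} and \cite{LST}), MVW-involutions do not exist for non-archimedean quaternionic groups, so this step fails precisely in one of the cases the paper covers. Your base case --- irreducibility of $\Theta_{n_0}(\pi)$ for $\pi$ supercuspidal via a see-saw into degenerate principal series --- is the genuine crux, and you correctly flag residue characteristic two as the obstacle; historically this was resolved only after the present paper (by Gan--Takeda, using among other things the conservation relations proved here). So your proposal is a reasonable high-level roadmap toward the conjecture, but it is not a proof, and it is not something the paper claims or supplies.
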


Waldspurger proves the above conjecture when $\rF$ has odd residue
characteristic \cite{Wald}. (We will not assume the Howe duality
conjecture, and thus there will not be any assumption on the residue
characteristic of $\rF$.) For the theory of local theta
correspondence, a basic question is occurrence, which is to
determine the sets $\mathcal R_{\omega_{U,V}}(U)$ and $\mathcal
R_{\omega_{U,V}}(V)$. By the symmetric role of $U$ and $V$, without
loss of generality we shall focus on the set $\mathcal
R_{\omega_{U,V}}(U)$.

\subsection{A prologue: Conservation relations in the Witt group}
Before proceeding to the conservation relations for local theta correspondence, it is instructive to explain certain relations in the Witt group, which is actually the conservation relations for local theta correspondence in the case $U=0$ (the zero space).

Denote by $\widehat \CW^+_0$ the commutative monoid (under
orthogonal direct sum) of all isometry classes of
$-\epsilon$-Hermitian left $\mathrm D$-vector spaces. When no
confusion is possible, we do not distinguish an element of $\widehat
\CW^+_0$ with a space which represents it. Recall that the split
rank of a space $V\in\widehat \CW^+_0$ is defined to be
\[
 \rank \,V:=\max \ \{\dim Y\mid Y\textrm{ is a totally isotropic $\rD$-subspace of $V$}\}.
\]
Denote by $\H$ the hyperbolic plane in $\widehat \CW^+_0$, namely the element of $\widehat \CW^+_0$ with dimension $2$ and split rank $1$. A subset of $\widehat \CW^+_0$ of the form
\[
 \mathbf t:=\{V_0, V_0+\H, V_0+2\H,\cdots\}
\]
is called a Witt tower in $\widehat \CW^+_0$, where $V_0$ is an anisotropic element in $\widehat \CW^+_0$, namely an element of split rank $0$. Define the anisotropic degree of $\mathbf t$ by
\[
   \deg \mathbf t:=\dim V_0.
\]

Denote by $\CW_0$ the set of all Witt towers in $\widehat \CW^+_0$.
This is a quotient set of $\widehat \CW^+_0$, and the monoid
structure on $\widehat \CW^+_0$ descends to a monoid structure on
$\CW_0$. The resulting monoid $\CW_0$ is in fact a finite abelian
group (its order is a power of $2$), which is called the Witt group
of $-\epsilon$-Hermitian left $\mathrm D$-vector spaces. See Section
\ref{secwg} for details.

Write $\mathrm d_{\rD,\epsilon}$ for the maximal dimension
of an anisotropic element in $\widehat \CW^+_0$:
\[
  \mathrm d_{\rD,\epsilon}:=\max \{\dim V_0\mid V_0\textrm{ is an anisotropic element of $\widehat \CW^+_0$}\}.
\]
Recall the following well-known result:
\begin{prpd}\label{danis} (\cf \cite[Chapters 5 and 10]{Sch})
One has that
\begin{equation}
\label{drhov0}
  \mathrm d_{\rD,\epsilon}=\left\{
            \begin{array}{ll}
              0, \ \ \ \ \ & \hbox{if $V$ is a symplectic space;}\smallskip \\
              1, \ \ \ \ \ & \hbox{if $V$ is a quaternionic Hermitian space;} \smallskip\\
              2, \ \ \ \ \ & \hbox{if $V$ is a Hermitian space or a skew-Hermition space;}\smallskip \\
              3, \ \ \ \ \ & \hbox{if $V$ is a quaternionic skew-Hermitian space;} \smallskip\\
              4, \ \ \ \ \ & \hbox{if $V$ is a symmetric bilinear space.} \\
            \end{array}
          \right.
\end{equation}
Moreover, there exists a unique element $V^\circ\in \widehat \CW^+_0$ which is anisotropic and has dimension $\mathrm d_{\rD,\epsilon}$; and every anisotropic element of $\widehat \CW^+_0$  is isometrically isomorphic to a subspace of $V^\circ$.
\end{prpd}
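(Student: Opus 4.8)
The plan is to recognise \eqref{drhov0} together with the accompanying assertion as a compendium of classical facts about $\epsilon$-Hermitian forms over a non-archimedean local field, and to check them row by row in the table. The symplectic case $(\rD,\epsilon)=(\rF,-1)$ is immediate: any symplectic space over any field is an orthogonal sum of hyperbolic planes, so its split rank is half its dimension; hence $0$ is the only anisotropic space, $\mathrm d_{\rD,\epsilon}=0$, and $V^\circ=0$. For each of the four remaining cases I would first recall the classification of the relevant forms over $\rF$, all of which are classical.

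For symmetric bilinear spaces $(\rD,\epsilon)=(\rF,1)$ the essential input is that the $u$-invariant of $\rF$ equals $4$: by Springer's theorem an anisotropic quadratic form over $\rF$ decomposes, after scaling its diagonal entries to have valuation $0$ or $1$, into two anisotropic quadratic forms over the residue field, and anisotropic quadratic forms over a finite field have dimension at most $2$; hence anisotropic quadratic forms over $\rF$ have dimension at most $4$. The reduced norm form of the unique quaternion division algebra $\rD_0$ over $\rF$ (unique because $\Br(\rF)[2]\cong\Z/2$) is anisotropic of dimension $4$, and it is the only such form up to isometry: any anisotropic $4$-dimensional quadratic form has trivial discriminant, hence is the norm form of some quaternion algebra, which anisotropy of the form forces to be a division algebra --- so $V^\circ$ is unique. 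For Hermitian and skew-Hermitian spaces over a quadratic extension $\rE/\rF$ --- passing from the skew-Hermitian case to the Hermitian one by scaling the form by a nonzero $\delta\in\rE$ with $\delta^\iota=-\delta$ --- the classification is by dimension together with a discriminant in $\rF^\times/\N_{\rE/\rF}(\rE^\times)\cong\Z/2$; sending an $\rE$-space $V$ of dimension $n$ to the $\rF$-quadratic form $v\mapsto\Tr_{\rE/\rF}\langle v,v\rangle_V$, of dimension $2n$, shows dimension-$3$ forms are isotropic (as $6>u(\rF)$), so $\mathrm d_{\rD,\epsilon}=2$ and $V^\circ$ is the unique anisotropic space of dimension $2$, distinguished by having nontrivial discriminant. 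For quaternionic Hermitian spaces, surjectivity of the reduced norm $\rD_0^\times\to\rF^\times$ makes every one-dimensional form isometric to $\langle 1\rangle$ and every two-dimensional form isotropic, so $\mathrm d_{\rD,\epsilon}=1$ and $V^\circ=\langle 1\rangle$. Finally, for quaternionic skew-Hermitian spaces one invokes Tsukamoto's classification of anti-Hermitian forms over $\rD_0$ (see \cite[Chapter 10]{Sch}): anisotropic spaces have dimension at most $3$, with a unique one of dimension $3$.

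Granting the value of $\mathrm d:=\mathrm d_{\rD,\epsilon}$ and the uniqueness of $V^\circ$, the embedding assertion becomes formal. I would prove by downward induction on $\mathrm d-m$ that every anisotropic space $W'$ of dimension $m\le\mathrm d$ embeds isometrically into $V^\circ$. The base case $m=\mathrm d$ is exactly the uniqueness of $V^\circ$. For the inductive step one uses the \emph{enlargement property}: an anisotropic space $W'$ of dimension $m<\mathrm d$ fails to represent some value $c$, so $W'\perp\langle -c\rangle$ is anisotropic of dimension $m+1$; it contains $W'$, and by the inductive hypothesis it embeds into $V^\circ$, whence so does $W'$. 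The enlargement property is read off case by case from the explicit value sets of anisotropic forms of each dimension --- for instance, in the symmetric bilinear case a dimension-$3$ anisotropic form $q$ representing every value would represent $-\disc(q)$, whence $q\cong\langle -\disc(q)\rangle\perp q_2$ with $\disc(q_2)\equiv -1$, forcing $q_2$ to be a hyperbolic plane, which contradicts the anisotropy of $q$; the remaining cases are handled similarly.

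The only place where genuine content enters is the case-by-case determination of $\mathrm d_{\rD,\epsilon}$ and the uniqueness of $V^\circ$, that is, the classification of anisotropic $\epsilon$-Hermitian forms over a non-archimedean local field. For symmetric bilinear forms (the $u$-invariant computation) and for (skew-)Hermitian forms over a quadratic extension (classification by dimension and discriminant) this is entirely standard; the quaternionic skew-Hermitian case --- anisotropic dimension exactly $3$ with a unique maximal space --- is the least elementary, and there I would follow the classical treatment in \cite{Sch}. Everything else --- the symplectic case, the enlargement property, and the resulting embedding of all smaller anisotropic spaces into $V^\circ$ --- is then formal.
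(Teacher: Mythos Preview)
Your proposal is correct in substance; the paper itself gives no proof of this proposition, merely citing \cite[Chapters 5 and 10]{Sch}, so your case-by-case verification is in fact more detailed than what appears in the paper.  A few remarks are in order.

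First, a harmless slip of notation: in the paper's conventions $V$ is a $-\epsilon$-Hermitian space, so ``$V$ is symplectic'' corresponds to $(\rD,\epsilon)=(\rF,1)$ and ``$V$ is symmetric bilinear'' to $(\rD,\epsilon)=(\rF,-1)$, the opposite of what you wrote.  This does not affect any argument.

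Second, your enlargement argument for the embedding into $V^\circ$ is valid, but the inductive step depends on knowing that an anisotropic form of dimension $m<\mathrm d_{\rD,\epsilon}$ is never universal (fails to represent some value).  You verify this for ternary quadratic forms and defer the rest to Scharlau; that is fine, but it is worth noting that one can bypass the enlargement step altogether once uniqueness of $V^\circ$ is known.  Indeed, $-V^\circ\cong V^\circ$ in every case (for quadratic forms because the norm form is a round Pfister form representing $-1$; in the other cases by comparing discriminants), so $V^\circ$ has order $2$ in the Witt group.  Given anisotropic $W'$ of dimension $m\le\mathrm d_{\rD,\epsilon}$, the form $W'\perp(-V^\circ)$ has dimension $m+\mathrm d_{\rD,\epsilon}$ and Witt index at most $m$; if the index were strictly less than $m$, the anisotropic part would have dimension $\ge\mathrm d_{\rD,\epsilon}-m+2$, and one checks (using parity, uniqueness of $V^\circ$, and $2[V^\circ]=0$) that this forces $[W']=0$, contradicting anisotropy.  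Hence the Witt index is exactly $m$, i.e.\ $V^\circ\cong W'\perp W''$ for some $W''$.  This is essentially the argument behind the paper's Proposition~1.8, which is the immediate corollary.
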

Here and henceforth,  we refer the various cases by the types of the spaces under consideration.  For example, ``$V$ is a symplectic space" means that $(\rD,\epsilon)=(\rF, 1)$.

By Proposition \ref{danis}, the Witt group $\CW_0$ has a unique element of  anisotropic degree $\mathrm d_{\rD,\epsilon}$. Denote it by $\mathbf t_0^\circ$ and call it the anti-split Witt tower in $\widehat \CW^+_0$. Note that $\CW_0$ is trivial when $V$ is a symplectic space. Except for this case, $\mathbf t_0^\circ$ has order $2$ in $\CW_0$. By considering the negative of the orthogonal complement of an anisotropic element of $\widehat \CW^+_0$ in $V^\circ$, Proposition \ref{danis} implies the following relations in the Witt group:

\begin{prpd}\label{conserv0} We have \[\deg \mathbf t_1+\deg \mathbf t_2=\mathrm d_{\rD,\epsilon}\]
for all $\mathbf t_1, \mathbf t_2\in \CW_0$ with difference $\mathbf t_0^\circ$.
\end{prpd}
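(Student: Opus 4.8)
The plan is to read off the identity directly from the structural statement of Proposition~\ref{danis}, as anticipated in the paragraph preceding the proposition: one realizes the anisotropic kernel of $\mathbf t_1$ as a subspace of $V^\circ$, and then shows that the \emph{negative} of its orthogonal complement represents $\mathbf t_2$; comparing dimensions yields the claim.

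First I would set up notation and record the elementary facts about $\CW_0$ needed. For $V\in\widehat\CW^+_0$ let $[V]\in\CW_0$ denote the Witt tower containing $V$; the map $V\mapsto[V]$ is the quotient homomorphism of monoids, and for $V$ anisotropic one has $\deg[V]=\dim V$. This last assertion uses that the anisotropic kernel of a space is unique up to isometry, i.e.\ Witt cancellation for $\epsilon$-Hermitian forms over $(\rD,\iota)$; see Section~\ref{secwg}. For $V\in\widehat\CW^+_0$ write $-V$ for $V$ with its form replaced by the negative form. Then $-V$ has the same split rank as $V$ (a subspace is totally isotropic for a form precisely when it is for the negative form), and $V+(-V)$ is split since the diagonal is a totally isotropic subspace of half its dimension; hence in $\CW_0$ one has $[-V]=-[V]$. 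Finally $[V^\circ]=\mathbf t_0^\circ$, since $V^\circ$ is anisotropic of the maximal dimension $\mathrm d_{\rD,\epsilon}$; applying the previous observation to $-V^\circ$, which is again anisotropic of dimension $\mathrm d_{\rD,\epsilon}$, gives $[-V^\circ]=\mathbf t_0^\circ$ as well, whence $\mathbf t_0^\circ=-\mathbf t_0^\circ$, that is $2\mathbf t_0^\circ=0$.

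Now let $\mathbf t_1,\mathbf t_2\in\CW_0$ have difference $\mathbf t_0^\circ$; since $2\mathbf t_0^\circ=0$ this relation is symmetric, so we may assume $\mathbf t_1-\mathbf t_2=\mathbf t_0^\circ$. Let $V_1$ be the anisotropic kernel of $\mathbf t_1$, so that $[V_1]=\mathbf t_1$ and $\dim V_1=\deg\mathbf t_1$. By Proposition~\ref{danis} there is an isometric embedding $V_1\hookrightarrow V^\circ$; its image is non-degenerate (an isotropic vector of $V_1$ would be isotropic in $V^\circ$), so $V^\circ=V_1+V_1^\perp$ is an orthogonal direct sum in which $V_1^\perp$ is anisotropic of dimension $\mathrm d_{\rD,\epsilon}-\deg\mathbf t_1$. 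Passing to $\CW_0$ gives $\mathbf t_0^\circ=[V^\circ]=[V_1]+[V_1^\perp]=\mathbf t_1+[V_1^\perp]$, so $[V_1^\perp]=\mathbf t_0^\circ-\mathbf t_1$ and hence $[-V_1^\perp]=-[V_1^\perp]=\mathbf t_1-\mathbf t_0^\circ=\mathbf t_2$. Since $-V_1^\perp$ is anisotropic of dimension $\mathrm d_{\rD,\epsilon}-\deg\mathbf t_1$, it is the anisotropic kernel of $\mathbf t_2$, and therefore $\deg\mathbf t_2=\mathrm d_{\rD,\epsilon}-\deg\mathbf t_1$, which is the asserted conservation relation. (If $V$ is symplectic then $\CW_0=0$ and $\mathrm d_{\rD,\epsilon}=0$, so the statement holds trivially.)

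I do not anticipate a genuine obstacle: the proposition is a formal consequence of Proposition~\ref{danis} together with the basic properties of the Witt group used above. The only ingredient that requires the machinery of Section~\ref{secwg} rather than elementary manipulation is the well-definedness of $\deg$ on $\CW_0$, equivalently the uniqueness up to isometry of the anisotropic kernel; this is Witt cancellation, valid for all the dual pairs under consideration.
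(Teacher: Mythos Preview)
Your proof is correct and follows precisely the route the paper indicates in the sentence preceding the proposition: embed the anisotropic kernel $V_1$ of $\mathbf t_1$ into $V^\circ$ via Proposition~\ref{danis}, take the negative of its orthogonal complement, and observe that this anisotropic space represents $\mathbf t_2$. The paper states only this one-line idea; you have carefully supplied the supporting facts ($[-V]=-[V]$, $2\mathbf t_0^\circ=0$, well-definedness of $\deg$ via Witt cancellation), so there is nothing to add.
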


\subsection{The generalized Witt group}
\label{sub1.3}
Note that the subset $\mathcal R_{\omega_{U,V}}(U)$ of $\Irr(\oGb(U))$ depends only on the restriction of $\omega_{U,V}$ to the subgroup
\[
  \oJb_U(V):=\oGb(U)\ltimes \oH(U\otimes_\rD V)\subset \oJb(U,V).
\]
For a fixed $\pi\in \Irr(\oGb(U))$, we shall consider occurrence of $\pi$ in $\omega_{U,V}$, or the membership of $\pi$ in $\mathcal
R_{\omega_{U,V}}(U)$, as $V$ vary. We introduce the following

\begin{defn}\label{defenh} An enhanced oscillator representation of $\oGb(U)$ is a pair $(V, \omega)$, where $V$ is a
$-\epsilon$-Hermitian left $\rD$-vector space, and $\omega$ is a
smooth oscillator representation of  $\oJb_U(V)$  which is genuine
in the following sense: if $U$ is a symplectic space, then
$\varepsilon_U$ acts through  the scalar multiplication by
$(-1)^{\dim V}$ in $\omega$. Two enhanced oscillator representations
$(V_1, \omega_1)$ and $(V_2,\omega_2)$ of  $\oGb(U)$ are said to be
isomorphic if there is an isometric isomorphism $V_1\cong V_2$ such
that $\omega_1$ is isomorphic to $\omega_2$ with respect to the
induced isomorphism $\oJb_U(V_1)\cong \oJb_U(V_2)$.
\end{defn}

Denote by $\widehat{\CW}^+_U$ the set of all isomorphism classes of enhanced
oscillator representations of $\oGb(U)$. The set $\widehat{\CW}^+_U$ has a
natural additive structure which makes it a commutative monoid:
\[
  (V_1, \omega_1)+(V_2, \omega_2):=(V_1\oplus V_2, \omega_1 \otimes\omega_2).
\]
Here $V_1\oplus V_2$ denotes the orthogonal direct sum, and the
tensor product $\omega_1 \otimes\omega_2$ carries the representation
of the group $\oJb_U(V_1\oplus V_2)$, as follows:
\[
  (g, (w_1+w_2,t_1+t_2))\cdot(\phi_1\otimes \phi_2):=((g,(w_1,t_1))\cdot\phi_1)\otimes ((g,(w_2,t_2))\cdot\phi_2),
\]
where $g\in \oGb(U)$, $w_i\in U\otimes_\rD V_i$, $t_i\in \rF$ and
$\phi_i\in \omega_i$ ($i=1,2$). As before, we shall not distinguish
an element of $\widehat{\CW}^+_U$ with an enhanced oscillator
representation which represents it.

Recall the hyperbolic plane $\H\in \widehat \CW^+_0$. We shall define the hyperbolic plane $\H_U$ in $\widehat{\CW}^+_U$ as follows.
The symplectic space $U\otimes_{\rD}\H$ has a complete polarization
\be \label{udd}
  U\otimes_{\rD} \H =U\otimes \mathrm e_1\oplus U\otimes \mathrm e_2,
\ee
where $\mathrm e_1, \mathrm e_2$ is a basis of $\H$ of isotropic vectors. Define the hyperbolic plane $\H_U$ in $\widehat{\CW}^+_U$ to be the enhanced oscillator representation
\[
\H_U:=(\H, \omega_U)\in \widehat{\CW}^+_U,
 \]
where $\omega_U$ is the representation of $\oJb_U(\H)$ on the space $\CS(U\otimes \mathrm e_1)$ so that the Heisenberg group $\oH(U\otimes_{\rD} \H)$ acts as in \eqref{acth} (for the complete polarization \eqref{udd}), and $\oGb(U)$ acts by
\[
  (\bar{g}\cdot\phi)(x\otimes \mathrm e_1):=\phi((g^{-1}(x))\otimes \mathrm e_1),\quad \bar{g}\in \oGb(U),\,\phi\in \CS(U\otimes \mathrm e_1), \,x\in U.
\]
Here $g$ denotes the image of $\bar{g}$ under the covering homomorphism $\oGb(U)\rightarrow \oG(U)$.

\begin{defn} Two elements $\sigma_1, \sigma_2\in \widehat{\CW}^+_U$ are said to be Witt equivalent if there are non-negative integers $m_1$ and $m_2$ such that the
equality
\[
  \sigma_1+ m_1\H_U=\sigma_2+m_2\,\H_U
\]
holds in the commutative monoid $\widehat{\CW}^+_U$. This defines an equivalence relation on $\widehat{\CW}^+_U$ whose equivalence
classes are called Witt towers in $\widehat{\CW}^+_U$.
\end{defn}

For $\sigma=(V,\omega)\in \widehat{\CW}^+_U$, we shall refer to the dimension
and the split rank of $V$ as the dimension and the split rank of
$\sigma$:
\[
  \dim \sigma:=\dim V\quad \textrm{ and }\quad \rank \,\sigma:=\rank \,V.
\]
Each Witt tower $\mathbf t\subset \widehat{\CW}^+_U$ has a unique anisotropic representative, namely an element of split rank $0$ (this is a consequence of Lemma \ref{uniqueo}). Write $\sigma_\mathbf t$ for this anisotropic element. Then
\[
  \mathbf t=\{\sigma_\mathbf t, \sigma_\mathbf t+\H_U, \sigma_\mathbf t+2\H_U,\cdots\}.
\]
We define the anisotropic degree of $\mathbf t$ to be
\begin{equation}
\label{def-deg}
  \deg \mathbf t:=\dim \sigma_\mathbf t.
\end{equation}

Write $\CW_U$ for the set of all Witt towers in $\widehat{\CW}^+_U$. Similar to the Witt group $\CW_0$, the additive structure on $\widehat{\CW}^+_U$ descends to an additive structure on $\CW_U$ which makes  $\CW_U$ an abelian group (see Proposition \ref{exactcw}, part (c)). There is a short exact sequence  (see Proposition \ref{exactcw}, part (d))
\[
   1\rightarrow (\oG(U))^*\rightarrow
   \CW_U\rightarrow
   \mathcal \CW_0\rightarrow 1.
\]
Here and henceforth, for all topological group $G$, $G^*:=\Hom(G,\C^\times)$ denotes the group of all characters on $G$. When $\oG(U)$ is a perfect group, we have identifications
\[
  \widehat \CW^+_U=\widehat \CW^+_0\quad\textrm{and}\quad \CW_U=\CW_0.
\]
(This includes the case when $U=0$, with clearly consistent notation.)

\subsection{First occurrence index and conservation relations}
\label{firsoc}

With the above notion, we may rephrase the question of occurrence as
follows: for a given $\sigma=(V,\omega)\in \widehat{\CW}^+_U$, one seeks to determine the set
\[
  \mathcal R_\sigma:=\{\pi\in \Irr(\oGb(U))\mid \Hom_{\oGb(U)}(\omega, \pi)\neq 0\}.
\]
A clear necessary condition for $\pi\in  \mathcal R_\sigma$ is that $\pi$ is genuine with respect to $\sigma$ in the following sense:
\begin{itemize}
\item
 if $U$ is a symplectic space, then $\varepsilon_U$ acts through the scalar multiplication by $(-1)^{\dim \sigma}$ in $\pi$.
 \end{itemize}

Let  $\pi\in \Irr(\oGb(U))$ and let $\mathbf t\in\CW_U$. Assume that $\pi$ is genuine with respect to
$\mathbf t$, namely $\pi$ is genuine with respect to some (and hence all) elements of
$\mathbf t$.
There are two basic properties concerning occurrence:
\begin{itemize}
\item Occurrence in the so-called stable range (see \cite{Li1} and \eqref{stablerange}):
\be \label{stableint}
  \textrm{for all $\sigma\in \mathbf t$, $\,$ if $\,\rank\, \sigma\geq \dim U$,$\,$ then $\,\pi\in \mathcal R_\sigma$.}
\ee
\item Kudla's persistence principle (\cite{Ku1}):
\[ \textrm{for all $\sigma_1,\sigma_2\in \mathbf t$,$\,$ if $\,\dim \sigma_1\leq \dim \sigma_2$, then $\mathcal R_{\sigma_1}\subset \mathcal R_{\sigma_2}$.}\]
\end{itemize}
Write $1_U$ for the trivial representation of $\oGb(U)$. We note that in our formulation, Kudla's persistence principle follows clearly from the fact that $\Hom_{\oGb(U)}(\omega_U, 1_U)\neq 0$. (Recall that $\omega_U$ denotes the underlying representation of the hyperbolic plane $\H_U\in \widehat \CW_U^+$.) Define the first occurrence
index
\begin{equation}
\label{Symp-index}
   \operatorname n_\mathbf t(\pi):=\min\{\dim \sigma\mid \sigma\in \mathbf t, \,\pi\in \mathcal R_\sigma\}.
\end{equation}
In view of the aforementioned two properties, the first occurrence
index is finite and is of clear interest.

Generalizing the anti-split Witt tower $\mathbf t_0^\circ\in \CW_0$, we will define in Section \ref{kk} the anti-split Witt tower $\mathbf t_U^\circ\in \CW_U$. When $U$ is a symmetric bilinear space,
$\mathbf t_U^\circ\in \CW_U=(\oO(U))^*$ is the sign character; when $U$ is a symplectic space or a quaternionic Hermitian space (in which case $\oG(U)$ is a perfect group), $\mathbf t_U^\circ \in \CW_U =\CW_0$ is identical to $\mathbf t_0^\circ$. In all cases, $\mathbf t_U^\circ$ is characterized by the equality
\begin{equation}\label{degst}
  \operatorname n_{\mathbf t_U^\circ}(1_U)=2\dim U+\mathrm d_{\rD,\epsilon}.
\end{equation}
The element $\mathbf t_U^\circ\in \CW_U$ has anisotropic degree $\mathrm d_{\rD,\epsilon}$, and has order $2$ unless $U$ is the zero symmetric bilinear space (in this exceptional case the group $\CW_U$ is trivial).

In the non-archimedean case, the conservation relations assert the
following:

\begin{thmd}\label{main}
Let $\mathbf t_1$ and $\mathbf t_2$ be
two Witt towers in $\CW_U$ with difference $\mathbf t_U^\circ$. Then for
any $\pi\in \Irr(\oGb(U))$ which is genuine with respect to
$\mathbf t_1$ (and hence genuine with respect to $\mathbf t_2$), one
has that
\[
   \operatorname n_{\mathbf t_1}(\pi)+\operatorname n_{\mathbf t_2}(\pi)=
   2\dim U+\mathrm d_{\rD,\epsilon}.
\]
\end{thmd}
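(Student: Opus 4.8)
The plan is to prove the two inequalities
\[
   \operatorname n_{\mathbf t_1}(\pi)+\operatorname n_{\mathbf t_2}(\pi)\leq 2\dim U+\mathrm d_{\rD,\epsilon}
   \qquad\text{and}\qquad
   \operatorname n_{\mathbf t_1}(\pi)+\operatorname n_{\mathbf t_2}(\pi)\geq 2\dim U+\mathrm d_{\rD,\epsilon}
\]
separately; combined with the characterization \eqref{degst} of $\mathbf t_U^\circ$ (taking $\pi=1_U$, which realizes equality) and Proposition \ref{conserv0} in the Witt-group prologue, these give the theorem. The first, ``easy'' inequality is the \emph{going-up}/\emph{Kudla persistence} direction. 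Here one exploits that $\mathbf t_1$ and $\mathbf t_2$ differ by the anti-split tower $\mathbf t_U^\circ$: one shows that, after adding enough copies of $\H_U$, a space $V_1$ in $\mathbf t_1$ and a space $V_2$ in $\mathbf t_2$ with $\dim V_1+\dim V_2 = 2\dim U+\mathrm d_{\rD,\epsilon}$ can be glued into a single space whose associated oscillator representation factors through the diagonal of $\oGb(U)$ and has a $\oGb(U)\times\oGb(U)$-equivariant structure; a ``seesaw'' argument between the dual pairs $(\oGb(U),\oGb(V_1\oplus V_2))$ and $(\oGb(U)\times\oGb(U),\,\oGb(V_1)\times\oGb(V_2))$ then forces $\pi$ to occur in one of $V_1,V_2$ once it occurs on the ``big'' side, and occurrence on the big side is guaranteed by the stable-range statement \eqref{stableint} once the total dimension is large enough. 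This is essentially Kudla–Rallis's original argument and should go through with only bookkeeping about which Witt tower one lands in; the anti-split tower is precisely the correction that makes the two dimensions add to $\mathrm d_{\rD,\epsilon}$ beyond $2\dim U$, exactly as in Proposition \ref{conserv0}.

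The substantive half is the \emph{lower bound}
\(
   \operatorname n_{\mathbf t_1}(\pi)+\operatorname n_{\mathbf t_2}(\pi)\geq 2\dim U+\mathrm d_{\rD,\epsilon},
\)
equivalently: $\pi$ \emph{cannot} occur in both a $V_1\in\mathbf t_1$ and a $V_2\in\mathbf t_2$ whose dimensions sum to less than $2\dim U+\mathrm d_{\rD,\epsilon}$. The plan is to argue by contradiction and run the ``doubling'' / degenerate-principal-series machinery. Suppose $\pi$ occurs in both $\omega_{U,V_1}$ and $\omega_{U,V_2}$. Form $V:=V_1\oplus(-V_2)$, which lies in the split Witt tower (difference $\mathbf t_1-\mathbf t_2=\mathbf t_U^\circ$ is cancelled by taking $-V_2$), so $\oGb(U)\times\oGb(U)$ sits inside $\oGb(V)$ and $\oGb(V)$ contains a Siegel parabolic whose Levi is a general linear group on a space of dimension governed by $\dim V_1+\dim V_2$. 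The occurrences of $\pi$ in both sides produce a nonzero $\oGb(U)^{\mathrm{diag}}$-invariant functional on $\omega_{U,V}\otimes(\pi\otimes\bar\pi)^{*}$ — equivalently, via the standard unfolding, a nonzero intertwining map from $\pi\otimes\bar\pi$ into the degenerate principal series of $\oGb(V)$ restricted to $\oGb(U)\times\oGb(U)$. One then invokes the structure of these degenerate principal series (Kudla–Rallis's filtration, or its extension to all type-I pairs and quaternionic/unitary cases) together with the theory of the normalized local doubling zeta integral / intertwining operator: the nonvanishing of such a functional, combined with the local functional equation, controls the poles and forces a constraint on $\dim V_1+\dim V_2$ that reads precisely $\dim V_1+\dim V_2\geq 2\dim U+\mathrm d_{\rD,\epsilon}$. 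The anisotropic constant $\mathrm d_{\rD,\epsilon}$ enters here as the dimension of the unique anisotropic space $V^\circ$ of Proposition \ref{danis}, which is exactly the defect in the Siegel–Weil / first-term identity that governs when the degenerate principal series ceases to have the relevant submodule.

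The main obstacle I expect is making the lower-bound argument uniform across \emph{all} dual pairs of type I — orthogonal/symplectic, unitary, and both quaternionic cases — and across all local fields of characteristic zero, including the archimedean completions handled in the later sections; the available literature (Kudla–Rallis, Harris–Kudla–Sweet) treats the degenerate principal series and doubling integrals in a case-by-case fashion, and the cleanest route is probably to isolate a single representation-theoretic input — a nonvanishing/reducibility statement for the relevant degenerate principal series at the critical point, expressed purely in terms of $\dim U$ and $\mathrm d_{\rD,\epsilon}$ — and then verify that input in each case, rather than re-running the whole doubling analysis each time. A secondary subtlety is the metaplectic bookkeeping: tracking the genuineness condition (the action of $\varepsilon_U$ by $(-1)^{\dim\sigma}$) through the doubling construction, since $V_1\oplus(-V_2)$ must be assembled so that the two halves' metaplectic contributions are compatible; this is routine but must be done carefully, as it is exactly what pins down that $\mathbf t_1$ and $\mathbf t_2$ differ by $\mathbf t_U^\circ$ and not some other element of $\CW_U$. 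Once both inequalities are in hand, equality is forced and the theorem follows.
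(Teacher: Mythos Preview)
Your plan has the two halves of the argument essentially reversed, and this leads to a genuine gap.

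First, a factual error: if $\mathbf t_1-\mathbf t_2=\mathbf t_U^\circ$, then $V_1\oplus(-V_2)$ represents $\mathbf t_1-\mathbf t_2=\mathbf t_U^\circ$, the \emph{anti-split} tower, not the split one. Taking $-V_2$ does not ``cancel'' $\mathbf t_U^\circ$; it is precisely what produces it. This matters because the whole point of the lower bound is that the trivial representation does \emph{not} occur in $\mathbf t_U^\circ$ before stable range.

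Second, and more seriously, the doubling/degenerate-principal-series machinery you assign to the \emph{lower} bound is in fact what the paper uses to prove the \emph{upper} bound $\operatorname n_{\mathbf t_1}(\pi)+\operatorname n_{\mathbf t_2}(\pi)\le 2\dim U+\mathrm d_{\rD,\epsilon}$, and the paper explicitly flags this as its key new contribution. One doubles $U$ (not $V$): on $U^\square=U\oplus U^-$ local zeta integrals give $\Hom_{\oGb(U)}(\operatorname I(\chi),\pi)\neq 0$ for a suitable degenerate principal series, and the structural input is an exact sequence $0\to \RQ_{\sigma_1^\square}\to \operatorname I(\chi)\to \RQ_{\sigma_2^\square}\to 0$ which forces $\pi$ into $\CR_{\sigma_1}$ or $\CR_{\sigma_2}$. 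Your seesaw/stable-range sketch for the upper bound is not a proof: from occurrence of $\pi$ in the ``big'' oscillator one cannot, by seesaw alone, deduce occurrence in one of the two factors.

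Third, the lower bound in the paper is \emph{not} proved directly as $\operatorname n_{\mathbf t_1}(\pi)+\operatorname n_{\mathbf t_2}(\pi)\ge\cdots$. What is proved is $\operatorname n_{\mathbf t_1}(\pi)+\operatorname n_{-\mathbf t_2}(\pi^\vee)\ge 2\dim U+\mathrm d_{\rD,\epsilon}$ (note the $-\mathbf t_2$ and the contragredient), by reducing to $\operatorname n_{\mathbf t_U^\circ}(1_U)\ge 2\dim U+\mathrm d_{\rD,\epsilon}$ and proving the latter via Rallis's distribution-theoretic argument (reduction to the null cone, vanishing on small orbits, a homogeneity computation on the main orbit, Fourier transform). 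Your doubling approach to the lower bound is essentially that of \cite{KR3} and \cite{GG}, and the paper explicitly notes that this route \emph{fails in the quaternionic case} because MVW-involutions do not exist there; this is exactly the ``secondary subtlety'' you acknowledge but do not resolve. The paper sidesteps the need for MVW by combining the $\pi^\vee$-version of the lower bound with the $\pi$-version of the upper bound in the four-inequality system \eqref{enequ}, which forces all of them to be equalities; you would need this trick, or a substitute, for a uniform proof.
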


\vsp

\noindent {\bf Remarks}: (a) For orthogonal-symplectic and
unitary-unitary dual pairs, the conservation relations were
conjectured by Kudla and Rallis in the mid 1990's \cite{Ku2, KR3}.
For quaternionic dual pairs, the conjectured statements first
appeared in Gan-Tantono \cite[Section 4]{GT}.

(b) For orthogonal-symplectic dual pairs and $\pi$
supercuspidal, the conservation relations were due to Kudla and
Rallis \cite{KR3}. This was later extended to all irreducible
dual pairs of type I by Minguez \cite{Mi}, again for $\pi$ supercuspidal.

(c) The inequality $\operatorname n_{\mathbf t_1}(\pi)+\operatorname
n_{\mathbf t_2}(\pi)\geq 2\dim U+\mathrm d_{\rD,\epsilon}$ is due to Rallis
\cite{Ra1,Ra2} and Kudla-Rallis \cite[Theorem 3.8]{KR3} (for
orthogonal-symplectic dual pairs), and Gong-Greni\'{e}
\cite[Theorem 1.8]{GG} (for unitary-unitary dual pairs, following an
earlier work of Harris-Kudla-Sweet \cite{HKS}). See also Gan-Ichino
\cite[Theorem 5.4]{GI}.

\vsp

We now comment on the organization of this article. In Section
\ref{sub1.6}, we explain the idea of the proof of Theorem
\ref{main}, with a special focus on the upper bound, namely the
inequality $\operatorname n_{\mathbf t_1}(\pi)+\operatorname
n_{\mathbf t_2}(\pi)\leq 2\dim U+\mathrm d_{\rD,\epsilon}$ (a key
contribution of the current article). We give its main argument for
the case when $U$ is a quaternionic Hermitian space. In Section
\ref{EOR}, we introduce the Grothendieck group $\widehat \CW_U$ of
the commutative monoid $\widehat \CW_U^+$, to be called the
generalized Witt-Grothendieck group, and we give their basic
properties. In Section \ref{kk}, we introduce the notion of Kudla
characters and the Kudla homomorphism, which are then used to
explicitly determine the generalized Witt-Grothendieck groups. A
main purpose for introducing these new notions is to formulate the
conservation relations for all irreducible dual pairs of type I in a
uniform and conceptually simple manner. This approach is justified,
and in our view appealing, due to the commonality of the underlying
principles, both in the occurrence and non-occurrence aspects. In
Section \ref{dpsdm}, we review the doubling method and some results
on the structure of degenerate principal series of $\oGb(U)$ for $U$
split, and prove the upper bound in the conservation relations.
Section \ref{NOtrivial} is devoted to the phenomenon of
non-occurrence of the trivial representation before stable range,
which is responsible for the lower bound in the conservation
relations. We follow the method of Rallis \cite{Ra1,Ra2} (which
treat the case of orthogonal groups). It is worth mentioning that
for the base case ($\dim U=1$, and $V$ anisotropic; see Lemma \ref{dim1}), proving
non-occurrence of the trivial representation again requires the use
of the doubling method.  In the final Section \ref{secarch}, we
discuss the conservation relations in the archimedean case. Then
three  different phenomena occur: the same conservation relations as
in the non-archimedean case hold when $U$ is a real or complex
symmetric bilinear space; no conservation relations hold when $U$ is
a complex symplectic space or a real quaternionic Hermitian space;
when $U$ is a real symplectic space, a complex Hermitian or
skew-Hermitian space, or a real quaternionic skew-Hermitian space, a
more involved version of the conservation relations hold.

\vsp

\noindent {\bf Acknowledgements}:
This work began in March, 2012 at the Institute for Mathematical Sciences (IMS), National
University of Singapore. The authors thank IMS for the support. The authors also thank Wee Teck Gan, Michael
Harris, Atsushi Ichino and Jian-Shu Li for their interest, as well
as comments on an earlier version of this article. A special thanks go to Dipendra Prasad for sending us the preprint of
Rallis \cite{Ra2}, in addition to his interest. Finally the authors would like to
thank the referee for his insightful comments and detailed suggestions. B. Sun was supported in part by the NSFC Grants 11222101 and 11321101, and
C.-B. Zhu by the MOE grant MOE2010-T2-2-113.

\section{About the proof of Theorem \ref{main}}
\label{sub1.6}

\subsection{The strategy of the proof}\label{secstra}

Let $\mathbf t_1$, $\mathbf t_2$ and $\pi$ be as in Theorem \ref{main}. As usual, we use a superscript ``$^\vee$" to indicate the contragredient representation of an admissible smooth representation. There are two equally important aspects of the conservation
relations, which respectively assert the non-occurrence (Proposition \ref{lower0}):
\begin{equation}\label{geq}
\operatorname n_{\mathbf t_1}(\pi)+\operatorname
n_{-\mathbf t_2}(\pi^\vee) \geq 2\dim U+\mathrm d_{\rD,\epsilon},
\end{equation}
and the occurrence (Corollary \ref{upper}):
\begin{equation}\label{leq}
  \operatorname n_{\mathbf t_1}(\pi)+\operatorname
n_{\mathbf t_2}(\pi)\leq 2\dim U+\mathrm d_{\rD,\epsilon}.
\end{equation}
Assuming both \eqref{geq} and \eqref{leq}, we then have
\begin{equation}\label{enequ}
  \left\{
  \begin{array}{rl}
    \operatorname n_{\mathbf t_1}(\pi)+\operatorname n_{-\mathbf t_2}(\pi^\vee)&\geq \,2\dim U+\mathrm d_{\rD,\epsilon};\smallskip  \\
    \operatorname n_{\mathbf t_2}(\pi)+\operatorname n_{-\mathbf t_1}(\pi^\vee)&\geq \,2\dim U+\mathrm d_{\rD,\epsilon};\smallskip \\
    \operatorname n_{\mathbf t_1}(\pi)+\operatorname n_{\mathbf t_2}(\pi)&\leq \,2\dim U+\mathrm d_{\rD,\epsilon};\smallskip  \\
    \operatorname n_{-\mathbf t_1}(\pi^\vee)+\operatorname n_{-\mathbf t_2}(\pi^\vee)&\leq \,2\dim U+\mathrm d_{\rD,\epsilon}.
  \end{array}
\right.
\end{equation}
This forces all the above inequalities to be equalities! In particular we arrive at Theorem \ref{main}.

For the non-occurrence, the key phenomenon is the late occurrence of
the trivial representation $1_U$ in the anti-split Witt tower $\mathbf t_U^\circ$ (Proposition \ref{trivialv-ind}):
\begin{equation}\label{nonocsp}
   \operatorname n_{\mathbf t_U^\circ}(1_U)\geq 2\dim U+\mathrm d_{\rD,\epsilon}.
\end{equation}
This asserts the vanishing of certain vector valued $\oGb(U)$-invariant distributions.  We show \eqref{nonocsp} following the method of Rallis \cite{Ra1, Ra2}.
The proof consists of reduction to the null cone, and
within the null cone, proving vanishing on small orbits and
homogeneity for the main orbits, and finally employing the Fourier
transform. This is long and involved, and the details are given in Section \ref{NOtrivial}.

 As is well-known,  \eqref{nonocsp} implies \eqref{geq}, as follows. Let $\sigma_1=(V_1, \omega_1)$ be the element of $\mathbf t_1$ so that $\operatorname n_{\mathbf t_1}(\pi)=\dim \sigma_1$. Likewise, let $\sigma_2=(V_2, \omega_2)$ be the element of $-\mathbf t_2$ so that $\operatorname n_{-\mathbf t_2}(\pi^\vee)=\dim \sigma_2$. Then
\[
   \Hom_{\oGb(U)}(\omega_1, \pi)\neq 0\quad\textrm{and}\quad \Hom_{\oGb(U)}(\omega_2, \pi^\vee)\neq 0.
\]
Consequently,
\[
   \Hom_{\oGb(U)}(\omega_1\otimes \omega_2, 1_U)\neq 0,
\]
which implies that
\[
   1_U\in \mathcal R_{\sigma_1+\sigma_2}\quad\textrm{and so}\quad \dim (\sigma_1+\sigma_2)\geq \operatorname n_{\mathbf t_1-\mathbf t_2}(1_U)=\operatorname n_{\mathbf t_U^\circ}(1_U).
\]
Therefore \eqref{nonocsp} implies that
\[
  \operatorname n_{\mathbf t_1}(\pi)+\operatorname n_{-\mathbf t_2}(\pi^\vee)=\dim \sigma_1+\dim \sigma_2=\dim (\sigma_1+\sigma_2)\geq 2\dim U+\mathrm d_{\rD,\epsilon}.
\]

\subsection{The methods of Kudla and Rallis}\label{kr}

For the occurrence, we use the doubling method, theory of local zeta
integrals, and critically the known structure of degenerate
principle series, which are in fact all part of the foundational
work of Kudla and Rallis \cite{KR3}.
To illustrate how the methods of Kudla and Rallis will lead to \eqref{leq}, we give the main argument for one special case, namely theta lifting from a quaternionic symplectic group to a quaternionic orthogonal group.

We thus assume that $U$ is a quaternionic Hermitian space. Then $\oG(U)$ is a perfect group, and $\widehat{\CW}^+_U=\widehat \CW^+_0$. For each space $V\in \widehat \CW^+_0$, define its  discriminant to be
\[
   \operatorname{disc} V:=\left(\prod_{i=1}^m \la e_i,e_i\ra_{V}\,\la e_i,e_i\ra_{V}^\iota\right) (\rF^\times)^2\in \rF^\times/(\rF^\times)^2,
   \]
where $e_1,e_2,\cdots, e_m$ is an orthogonal basis of $V$. This is independent of the choice of orthogonal basis.

For each $V\in \widehat{\CW}^+_0$, denote by $\omega_V$ the unique  (up to isomorphism) smooth oscillator representation of
\[
   \oJ_U(V):=\oG(U)\ltimes \oH(U \otimes _\rD V)\subset \oJ(U,V).
\]
Likewise, denote by $\omega^\square_V$ the unique (up to isomorphism) smooth oscillator representation of
\[
   \oJ_U^\square(V):=\oG(U^\square)\ltimes \oH(U^\square \otimes _\rD V)\subset \oJ(U^\square,V).
\]
Here
\be \label{squareu}
  U^\square:=U\oplus U^-,
\ee
and $U^-$ denotes the space $U$ equipped with the form scaled by $-1$. Put
\[
  U^\triangle:=\{(u,u)\mid u\in U\}.
  \]
It is a Lagrangian subspace of $U^\square$, namely, it is totally isotropic and $\dim U^\square=2\dim U^\triangle$. Consequently, $U^\triangle\otimes_\rD V$  is a Lagrangian subspace of the symplectic space $U^\square \otimes_\rD V$.

\begin{lem}\label{haaro} (\cite[Theorem 1]{Ca} and \cite[Theorem 9.1]{Ho1})
Let $X$ be a Lagrangian subspace of a symplectic space $W$ over
$\rF$, to be viewed as an abelian subgroup of the Heisenberg group
$\oH(W)$. Let $\omega$ be an irreducible smooth representation of
$\oH(W)$ with central character $\psi$. Then there exists a non-zero
linear functional on $\omega$ which is invariant under $X$, and such
a linear functional is unique up to scalar multiples.
\end{lem}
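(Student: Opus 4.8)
The plan is to reduce to a concrete model of $\omega$ and then appeal to uniqueness of translation-invariant functionals. Both the existence and the uniqueness assertion are invariant under isomorphism of smooth $\oH(W)$-representations, so by the Stone--von Neumann Theorem (Theorem~\ref{stone}) I may assume that $\omega$ is the representation on $\CS(X)$ given by \eqref{acth}, attached to a complete polarization $W = X\oplus Y$; such a complementary Lagrangian $Y$ exists because every Lagrangian subspace of a symplectic space admits a Lagrangian complement. In this model the subgroup $X=\{(x_0,0)\mid x_0\in X\}\subset \oH(W)$ (which is indeed a subgroup, since $X$ is isotropic) acts purely by translations: taking $y_0=0$ and $t=0$ in \eqref{acth} gives $((x_0,0)\cdot\phi)(x)=\phi(x+x_0)$. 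Hence a linear functional on $\omega$ is $X$-invariant if and only if it is a translation-invariant linear functional on the space $\CS(X)$ of the totally disconnected, locally compact abelian group $X$.

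For existence, a Haar measure $dx$ on $X$ furnishes the functional $\phi\mapsto \int_X \phi(x)\,dx$, which is visibly nonzero and translation-invariant (incidentally it is also a $\psi$-eigenvector for the center of $\oH(W)$, though this is not needed). For uniqueness, I claim that any translation-invariant linear functional $\lambda$ on $\CS(X)$ is a scalar multiple of the Haar functional. This is elementary and requires no continuity hypothesis on $\lambda$: $\CS(X)$ is spanned by characteristic functions $\one_{a+L}$ of cosets of lattices (compact open subgroups) $L$ of $X$; translation invariance forces $\lambda(\one_{a+L})$ to be independent of $a$, say $\lambda(\one_{a+L})=c_L$, and decomposing a coset of $L$ into $[L:L']$ cosets of a sublattice $L'$ gives $c_L=[L:L']\,c_{L'}$, so $\lambda$ is pinned down by its value on a single $\one_{L_0}$. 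Matching normalizations, $\lambda$ is proportional to $\int_X\cdot\,dx$, which proves the statement.

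There is no serious obstacle here; the two points deserving care are the correct use of the Stone--von Neumann Theorem to pass to the model \eqref{acth} in such a way that the given Lagrangian $X$ becomes exactly the ``translation half'' of the polarization, and the observation that uniqueness of the translation-invariant functional holds for arbitrary linear functionals on the abstract vector space $\CS(X)$ (no topology needed), so that the conclusion applies to every linear functional on the smooth representation $\omega$.
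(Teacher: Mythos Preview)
Your argument is correct and follows exactly the paper's approach: realize $\omega$ in the Schr\"odinger model \eqref{acth} attached to a polarization $W=X\oplus Y$, so that $X$ acts by translations on $\CS(X)$, and then invoke existence and uniqueness of Haar measure on $X$. Your additional remarks (that any Lagrangian admits a Lagrangian complement, and that uniqueness of the translation-invariant functional on $\CS(X)$ is purely algebraic and needs no continuity hypothesis) are useful elaborations of what the paper compresses into a single sentence.
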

\begin{proof}
Using the realization of $\omega$ in \eqref{acth}, the lemma follows by the existence and uniqueness of Haar measure on $X$.
\end{proof}

Using Lemma \ref{haaro}, we obtain a non-zero linear functional $\lambda_V$ on $\omega^\square_V$ which is invariant under $U^\triangle\otimes_\rD V\subset \oH(U^\square \otimes_\rD V)$. Denote by $\oP(U^\triangle)$ the parabolic subgroup of $\oG(U^\square)$ stabilizing $U^\triangle$. The linear functional $\lambda _V$ has the following transformation property: (\cf \cite[Section 6]{Ya})
\[
   \lambda_V(p\cdot\phi)=\chi_V(p)\lambda_V(\phi) \quad\textrm{ for all } p\in \oP(U^\triangle),\, \phi\in \omega^\square _V,
\]
where $\chi_V(p)$ denotes the image of $p$ under the composition of
\[
  \oP(U^\triangle)\xrightarrow{\textrm{restriction}} \GL(U^\triangle)\xrightarrow{\textrm{reduced norm}} \rF^\times\xrightarrow{a\mapsto \left((-1)^{\dim V}\, \disc V,\, a\right)_\rF\,\abs{a}_\rF^{\dim V}} \C^\times.
\]
Here and as usual, $(\, ,\,)_\rF$ denotes the quadratic Hilbert symbol for $\rF$, and $\abs{\,\cdot\, }_\rF$ denotes the normalized absolute value for $\rF$.

For every character $\chi\in (\oP(U^\triangle))^*$, put
\[
  \operatorname I(\chi):=\{f\in \con^\infty(\oG(U^\square))\mid
f(pg)=\chi(p)\,f(g),\,p\in
\oP(U^\triangle),\,g\in \oG(U^\square)\}.
\]
It is a smooth representation of $\oG(U^\square)$ under right translations. This is a so-called degenerate principal series representation.

Let $\pi\in \Irr(\oG(U))$ be as in the Introduction. The theory of local zeta integrals \cite{PSR,LR} implies that
\be \label{zetan}
  \Hom_{\oG(U)}(\operatorname I(\chi), \pi)\neq 0.
\ee
Here $\oG(U)$ is identified with the subgroup of $\oG(U^\square)$   pointwise fixing $U^-$.

Via matrix coefficients, the linear functional $\lambda_V$ induces a $\oG(U^\square)$-intertwining linear map
\be \label{rquotient}
  \omega_V^\square \rightarrow \operatorname I(\chi_V),\quad \phi\mapsto(g\mapsto \lambda_V(g\cdot\phi)).
\ee
Denote by $\RQ_V$ the image of \eqref{rquotient}.  It is easy to see that (Lemma \ref{cn0})
\be \label{homog}
  \Hom_{\oG(U)}(\RQ_V,\pi)\neq 0\quad \textrm{ implies }\quad \Hom_{\oG(U)}( \omega_V,\pi)\neq 0.
\ee

Let $\mathbf t_1, \mathbf t_2\in \CW_0=\CW_U$ so that their difference is the anti-split Witt tower. Let $V_1$ be a space in $\mathbf t_1$ and  let $V_2$ be a space in $\mathbf t_2$. The following result about structure of degenerate principal series is critical for the conservation relations:
\begin{prpl} \cite[Theorem 1.4]{Ya}\label{strdg}
If $\dim V_1+\dim V_2=2\dim U+1$, then $\operatorname I(\chi_{V_1})/\RQ_{V_1}\cong \RQ_{V_2}$ as smooth representations of $\oG(U^\square)$.
\end{prpl}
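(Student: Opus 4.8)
The plan is to analyze the structure of the degenerate principal series $\operatorname I(\chi_V)$ via the doubling method, comparing the submodule $\RQ_V$ and its quotient as $V$ runs over a complementary pair of Witt towers. The essential input is that under the numerical hypothesis $\dim V_1 + \dim V_2 = 2\dim U + 1$, the inducing characters $\chi_{V_1}$ and $\chi_{V_2}$ sit at "complementary points" of the family $\{\operatorname I(\chi)\}$: the reduced-norm exponents $\abs{a}_\rF^{\dim V_i}$ together with the Weyl group action on characters of $\oP(U^\triangle)$ match up so that the standard intertwining operator $M(\chi_{V_1})\colon \operatorname I(\chi_{V_1}) \to \operatorname I(w\cdot\chi_{V_1})$ has $\operatorname I(\chi_{V_2})$ (or its appropriate twist) as target. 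I would set this up by first recording the precise description of $\chi_V$ from the excerpt — the composition through $\GL(U^\triangle) \xrightarrow{\mathrm{Nrd}} \rF^\times$ with the quadratic character $a \mapsto ((-1)^{\dim V}\disc V, a)_\rF$ twisted by $\abs{a}_\rF^{\dim V}$ — and checking that the quadratic-character parts of $\chi_{V_1}$ and $\chi_{V_2}$ agree (since $V_1, V_2$ differ by the anti-split tower, their discriminants and dimension parities are constrained exactly so that the product $(-1)^{\dim V_i}\disc V_i$ is a Witt-tower invariant that is shared, up to the known twist by $\mathrm d_{\rD,\epsilon}=1$).

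**Next I would** invoke the explicit submodule structure of these degenerate principal series, which is part of the Kudla–Rallis foundational analysis (\cite{KR3}) and its extension by Yamana (\cite{Ya}, Theorem 1.4 being cited): for $\oG(U^\square)$ with $U^\square$ split, the series $\operatorname I(\chi_V)$ has a known, typically length-two, composition structure along the relevant reducibility point, with $\RQ_V$ realized as the image of the map \eqref{rquotient} coming from the $U^\triangle\otimes_\rD V$-invariant functional $\lambda_V$. The content of Proposition \ref{strdg} is the identification of the \emph{quotient} $\operatorname I(\chi_{V_1})/\RQ_{V_1}$ with $\RQ_{V_2}$; the natural route is to exhibit a nonzero $\oG(U^\square)$-map $\operatorname I(\chi_{V_1}) \to \operatorname I(\chi_{V_2})$ whose image is $\RQ_{V_2}$ and whose kernel is exactly $\RQ_{V_1}$, using the standard intertwining integral between these two principal series together with the known location of its poles/zeros (the "complementary point" phenomenon), and then matching Langlands/Jordan–Hölder constituents on both sides using the explicit constituent lists. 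Equivalently, one dualizes: since $\RQ_V$ is the maximal quotient of $\omega_V^\square$ appearing in $\operatorname I(\chi_V)$, and theta/doubling compatibility identifies the "missing" constituent of $\operatorname I(\chi_{V_1})$ with the one "present" in $\operatorname I(\chi_{V_2})$, the identification follows from semisimplicity of the relevant graded piece plus multiplicity-one.

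**The main obstacle** I anticipate is \emph{not} the character bookkeeping — that is routine given the formula for $\chi_V$ and the Witt-group relations from Proposition \ref{conserv0} — but rather establishing that the composition series of $\operatorname I(\chi_{V_1})$ at this reducibility point is genuinely of length two (or, more precisely, that the relevant "complementary" constituents occur with multiplicity one and that $\RQ_{V_1}$ and $\RQ_{V_2}$ carve out exactly complementary pieces). This is a delicate point about degenerate principal series of quaternionic (and classical) groups $\oG(U^\square)$ with $U^\square$ split, precisely at the near-the-edge reducibility governed by $2\dim U + \mathrm d_{\rD,\epsilon}$. In the proof I would handle this by citing the structural results of Kudla–Rallis and Yamana directly for the needed cases — since the statement itself is attributed to \cite[Theorem 1.4]{Ya} — and then the argument reduces to: (i) verify the numerical match of inducing data, (ii) invoke the cited submodule/quotient structure, (iii) identify $\RQ_{V_i}$ with the distinguished submodule on each side via $\lambda_{V_i}$, and (iv) conclude $\operatorname I(\chi_{V_1})/\RQ_{V_1} \cong \RQ_{V_2}$. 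The potential subtlety in a self-contained treatment — distinguishing the Langlands quotient from the subrepresentation, i.e.\ getting the direction of the isomorphism right rather than its dual — is controlled by the transformation property of $\lambda_V$ under $\oP(U^\triangle)$ stated above, which pins down $\RQ_{V_i}$ as the image (not the kernel) of \eqref{rquotient} and hence fixes which side is the sub and which is the quotient.
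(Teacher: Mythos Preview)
The paper does not prove this statement at all: Proposition \ref{strdg} is stated with a citation to \cite[Theorem 1.4]{Ya} and no proof is given; the general version (Proposition \ref{quotient0}) is likewise dispatched in one sentence by citing Kudla--Rallis \cite{KR2}, Kudla--Sweet \cite{KS}, and Yamana \cite{Ya}. So there is nothing to compare your argument against --- the paper treats this as a black-box input from the literature on degenerate principal series.

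Your outline is a reasonable sketch of how Yamana's proof (and its predecessors) actually goes: verify that under the numerical constraint the inducing characters $\chi_{V_1}$ and $\chi_{V_2}$ are related by the long Weyl element (the ``complementary point''), study the standard intertwining operator between the two principal series, and identify its image and kernel with the Rallis quotients $\RQ_{V_i}$ via the explicit composition-series analysis. You correctly flag the genuine work as the length-two (multiplicity-one) structure at this reducibility point, and you are right that in a self-contained account this is where the difficulty lies. For the purposes of the present paper, however, the appropriate ``proof'' is simply the citation --- your sketch goes beyond what the paper itself supplies.
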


Using Proposition \ref{strdg}, we get
\begin{prpl}\label{strdg1}
If $\dim V_1+\dim V_2=2\dim U+1$, then
\[
\Hom_{\oG(U)}(\omega_{V_1},\pi)\neq 0\quad\textrm{ or }\quad \Hom_{\oG(U)}(\omega_{V_2},\pi)\neq 0.
\]
\end{prpl}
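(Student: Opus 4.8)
The plan is to deduce Proposition~\ref{strdg1} from Proposition~\ref{strdg} together with the two facts established just above it, namely \eqref{homog} (that $\Hom_{\oG(U)}(\RQ_V,\pi)\neq 0$ implies $\Hom_{\oG(U)}(\omega_V,\pi)\neq 0$) and \eqref{zetan} (that $\Hom_{\oG(U)}(\operatorname I(\chi_{V_1}),\pi)\neq 0$). First I would apply the left-exact functor $\Hom_{\oG(U)}(-,\pi)$ to the short exact sequence of smooth $\oG(U^\square)$-representations
\[
  0\rightarrow \RQ_{V_1}\rightarrow \operatorname I(\chi_{V_1})\rightarrow \operatorname I(\chi_{V_1})/\RQ_{V_1}\rightarrow 0,
\]
restricted to $\oG(U)$, to obtain the exact sequence
\[
  0\rightarrow \Hom_{\oG(U)}(\operatorname I(\chi_{V_1})/\RQ_{V_1},\pi)\rightarrow \Hom_{\oG(U)}(\operatorname I(\chi_{V_1}),\pi)\rightarrow \Hom_{\oG(U)}(\RQ_{V_1},\pi).
\]
Since the middle term is nonzero by \eqref{zetan}, at least one of the two outer terms is nonzero.

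In the first case, $\Hom_{\oG(U)}(\RQ_{V_1},\pi)\neq 0$, and \eqref{homog} immediately gives $\Hom_{\oG(U)}(\omega_{V_1},\pi)\neq 0$. In the second case, $\Hom_{\oG(U)}(\operatorname I(\chi_{V_1})/\RQ_{V_1},\pi)\neq 0$; here I would invoke Proposition~\ref{strdg}, which under the hypothesis $\dim V_1+\dim V_2=2\dim U+1$ identifies $\operatorname I(\chi_{V_1})/\RQ_{V_1}$ with $\RQ_{V_2}$ as $\oG(U^\square)$-representations, hence \emph{a fortiori} as $\oG(U)$-representations. Thus $\Hom_{\oG(U)}(\RQ_{V_2},\pi)\neq 0$, and applying \eqref{homog} again yields $\Hom_{\oG(U)}(\omega_{V_2},\pi)\neq 0$. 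In either case the desired disjunction holds, completing the proof.

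The argument is essentially formal once Proposition~\ref{strdg} is in hand, so there is no serious obstacle at this level; the only point requiring a word of care is the compatibility of restriction of $\Hom$-spaces with the exact sequence (left-exactness of $\Hom_{\oG(U)}(-,\pi)$ on smooth representations, which is standard), and the observation that an isomorphism of $\oG(U^\square)$-modules restricts to one of $\oG(U)$-modules so that the nonvanishing of $\Hom_{\oG(U)}$ transfers. The genuine difficulty is hidden upstream, in Proposition~\ref{strdg} (the precise structure of the degenerate principal series of $\oG(U^\square)$ at the relevant point, due to Yamana) and in \eqref{zetan} (nonvanishing of the doubling zeta integral), both of which we are entitled to assume here.
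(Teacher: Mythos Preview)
Your proof is correct and follows essentially the same approach as the paper's: the paper's two-line argument implicitly uses exactly the left-exactness of $\Hom_{\oG(U)}(-,\pi)$ on the short exact sequence $0\to\RQ_{V_1}\to\operatorname I(\chi_{V_1})\to\RQ_{V_2}\to 0$ (via Proposition~\ref{strdg}) together with \eqref{zetan} and \eqref{homog}, which you have spelled out in full.
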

\begin{proof}
By Proposition \ref{strdg}, \eqref{zetan} (for $\chi=\chi_{V_1}$) implies that
\[
\Hom_{\oG(U)}(\RQ_{V_1},\pi)\neq 0\quad\textrm{ or }\quad \Hom_{\oG(U)}(\RQ_{V_2},\pi)\neq 0.
\]
The proposition then follows by \eqref{homog}.
\end{proof}

\begin{lem}\label{strdg2}
If $\operatorname n_{\mathbf t_1}(\pi)=\deg \mathbf t_1$ or $\operatorname n_{\mathbf t_2}(\pi)=\deg \mathbf t_2$, then
\[
  \operatorname n_{\mathbf t_1}(\pi)+\operatorname
n_{\mathbf t_2}(\pi)\leq 2\dim U+3.
\]
\end{lem}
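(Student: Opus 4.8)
The plan is to exploit the symmetry between $\mathbf t_1$ and $\mathbf t_2$, combine the hypothesis with occurrence in the stable range, and conclude with the elementary conservation relation in the Witt group recorded in Proposition \ref{conserv0}. Since $\mathbf t_1$ and $\mathbf t_2$ play symmetric roles, I would first reduce to the case $\operatorname n_{\mathbf t_1}(\pi)=\deg \mathbf t_1$, and write $d_i:=\deg \mathbf t_i$ ($i=1,2$). As $U$ is quaternionic Hermitian, $V$ is quaternionic skew-Hermitian, so Proposition \ref{danis} gives $\mathrm d_{\rD,\epsilon}=3$; and since $\mathbf t_1-\mathbf t_2=\mathbf t_U^\circ=\mathbf t_0^\circ$ by hypothesis, Proposition \ref{conserv0} yields $d_1+d_2=\mathrm d_{\rD,\epsilon}=3$.

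Next I would bound $\operatorname n_{\mathbf t_2}(\pi)$ from above, a step that uses nothing about $\pi$ beyond being an irreducible admissible representation. Let $V_2$ be the space in the tower $\mathbf t_2$ of dimension $2\dim U+d_2$; this dimension has the same parity as $d_2$ and is at least $d_2$, so such a space exists, and since its anisotropic kernel has dimension $d_2$, one has $\rank V_2=\dim U$. Occurrence in the stable range \eqref{stableint} then gives $\Hom_{\oG(U)}(\omega_{V_2},\pi)\neq 0$, hence $\operatorname n_{\mathbf t_2}(\pi)\le\dim V_2=2\dim U+d_2$. Combining this with $\operatorname n_{\mathbf t_1}(\pi)=d_1$ and $d_1+d_2=3$ gives
\[
  \operatorname n_{\mathbf t_1}(\pi)+\operatorname n_{\mathbf t_2}(\pi)=d_1+\operatorname n_{\mathbf t_2}(\pi)\le d_1+(2\dim U+d_2)=2\dim U+3 .
\]

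I do not expect a genuine obstacle in this lemma; the only points requiring a little attention are the bookkeeping of dimensions and split ranks of spaces within a Witt tower, and the observation that the constant $3$ in the statement is exactly $\mathrm d_{\rD,\epsilon}$ for the skew-Hermitian side. It is worth noting that the structural input of this section, Proposition \ref{strdg1}, is not used in this argument: once $\pi$ occurs at the bottom of $\mathbf t_1$, the dichotomy it provides is always resolved by its $\mathbf t_1$-alternative and yields no information about $\mathbf t_2$. Proposition \ref{strdg1} is instead the decisive tool for the complementary case, where occurrence lies strictly above the bottom of both towers; that case, together with Lemma \ref{strdg2}, is what ultimately produces the upper bound \eqref{leq}.
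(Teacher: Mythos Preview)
Your proof is correct and follows essentially the same approach as the paper: reduce by symmetry to $\operatorname n_{\mathbf t_1}(\pi)=\deg\mathbf t_1$, bound $\operatorname n_{\mathbf t_2}(\pi)$ by $2\dim U+\deg\mathbf t_2$ via stable-range occurrence \eqref{stableint}, and conclude using $\deg\mathbf t_1+\deg\mathbf t_2=\mathrm d_{\rD,\epsilon}=3$ from Proposition~\ref{conserv0}. Your added explanations (the parity/rank check for $V_2$ and the remark that Proposition~\ref{strdg1} plays no role here) are accurate elaborations, not departures from the paper's argument.
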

\begin{proof}
Without loss of generality, assume that $\operatorname n_{\mathbf t_1}(\pi)=\deg \mathbf t_1$. By \eqref{stableint}, we have
\[
  \operatorname n_{\mathbf t_2}(\pi)\leq 2\dim U+\deg \mathbf t_2.
\]
Therefore Proposition \ref{conserv0} implies that
\[
  \operatorname n_{\mathbf t_1}(\pi)+\operatorname
n_{\mathbf t_2}(\pi)\leq \deg \mathbf t_1+2\dim U+\deg \mathbf t_2=2\dim U+3.
\]
\end{proof}

We now finish the proof of the inequality \eqref{leq}. In view of Lemma \ref{strdg2}, we may assume that $\operatorname n_{\mathbf t_1}(\pi)>\deg(\mathbf t_1)$ and $\operatorname n_{\mathbf t_2}(\pi)>\deg(\mathbf t_2)$. Then $U\neq 0$. Assume that \eqref{leq} does not hold. Then
\[
  \operatorname n_{\mathbf t_1}(\pi)+\operatorname
n_{\mathbf t_2}(\pi)\geq  2\dim U+5.
\]
This implies that there is a space $V_1$ in $\mathbf t_1$ and a space $V_2$ in $\mathbf t_2$ such that
\be \label{dimvi}
  \dim V_i<\operatorname n_{\mathbf t_i}(\pi), \quad i=1,2,
\ee
and
\be\label{dimv1}
  \dim V_1+\dim V_2=2\dim U+1.
\ee
We get a contradiction since \eqref{dimvi} implies that
\[
\Hom_{\oG(U)}(\omega_{V_1},\pi)=0\quad\textrm{ and }\quad \Hom_{\oG(U)}(\omega_{V_2},\pi)=0,
\]
and by Proposition \ref{strdg1}, \eqref{dimv1} implies that
\[
\Hom_{\oG(U)}(\omega_{V_1},\pi)\neq 0\quad\textrm{ or }\quad \Hom_{\oG(U)}(\omega_{V_2},\pi)\neq 0.
\]
This proves that the inequality \eqref{leq} always holds.

\section{Generalized Witt-Grothendieck groups}
\label{EOR}

\subsection{The Witt-Grothendieck group}\label{secwg}

Recall from the Introduction the commutative monoid $\widehat
\CW^+_0$ of isometry classes of  $-\epsilon$-Hermitian left $\mathrm
D$-vector spaces.
\begin{lem}\cite[Chapter 7, Corollary 9.2 (i)]{Sch}\label{canc0}
The monoid $\widehat \CW^+_0$  is cancellative, namely, for all $V_1, V_2, V_3\in \widehat \CW^+_0$, if $V_1+V_3=V_2+V_3$, then $V_1=V_2$.
\end{lem}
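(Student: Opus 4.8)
The assertion is the classical Witt cancellation theorem for $-\epsilon$-Hermitian spaces over the division algebra $(\rD,\iota)$, and the plan is to deduce it from Witt's extension theorem in the usual way. The first point to record is that, since $\rF$ has characteristic zero, every $-\epsilon$-Hermitian form over $(\rD,\iota)$ is trace-valued (``even''): if $a=\la v,v\ra_V$ then $a=-\epsilon\,a^\iota$, hence $a=\frac12(a-\epsilon a^\iota)$ lies in the image of the additive map $x\mapsto x-\epsilon x^\iota$ on $\rD$. This holds in every case of Proposition~\ref{danis}, including the quaternionic skew-Hermitian one, where $\la v,v\ra_V$ is a pure quaternion. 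Consequently the hypotheses of Witt's extension theorem are satisfied for all spaces under consideration.

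Next I would invoke Witt's extension theorem: for a non-degenerate $-\epsilon$-Hermitian $\rD$-vector space $W$, any isometry between two $\rD$-subspaces of $W$ extends to an element of $\oG(W)$ (this is the content of \cite[Chapter~7]{Sch}). Granting this, cancellation is formal. Suppose $\phi\colon V_1\oplus V_3\xrightarrow{\sim}V_2\oplus V_3$ is an isometry, where $\oplus$ denotes the orthogonal direct sum. Put $W:=V_2\oplus V_3$, let $V_3'\subseteq W$ be the second orthogonal summand, and let $V_3'':=\phi(V_3)\subseteq W$ be the image of the $V_3$-summand of $V_1\oplus V_3$. Both $V_3'$ and $V_3''$ are isometric to $V_3$, hence to each other; by Witt's extension theorem the resulting isometry $V_3'\to V_3''$ extends to some $\tau\in\oG(W)$ with $\tau(V_3')=V_3''$. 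Since $\tau$ is an isometry of all of $W$, it carries $(V_3')^\perp$ isometrically onto $(V_3'')^\perp$. But $(V_3')^\perp=V_2$ inside $W=V_2\oplus V_3$, while $(V_3'')^\perp=\phi(V_1)\cong V_1$. Therefore $V_1\cong V_2$, which is the claim.

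The genuine work is hidden in Witt's extension theorem over a division ring with involution: one must cope with non-commutativity and, in the skew-Hermitian quaternionic case, with the fact that certain anisotropic planes carry no hyperplane reflection, so that one must use Eichler transvections and Siegel-type transformations to generate enough of $\oG(W)$. This is classical and is carried out in full in Scharlau's book, so I would simply cite it rather than reprove it. If one preferred a self-contained argument, the alternative is to induct on $\dim V_3$, reducing to the case $\dim V_3=1$ and then proving directly that $\oG(W)$ acts transitively on the set of vectors $w\in W$ with a prescribed anisotropic value $\la w,w\ra_W=a$; the base case is exactly where the non-commutative subtleties are concentrated, and it is the main obstacle in either approach.
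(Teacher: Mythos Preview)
Your argument is correct and is the standard deduction of Witt cancellation from Witt's extension theorem, with the necessary observation that in characteristic zero all $-\epsilon$-Hermitian forms are trace-valued so that the extension theorem applies. The paper itself gives no proof of this lemma: it simply cites \cite[Chapter~7, Corollary~9.2~(i)]{Sch} and moves on, so your proposal is an expansion of the cited result rather than an alternative to anything the authors wrote.
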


 Denote by $\widehat \CW_0$ the Grothendieck group \cite[Chapter 2, Definition 1.2]{Sch}  of the commutative monoid $\widehat \CW^+_0$. By definition, it is an abelian group together with a monoid homomorphism $\mathrm j_0:  \widehat \CW^+_0\rightarrow \widehat \CW_0$ with the following universal property: for each abelian group $A$ and each monoid homomorphism $\varphi^+: \widehat \CW^+_0\rightarrow A$, there is a unique group homomorphism $\varphi: \widehat \CW_0\rightarrow A$ such that $\varphi\circ \mathrm j_0=\varphi^+$.
 The group  $\widehat \CW_0$ is called  the Witt-Grothendieck group  of $-\epsilon$-Hermitian left $\mathrm D$-vector
spaces. Note that Lemma \ref{canc0} implies that $\mathrm j_0$ is injective \cite[Chapter 2, Lemma 1.3]{Sch}. Via $\mathrm j_0$, we view  $\widehat \CW^+_0$ as a submonoid of $\widehat \CW_0$.

Denote by $\rE$ the center of $\rD$. Put
\[
  \Delta:=\Delta_{\rF,\rD,\epsilon}:=\left\{
                                                              \begin{array}{ll}
                                                                \{1\}, & \hbox{$\epsilon=1$, $\rD=\rF$;} \\
                                                                \rE_{\pm}^\times/\mathrm N^\times, & \hbox{$\epsilon=1$, $\rD$ is a quadratic extension;} \\
                                                                \rF^\times/\RN^\times, & \hbox{$\epsilon=1$, $\rD$ is a quaternion algebra;} \\
                                                                 \oHil(\rF), & \hbox{$\epsilon=-1$, $\rD=\rF$;} \\
                                                                \rF^\times/\RN^\times, & \hbox{$\epsilon=-1$, $\rD$ is a quadratic extension;} \\
                                                                \{1\}, & \hbox{$\epsilon=-1$, $\rD$ is a quaternion algebra,}
                                                              \end{array}
                                                            \right.
\]
where
\[
\left\{
  \begin{array}{l}
    \RN^\times:=\{a a^\iota\mid a\in \rE^\times\}\subset \rF^\times;\smallskip \\
    \rE_\pm^\times:=\{a\in \rE^\times\mid a^\iota=a \text{ or } -a\};\smallskip \\
    \oHil(\rF):=\frac{\rF^\times}{(\rF^\times)^2}\times \{\pm 1\},
  \end{array}
\right.
\]
and $\oHil(\rF)$ is viewed as a group with group multiplication
\be\label{atat}
   (a, t)(a',t'):=(a a', t\, t' (a,a')_\rF), \quad a, a'\in\rF^\times/(\rF^\times)^2, \ t, t'\in \{\pm 1\}.
\ee
Here and as usual, $(\,,\,)_\rF$ denotes the quadratic Hilbert symbol for $\rF$. Then we have a group homomorphism
\be \label{dimdisc}
\disc: \widehat \CW_0\rightarrow \Delta
\ee
such that for each $V\in \widehat \CW^+_0$,
\[
 \disc \, V:=\left\{
                                                              \begin{array}{ll}
                                                                1, & \hbox{$\epsilon=1$, $\rD=\rF$;}\smallskip \\
                                                                \left(\prod_{i=1}^m \la e_i,e_i\ra_{V}\right) \RN^\times, & \hbox{$\epsilon=1$, $\rD$ is a quadratic extension;} \smallskip \\
                                                               \left(\prod_{i=1}^m \la e_i,e_i\ra_{V}\,\la e_i,e_i\ra_{V}^\iota\right) \RN^\times, & \hbox{$\epsilon=1$, $\rD$ is a quaternion algebra;} \smallskip \\
                                                                 (\left(\prod_{i=1}^m \la e_i,e_i\ra_{V}\right) \RN^\times, \operatorname{hass} V), & \hbox{$\epsilon=-1$, $\rD=\rF$;} \smallskip \\
                                                               \left(\prod_{i=1}^m \la e_i,e_i\ra_{V}\right) \RN^\times, & \hbox{$\epsilon=-1$, $\rD$ is a quadratic extension;} \smallskip \\
                                                                1, & \hbox{$\epsilon=-1$, $\rD$ is a quaternion algebra,}
                                                              \end{array}
                                                            \right.
\]
where $e_1,e_2,\cdots, e_m$ is an orthogonal basis of $V$, and $\operatorname{hass} V$ denotes the Hasse invariant of $V$ when $V$ is a symmetric bilinear space:
\[
  \operatorname{hass} V:=\prod_{1\leq i<j\leq m}(\la e_i,e_i\ra_{V}, \la e_j,e_j\ra_{V})_\rF.
\]

On the other hand, we also have the dimension homomorphism $\dim:
\widehat \CW_0\rightarrow \Z$. We summarize the well-know results on
the classification of $-\epsilon$-Hermitian left $\mathrm D$-vector
spaces as follows:
\begin{thml}\label{wg0} (\cf \cite[Chapters 5 and 10]{Sch})
The  homomorphism
\be\label{homcw0}
\dim\times \disc: \widehat \CW_0\rightarrow \Z\times \Delta
\ee
 is injective and its image equals the group of Table \ref{tablecw0}. If we identify $\widehat \CW_0$ with the image of \eqref{homcw0}, then an element $(m,\delta)$ of $\widehat \CW_0$ belongs to the monoid $\widehat \CW_0^+$ if and only if $m\geq 0$, and
\[
\left\{
  \begin{array}{ll}
  \delta=1,&\hbox{if $m=0$,}\\
   \delta\neq -1, & \hbox{if $m=1$, $\rD$ is a quaternion algebra and $\epsilon=1$}; \\
   \delta\in (\rF^\times/\RN^\times)\times\{1\}\subset \oHil(\rF), & \hbox{if $m=1$, $\rD=\rF$ and $\epsilon=-1$.}
  \end{array}
\right.
\]

\end{thml}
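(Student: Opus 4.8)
The plan is to reduce the classification of $-\epsilon$-Hermitian left $\rD$-vector spaces to the six cases listed in the definition of $\Delta$, treating each case by citing the appropriate structure theorem from \cite{Sch} (Chapters 5 and 10), and then to translate that structure theorem into the asserted injectivity of $\dim\times\disc$ and the description of its image. First I would recall that by Witt's theorem (valid in all six cases), two spaces $V_1,V_2\in\widehat\CW_0^+$ are isometric if and only if they have the same dimension and isometric anisotropic kernels; combined with Proposition \ref{danis}, the anisotropic kernel is determined by a bounded amount of discrete data, and the content of the theorem is that this data is faithfully recorded by $\disc$. Concretely, for the split orthogonal case $(\rD,\epsilon)=(\rF,1)$ with $\rD=\rF$, there is only one space in each dimension, $\Delta=\{1\}$, and there is nothing to prove beyond $\dim$ being a bijection onto $\Z$. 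For the symmetric bilinear case $(\rD,\epsilon)=(\rF,-1)$, the classical fact that a quadratic form over a local field is classified by its dimension, discriminant in $\rF^\times/(\rF^\times)^2$, and Hasse invariant gives exactly that $\dim\times\disc$ is injective into $\Z\times\oHil(\rF)$; the image constraint comes from the relations among these invariants for forms of small dimension (a form of dimension $1$ is determined by its discriminant and has trivial Hasse invariant, hence the condition $\delta\in(\rF^\times/\RN^\times)\times\{1\}$ when $m=1$, and the zero form forces $\delta=1$ when $m=0$).

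Next I would handle the Hermitian and skew-Hermitian cases over a quadratic extension $(\epsilon=\pm1,\ \rD=\rE$ a quadratic field extension of $\rF)$. Here Jacobowitz's classification (as presented in \cite[Chapter 10]{Sch}) says that such a space is determined by its dimension and its discriminant in $\rF^\times/\RN^\times$ (the image of the determinant of a Gram matrix), so $\dim\times\disc$ is injective into $\Z\times(\rF^\times/\RN^\times)$; the only image constraint is $\delta=1$ when $m=0$, since every nonzero value of the discriminant is realized in every positive dimension (one can always scale a one-dimensional space by a suitable element, using that the norm map $\rE^\times\to\rF^\times$ has image $\RN^\times$ of index $2$). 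For $\epsilon=1$ with $\rD=\rE$ quadratic, the group $\Delta=\rE_\pm^\times/\RN^\times$ appears because the diagonal entries $\la e_i,e_i\ra_V$ are automatically $\iota$-fixed (lie in $\rF^\times\subset\rE_+^\times$), but one must check the target group is as in Table \ref{tablecw0}; this is a routine verification that $\rF^\times/\RN^\times$ has order $2$ and sits inside $\rE_\pm^\times/\RN^\times$ appropriately. For the quaternionic cases $(\epsilon=\pm1,\ \rD$ a quaternion division algebra), I would again invoke \cite[Chapter 10]{Sch}: a quaternionic skew-Hermitian space $(\epsilon=-1)$ is determined by its dimension alone (the reduced-norm discriminant is always trivial, $\Delta=\{1\}$), while a quaternionic Hermitian space $(\epsilon=1)$ is determined by its dimension together with the product of reduced norms of the diagonal entries in $\rF^\times/\RN^\times$, subject to the constraint — coming from the nonexistence of a one-dimensional anisotropic space with "wrong" discriminant, i.e.\ $\mathrm d_{\rD,\epsilon}=1$ forces the anti-split one-dimensional space to be unique — that $\delta\neq-1$ when $m=1$.

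In each case the passage from "isometry classes of spaces are classified by (dimension, discriminant)" to the stated theorem is formal: the Witt-Grothendieck group $\widehat\CW_0$ is generated as a group by the monoid $\widehat\CW_0^+$ (which injects into it by Lemma \ref{canc0}), the homomorphisms $\dim$ and $\disc$ on the monoid extend uniquely to the group by the universal property, injectivity on the monoid plus the group structure forces injectivity on $\widehat\CW_0$, and the image is the subgroup of $\Z\times\Delta$ generated by the image of the monoid, which one reads off from Table \ref{tablecw0}. The description of which $(m,\delta)$ lie in $\widehat\CW_0^+$ then amounts to: $m\geq0$ obviously; and for the borderline small dimensions $m=0,1$ the realizability constraints recorded above. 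The main obstacle — really the only nontrivial input — is the classification of anisotropic $-\epsilon$-Hermitian forms over local fields, i.e.\ Proposition \ref{danis} together with the fact that the discriminant separates the finitely many anisotropic spaces; but this is entirely classical and available in \cite[Chapters 5 and 10]{Sch}, so the proof reduces to carefully matching each of the six cases against the corresponding entry of Table \ref{tablecw0} and checking the $m=0,1$ boundary conditions.
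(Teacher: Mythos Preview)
Your overall strategy is exactly what the paper does: it states the theorem with a bare citation to \cite[Chapters 5 and 10]{Sch} and gives no argument of its own, so a case-by-case reduction to the classical classification of $-\epsilon$-Hermitian forms over local fields is precisely the intended proof. However, your execution contains a systematic bookkeeping error that would need to be fixed before the sketch is correct.

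Throughout, you conflate $\epsilon$ (the sign for $U$) with the sign of the form on $V$, which is $-\epsilon$. Concretely: for $(\rD,\epsilon)=(\rF,1)$ the space $V$ is \emph{symplectic}, not orthogonal, and the image of $\dim$ is $2\Z$, not $\Z$ (as Table \ref{tablecw0} records). For $\epsilon=1$ with $\rD$ a quadratic extension, $V$ is skew-Hermitian, so the diagonal entries $\la e_i,e_i\ra_V$ satisfy $a^\iota=-a$ and lie in $\rE_-^\times$, not in $\rF^\times$ as you write; this is why $\Delta=\rE_\pm^\times/\RN^\times$ and why the fibre product with $\Z/2\Z$ appears in Table \ref{tablecw0} (odd-dimensional spaces have discriminant in the $\rE_-^\times$-part, even-dimensional in the $\rF^\times$-part). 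For the quaternionic cases you have the two rows swapped: when $\epsilon=1$, $V$ is quaternionic \emph{skew}-Hermitian with $\mathrm d_{\rD,\epsilon}=3$ (not $1$), and the constraint $\delta\neq -1$ for $m=1$ comes from the fact that the reduced norm on trace-zero quaternions (a $3$-dimensional anisotropic quadratic form) misses exactly one square class, namely $-1\cdot(\rF^\times)^2$; when $\epsilon=-1$, $V$ is quaternionic Hermitian and is determined by dimension alone. Once you straighten out the $\epsilon\leftrightarrow -\epsilon$ bookkeeping in each of the six cases, your argument goes through and matches the paper's (implicit) proof.
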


\begin{table}[h]
\caption{The Witt-Grothendieck group $\widehat \CW_0$}\label{tablecw0}
\centering 
\begin{tabular}{c c c c c c c} 
\hline
$\rD$ & \vline & $\rF$ & quadratic extension & quaternion algebra\\
\hline
 $\epsilon=1$ & \vline & $2\Z$ & $\Z\times_{\Z/2\Z} \frac{\rE_{\pm}^\times}{\mathrm N^\times}$  & $\phantom{\frac{\overbrace a}{\underbrace a}}\Z\times \frac{\rF^\times}{\RN^\times}\phantom{\frac{\overbrace a}{a}}$\\ 
\hline
$\epsilon=-1$ & \vline &$\Z\times \oHil(\rF)$ & $\Z\times \frac{\rF^\times}{\mathrm N^\times}$  & $\phantom{\frac{\overbrace a}{\underbrace a}}\Z \phantom{\frac{\overbrace a}{a}}$  \\
\hline  
\end{tabular}
\label{table:nonlin} 
\end{table}

The fiber product $\Z\times_{\Z/2\Z} \frac{\rE_{\pm}^\times}{\mathrm N^\times}$ of Table \ref{tablecw0} is defined with respect to the quotient homomorphism $\Z\rightarrow \Z/2\Z$, and the homomorphism $\frac{\rE_{\pm}^\times}{\mathrm N^\times}\rightarrow \Z/2\Z$ with kernel $\frac{\rF^\times}{\RN^\times}$.

\subsection{Commutator quotient groups of classical groups}
\label{commutator}

Let $U$ be an $\epsilon$-Hermitian right $\rD$-vector space as before. Then the commutator group $[\oG(U),\oG(U)]$ is closed in $\oG(U)$ (\cf \cite{Ri}). Put
\[
 \RA(U):=\oG(U)/[\oG(U),\oG(U)],
\]
which is an abelian topological group. Given a $\rD$-linear isometric embedding
\[
 \varphi: U\rightarrow U'
\]
of $\epsilon$-Hermitian right $\rD$-vector spaces, we have a continuous group homomorphism
\be \label{ogvarphi}
  \oG(\varphi): \oG(U)\rightarrow \oG(U')
\ee
such that for all $g\in \oG(G)$,
\[
   (\oG(\varphi)(g))(\varphi(u)+u')=\varphi(g(u))+u',
\]
for all $u\in U$, and all $u'\in U'$ which is perpendicular to $\varphi(U)$. The homomorphism \eqref{ogvarphi} descends to a continuous group homomorphism
\be \label{oavarphi}
  \RA(\varphi): \RA(U)\rightarrow \RA(U').
\ee
The assignments
\[
  U\mapsto \RA(U),\qquad  \varphi\mapsto \RA(\varphi)
\]
is a functor from the category of $\epsilon$-Hermitian right $\rD$-vector spaces (the morphisms in this category are $\rD$-linear isometric embeddings) to the category of abelian topological groups. Write $\RA_\infty$ for the direct limit of this functor:
\[
  \RA_\infty:=\varinjlim_U \RA(U).
\]
Recall that by definition of the direct limit, $\RA_\infty$  is an abelian  topological group together with a continuous homomorphism $\RA(U)\rightarrow \RA_\infty$ for each  $\epsilon$-Hermitian right $\rD$-vector space $U$, satisfying certain universal properties.

We summarize the well-known results on commutator quotient groups of classical groups as follows:

\begin{thml}\label{cq} (\cf \cite[Section 2]{Wall})
The topological group  $\RA_\infty$ is canonically isomorphic to the topological group of Table \ref{tnu}.
Furthermore the natural homomorphism $\RA(U)\rightarrow \RA_\infty$ is a topological isomorphism in the following cases:
\begin{itemize}
  \item[(i)] $U$ is a symplectic space or a quaternionic Hermitian space;
  \item[(ii)] $U$ is a non-zero Hermitian or skew-Hermitian space;
  \item[(iii)] $U$ is a non-anisotropic symmetric bilinear space or a non-anisotropic quaternionic skew-Hermitian space.
\end{itemize}
\end{thml}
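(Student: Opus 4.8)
The statement is a repackaging of Wall's structure theory for commutator subgroups of isometry groups of $\epsilon$-Hermitian forms \cite{Wall}. The plan is to attach to each $\oG(U)$ an explicit abelian-group-valued invariant whose kernel is, for all but the smallest spaces, the commutator subgroup; to check that these invariants are compatible with the embeddings $\oG(\varphi)$; and then to read off the direct limit.

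I would first dispose of the cases in which $\oG(U)$ is essentially perfect. When $U$ is symplectic or quaternionic Hermitian, $\RA(U)$ is trivial as soon as $\dim U$ is positive and $U$ is isotropic (the group then being the $\rF$-points of a simply connected isotropic group), the remaining small or anisotropic instances being treated directly in \cite{Wall}; so $\RA_\infty$ in these cases is the group of Table~\ref{tnu} and the natural map is an isomorphism. In the remaining cases I would introduce a continuous homomorphism $\nu_U\colon\oG(U)\to\Lambda_U$ into an abelian group: the determinant, taking values in the norm-one subgroup of the center of $\rD$ when $\rD$ is a field and in $\rF^\times/(\rF^\times)^2$ via the Dieudonn\'e reduced norm when $\rD$ is the quaternion algebra, enriched by the spinor norm when $U$ is a symmetric bilinear space. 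Multiplicativity of $\nu_U$ gives $[\oG(U),\oG(U)]\subseteq\ker\nu_U$ at once; the reverse inclusion is the heart of \cite[Section~2]{Wall}, obtained from the generation of $\oG(U)$ by (quasi-)reflections and transvections together with the fact that each such generator is conjugate in $\oG(U)$ to its inverse, or that two such generators with the same $\nu_U$-value are conjugate, so that $\ker\nu_U$ is generated by commutators — provided $\dim U$ is not too small and, in the symmetric bilinear and quaternionic skew-Hermitian cases, $U$ is not anisotropic. Recall that $[\oG(U),\oG(U)]$ is closed by \cite{Ri}, so that $\RA(U)$ is a locally compact Hausdorff group and all the maps in sight are continuous.

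For an isometric embedding $\varphi\colon U\to U'$, the defining formula for $\oG(\varphi)$ — extension by the identity on $\varphi(U)^{\perp}$ — shows at once that $\nu_{U'}\circ\oG(\varphi)=\nu_U$, since the determinant and the spinor norm are unchanged by a block-identity enlargement. Hence the $\nu_U$ assemble into a homomorphism out of $\RA_\infty$, and since the non-anisotropic spaces of large dimension are cofinal in the direct system (on which, by the previous step, the transition maps are eventually the identity), this homomorphism identifies $\RA_\infty$ with the stable value of $\Lambda_U$, i.e.\ with the group of Table~\ref{tnu}. Surjectivity of each individual $\nu_U$ onto the stable group is checked by exhibiting, for each target value, an isometry supported on a one- or two-dimensional subspace; combined with the computation of $\ker\nu_U$ this shows that, for the spaces listed in (ii) and (iii) (where an isotropic vector is available, respectively not needed), $\RA(U)\to\RA_\infty$ is a continuous bijective homomorphism of locally compact $\sigma$-compact groups, hence a topological isomorphism by the open mapping theorem.

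The main obstacle is the precise determination $\ker\nu_U=[\oG(U),\oG(U)]$ with all of its low-dimensional and anisotropic exceptions correctly identified — exactly the delicate part of \cite{Wall}. The hardest instances are the quaternionic ones and the anisotropic Hermitian forms, where computing $[\oG(U),\oG(U)]$ forces one to understand the abelianization of the reduced-norm-one group $\SL_1(\rD)$ of the division algebra and its interaction with the involution $\iota$, and where, in the cases carrying a non-anisotropy hypothesis, the small spaces genuinely fail to realize the stable answer and must be set aside.
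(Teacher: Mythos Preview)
The paper does not give its own proof of this theorem: it is stated as a summary of known results with a citation to \cite[Section 2]{Wall}, and immediately afterwards the paper simply \emph{describes} the resulting homomorphism $\mu_U\colon \oG(U)\to \RA_\infty$ case by case (trivial in the symplectic and quaternionic Hermitian cases; the determinant composed with the inverse of Hilbert 90 in the unitary cases; the spinor-norm-and-sign map on reflections in the orthogonal case; the quasi-symmetry invariant in the quaternionic skew-Hermitian case). So there is nothing to compare at the level of argument --- your sketch is essentially an outline of Wall's proof, which is exactly what the paper is invoking.

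Your outline is sound and matches the paper's explicit description of $\mu_U$. One small correction: in the quaternionic skew-Hermitian case the target is $\rF^\times/\RN^\times$ (the quotient by the image of the reduced norm of $\rD^\times$), not $\rF^\times/(\rF^\times)^2$; over a non-archimedean local field these have different orders. Otherwise your identification of the invariants, the compatibility under $\oG(\varphi)$, and the cofinality argument for the direct limit are all in line with what the paper is citing.
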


\begin{table}[h]
\caption{The commutator quotient group $\RA_\infty$ }\label{tnu}
\centering 
\begin{tabular}{c c c c c c c} 
\hline
$\rD$ & \vline & $\rF$ & quadratic extension & quaternion algebra\\
\hline
 $\epsilon=1$ & \vline & $\frac{\rF^\times}{\RN^\times}\times \{\pm 1\}$ & $\frac{\RE^\times}{\RF^\times}$  & $\phantom{\frac{\overbrace a}{\underbrace a}}\{1\}\phantom{\frac{\overbrace a}{a}}$\\ 
\hline
$\epsilon=-1$ & \vline &$\{1\}$ & $\frac{\RE^\times}{\RF^\times}$  & $\phantom{\frac{\overbrace a}{\underbrace a}}\frac{\rF^\times}{\RN^\times} \phantom{\frac{\overbrace a}{a}}$  \\
\hline  
\end{tabular}
\label{table:nonlin} 
\end{table}

Identify $\RA_\infty$ with the group of Table \ref{tnu}. Write
\be
\label{Ainfty}
  \mu_U: \oG(U)\rightarrow \RA_\infty
\ee
for the homomorphism which descends to the natural  homomorphism $\RA(U)\rightarrow \RA_\infty$. We describe the homomorphism  $\mu_U$ case by case in what follows.
\vsp

\noindent {\bf Case 1}: $U$ is a quaternionic Hermitian space or a symplectic space. In this case, $\mathrm A_{\infty}$ is trivial and hence $\mu_U$ is also trivial.

\vsp

\noindent {\bf Case 2}: $U$ is a Hermitian space or a skew-Hermitian space.
Hilbert's Theorem 90 implies that the map
\be \label{hilbert}
  \rE^\times/\rF^\times \rightarrow \oU(\RE),\quad  x \rF^\times \mapsto \frac{x}{x^\iota}
\ee
is a topological isomorphism, where $\oU(\RE):=\{a\in \RE^\times \mid a a^\iota=1\}$. The homomorphism $\mu_U$ is the composition of
\[
  \oG(U)\xrightarrow{\det}\oU(\RE)\xrightarrow{\textrm{the inverse of \eqref{hilbert}}} \rE^\times/\rF^\times.
\]

\vsp
\noindent {\bf Case 3}: $U$ is a symmetric bilinear space.  In this case, $\oG(U)$ is generated by reflections \cite[Proposition 8]{Di1}, namely elements of the form  $s_v$ such that
\[
  \left\{
    \begin{array}{l}
      s_v(v)=-v, \textrm{ and} \\
     s_v(u)=u\textrm{ for all element $u\in U$ which is perperdicular to $v$},
    \end{array}
  \right.
 \]
where $v$ is a non-isotropic vector in $U$. The homomorphism $\mu_U$ is determined by
\[
  \mu_U(s_v)=\left(\la v,v\ra_U\, \RN^\times,-1\right)
\]
for all reflections $s_v\in \oG(U)$ (\cf \cite[Section 55]{OM}).

\vsp
\noindent {\bf Case 4}: $U$ is a quaternionic skew-Hermitian space. In this case,  $\oG(U)$ is generated by quasi-symmetries \cite[Theorem 2]{Di2}, namely elements of the form $s_{v,a}$ such that
\[
  \left\{
    \begin{array}{l}
      s_{v,a}(v)=va, \textrm{ and} \\
     s_{v,a}(u)=u\textrm{ for all element $u\in U$ which is perperdicular to $v$},
    \end{array}
  \right.
 \]
where $v$ is a non-isotropic vector in $U$, and $a$ is an element of $\rD$ which commutes with $\la v,v\ra_U$ and satisfies that $a a^\iota=1$. The homomorphism $\mu_U$ is determined by
\[
  \mu_U(s_{v,a})=\left\{
                   \begin{array}{ll}
                     (1+a)(1+a^\iota)\RN^\times, & \hbox{if $a\neq -1$;} \\
                     \la v,v\ra_U\,\la v,v\ra_U^\iota\, \RN^\times, & \hbox{if $a=-1$}
                   \end{array}
                 \right.
\]
for all quasi-symmetries $s_{v,a}\in \oG(U)$ (\cf \cite[Corollary
1.5 and Proposition 1.6]{Ku2} and \cite[Proposition 6.5]{Ya}).

\subsection{The generalized Witt-Grothendieck group}
\label{semigp}

Let $U$ be an $\epsilon$-Hermitian right $\mathrm D$-vector space,
and let $V$ be a $-\epsilon$-Hermitian left $\rD$-vector space.

\begin{lem}\label{extension00}
There exists a smooth oscillator representation of $\oJb(U,V)$. Every smooth oscillator representation $\omega_{U,V}$ of $\oJb(U,V)$ is unitarizable, and has the following property: when $U$ is a non-zero symplectic space, $\varepsilon_U$ acts through the scalar multiplication by $(-1)^{\dim V}$ in $\omega_{U,V}$; and likewise for $V$ when $V$ is a non-zero symplectic space.
\end{lem}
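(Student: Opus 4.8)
The plan is to build the smooth oscillator representation of $\oJb(U,V)$ explicitly from the Schr\"odinger model attached to a complete polarization of the symplectic space $W:=U\otimes_\rD V$, and to verify the genuineness property by a direct computation of the relevant Weil cocycle on the diagonally embedded copies of $\oGb(U)$ and $\oGb(V)$. First I would fix a complete polarization $W=X\oplus Y$ and realize the (unique, by the Stone--von Neumann Theorem \ref{stone}) irreducible smooth representation of $\oH(W)$ with central character $\psi$ on $\CS(X)$ via \eqref{acth}. By the Stone--von Neumann uniqueness, the group $\Sp(W)$ acts projectively on $\CS(X)$ normalizing this Heisenberg action; passing to the metaplectic group $\cover{\Sp}(W)$ resolves the projectivity, giving a genuine smooth representation of $\oH(W)\rtimes \cover{\Sp}(W)$ — this is the classical Weil representation, whose existence and unitarizability (with respect to the $L^2(X)$ inner product, which is preserved) are standard (\cf \cite[2.II]{MVW}). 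The action homomorphism $\oG(U)\times \oG(V)\to \Sp(W)$ of \eqref{actgh} then pulls back the metaplectic cover $\cover{\Sp}(W)\to \Sp(W)$ to a two-fold cover of $\oG(U)\times \oG(V)$, and I would identify this pullback with a quotient of $\oGb(U)\times \oGb(V)$, thereby producing the desired oscillator representation $\omega_{U,V}$ of $\oJb(U,V)$ and simultaneously establishing unitarizability.

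The substantive point is the genuineness assertion, i.e. the identification of the pullback cover. Suppose $U$ is a non-zero symplectic space, so $\oGb(U)=\cover{\Sp}(U)$. The composite $\oGb(U)=\cover{\Sp}(U)\to\Sp(U)\hookrightarrow\Sp(W)$ (via $g\mapsto g\otimes 1_V$) lands in $\Sp(W)$, and I must show that the pullback of the metaplectic cocycle along $g\mapsto g\otimes 1_V$ is cohomologous to the $(\dim V)$-th power of the metaplectic cocycle on $\cover{\Sp}(U)$; equivalently, that $\varepsilon_U$ (the nontrivial element of $\Ker(\oGb(U)\to\oG(U))$) acts on $\omega_{U,V}$ by the scalar $(-1)^{\dim V}$. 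The clean way to see this: choosing a compatible polarization, $W=(U\otimes_\rD X_V)\oplus(U\otimes_\rD Y_V)$ if $V=X_V\oplus Y_V$ is a complete polarization of $V$ — but $V$ need not be split, so instead I would argue that $\Sp(U)\hookrightarrow\Sp(W)=\Sp(U\otimes_\rD V)$ is, up to conjugacy and up to the metaplectic cover, "$\dim_\rF$-many diagonal copies" scaled appropriately; concretely the restriction of the Weil representation of $\cover{\Sp}(W)$ to the image of $\cover{\Sp}(U)$ is the $(\dim V)$-fold tensor (outer) product of copies of the Weil representation of $\cover{\Sp}(U)$ (one for each vector in an orthogonal-type decomposition of $V$, with the anisotropic kernel contributing the leftover), on which $\varepsilon_U$ acts by $(-1)^{\dim V}$. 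The symmetric statement for $V$ is identical. I would also record that when $U$ is \emph{not} a symplectic space, $\oGb(U)=\oG(U)$ maps to $\Sp(W)$ through a subgroup on which the metaplectic cover splits (this is the standard fact that the restriction of the metaplectic cover to $\oG(U)$ splits canonically for $U$ orthogonal/unitary/quaternionic of the relevant type — \cf \cite[2.II.5, 3.III]{MVW}), so no genuineness condition is imposed there; the two halves combine to give a well-defined action of $\oGb(U)\times\oGb(V)$ factoring through the stated central subgroup, hence of $\oJb(U,V)$.

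I expect the main obstacle to be the cocycle computation identifying the pullback of the metaplectic extension under $g\mapsto g\otimes 1_V$: one must be careful that $V$ carries a $(-\epsilon)$-Hermitian form over a possibly non-commutative $\rD$ with involution, so "$\dim V$" means the $\rD$-dimension while the symplectic form \eqref{tensor-form} on $W$ lives over $\rF$, and the splitting/non-splitting of the metaplectic cover over $\oG(U)$ (when $U$ is not symplectic) and the precise parity $(-1)^{\dim V}$ both hinge on lattice-model or Weil-index computations that are genuinely case-dependent. The cleanest route is to reduce, via Witt's theorem and additivity of the Weil representation under orthogonal direct sums of $V$, to the two extreme cases $V=\H$ (hyperbolic plane, where the Schr\"odinger model of \eqref{acth} makes the $\oGb(U)$-action and the scalar $(-1)^2=1$ transparent, matching the description of $\H_U$ in the Introduction) and $V$ one-dimensional anisotropic (where a direct Weil-index computation gives the sign); the general parity $(-1)^{\dim V}$ then follows by multiplicativity. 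Everything else — unitarizability, smoothness, the reduction to polarized models — is routine given \ref{stone} and Lemma \ref{uniqueo}.
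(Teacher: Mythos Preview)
Your approach is broadly correct and aligns with the paper's, in that both construct the oscillator representation by pulling the Weil representation of $\cover{\Sp}(W)$ back along $\oGb(U)\times\oGb(V)\to\cover{\Sp}(W)$. The paper is much terser: for existence and genuineness it simply cites Kudla's splitting results \cite[Proposition 4.1]{Ku2} together with the fact that lifts to $\cover{\Sp}(W)$ of commuting elements of $\Sp(W)$ again commute \cite[Chapter 2, Lemma II.5]{MVW}, whereas you carry out the cocycle identification by hand via the additive reduction to $V=\H$ and $V$ one-dimensional anisotropic. Your reduction is valid (every quadratic space over $\rF$ diagonalizes, so the anisotropic kernel is an orthogonal sum of lines, and multiplicativity of the sign follows), and it has the advantage of being self-contained rather than deferring to the literature.

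There is one genuine gap in your treatment of unitarizability. You establish that the particular Weil representation on $\CS(X)\subset L^2(X)$ is unitary, but the lemma asserts that \emph{every} smooth oscillator representation of $\oJb(U,V)$ is unitarizable. Invoking Lemma \ref{uniqueo} tells you that any other oscillator representation differs from yours by a character of $\oGb(U)\times\oGb(V)$, but a twist of a unitary representation by a character is unitarizable only if that character is itself unitary. The paper closes this gap by appealing to Theorem \ref{cq}: the commutator quotients $\RA(U)$, $\RA(V)$ are compact (indeed finite in the non-archimedean case), so every character on $\oG(U)\times\oG(V)$ is automatically unitary. You should add this observation; without it the unitarizability claim for \emph{all} oscillator representations is not justified.
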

\begin{proof}
Besides splitting of metaplectic covers \cite[Proposition 4.1]{Ku2},
the first assertion and the second part of the last assertion are
due to the fact that any two elements in a metaplectic group commute
with each other if their projections to the symplectic group commute
with each other \cite[Chapter 2, Lemma II.5]{MVW}. The
unitarizability of $\omega_{U,V}$ is due to the fact that all
characters on $\oG(U)$ and $\oG(V)$ are unitary (see Theorem
\ref{cq}).
\end{proof}

\begin{lem}\label{extension0}
Every smooth oscillator representation of $\oJb_U(V)$
extends to a smooth oscillator representation of $\oJb(U,V)$.
\end{lem}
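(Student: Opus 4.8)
The plan is to bootstrap from Lemma \ref{extension00}, which already furnishes a smooth oscillator representation $\omega_{U,V}$ of the full modified Jacobi group $\oJb(U,V)$, together with the uniqueness statement of Lemma \ref{uniqueo}. First I would observe that the restriction $\omega_{U,V}|_{\oJb_U(V)}$ is itself a smooth oscillator representation of $\oJb_U(V)$: the subgroup $\oJb_U(V)=\oGb(U)\ltimes \oH(U\otimes_\rD V)$ sits inside $\oJb(U,V)$ via $g\mapsto (g,1)$ on the first factor, compatibly with the action of $\oH(U\otimes_\rD V)$, and restricting $\omega_{U,V}$ further to $\oH(U\otimes_\rD V)$ is unchanged, hence still irreducible with central character $\psi$. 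Thus Definition \ref{defso} applies on both sides with the same symplectic space $U\otimes_\rD V$.

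Next, given an arbitrary smooth oscillator representation $\omega$ of $\oJb_U(V)$, I would apply Lemma \ref{uniqueo} with $J=\oJb_U(V)$: there is a unique character $\chi$ of $\oJb_U(V)$, trivial on $\oH(U\otimes_\rD V)$, with $\omega\cong \chi\otimes(\omega_{U,V}|_{\oJb_U(V)})$. Since $\oJb_U(V)/\oH(U\otimes_\rD V)\cong \oGb(U)$, the character $\chi$ is pulled back from a character $\chi_0$ of $\oGb(U)$.

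Then I would extend $\chi_0$ to a character of $\oGb(U)\times \oGb(V)$ by letting it act trivially on the $\oGb(V)$-factor, and pull this back along the projection $\oJb(U,V)\to \oGb(U)\times \oGb(V)$ to obtain a character $\tilde\chi$ of $\oJb(U,V)$ that is trivial on $\oH(U\otimes_\rD V)$ and restricts to $\chi$ on $\oJb_U(V)$. Finally I set $\tilde\omega:=\tilde\chi\otimes \omega_{U,V}$. Because $\tilde\chi$ is trivial on $\oH(U\otimes_\rD V)$, the restriction $\tilde\omega|_{\oH(U\otimes_\rD V)}$ coincides with that of $\omega_{U,V}$, so $\tilde\omega$ is a smooth oscillator representation of $\oJb(U,V)$; and $\tilde\omega|_{\oJb_U(V)}=\chi\otimes(\omega_{U,V}|_{\oJb_U(V)})\cong \omega$, as desired.

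The argument has no serious obstacle — it is entirely formal once Lemmas \ref{extension00} and \ref{uniqueo} are in hand. The only points needing a line of care are verifying that $\oJb_U(V)$ is realized as a closed subgroup of $\oJb(U,V)$ compatibly with the $\oH(U\otimes_\rD V)$-actions, and that a character of $\oGb(U)$ genuinely extends to the direct product $\oGb(U)\times \oGb(V)$, which is immediate since characters of a direct product of topological groups factor as exterior products of characters of the factors.
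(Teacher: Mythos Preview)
Your proposal is correct and follows essentially the same approach as the paper: start from a smooth oscillator representation of $\oJb(U,V)$ supplied by Lemma~\ref{extension00}, use Lemma~\ref{uniqueo} to write the given $\omega$ as a character twist of its restriction, and then extend that character trivially across the $\oGb(V)$-factor. The paper's proof is simply a terser version of exactly this argument.
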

\begin{proof}
Let $\omega$ be a smooth oscillator representation of $\oJb(U,V)$. By Lemma \ref{uniqueo}, every smooth oscillator representation of $\oJb_U(V)$ is of the form $\chi\otimes \omega|_{\oJb_U(V)}$ for some character $\chi$ of $\oJb_U(V)$ which is trivial on $\oH(U\otimes_\rD V)$. The lemma then follows as $\chi$ extends to a character of $\oJb(U,V)$.
\end{proof}

\begin{lem}\label{eq0}
Two genuine smooth oscillator representations $\omega_1$ and $\omega_2$ of $\oJb_U(V)$
are isomorphic if and only if $(V, \omega_1)$ and $(V,
\omega_2)$ are isomorphic as enhanced oscillator
representations  of $\oGb(U)$.
\end{lem}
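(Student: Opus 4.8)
The plan is to observe that one implication is free and the other reduces to a single intertwining statement. If $\omega_1\cong\omega_2$ as representations of $\oJb_U(V)$, then the isometry $\mathrm{id}_V$ (which induces the identity automorphism of $\oJb_U(V)$) exhibits $(V,\omega_1)$ and $(V,\omega_2)$ as isomorphic enhanced oscillator representations. Conversely, if there is an isometry $\alpha\in\oG(V)$ with $\omega_1\cong\omega_2\circ\wt\alpha$, where $\wt\alpha$ is the automorphism of $\oJb_U(V)$ induced by $\alpha$, it suffices to prove that
\[
  \omega_2\circ\wt\alpha\cong\omega_2
\]
for every genuine smooth oscillator representation $\omega_2$ of $\oJb_U(V)$ and every $\alpha\in\oG(V)$; then $\omega_1\cong\omega_2\circ\wt\alpha\cong\omega_2$.

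The idea is to realize $\wt\alpha$ as an inner automorphism after enlarging the group. First I would spell out $\wt\alpha$ explicitly: writing $W=U\otimes_\rD V$, the isometry $\alpha$ acts on $W$ through $1\otimes\alpha\in\Sp(W)$, and $\wt\alpha$ is the automorphism of $\oJb_U(V)=\oGb(U)\ltimes\oH(W)$ which fixes $\oGb(U)$ pointwise and sends $(w,t)\in\oH(W)$ to $((1\otimes\alpha)w,t)$. Next, by Lemma \ref{extension0}, extend $\omega_2$ to a smooth representation $\wt\omega_2$ of the full Jacobi group $\oJb(U,V)=(\oGb(U)\times\oGb(V))\ltimes\oH(W)$, and choose a lift $\bar\alpha\in\oGb(V)$ of $\alpha$ (the covering map $\oGb(V)\to\oG(V)$ is surjective). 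Using that the subgroups $\oGb(U)$ and $\oGb(V)$ of $\oJb(U,V)$ commute (any two elements of a metaplectic group commute when their images in the symplectic group do; cf. the proof of Lemma \ref{extension00}) together with the definition \eqref{actgh} of the action on $\oH(W)$, conjugation by $\bar\alpha$ in $\oJb(U,V)$ preserves $\oJb_U(V)$ and restricts there to exactly $\wt\alpha$. Hence $\omega_2\circ\wt\alpha=\wt\omega_2|_{\oJb_U(V)}\circ(\textrm{conjugation by }\bar\alpha)$ is intertwined with $\omega_2$ by the single operator $\wt\omega_2(\bar\alpha)$, which gives $\omega_2\circ\wt\alpha\cong\omega_2$ and completes the proof.

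The only steps needing care are purely bookkeeping: matching the Heisenberg-factor part of $\wt\alpha$ with conjugation by $\bar\alpha$ (this is where \eqref{actgh} enters) and invoking commutativity of $\oGb(U)$ and $\oGb(V)$ in the metaplectic setting; no genuineness subtlety arises, since an isomorphism of enhanced oscillator representations fixes $V$ hence $\dim V$, and $\wt\alpha$ fixes $\varepsilon_U$ when $U$ is symplectic. An alternative route would be to deduce from Theorem \ref{stone} (applied to the symplectic automorphism $1\otimes\alpha$ of $W$) and Lemma \ref{uniqueo} that $\omega_2\circ\wt\alpha\cong\chi\otimes\omega_2$ for a unique character $\chi$ of $\oGb(U)$ trivial on $\oH(W)$, and then to show $\chi$ is trivial; but establishing triviality of $\chi$ seems to require the same $\oGb(V)$-action input, so the direct argument above is the cleaner one and is the approach I would take.
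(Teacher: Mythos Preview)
Your proposal is correct and follows essentially the same approach as the paper: both extend $\omega_2$ to $\oJb(U,V)$ via Lemma~\ref{extension0}, lift the isometry $\alpha\in\oG(V)$ to $\bar\alpha\in\oGb(V)$, and observe that conjugation by $\bar\alpha$ realizes the induced automorphism $\wt\alpha$ on $\oJb_U(V)$, so that $\wt\omega_2(\bar\alpha)$ provides the required intertwiner. The only cosmetic difference is that you factor the argument as $\omega_1\cong\omega_2\circ\wt\alpha\cong\omega_2$, while the paper writes the single intertwiner $\bar{g}^{-1}\circ\varphi$ directly.
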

\begin{proof} The ``only if" part is trivial. To prove the ``if" part, assume that $(V, \omega_1)$ and $(V,
\omega_2)$ are isomorphic as enhanced oscillator
representations. This amounts to saying that there is an element
$g\in \oG(V)$ and a linear
isomorphism $\varphi: \omega_1\rightarrow \omega_2$ such that the diagram
\[
  \begin{CD}
            \omega_1 @>\varphi >> \omega_2\\
            @V h VV           @VV {g_U(h)}V\\
          \omega_1 @>\varphi >> \omega_2\\
  \end{CD}
\]
commutes for every $h\in \oJb_U(V)$, where $g_U:
\oJb_U(V)\rightarrow \oJb_U(V)$ is the automorphism induced by
$g:V\rightarrow V$.

Using Lemma \ref{extension0}, we extend $\omega_2$ to a representation of $\oJb(U,V)$,
which we still denote by $\omega_2$. Let $\bar{g}$ be an element of $\oGb(V)$ which lifts $g$. Then
$g_U(h)=\bar{g}h{\bar{g}}^{-1}$ for every $h\in \oJb_U(V)$. Therefore the
diagram
\[
  \begin{CD}
            \omega_1 @>{\bar{g}}^{-1}\circ \varphi >> \omega_2\\
            @V h VV           @VV h V\\
          \omega_1 @>{\bar{g}}^{-1}\circ \varphi >> \omega_2\\
  \end{CD}
\]
commutes for every $h\in \oJb_U(V)$, and consequently, $\omega_1$ and
$\omega_2$ are isomorphic as representations of
$\oJb_U(V)$.
\end{proof}

Recall the monoid $\widehat \CW^+_U$ from the Introduction. Note that $\oJb_U(0)=\oGb(U)\times \rF$. The map
\be\label{incch}
   (\oG(U))^*\rightarrow \widehat \CW^+_U,\qquad \chi\mapsto (0, \bar{\chi}\otimes \psi)
\ee
is clearly an injective monoid homomorphism, where $\bar{\chi}\in (\oGb(U))^*$ denotes the pull-back of $\chi$ through the covering homomorphism $\oGb(U)\rightarrow \oG(U)$.
Using this homomorphism, we view $(\oG(U))^*$ as a submonoid of $\widehat \CW^+_U$. Define a monoid homomorphism
\be \label{puplus}
   \widehat q_U^+: \widehat \CW^+_U\rightarrow \widehat \CW_0^+,\qquad (V,\omega)\mapsto V.
\ee
It is surjective by Lemma \ref{extension00}.

\begin{lem}\label{actst}
For each $V\in \widehat \CW_0^+$, the fiber $(\widehat q_U^+)^{-1}(V)$ is a principal homogeneous space of $(\oG(U))^*$ under the additive structure on $\widehat \CW^+_U$, that is, the map
\be \label{actp}
  +:  (\oG(U))^*\times  (\widehat q_U^+)^{-1}(V)\rightarrow (\widehat q_U^+)^{-1}(V)
\ee
is a well-defined  simply transitive action of $(\oG(U))^*$.
\end{lem}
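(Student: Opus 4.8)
The plan is to derive the entire statement from a single computational identity in the monoid $\widehat{\CW}^+_U$: for every $\chi\in(\oG(U))^*$ and every $(V,\omega)\in\widehat{\CW}^+_U$, the enhanced oscillator representation $\chi+(V,\omega)$ equals $(V,\bar\chi\otimes\omega)$, where $\bar\chi\otimes\omega$ denotes the representation of $\oJb_U(V)$ that coincides with $\omega$ on $\oH(U\otimes_\rD V)$ and is twisted by $\bar\chi$ on $\oGb(U)$. To establish this I would unwind the definitions of \eqref{incch} and of the monoid addition on $\widehat{\CW}^+_U$. The underlying space of $(0,\bar\chi\otimes\psi)+(V,\omega)$ is $0\oplus V$, canonically $V$; correspondingly $\oH(U\otimes_\rD(0\oplus V))=\oH(U\otimes_\rD V)$ since $U\otimes_\rD 0=0$; and in the tensor-product formula
\[
  (g,(w_1+w_2,t_1+t_2))\cdot(\phi_1\otimes\phi_2)=((g,(w_1,t_1))\cdot\phi_1)\otimes((g,(w_2,t_2))\cdot\phi_2),
\]
the splitting $t=t_1+t_2$ of the central parameter is immaterial, because in any smooth oscillator representation $(w,t)$ acts as $\psi(t)$ times the action of $(w,0)$; hence the factor of $\psi$ carried by the one-dimensional space attached to $(0,\bar\chi\otimes\psi)$ merges into the central character $\psi$ of $\oH(U\otimes_\rD V)$, leaving precisely $\bar\chi\otimes\omega$. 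This identity shows at once that \eqref{actp} is well defined and takes values in $(\widehat q_U^+)^{-1}(V)$: the restriction of $\bar\chi\otimes\omega$ to $\oH(U\otimes_\rD V)$ is $\omega|_{\oH(U\otimes_\rD V)}$, which is irreducible with central character $\psi$, and $\bar\chi\otimes\omega$ is genuine because $\bar\chi$ is trivial on $\varepsilon_U$ (as $\chi$ is inflated from $\oG(U)$). That \eqref{actp} is an action of the group $(\oG(U))^*$ then follows either by applying the identity twice, or, more cleanly, from the fact that \eqref{incch} is a monoid homomorphism together with associativity of $+$ in $\widehat{\CW}^+_U$.

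For transitivity, given $(V,\omega_1),(V,\omega_2)\in(\widehat q_U^+)^{-1}(V)$, I would apply Lemma \ref{uniqueo} with $J=\oJb_U(V)$ and $W=U\otimes_\rD V$ (noting that $\oJb_U(V)$ satisfies the hypotheses of Definition \ref{defso} in the non-archimedean setting): there is a unique character $\eta$ of $\oJb_U(V)$, trivial on $\oH(U\otimes_\rD V)$, with $\omega_2\cong\eta\otimes\omega_1$. Since $\oJb_U(V)=\oGb(U)\ltimes\oH(U\otimes_\rD V)$, the character $\eta$ is inflated from a character of $\oGb(U)$. When $U$ is not symplectic, $\oGb(U)=\oG(U)$, so $\eta$ is already inflated from $\oG(U)$; when $U$ is symplectic, the genuineness of $\omega_1$ and $\omega_2$ (both having $\varepsilon_U$ act by $(-1)^{\dim V}$) forces $\eta(\varepsilon_U)=1$, so again $\eta=\bar\chi$ for a unique $\chi\in(\oG(U))^*$. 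By the computational identity, $\chi+(V,\omega_1)=(V,\bar\chi\otimes\omega_1)=(V,\eta\otimes\omega_1)\cong(V,\omega_2)$, which gives transitivity.

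For freeness, suppose $\chi+(V,\omega)=(V,\omega)$ for some $\chi\in(\oG(U))^*$. By the identity this says $(V,\bar\chi\otimes\omega)$ and $(V,\omega)$ are isomorphic as enhanced oscillator representations of $\oGb(U)$; by Lemma \ref{eq0} this is equivalent to $\bar\chi\otimes\omega\cong\omega$ as representations of $\oJb_U(V)$. The uniqueness clause of Lemma \ref{uniqueo} then forces the inflated character $\bar\chi$ to be trivial, and since the inflation map $(\oG(U))^*\to(\oGb(U))^*$ is injective (the covering $\oGb(U)\to\oG(U)$ being surjective), we conclude $\chi=1$. Combined with transitivity, this proves that \eqref{actp} is simply transitive.

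The only genuine obstacle is the bookkeeping in the computational identity: one must carefully check that the extra copy of $\psi$ coming from $\oH(U\otimes_\rD 0)=\rF$ is absorbed into the central character of $\oH(U\otimes_\rD V)$ and produces no spurious twist, and, in the metaplectic cases, keep track of which characters of $\oGb(U)$ descend to $\oG(U)$. Everything downstream is a formal consequence of Lemmas \ref{uniqueo}, \ref{eq0} and \ref{extension00}.
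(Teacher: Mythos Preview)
Your proof is correct and follows essentially the same approach as the paper: well-definedness is unwound from the monoid structure (the paper just says ``clearly''), transitivity comes from Lemma \ref{uniqueo}, and freeness from Lemma \ref{eq0} together with the uniqueness clause of Lemma \ref{uniqueo}. You have simply spelled out in full the computational identity $\chi+(V,\omega)=(V,\bar\chi\otimes\omega)$ and the descent of the twisting character through $\oGb(U)\to\oG(U)$ that the paper leaves implicit.
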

\begin{proof}
The map \eqref{actp} is clearly a well-defined group action. Lemma \ref{uniqueo} implies that this action is transitive. Lemma \ref{uniqueo} and Lemma \ref{eq0} further imply that it is simply transitive.
\end{proof}

\begin{lem}\label{cancU}
The monoid $\widehat \CW^+_U$  is cancellative, namely, for all $\sigma_1, \sigma_2, \sigma_3\in \widehat \CW^+_U$, if $\sigma_1+\sigma_3=\sigma_2+\sigma_3$, then $\sigma_1=\sigma_2$.
\end{lem}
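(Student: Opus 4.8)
The plan is to bootstrap cancellativity in $\widehat \CW^+_U$ from the two structural facts already in hand: the base monoid $\widehat \CW^+_0$ is cancellative (Lemma \ref{canc0}), and each fibre of the monoid surjection $\widehat q_U^+\colon \widehat \CW^+_U\to \widehat \CW^+_0$ is a torsor under $(\oG(U))^*$ (Lemma \ref{actst}). So suppose $\sigma_1+\sigma_3=\sigma_2+\sigma_3$ in $\widehat \CW^+_U$, and set $V_i:=\widehat q_U^+(\sigma_i)$.

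First I would apply the homomorphism $\widehat q_U^+$ to the given equality to obtain $V_1+V_3=V_2+V_3$ in $\widehat \CW^+_0$; cancellativity of $\widehat \CW^+_0$ then yields $V_1=V_2=:V$. Hence $\sigma_1$ and $\sigma_2$ both lie in the single fibre $(\widehat q_U^+)^{-1}(V)$, and by Lemma \ref{actst} there is a unique character $\chi\in (\oG(U))^*$ with $\sigma_2=\chi+\sigma_1$, where $\chi$ is regarded as an element of $\widehat \CW^+_U$ via the embedding \eqref{incch}.

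Next I would transport this character past $\sigma_3$: using commutativity and associativity of the monoid $\widehat \CW^+_U$,
\[
   \chi+(\sigma_1+\sigma_3)=(\chi+\sigma_1)+\sigma_3=\sigma_2+\sigma_3=\sigma_1+\sigma_3 .
\]
Now $\sigma_1+\sigma_3$ lies in the fibre $(\widehat q_U^+)^{-1}(V+V_3)$, on which $(\oG(U))^*$ acts simply transitively by Lemma \ref{actst}; freeness of this action forces $\chi$ to be the trivial character, and therefore $\sigma_1=\sigma_2$, which is the asserted cancellation.

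The only step requiring any care — and the closest thing to an obstacle here — is the identity $\chi+(\sigma_1+\sigma_3)=(\chi+\sigma_1)+\sigma_3$, i.e.\ that translation by (the image under \eqref{incch} of) a character is compatible with addition of $\sigma_3$ in $\widehat \CW^+_U$; but this is immediate from the commutative monoid axioms. Everything else is a formal diagram chase with the torsor structure, so no further input is needed.
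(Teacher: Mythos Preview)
Your proof is correct and follows essentially the same approach as the paper's: apply $\widehat q_U^+$ and use Lemma~\ref{canc0} to land $\sigma_1,\sigma_2$ in the same fibre, write $\sigma_2=\chi+\sigma_1$ via Lemma~\ref{actst}, and then use the freeness of the $(\oG(U))^*$-action on the fibre containing $\sigma_1+\sigma_3$ to force $\chi$ trivial. The only difference is that you make the appeal to associativity and the specific fibre explicit, which the paper leaves implicit.
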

\begin{proof}
Applying the homomorphism $\widehat q_U^+$ to the equality $\sigma_1+\sigma_3=\sigma_2+\sigma_3$  in the lemma, Lemma \ref{canc0} implies that
\[
  \widehat q_U^+(\sigma_1)=\widehat q_U^+(\sigma_2).
\]
Then Lemma \ref{actst} implies that $\sigma_2=\chi+\sigma_1$ for some $\chi\in (\oG(U))^*$. Therefore
\[
  \chi+(\sigma_1+\sigma_3)=\sigma_2+\sigma_3=\sigma_1+\sigma_3,
\]
and Lemma \ref{actst} implies that $\chi$ is trivial. This proves that $\sigma_2=\sigma_1$.
\end{proof}

\begin{lem}\label{contra0}
Let $\sigma_1=(V_1,\omega_1)$, $\sigma_2=(V_2, \omega_2)\in \widehat \CW^+_U$. If $V_2$ is isometrically isomorphic to a subspace of $V_1$, then there exists a unique element $\sigma_3\in  \widehat \CW^+_U$ such that the equality
\[
  \sigma_2+\sigma_3=\sigma_1
\]
holds in $\widehat \CW^+_U$.
\end{lem}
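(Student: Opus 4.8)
Uniqueness is immediate: if $\sigma_2+\sigma_3=\sigma_1=\sigma_2+\sigma_3'$, then Lemma \ref{cancU} (cancellativity of $\widehat\CW^+_U$) forces $\sigma_3=\sigma_3'$. So the content is existence, and the plan is to produce $\sigma_3$ by first building any candidate with the correct underlying space and then correcting it by a character of $\oG(U)$ using the principal-homogeneous-space structure of the fibers of $\widehat q_U^+$.

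First I would fix an isometric embedding $\varphi\colon V_2\hookrightarrow V_1$ and set $V_3:=\varphi(V_2)^\perp$, the orthogonal complement of $\varphi(V_2)$ in $V_1$. Since $\varphi(V_2)$ is non-degenerate, one has the orthogonal decomposition $V_1=\varphi(V_2)\oplus V_3$, hence an isometric isomorphism $V_1\cong V_2\oplus V_3$ in $\widehat\CW_0^+$; in particular $\dim V_1=\dim V_2+\dim V_3$, so the parity of $\dim V_3$ is what the genuineness bookkeeping requires. By Lemma \ref{extension00} there is a genuine smooth oscillator representation $\omega_3'$ of $\oJb_U(V_3)$, giving an element $(V_3,\omega_3')\in\widehat\CW^+_U$. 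Then $\sigma_2+(V_3,\omega_3')=(V_2\oplus V_3,\,\omega_2\otimes\omega_3')$ lies in the monoid $\widehat\CW^+_U$ and, after transporting along the isometry $V_2\oplus V_3\cong V_1$, lies in the same fiber $(\widehat q_U^+)^{-1}(V_1)$ as $\sigma_1$.

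Now I invoke Lemma \ref{actst}: the fiber $(\widehat q_U^+)^{-1}(V_1)$ is a principal homogeneous space under $(\oG(U))^*$ (acting via the submonoid embedding \eqref{incch}). Hence there is a (unique) character $\chi\in(\oG(U))^*$ with
\[
  \sigma_1=\chi+\sigma_2+(V_3,\omega_3').
\]
Since $(\oG(U))^*$ is a submonoid of the commutative monoid $\widehat\CW^+_U$, the element $\sigma_3:=\chi+(V_3,\omega_3')$ again lies in $\widehat\CW^+_U$; concretely it is $(V_3,\bar\chi\otimes\omega_3')$, which is still a genuine oscillator representation because $\bar\chi$ is pulled back from $\oG(U)$ and hence trivial on $\varepsilon_U$. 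Commutativity of the monoid then gives $\sigma_2+\sigma_3=\chi+\sigma_2+(V_3,\omega_3')=\sigma_1$, as desired.

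The steps are all routine consequences of the machinery already set up; the only points demanding a little care are (i) checking that $\varphi(V_2)$ is a non-degenerate subspace so that the orthogonal decomposition $V_1=\varphi(V_2)\oplus V_3$ genuinely holds, and (ii) the bookkeeping that twisting a genuine oscillator representation of $\oJb_U(V_3)$ by a character pulled back from $\oG(U)$ preserves both genuineness and membership in $\widehat\CW^+_U$. Neither is a real obstacle; the substantive input is entirely contained in Lemmas \ref{extension00}, \ref{actst} and \ref{cancU}.
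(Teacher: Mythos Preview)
Your proof is correct and follows essentially the same approach as the paper: deduce uniqueness from cancellativity (Lemma~\ref{cancU}), pick any enhanced oscillator representation over the orthogonal complement $V_3$, and then correct by a character $\chi\in(\oG(U))^*$ using the principal homogeneous space structure of Lemma~\ref{actst}. The paper's version is terser, simply writing ``take an element $\sigma_3'=(V_3,\omega)$ so that $V_2+V_3=V_1$'' without spelling out the invocation of Lemma~\ref{extension00} or the genuineness check, but the argument is the same.
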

\begin{proof}
The uniqueness is a direct consequence of Lemma \ref{cancU}. For existence, take an element $\sigma_3'=(V_3,\omega)$ so that $V_2+V_3=V_1$ in $\widehat \CW_0^+$. Then Lemma \ref{actst} implies that
\[
 \chi+(\sigma_2+\sigma_3')=\sigma_1
\]
for some $\chi\in (\oG(U))^*$. The lemma follows by putting $\sigma_3:=\chi+\sigma_3'$.
\end{proof}

Recall from the Introduction the hyperbolic plane  $\H\in \widehat \CW_0^+$ and the hyperbolic plane $\H_U\in \widehat \CW_U^+$. Lemma \ref{contra0} immediately implies the following
\begin{lem}\label{contra1}
For every $\sigma\in \widehat \CW^+_U$, there exists a unique element $\sigma^\vee\in  \widehat \CW^+_U$ such that the equality
\[
  \sigma+\sigma^\vee=(\dim \sigma )\,\H_U
\]
holds in $\widehat \CW^+_U$.
\end{lem}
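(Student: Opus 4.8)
The plan is to deduce this directly from Lemma \ref{contra0}, whose only hypothesis is that $V_2$ embeds isometrically into $V_1$. So write $\sigma=(V,\omega)$ and set $n:=\dim\sigma=\dim V$. By the very definition of $\H_U$ (and of the additive structure on $\widehat\CW^+_U$), the underlying $-\epsilon$-Hermitian space of $n\,\H_U\in\widehat\CW^+_U$ is $n\H\in\widehat\CW^+_0$, the split space of dimension $2n$. Thus it suffices to produce an isometric embedding $V\hookrightarrow n\H$ and then invoke Lemma \ref{contra0} with $\sigma_1:=n\,\H_U$ and $\sigma_2:=\sigma$.

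To get that embedding, consider the orthogonal direct sum $V\oplus V^-$, where $V^-$ is $V$ with its form scaled by $-1$ (as in \eqref{squareu}). The diagonal $\{(v,v)\mid v\in V\}$ is a $\rD$-subspace of dimension $n$ on which the form vanishes identically, since $\la(v,v),(v',v')\ra_{V\oplus V^-}=\la v,v'\ra_V-\la v,v'\ra_V=0$; as $\dim(V\oplus V^-)=2n$, it is a Lagrangian, so $V\oplus V^-$ is a split $-\epsilon$-Hermitian space of dimension $2n$. Since split spaces of a given dimension are unique up to isometry (\cf \cite[Chapters 5 and 10]{Sch}), we get $V\oplus V^-\cong n\H$ in $\widehat\CW^+_0$. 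Composing the inclusion of $V$ as the first summand of $V\oplus V^-$ with this isometry yields the desired isometric embedding $V\hookrightarrow n\H$.

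Now Lemma \ref{contra0}, applied to $\sigma_1=n\,\H_U$ and $\sigma_2=\sigma$, gives a unique $\sigma_3\in\widehat\CW^+_U$ with $\sigma+\sigma_3=n\,\H_U=(\dim\sigma)\,\H_U$; we set $\sigma^\vee:=\sigma_3$. Uniqueness is already part of Lemma \ref{contra0} (equivalently, it follows from cancellativity, Lemma \ref{cancU}), so nothing more is needed. The only non-formal step is the isometric embedding $V\hookrightarrow n\H$, i.e., the classical fact that $V\oplus V^-$ is split; this is the point I would expect to require the small argument above, but it is entirely standard and presents no real obstacle.
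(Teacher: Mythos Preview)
Your proof is correct and follows exactly the approach the paper indicates: the paper simply states that Lemma~\ref{contra0} immediately implies Lemma~\ref{contra1}, and you have spelled out the one detail left implicit there, namely that $V$ embeds isometrically into $(\dim V)\,\H$ because $V\oplus V^-$ is split. Nothing more is needed.
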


As in Section \ref{secwg}, denote by $\widehat{\CW}_U$ the Grothendieck group of the commutative monoid $\widehat{\CW}^+_U$, and view $\widehat{\CW}^+_U$ as a submonoid of $\widehat{\CW}_U$ (by Lemma \ref{cancU}). The surjective  monoid homomorphism $\widehat{q}_U^+$ uniquely extends to a surjective group homomorphism
\be \label{hpu}
  \widehat{q}_U: \widehat \CW_U\rightarrow \widehat \CW_0.
\ee
Recall that $\CW_U$ denotes the set of Witt towers in $\widehat{\CW}^+_U$, which is a quotient monoid of $\widehat{\CW}^+_U$.

Using Lemmas \ref{actst}-\ref{contra1}, it is routine to prove the following proposition. We omit the details.
\begin{prpl}\label{exactcw}
The following hold true.

 \noindent
(a) The sequence
\be \label{exacthat}
  1\rightarrow (\oG(U))^*\rightarrow \widehat{\CW}_U \xrightarrow{\widehat q_U} \widehat{\CW}_0\rightarrow 1
\ee
is exact, and $\widehat q_U^{\,-1}(\widehat \CW_0^+)=\widehat \CW_U^+$.

\noindent
(b) Every element $\mathbf t\in \CW_U$ is uniquely of the form
\[
  \mathbf t=\{\sigma_\mathbf t+m\,\H_U\mid m=0,1,2,\cdots\},
\]
where $\sigma_\mathbf t$ is an anisotropic element of $\widehat \CW_U^+$, as defined in the Introduction.

\noindent
(c) The inclusion map $\widehat{\CW}^+_U\rightarrow \widehat{\CW}_U$ descends to a bijection $\CW_U\rightarrow \widehat{\CW}_U/(\Z\cdot \H_U)$. Consequently, the monoid  $\CW_U$ is an abelian group.

\noindent
(d)  The sequence
\[
  1\rightarrow (\oG(U))^*\xrightarrow{j_U} \CW_U \xrightarrow{q_U}\CW_0\rightarrow 1
\]
is exact, where $j_U$ is the restriction of the quotient map $\widehat{\CW}^+_U\rightarrow \CW_U$ to $(\oG(U))^*$, and $q_U$ is the descent of $\widehat q_U^+$.
\end{prpl}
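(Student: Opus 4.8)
The plan is to deduce everything from the Grothendieck-group formalism already in place. Since $\widehat{\CW}^+_U$ is cancellative (Lemma \ref{cancU}), the canonical map $\widehat{\CW}^+_U\to\widehat{\CW}_U$ is injective, so every element of $\widehat{\CW}_U$ is a difference $\sigma_1-\sigma_2$ with $\sigma_i\in\widehat{\CW}^+_U$, and via \eqref{incch} one regards $(\oG(U))^*$ as a subgroup of $\widehat{\CW}_U$. With these identifications fixed, part (a) is the heart of the matter and (b)--(d) follow by passing to quotients. For (a): the extension of $\widehat q_U^+$ to a surjective homomorphism $\widehat q_U$ is immediate from the universal property together with surjectivity of $\widehat q_U^+$. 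Exactness at $\widehat{\CW}_U$: a difference $\sigma_1-\sigma_2$ lies in $\ker\widehat q_U$ exactly when $\sigma_1$ and $\sigma_2$ have the same underlying space $V\in\widehat{\CW}_0^+$, and then Lemma \ref{actst} (the fibre $(\widehat q_U^+)^{-1}(V)$ is a torsor under $(\oG(U))^*$) gives $\sigma_1=\chi+\sigma_2$ for a unique $\chi\in(\oG(U))^*$, i.e.\ $\sigma_1-\sigma_2=\chi$; conversely every $\chi\in(\oG(U))^*$ has underlying space $0$ and so lies in the kernel. For the equality $\widehat q_U^{\,-1}(\widehat{\CW}_0^+)=\widehat{\CW}_U^+$, the inclusion $\supseteq$ is clear, and for $\subseteq$ one writes $x=\sigma_1-\sigma_2$ with $\widehat q_U(x)\in\widehat{\CW}_0^+$; then the underlying space of $\sigma_2$ is an orthogonal summand of that of $\sigma_1$, so Lemma \ref{contra0} produces $\sigma_3\in\widehat{\CW}_U^+$ with $\sigma_2+\sigma_3=\sigma_1$, whence $x=\sigma_3\in\widehat{\CW}_U^+$.

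For (b) and (c): given a Witt tower $\mathbf t$, pick a representative $\sigma=(V,\omega)$; if $\rank V>0$ then $\H$ embeds isometrically in $V$, so Lemma \ref{contra0} applied with second argument $\H_U$ splits off a copy of $\H_U$ and lowers the split rank, and iterating yields an anisotropic representative $\sigma_{\mathbf t}$. Uniqueness of $\sigma_{\mathbf t}$ and the description $\mathbf t=\{\sigma_{\mathbf t}+m\,\H_U\mid m\ge 0\}$ both follow by applying $\widehat q_U^+$ to reduce to Witt cancellation and the behaviour of split ranks in $\widehat{\CW}_0^+$, then cancelling in $\widehat{\CW}^+_U$ via Lemma \ref{cancU}; this is (b). For (c), two elements of $\widehat{\CW}^+_U$ are Witt equivalent precisely when their difference in $\widehat{\CW}_U$ lies in $\Z\cdot\H_U$, so the composite $\widehat{\CW}^+_U\hookrightarrow\widehat{\CW}_U\to\widehat{\CW}_U/(\Z\cdot\H_U)$ descends to a well-defined injection $\CW_U\to\widehat{\CW}_U/(\Z\cdot\H_U)$; surjectivity holds because $-\sigma_2\equiv\sigma_2^\vee\pmod{\Z\cdot\H_U}$ by Lemma \ref{contra1} (which gives $\sigma_2+\sigma_2^\vee=(\dim\sigma_2)\,\H_U$), so an arbitrary $\sigma_1-\sigma_2$ is congruent to $\sigma_1+\sigma_2^\vee\in\widehat{\CW}^+_U$. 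As the target is a quotient group and the bijection respects addition at every stage, $\CW_U$ inherits an abelian group structure refining its monoid structure.

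For (d): reduce the exact sequence of (a) modulo $\Z\cdot\H_U$ upstairs and modulo $\Z\cdot\H$ in $\widehat{\CW}_0$. Since $\widehat q_U(\H_U)=\H$, the map $\widehat q_U$ descends to $q_U\colon\CW_U=\widehat{\CW}_U/(\Z\cdot\H_U)\to\widehat{\CW}_0/(\Z\cdot\H)=\CW_0$, which is visibly the descent of $\widehat q_U^+$ and is surjective because $\widehat q_U$ is. Injectivity of $j_U$ reduces to $(\oG(U))^*\cap\Z\cdot\H_U=\{1\}$ inside $\widehat{\CW}_U$, which holds since characters have underlying space $0$ while $m\,\H_U$ has underlying space of dimension $2m$, so only $m=0$ occurs. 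For exactness in the middle: if $\widehat q_U(x)=m\,\H$ then $x-m\,\H_U\in\ker\widehat q_U=(\oG(U))^*$ by (a), so $\widehat q_U^{\,-1}(\Z\cdot\H)=(\oG(U))^*+\Z\cdot\H_U$, and passing to the quotient by $\Z\cdot\H_U$ identifies $\ker q_U$ with the image of $j_U$.

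I do not expect any step to be genuinely hard — the authors themselves describe it as routine — the real work is bookkeeping: keeping the chain $(\oG(U))^*\subset\widehat{\CW}^+_U\subset\widehat{\CW}_U$ and its towers/quotients straight, and invoking Lemma \ref{actst} (fibres are torsors), Lemma \ref{cancU} (cancellation), Lemma \ref{contra0} (subtracting when underlying spaces embed), and Lemma \ref{contra1} (the dual, to realize negatives modulo $\H_U$) at exactly the right places. The one spot that takes a moment's care is the equality $\widehat q_U^{\,-1}(\widehat{\CW}_0^+)=\widehat{\CW}_U^+$ in (a), because there one must manufacture an honest element of the positive cone out of a formal difference, and Lemma \ref{contra0} is precisely the tool for that.
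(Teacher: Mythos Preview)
Your proposal is correct and is exactly the routine verification the paper has in mind: the authors write ``Using Lemmas \ref{actst}--\ref{contra1}, it is routine to prove the following proposition. We omit the details,'' and you have filled in those details by invoking Lemma \ref{actst} for the torsor structure of fibres, Lemma \ref{cancU} for cancellation, Lemma \ref{contra0} to subtract inside the positive cone, and Lemma \ref{contra1} to realize negatives modulo $\H_U$, precisely as intended. The one step you singled out as requiring care, namely $\widehat q_U^{\,-1}(\widehat{\CW}_0^+)=\widehat{\CW}_U^+$ via Lemma \ref{contra0}, is indeed the only non-tautological point, and your argument for it is clean.
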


Extending the notion of the split rank of an element of $\widehat \CW_U^+$, we define the split rank of each $\sigma\in \widehat \CW_U$,  which is denoted by $\rank \,\sigma$, to be the integer $m$ such  that  $\sigma-m\H_U$ is an anisotropic element of $\widehat \CW_U^+$. It is clear that
\[
  \sigma\in \widehat \CW_U^+\quad\textrm{if and only if}\quad \rank \,\sigma\geq 0.
\]

\subsection{Restrictions of enhanced oscillator representations}\label{reste}

Let $U'$ be an $\epsilon$-Hermitian right $\rD$-vector space with a $\rD$-linear isometric embedding
\[
     \varphi: U\hookrightarrow U'.
\]
For convenience, we identify $U$ with a subspace of $U'$ via $\varphi$. Denote by $U^\perp$ the orthogonal complement of $U$ in $U'$. Then
the induced embedding $\oG(U)\times \oG(U^\perp)\hookrightarrow \oG(U')$ uniquely lifts
to a ``genuine" homomorphism
\begin{equation}\label{embvvp1}
\oGb(U)\times \oGb(U^\perp)\rightarrow \oGb(U').
\end{equation}
Here ``genuine" means that when $U$ is a symplectic space, the homomorphism \eqref{embvvp1} maps both $\varepsilon_U$ and $\varepsilon_{U^\perp}$ to $\varepsilon_{U'}$.

For every  $-\epsilon$-Hermitian left $\rD$-vector space $V$,
combining \eqref{embvvp1} with the homomorphism
\[
\begin{array}{rcl}
  \oH(U\otimes_\rD V)\times \oH(U^\perp \otimes_\rD V)&\rightarrow &\oH(U'\otimes_\rD V),\\
    ((w,t),(w',t'))&\mapsto &(w+w',t+t'),
 \end{array}
\]
we obtain a homomorphism
\begin{equation}\label{embvvp2}
    \oJb_U(V)\times \oJb_{U^\perp}(V)\rightarrow\oJb_{U'}(V).
 \end{equation}

For each genuine smooth oscillator representation $\omega'$ of $\oJb_{U'}(V)$, its restriction through \eqref{embvvp2} is uniquely of the form
\begin{equation}\label{rdec}
  \omega'|_{\oJb_{U}(V)\times \oJb_{U^\perp}(V)}\cong\omega'|_U\otimes \omega'|_{U^\perp},
  \end{equation}
where $\omega'|_U$ is a genuine smooth oscillator representation of $\oJb_{U}(V)$, and $\omega'|_{U^\perp}$ is a genuine smooth oscillator representation of $\oJb_{U^\perp}(V)$. In turn this defines a monoid homomorphism (the restriction map)
\begin{equation}\label{embvvp3}
 \widehat{\mathrm r}_\varphi^+: \widehat \CW_{U'}^+\rightarrow \widehat \CW_{U}^+,\quad (V,\omega')\mapsto (V,\omega'|_U).
\end{equation}
It extends to a group homomorphism
\begin{equation}\label{embvvp35}
 \widehat{\mathrm r}_\varphi: \widehat \CW_{U'}\rightarrow \widehat \CW_{U}
\end{equation}
and descends to a group homomorphism
\begin{equation}\label{embvvp4}
  \mathrm r_\varphi: \CW_{U'}\rightarrow \CW_U.
\end{equation}

\begin{lem}\label{rewg}
The homomorphism $\widehat{\mathrm r}_\varphi^+: \widehat \CW_{U'}^+\rightarrow \widehat \CW_{U}^+$ only depends on $U$ and $U'$, that is, it does not depend on the $\rD$-linear isometric embedding $\varphi$. Consequently, both $\widehat{\mathrm r}_\varphi$ and $\mathrm r_\varphi$ do not depend on $\varphi$.
\end{lem}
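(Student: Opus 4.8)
\noindent\textit{Proof strategy.} The plan is to trace everything back to Witt's extension theorem. Fix two $\rD$-linear isometric embeddings $\varphi_1,\varphi_2\colon U\hookrightarrow U'$ and an enhanced oscillator representation $\sigma'=(V,\omega')\in\widehat\CW^+_{U'}$. Write $U_i^\perp$ for the orthogonal complement of $\varphi_i(U)$ in $U'$ and let $\iota_i\colon\oJb_U(V)\times\oJb_{U_i^\perp}(V)\to\oJb_{U'}(V)$ denote the homomorphism \eqref{embvvp2} built from $\varphi_i$, so that $\widehat{\mathrm r}^+_{\varphi_i}(\sigma')=(V,\omega'|_{U,\varphi_i})$ with $\omega'|_{U,\varphi_i}$ the $\oJb_U(V)$-factor in the decomposition \eqref{rdec} of $\omega'\circ\iota_i$. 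Since $\widehat{\mathrm r}_\varphi$ is the canonical extension of $\widehat{\mathrm r}^+_\varphi$ to the Grothendieck groups and $\mathrm r_\varphi$ is its descent to Witt towers, it is enough to prove $\omega'|_{U,\varphi_1}\cong\omega'|_{U,\varphi_2}$ as smooth representations of $\oJb_U(V)$, for every such $\sigma'$.

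First I would use Witt's extension theorem (\cf \cite[Chapters 1 and 7]{Sch}) to find $g\in\oG(U')$ with $\varphi_2=g\circ\varphi_1$; then $g$ carries $\varphi_1(U)$ onto $\varphi_2(U)$ and restricts to an isometry $U_1^\perp\xrightarrow{\;\sim\;}U_2^\perp$. Pick a lift $\bar g\in\oGb(U')$ of $g$. The next step is a short bookkeeping computation: for $h\in\oG(U)$ the isometries $g\,\oG(\varphi_1)(h)\,g^{-1}$ and $\oG(\varphi_2)(h)$ of $U'$ coincide (they have the same effect on $\varphi_2(U)$ and on its orthogonal complement), and similarly on the orthogonal complements; together with the evident compatibility on the Heisenberg groups and the uniqueness of the genuine lift \eqref{embvvp1} on the metaplectic covers, this gives the identity
\[
  c_{\bar g}\circ\iota_1=\iota_2\circ\psi
\]
of homomorphisms to $\oJb_{U'}(V)$, where $c_{\bar g}$ is conjugation by $\bar g$ and $\psi\colon\oJb_U(V)\times\oJb_{U_1^\perp}(V)\to\oJb_U(V)\times\oJb_{U_2^\perp}(V)$ is the identity on the first factor and the isomorphism induced by $g|_{U_1^\perp}$ on the second.

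Now $\bar g\in\oGb(U')\subset\oJb_{U'}(V)$, so $c_{\bar g}$ is inner and $\omega'(\bar g)$ intertwines $\omega'$ with $\omega'\circ c_{\bar g}$. Precomposing with $\iota_1$ and invoking the displayed identity, $\omega'\circ\iota_1\cong\omega'\circ\iota_2\circ\psi$ as representations of $\oJb_U(V)\times\oJb_{U_1^\perp}(V)$; expanding both sides via \eqref{rdec} (and noting $\psi$ is the identity on the $\oJb_U(V)$-factor) this is an isomorphism
\[
  \omega'|_{U,\varphi_1}\otimes\omega'|_{U_1^\perp,\varphi_1}\;\cong\;\omega'|_{U,\varphi_2}\otimes\bigl(\omega'|_{U_2^\perp,\varphi_2}\circ(g|_{U_1^\perp})_{*}\bigr)
\]
in which all four tensor factors are smooth oscillator representations of the relevant Jacobi groups. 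To finish I would apply the uniqueness in the Stone--von Neumann theorem in external form: a smooth representation of a product $J_1\times J_2$ whose restriction to each Heisenberg subgroup $\oH(W_i)$ is a smooth oscillator representation splits, uniquely up to isomorphism, as an external tensor product of smooth oscillator representations of $J_1$ and of $J_2$ --- the $\oH(W_i)$-factors being forced by Theorem \ref{stone} and the remaining multiplicity datum recovered via Dixmier's form of Schur's Lemma, exactly as in Lemma \ref{uniqueo}. Applying this to the last display identifies the $\oJb_U(V)$-factors on the two sides, i.e.\ $\omega'|_{U,\varphi_1}\cong\omega'|_{U,\varphi_2}$, as required.

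The point I expect to require the most care is this last one: making the external-tensor-product uniqueness rigorous, i.e.\ verifying that the multiplicity space attached to an oscillator representation of one factor is functorially recovered (this uses the finite generation of the Heisenberg representation together with Schur's Lemma in Dixmier's form). The appeal to Witt's theorem and the cover-level bookkeeping of the second paragraph are routine, though the quaternionic cases warrant attention.
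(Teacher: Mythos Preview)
Your proposal is correct and follows essentially the same route as the paper: Witt's extension theorem produces $g\in\oG(U')$ with $\varphi_2=g\circ\varphi_1$, and conjugation by a lift $\bar g\in\oGb(U')$ is inner on $\oJb_{U'}(V)$, so the restriction is unchanged. The paper's two-sentence proof packages this slightly differently---it first treats the case $U'=U$ (where $\varphi\in\oG(U)$ induces an inner automorphism of $\oJb_U(V)$, so $\widehat{\mathrm r}^+_\varphi=\mathrm{id}$), then invokes Witt to reduce the general case to that one---which lets it avoid spelling out the external-tensor-product uniqueness you flag as the delicate step; that uniqueness is already asserted in the paper's definition of the decomposition \eqref{rdec}.
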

\begin{proof} If $U'=U$, then $\varphi$ induces an inner automorphism $\oJb_U(V)\rightarrow \oJb_U(V)$, and consequently $\widehat{\mathrm r}_\varphi^+$ is the identity map. In general, the lemma follows by using  Witt's extension theorem \cite[Theorem 9.1]{Sch} and applying the above
argument to $U'$.
 \end{proof}

In view of Lemma \ref{rewg}, we also use $\widehat{\mathrm r}^{U'}_U$ and $\mathrm r^{U'}_U$ to denote $\widehat{\mathrm r}_\varphi$ and $\mathrm r_\varphi$, respectively. The following lemma is an obvious consequence of Proposition \ref{exactcw}.
\begin{lem}\label{decw}
One has that
\[
  \widehat \CW_U=\widehat{\mathrm r}^{U'}_U(\widehat \CW_{U'})+(\oG(U))^*\quad\textrm{and}\quad  \CW_U={\mathrm r}^{U'}_U(\CW_{U'})+(\oG(U))^*.
\]
\end{lem}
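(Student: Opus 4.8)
The plan is to deduce both equalities by a short diagram chase, using parts (a) and (d) of Proposition \ref{exactcw} together with the evident compatibility of the restriction map with the forgetful map to $\widehat\CW_0$ (resp. $\CW_0$). The first thing I would record is that
\[
\widehat q_U\circ\widehat{\mathrm r}^{U'}_U=\widehat q_{U'}\qquad\text{and}\qquad q_U\circ \mathrm r^{U'}_U=q_{U'}.
\]
The first identity already holds at the level of monoids, since by \eqref{embvvp3} and \eqref{puplus} one has $\widehat q_U^+(V,\omega'|_U)=V=\widehat q_{U'}^+(V,\omega')$; it then passes to the Grothendieck groups by the uniqueness in the extension \eqref{hpu}, and descends to $\CW_U$ to give the second identity (recall $\mathrm r^{U'}_U$ and $q_U$ are defined as the respective descents).

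For the inclusion ``$\supseteq$'' there is nothing to do: $\widehat{\mathrm r}^{U'}_U(\widehat\CW_{U'})$ and the image of $(\oG(U))^*$ are both subgroups of the abelian group $\widehat\CW_U$, so their sum lies in $\widehat\CW_U$; likewise in $\CW_U$.

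For ``$\subseteq$'', let $\sigma\in\widehat\CW_U$. By Proposition \ref{exactcw}(a) applied to $U'$, the homomorphism $\widehat q_{U'}$ is surjective, so there is $\tau'\in\widehat\CW_{U'}$ with $\widehat q_{U'}(\tau')=\widehat q_U(\sigma)$; put $\tau:=\widehat{\mathrm r}^{U'}_U(\tau')$, so that $\widehat q_U(\tau)=\widehat q_U(\sigma)$ by the square above. Then $\sigma-\tau\in\Ker\widehat q_U=(\oG(U))^*$ by exactness of \eqref{exacthat}, i.e. $\sigma=\tau+\chi$ with $\chi\in(\oG(U))^*$. This proves the first equality. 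The second is obtained verbatim, replacing Proposition \ref{exactcw}(a) by Proposition \ref{exactcw}(d), $\widehat q$ by $q$ and $\widehat{\mathrm r}$ by $\mathrm r$, and using that $q_{U'}$ is surjective with $\Ker q_U=j_U((\oG(U))^*)$.

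There is no real obstacle here; the only point worth stating with care is that the kernel of $\widehat q_U$ (resp. $q_U$) is exactly $(\oG(U))^*$ — this is precisely the exactness in Proposition \ref{exactcw}(a) (resp. (d)) — and that restriction of enhanced oscillator representations is compatible with remembering only the underlying space $V$, which is what makes the above square commute.
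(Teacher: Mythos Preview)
Your proof is correct and is precisely the diagram chase the paper has in mind: the paper only says the lemma ``is an obvious consequence of Proposition~\ref{exactcw}'', and your argument spells out exactly that consequence via the commuting square $\widehat q_U\circ\widehat{\mathrm r}^{U'}_U=\widehat q_{U'}$ together with the exact sequences in parts (a) and (d).
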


The assignments
\[
  U\mapsto \widehat \CW_{U}^+,\qquad  \varphi\mapsto \widehat{\mathrm r}_\varphi^+
\]
 form a contravariant functor from the category of $\epsilon$-Hermitian right $\rD$-vector spaces (the morphisms in this category are $\rD$-linear isometric embeddings) to the category of commutative monoids. Write $\widehat \CW^+_\infty$ for the inverse limit of this functor:
\[
  \widehat \CW^+_\infty:=\varprojlim_{U} \widehat \CW^+_U.
\]
Likewise, respectively using the homomorphisms of \eqref{embvvp35} and \eqref{embvvp4}, we form the inverse limits
\[
  \widehat \CW_\infty:=\varprojlim_{U} \widehat \CW_U\quad\textrm{ and }\quad  \CW_\infty:=\varprojlim_{U} \CW_U.
\]

Using the exact sequence of part (a) of Proposition \ref{exactcw}, the second assertion of Theorem \ref{cq} easily implies the following

\begin{lem}
The natural homomorphisms $\widehat \CW_\infty^+\rightarrow \widehat \CW_{U}^+$, $\widehat \CW_\infty\rightarrow \widehat \CW_{U}$ and $\CW_\infty\rightarrow \CW_{U}$ are isomorphisms in the following cases: \begin{itemize}
  \item[(i)] $U$ is a symplectic space or a quaternionic Hermitian space;
  \item[(ii)] $U$ is a non-zero Hermitian or skew-Hermitian space;
  \item[(iii)] $U$ is a non-anisotropic symmetric bilinear space or a non-anisotropic quaternionic skew-Hermitian space.
\end{itemize}
\end{lem}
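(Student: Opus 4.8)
The plan is to deduce all three assertions from a single reduction: for every $\rD$-linear isometric embedding $\varphi\colon U\hookrightarrow U'$ in which \emph{both} $U$ and $U'$ fall into one of the cases (i)--(iii), the restriction maps $\widehat{\mathrm r}_\varphi^+\colon\widehat\CW^+_{U'}\to\widehat\CW^+_U$, $\widehat{\mathrm r}_\varphi\colon\widehat\CW_{U'}\to\widehat\CW_U$ and $\mathrm r_\varphi\colon\CW_{U'}\to\CW_U$ are isomorphisms. Granting this, I would first note that each of the conditions (i)--(iii) is stable under enlarging $U$: the type of a space is determined by $(\rD,\epsilon)$, and a space containing a non-anisotropic subspace is itself non-anisotropic. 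Hence, fixing $U$ subject to the hypothesis of the lemma, the family of spaces $U''$ admitting a $\rD$-linear isometric embedding of $U$ is cofinal in the indexing category of all $\epsilon$-Hermitian right $\rD$-vector spaces (any two spaces embed into their orthogonal direct sum), it contains $U$ itself, and every one of its members again satisfies the hypothesis. Since by the reduction all transition maps inside this cofinal family are isomorphisms, passing to the inverse limit over it identifies $\widehat\CW^+_\infty$, $\widehat\CW_\infty$ and $\CW_\infty$ with $\widehat\CW^+_U$, $\widehat\CW_U$ and $\CW_U$ respectively, the identifications being exactly the natural projections; this is the assertion of the lemma.

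To establish the reduction, fix $\varphi\colon U\hookrightarrow U'$ with $U,U'$ as described. Every character of $\oG(U)$ factors through the commutator quotient, so $(\oG(U))^*=\Hom(\RA(U),\C^\times)$, and likewise for $U'$; moreover $\widehat{\mathrm r}_\varphi$ carries the subgroup $(\oG(U'))^*\subset\widehat\CW_{U'}$ of \eqref{incch} onto $(\oG(U))^*\subset\widehat\CW_U$ by pullback of characters along $\oG(\varphi)$. By the second assertion of Theorem~\ref{cq}, both $\RA(U)\to\RA_\infty$ and $\RA(U')\to\RA_\infty$ are topological isomorphisms; composing, $\RA(\varphi)$ is a topological isomorphism, so the induced pullback $(\oG(U'))^*\to(\oG(U))^*$ is an isomorphism. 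Now apply the five lemma to the morphism of the short exact sequences of Proposition~\ref{exactcw}(a),
\[
\begin{CD}
1 @>>> (\oG(U'))^* @>>> \widehat\CW_{U'} @>\widehat q_{U'}>> \widehat\CW_0 @>>> 1\\
@| @VVV @VV {\widehat{\mathrm r}_\varphi}V @| @|\\
1 @>>> (\oG(U))^* @>>> \widehat\CW_{U} @>\widehat q_U>> \widehat\CW_0 @>>> 1\\
\end{CD}
\]
whose middle square commutes because $\widehat q$ records only the underlying $-\epsilon$-Hermitian space, which $\widehat{\mathrm r}_\varphi$ leaves unchanged; since the two outer vertical arrows are isomorphisms, so is $\widehat{\mathrm r}_\varphi$. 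As $\widehat q_U^{-1}(\widehat\CW^+_0)=\widehat\CW^+_U$ and $\widehat q_{U'}^{-1}(\widehat\CW^+_0)=\widehat\CW^+_{U'}$ (Proposition~\ref{exactcw}(a)) and $\widehat q_U\circ\widehat{\mathrm r}_\varphi=\widehat q_{U'}$, this isomorphism restricts to an isomorphism $\widehat\CW^+_{U'}\to\widehat\CW^+_U$, which is the monoid statement. Finally, inspection of the Schr\"odinger model shows $\widehat{\mathrm r}_\varphi^+(\H_{U'})=\H_U$: since $U'\otimes_\rD\H$ decomposes orthogonally as $(U\otimes_\rD\H)\perp(U^\perp\otimes_\rD\H)$, both the Heisenberg action \eqref{acth} and the $\oGb(U')$-action defining $\omega_{U'}$ split along the $U$- and $U^\perp$-parts of $\CS(U'\otimes\mathrm e_1)$, so the restriction of $\omega_{U'}$ to $\oJb_U(\H)$ is $\omega_U$; hence $\widehat{\mathrm r}_\varphi$ descends modulo $\Z\cdot\H$ to an isomorphism $\CW_{U'}\to\CW_U$ by Proposition~\ref{exactcw}(c), completing the reduction.

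I expect no serious obstacle here: the only genuine input is the identification of the character groups, and that is supplied verbatim by Theorem~\ref{cq}, while the rest is diagram chasing together with the bookkeeping about cofinal subsystems of the indexing category and Lemma~\ref{rewg} (independence of $\widehat{\mathrm r}_\varphi$ of the chosen embedding). The one point that deserves care is the surjectivity half of the five-lemma step — equivalently, that $\widehat{\mathrm r}_\varphi$ really does reach all of $\widehat\CW_U$ — but this is immediate from the surjectivity of $\widehat q_{U'}$ onto $\widehat\CW_0$ combined with the isomorphism already obtained on the character subgroups. The role of the lemma is organizational: it allows one to work with the stable objects $\widehat\CW^+_\infty$, $\widehat\CW_\infty$ and $\CW_\infty$ in the good cases.
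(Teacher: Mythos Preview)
Your argument is correct and follows the same approach the paper indicates: the paper's proof is the single line ``Using the exact sequence of part (a) of Proposition~\ref{exactcw}, the second assertion of Theorem~\ref{cq} easily implies the following,'' and you have simply unpacked this by running the five lemma on the exact sequence \eqref{exacthat}, using Theorem~\ref{cq} to identify the character groups, and handling the passage to the inverse limit via a cofinality argument (legitimate here since, by Lemma~\ref{rewg}, the system factors through a directed poset). Your verification that $\widehat{\mathrm r}_\varphi^+(\H_{U'})=\H_U$ and that the hypotheses (i)--(iii) are inherited by overspaces are exactly the small checks the paper leaves implicit.
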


We will explicitly determine the group $\widehat \CW_\infty$ (and hence the monoid $\widehat \CW_\infty^+$ and the group $\CW_\infty$) in all cases in the next section.

\section{Kudla characters and the Kudla homomorphism}
\label{kk}

We refer the reader to Section \ref{secwg} for the notations.
Define a compact abelian topological group
\[
  \RK:=\left\{
                  \begin{array}{ll}
                    \oHil(\rF), & \hbox{if $(\rD,\epsilon)=(\rF, -1)$;} \\
                   \RE^\times/\RN^\times, & \hbox{otherwise.}
                  \end{array}
                \right.
\]
In this section, we will define a (canonical) group homomorphism
\[
   \xi_\infty: \CW_\infty\rightarrow \RK^*.
\]
For $U$ split and non-zero, the related homomorphism
$\xi_U:\CW_U\rightarrow \RK^*$ regulates certain transformation
property of the ``Schr\"{o}dinger functional" (see Lemma \ref{haaro}) in
various Schr\"{o}dinger realizations of an enhanced oscillator
representation $(V,\omega)$, and in turn it determines a
$\bar{\oG}(U)$-intertwining map from $\omega$ to a degenerate
principal series representation of $\bar{\oG}(U)$. The definition is
inspired by the work of Kudla \cite{Ku2} on the splitting of
metaplectic covers and the pioneer work of Kudla-Rallis \cite{KR1}
on the structure of degenerate principal series representations. We
thus call $\xi_\infty(\mathbf t)\in \RK^*$ the Kudla character of a
Witt tower $\mathbf t\in \CW_\infty$, and $\xi_\infty$ the Kudla
homomorphism.

Moreover we will show (Corollary \ref {kgp0}) that $\xi_\infty$ is surjective and  $\ker \xi_\infty \cong \Z/2\Z$. Let $\mathbf t_{\infty}^\circ \in \CW_\infty$ be the non-trivial element of $\ker \xi_\infty$. For each $\epsilon$-Hermitian right $\rD$-vector space $U$, denote by $\mathbf t_U^\circ\in \CW_U$ the image of $\mathbf t_{\infty}^\circ$ under the natural homomorphism $\CW_\infty\rightarrow \CW_U$. This is the anti-split Witt tower in the Introduction.

\subsection{Some (coherent) characters on Siegel parabolic subgroups}
\label{k1}

Let $U$ be an $\epsilon$-Hermitian right $\rD$-vector space which is split in the sense that $\dim U=2\,\rank \,U$.  For every Lagrangian subspace $X$ of $U$, denote by $\oP(X)$ the (Siegel) parabolic subgroup of $\oG(U)$
stabilizing $X$, and by $\oPb(X)\rightarrow\oP(X)$ the covering homomorphism
induced by $\oGb(U)\rightarrow \oG(U)$.

We use $\abs{\,\cdot\,}_X$ to denote the following positive
character on $\oPb(X)$:
\begin{equation}\label{absx}
  \oPb(X)\rightarrow
   \oP(X)\xrightarrow{\textrm{restriction on $X$}}
\GL(X)\stackrel{\textrm{det}}{\longrightarrow}\rE^\times
\xrightarrow{\abs{\,\cdot\,}_\rE^{\delta_\rD}}\R^\times_+.
\end{equation}
Here and henceforth  ``$\det$" stands for the reduced
norm; $\abs{\,\cdot\,}_\rE$ is the normalized absolute value on
$\rE$; $\R^\times_+$ denotes the multiplicative group of positive real numbers; and $\delta_\rD$ is the degree of $\rD$ over $\rE$,
which is $2$ if $\rD$ is a quaternion algebra, and is $1$ otherwise.

Let $\sigma=(V,\omega)\in \widehat \CW_U^+$. By Lemma \ref{haaro},
there exists a unique (up to scalar multiples) non-zero linear
functional $\lambda_{X\otimes_\rD V}$ on $\omega$ which is invariant
under $X\otimes_\rD V\subset \oH(U\otimes_\rD V)$. Since $\oPb(X)$
normalizes $X\otimes_\rD V$ in $\oJb_U(V)$, there exists a character
$\kappa_{\sigma,X}$ on $\oPb(X)$ such that
\begin{equation}\label{lambdax}
  \lambda_{X\otimes_\rD V}(h\cdot \phi)=\kappa_{\sigma,X}(h)
  \abs{h}_X^{\frac{\dim \sigma}{2}}\lambda_{X\otimes_\rD V}(\phi),\quad h\in \oPb(X),\,\phi\in \omega.
\end{equation}

\begin{lem}\label{kudla}
Let $Y$ be another  Lagrangian subspace of $U$. Then
\[
  \kappa_{\sigma,X}(h)=\kappa_{\sigma,Y}(h)
\]
for all $h\in \oPb(X)\cap \oPb(Y)$.
\end{lem}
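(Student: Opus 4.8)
The plan is to reduce to the case of a single Heisenberg group and exploit the uniqueness in Lemma \ref{haaro}. The key observation is that both $\lambda_{X\otimes_\rD V}$ and $\lambda_{Y\otimes_\rD V}$ are nonzero $\oH(U\otimes_\rD V)$-semiinvariant functionals on the \emph{same} oscillator representation $\omega$, attached to two different Lagrangian subspaces of the symplectic space $W:=U\otimes_\rD V$. First I would record that \eqref{lambdax} makes sense for $Y$ as well, giving a character $\kappa_{\sigma,Y}$ on $\oPb(Y)$ with $\lambda_{Y\otimes_\rD V}(h\cdot\phi)=\kappa_{\sigma,Y}(h)\,\abs{h}_Y^{\dim\sigma/2}\,\lambda_{Y\otimes_\rD V}(\phi)$ for $h\in\oPb(Y)$. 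The first real step is to relate $\lambda_{X\otimes_\rD V}$ and $\lambda_{Y\otimes_\rD V}$ directly. Since $X\otimes_\rD V$ and $Y\otimes_\rD V$ are both Lagrangians of $W$, there is a standard intertwining (partial Fourier transform / integration over $(X\cap Y^\perp)$-type) that carries one Schr\"odinger model to the other; equivalently, one can argue more cheaply: any $h\in\oPb(X)\cap\oPb(Y)$ normalizes \emph{both} $X\otimes_\rD V$ and $Y\otimes_\rD V$ inside $\oJb_U(V)$, so $h\cdot\lambda_{X\otimes_\rD V}$ is again $X\otimes_\rD V$-invariant and $h\cdot\lambda_{Y\otimes_\rD V}$ is again $Y\otimes_\rD V$-invariant, and uniqueness (Lemma \ref{haaro}) pins down the scalar by which $h$ acts in each case.

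The cleanest route is the following. Fix $h\in\oPb(X)\cap\oPb(Y)$, and also fix a ``cross'' vector — concretely, choose an auxiliary Lagrangian model and a distinguished vector $\phi_0\in\omega$ that pairs nontrivially against both $\lambda_{X\otimes_\rD V}$ and $\lambda_{Y\otimes_\rD V}$; the existence of such a $\phi_0$ follows because in the Schr\"odinger model $\CS(X\otimes_\rD V)$ (realization \eqref{acth}) the functional $\lambda_{X\otimes_\rD V}$ is evaluation at $0$, while $\lambda_{Y\otimes_\rD V}$ is given by a Fourier-type integral, and a Schwartz--Bruhat bump at the origin is not in the kernel of either. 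Now compare $\lambda_{X\otimes_\rD V}(h\cdot\phi_0)$ and $\lambda_{Y\otimes_\rD V}(h\cdot\phi_0)$. Using that the covering homomorphism $\oPb(X)\to\oP(X)\to\GL(X)$ and the corresponding map for $Y$ agree on the common Levi (the action of $h$ on $X$ versus its action on $Y$ is governed by the same element of $\GL(U)$ restricted to the common flag), one checks that $\abs{h}_X=\abs{h}_Y$ on $\oPb(X)\cap\oPb(Y)$. Given that, \eqref{lambdax} for $X$ and its analogue for $Y$, together with the fact that $h$ acts on the one-dimensional spaces $\C\lambda_{X\otimes_\rD V}$ and $\C\lambda_{Y\otimes_\rD V}$ via scalars that can be computed from a single matrix coefficient, force $\kappa_{\sigma,X}(h)=\kappa_{\sigma,Y}(h)$.

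The technical heart — and the step I expect to be the main obstacle — is verifying that the two semiinvariant functionals can be compared \emph{coherently} as $h$ ranges over all of $\oPb(X)\cap\oPb(Y)$, i.e.\ that there is no ambiguity coming from the metaplectic cover. Concretely: $\lambda_{X\otimes_\rD V}$ and $\lambda_{Y\otimes_\rD V}$ are each only defined up to a scalar, and one must show that the \emph{ratio} of the two induced characters is trivial, not merely that each is a character. The right way to handle this is to exhibit a single $\oPb(X)\cap\oPb(Y)$-equivariant identification: realize $\omega$ in the mixed model attached to a polarization adapted to both $X$ and $Y$ (writing $U=X_0\oplus(X\cap Y)\oplus Y_0$ with $X=X_0\oplus(X\cap Y)$, $Y=Y_0\oplus(X\cap Y)$ after an appropriate choice, then tensoring with $V$), in which both functionals become ``evaluate at $0$ on the relevant factor and integrate on the rest,'' and the action of the common parabolic is explicitly by the reduced-norm character on the Levi with no extra cocycle. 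Then $\kappa_{\sigma,X}$ and $\kappa_{\sigma,Y}$ are manifestly the same character, restricted from $\GL$ of the common block, multiplied by the common modulus factor. Once the mixed model is set up, the equality is a direct computation; the only care needed is matching the normalizations of Haar measures used to define the two functionals, which affects only the overall (group-independent) scalar and hence not the characters $\kappa_{\sigma,X}$, $\kappa_{\sigma,Y}$.
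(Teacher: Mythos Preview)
Your argument has a genuine gap: the claim that $\abs{h}_X=\abs{h}_Y$ for all $h\in\oPb(X)\cap\oPb(Y)$ is false. Take $U$ symplectic of dimension $4$ with symplectic basis $e_1,e_2,f_1,f_2$, let $X=\span\{e_1,e_2\}$ and $Y=\span\{e_1,f_2\}$, and let $h$ be the element with $h(e_1)=e_1$, $h(e_2)=2e_2$, $h(f_1)=f_1$, $h(f_2)=\tfrac{1}{2}f_2$. Then $h\in\oP(X)\cap\oP(Y)$ but $\det(h|_X)=2$ while $\det(h|_Y)=\tfrac{1}{2}$, so $\abs{h}_X\neq\abs{h}_Y$. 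Since the total scalars $\kappa_{\sigma,X}(h)\abs{h}_X^{\dim\sigma/2}$ and $\kappa_{\sigma,Y}(h)\abs{h}_Y^{\dim\sigma/2}$ by which $h$ acts on the two one-dimensional spaces $\C\lambda_{X\otimes_\rD V}$ and $\C\lambda_{Y\otimes_\rD V}$ are genuinely different, your matrix-coefficient comparison in the second paragraph cannot conclude anything: knowing $\lambda_X(h\phi_0)/\lambda_X(\phi_0)$ and $\lambda_Y(h\phi_0)/\lambda_Y(\phi_0)$ separately gives no relation between them. (Incidentally, in the model $\CS(X\otimes_\rD V)$ of \eqref{acth}, the $X\otimes_\rD V$-invariant functional is integration, not evaluation at $0$; evaluation at $0$ is the functional invariant under the complementary Lagrangian.)

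Your mixed-model approach in the third paragraph is closer to the right idea, but the decomposition $U=X_0\oplus(X\cap Y)\oplus Y_0$ with $X=X_0\oplus(X\cap Y)$ and $Y=Y_0\oplus(X\cap Y)$ cannot hold: the dimensions give $\dim U=2\dim X-\dim(X\cap Y)$, which fails whenever $X\cap Y\neq 0$. More importantly, for an arbitrary $h\in\oPb(X)\cap\oPb(Y)$ there is no reason for such a decomposition to be $h$-stable, so you cannot compute the action of $h$ block by block. The paper handles exactly this difficulty: it first reduces via Jordan decomposition to the case where the image $h_0\in\oG(U)$ is semisimple, and then chooses an auxiliary $h_0$-stable Lagrangian $Y'$ \emph{complementary to $X$} (not to $Y$) with $Y=(X\cap Y)\oplus(Y'\cap Y)$. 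In the resulting Schr\"odinger model $\CS(X\otimes_\rD V)$, the functional $\lambda_{X\otimes_\rD V}$ is integration over $X\otimes_\rD V$ and $\lambda_{Y\otimes_\rD V}$ is integration over $(X\cap Y)\otimes_\rD V$; since $h$ acts by $(h\cdot\phi)(u)=c_h\,\phi((h_0^{-1}\otimes 1)u)$ for a constant $c_h$ independent of the model, both characters are computed by the same Jacobian-and-$c_h$ data and the equality $\kappa_{\sigma,X}(h)=\kappa_{\sigma,Y}(h)$ drops out. The semisimplicity reduction is what makes the $h$-stable complement $Y'$ available and is the step your sketch is missing.
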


\begin{proof}
Using the Jordan decomposition, we assume without loss of generality
that $h$ is semisimple, namely the image $h_0$ of $h$ under the
covering homomorphism $\oGb(U)\rightarrow \oG(U)$ is semisimple.
Then it is elementary to see that there is an $h_0$-stable
Lagrangian subspace $Y'$ of $U$ such that
\[
  U=X\oplus Y'\quad\textrm{ and }\quad Y=(X\cap Y)\oplus (Y'\cap Y).
\]

Using the complete polarization
\[
  U\otimes_\rD V=(X\otimes_\rD V)\oplus (Y'\otimes_\rD V),
\]
we realize $\omega|_{\oH(U\otimes_\rD V)}$ on the space  $\CS(X\otimes_\rD V)$ as in \eqref{acth}. Respectively write $\mu_X$  and  $\mu_{X\cap Y}$ for Haar measures on $X\otimes_\rD V$ and $(X\cap Y)\otimes_\rD
V$. Then
\[
 \lambda_{X\otimes_\rD V}: \CS(X\otimes_\rD V)\rightarrow \C,\quad \phi\mapsto \int_{X\otimes_\rD V} \phi\,\mu_X
\]
is the unique (up to scalar multiples) linear functional on $\omega$ which is invariant under $X\otimes_\rD V$. Likewise
\[
 \lambda_{Y\otimes_\rD V}: \CS(X\otimes_\rD V)\rightarrow \C,\quad \phi \mapsto \int_{(X\cap Y)\otimes_\rD V} \phi|_{(X\cap Y)\otimes_\rD V}\, \mu_{X\cap Y}
\]
is the unique (up to scalar multiples) linear functional on $\omega$ which is invariant under $Y\otimes_\rD V$.

Note that there is a non-zero constant $c_h$ such that
\[
  (h\cdot\phi)(u)=c_h\,\phi((h_0^{-1}\otimes 1)(u))\quad \textrm{for all }\phi\in \CS(X\otimes_\rD V),\,u\in X\otimes_\rD V,
\]
where ``$1$" stands for the identity  map of $V$.
With the above explicit realizations, it is then routine to verify the equality in the lemma. We omit the details.
\end{proof}

Put
\begin{equation}
\label{split-part}
   \oGb(U)_{\mathrm{split}}:=\bigcup_{X\textrm{ is a  Lagrangian subspace of $U$}} \oPb(X).
\end{equation}
By Lemma \ref{kudla}, we get a well-defined map
\begin{equation*}
   \kappa_{\sigma}: \oGb(U)_{\mathrm{split}}\rightarrow \C^\times,
\end{equation*}
which sends $h\in \oPb(X)$ to $\kappa_{\sigma, X}(h)$, for each
Lagrangian subspace $X$ of $U$.

\begin{lem}
The map $\kappa_{\sigma}: \oGb(U)_{\mathrm{split}}\rightarrow \C^\times$ is $\oGb(U)$-conjugation invariant.
\end{lem}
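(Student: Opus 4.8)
The plan is to show that the map $\kappa_\sigma\colon \oGb(U)_{\mathrm{split}}\to\C^\times$ is unchanged under conjugation by an arbitrary element $\bar g\in\oGb(U)$. Since $\kappa_\sigma$ is, by construction, the assembly of the characters $\kappa_{\sigma,X}$ on the various $\oPb(X)$, and since conjugation by $\bar g$ carries a Lagrangian $X$ (with image $X_0$ in $U$) to the Lagrangian $g(X_0)$ and carries $\oPb(X)$ to $\oPb(g(X_0))$, the statement will follow once I prove the equivariance
\[
  \kappa_{\sigma,\,g(X_0)}(\bar g h \bar g^{-1}) \;=\; \kappa_{\sigma,X}(h),\qquad h\in\oPb(X).
\]
First I would recall how $\kappa_{\sigma,X}$ is defined: via the transformation law \eqref{lambdax} of the (up to scalars unique, by Lemma \ref{haaro}) linear functional $\lambda_{X\otimes_\rD V}$ on $\omega$ invariant under $X\otimes_\rD V\subset\oH(U\otimes_\rD V)$. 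The key observation is that $\bar g$, acting on the space of $\omega$ (we may and do fix an extension of $\omega$ to a representation of $\oJb(U,V)$ using Lemma \ref{extension0}, or simply work with the given $\oJb_U(V)$-action since $\bar g\in\oGb(U)$ already acts), intertwines the $(X\otimes_\rD V)$-action with the $(g(X_0)\otimes_\rD V)$-action, because $\bar g$ normalizes $\oH(U\otimes_\rD V)$ and sends the abelian subgroup $X\otimes_\rD V$ to $g(X_0)\otimes_\rD V$. Hence $\lambda_{X\otimes_\rD V}\circ \bar g^{-1}$ is a non-zero linear functional invariant under $g(X_0)\otimes_\rD V$, so by uniqueness it is a scalar multiple of $\lambda_{g(X_0)\otimes_\rD V}$.

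With that normalization in hand, for $h\in\oPb(X)$ and $\phi\in\omega$ I would compute
\[
  \lambda_{g(X_0)\otimes_\rD V}\bigl((\bar g h\bar g^{-1})\cdot\phi\bigr)
  \;=\; (\text{scalar})\cdot \lambda_{X\otimes_\rD V}\bigl(\bar g^{-1}(\bar g h\bar g^{-1})\bar g\cdot(\bar g^{-1}\phi)\bigr)
  \;=\;(\text{scalar})\cdot\lambda_{X\otimes_\rD V}\bigl(h\cdot(\bar g^{-1}\phi)\bigr),
\]
and then apply \eqref{lambdax} to $h$ on the right-hand side. The scalar cancels between the two sides (it is the same scalar relating $\lambda_{g(X_0)\otimes_\rD V}$ to $\lambda_{X\otimes_\rD V}\circ\bar g^{-1}$), leaving $\kappa_{\sigma,X}(h)\,\abs{h}_X^{\dim\sigma/2}$. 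To finish I need $\abs{\bar g h\bar g^{-1}}_{g(X_0)}=\abs{h}_X$, which is immediate from the definition \eqref{absx}: the reduced norm of the restriction to a Lagrangian is conjugation-invariant under $\oG(U)$, since $g$ conjugates the restriction $h_0|_X$ (as an element of $\GL(X)$) to $(\bar g h\bar g^{-1})_0|_{g(X_0)}$ via the linear isomorphism $g|_{X_0}$, and the reduced norm (and hence its absolute value) is unchanged by such a base change. Therefore $\kappa_{\sigma,g(X_0)}(\bar g h\bar g^{-1})=\kappa_{\sigma,X}(h)$, which is exactly the asserted conjugation-invariance of $\kappa_\sigma$.

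The only genuinely delicate point is bookkeeping the scalar ambiguity in $\lambda_{X\otimes_\rD V}$: because these functionals are defined only up to scalars, one must be careful that the scalar introduced by $\bar g$ when passing from $X$ to $g(X_0)$ is genuinely the same on both occurrences in the displayed computation — this is true because it is literally the proportionality constant between the two fixed choices $\lambda_{g(X_0)\otimes_\rD V}$ and $\lambda_{X\otimes_\rD V}\circ\bar g^{-1}$, independent of $h$ and $\phi$. One could alternatively sidestep this by realizing $\omega$ in a Schrödinger model adapted to a polarization $U\otimes_\rD V=(X\otimes_\rD V)\oplus(Y'\otimes_\rD V)$ as in the proof of Lemma \ref{kudla}, expressing $\lambda_{X\otimes_\rD V}$ as integration against a Haar measure, and tracking the action of $\bar g$ explicitly; that approach makes the scalar a concrete Jacobian and the cancellation manifest, but it is more computational. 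Either way the argument is short, and I expect no serious obstacle — the main care is simply in the normalization of the Lagrangian-invariant functionals.
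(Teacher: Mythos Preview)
Your proposal is correct and follows essentially the same approach as the paper: transport the Lagrangian-invariant functional via $\bar g$ and use its uniqueness up to scalar to compare the characters $\kappa_{\sigma,X}$ and $\kappa_{\sigma,g(X_0)}$. The paper's presentation is marginally cleaner in that it sidesteps your scalar bookkeeping entirely: since $\kappa_{\sigma,X'}$ is defined by the transformation law \eqref{lambdax} for \emph{any} non-zero $X'\otimes_\rD V$-invariant functional, one may simply take $\lambda':=\lambda_{X\otimes_\rD V}\circ \bar g^{-1}$ itself as that functional and read off the character directly, without ever introducing a separate $\lambda_{g(X_0)\otimes_\rD V}$ and a proportionality constant to cancel.
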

\begin{proof}
Let $h\in \oPb(X)$ and let $g\in \oGb(U)$. Put $X':=g_0(X)$, where
$g_0$ denotes the image of $g$ under the covering homomorphism
$\oGb(U)\rightarrow \oG(U)$. Then the linear functional
\[
 \lambda': \omega\rightarrow \C, \quad \phi\mapsto \lambda_{X\otimes_\rD V}(g^{-1}\cdot \phi)
\]
is non-zero and invariant under $X'\otimes_\rD V\subset \oH(U\otimes_\rD V)$. The definition of $\kappa_{\sigma,X'}$ implies that
\[
  \lambda'((ghg^{-1})\cdot (g\cdot\phi))=\kappa_{\sigma,X'}(ghg^{-1})
  \abs{ghg^{-1}}_{X'}^{\frac{\dim \sigma}{2}}\lambda'(g\cdot \phi),\quad \phi\in \omega.
\]
This is equivalent to the following equality:
\be \label{lambdax2}
  \lambda_{X\otimes_\rD V}(h\cdot\phi)=\kappa_{\sigma,X'}(ghg^{-1})\abs{h}_{X}^{\frac{\dim \sigma}{2}}\lambda_{X\otimes_\rD V}(\phi),\quad \phi\in \omega.
\ee Therefore $\kappa_{\sigma}(ghg^{-1})=\kappa_{\sigma}(h)$, by
comparing \eqref{lambdax} and \eqref{lambdax2}.
\end{proof}

To summarize,  the map $\kappa_\sigma$  has the following two properties:
\begin{itemize}
  \item P1: it is $\oGb(U)$-conjugation invariant;
  \item P2: its restriction to $\oPb(X)$ is a continuous group homomorphism, for each Lagrangian subspace $X$ of $U$.
\end{itemize}
For any abelian topological group $A$, denote by
$\Hom(\oGb(U)_{\mathrm{split}}, A)$ the group of all maps from
$\oGb(U)_{\mathrm{split}}$ to $A$ with the properties P1 and P2. Thus
\begin{equation}
\label{kch} \kappa_\sigma\in \Hom(\oGb(U)_{\mathrm{split}},
\C^\times).
\end{equation}
It is easily verified that
\be \label{kappap}
  \widehat \CW_U^+ \rightarrow \Hom(\oGb(U)_{\mathrm{split}},
\C^\times), \quad \sigma\mapsto \kappa_\sigma
\ee
is a monoid homomorphism, and $\kappa_{\H_U}=1$.  Therefore the homomorphism \eqref{kappap} extends to a group homomorphism
\be \label{kappag}
  \widehat \CW_U \rightarrow \Hom(\oGb(U)_{\mathrm{split}},
\C^\times), \quad \sigma\mapsto \kappa_\sigma,
\ee
and descends to a group homomorphism
\be \label{kappag}
  \CW_U \rightarrow \Hom(\oGb(U)_{\mathrm{split}},
\C^\times), \quad \mathbf t\mapsto \kappa_{\mathbf t}.
\ee

\subsection{The group $\Hom(\oGb(U)_{\mathrm{split}}, \C^\times)$ and the Kudla homomorphism}
\label{k2} In this subsection, further assume that $U$ is non-zero.  We first work with the group $\oG(U)$. Put
\[
  \oG(U)_{\mathrm{split}}:=\bigcup_{X\textrm{ is a  Lagrangian subspace of $U$}} \oP(X).
\]
Similarly to $\Hom(\oGb(U)_{\mathrm{split}}, A)$, we define the group
$\Hom(\oG(U)_{\mathrm{split}}, A)$ for every abelian topological
group $A$.

Define a map
\[
  \nu_U: \oG(U)_{\mathrm{split}}\rightarrow \RE^\times /\RN^\times
\]
by sending $h\in \oP(X)$ to $\det(h|_X) \RN^\times$, for each
Lagrangian subspace $X$ of $U$. We omit the proof of the following
elementary lemma.

\begin{lem}\label{descg}
The map $\nu_U$ is well-defined, belongs to $\Hom(\oG(U)_{\mathrm{split}}, \RE^\times /\RN^\times)$, and is surjective.
The pull-back through $\nu_U$ yields a group isomorphism
\[\Hom(\oG(U)_{\mathrm{split}},A) \cong
  \Hom(\RE^\times /\RN^\times,A)
\]
for every abelian topological group $A$.
\end{lem}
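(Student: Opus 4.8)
The plan is to verify the four assertions of the lemma in turn: that $\nu_U$ is well defined, that it lies in $\Hom(\oG(U)_{\mathrm{split}},\rE^\times/\RN^\times)$, that it is surjective, and finally that pull-back through it induces the asserted isomorphism for every $A$ — this last point being where the real work lies.

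For well-definedness, suppose $h\in\oP(X)\cap\oP(Y)$ for two Lagrangian subspaces $X,Y$ of $U$. Then $h$ stabilizes $Z:=X\cap Y$, hence also $Z^\perp$, and on the $\epsilon$-Hermitian space $Z^\perp/Z$ it preserves the complete polarization $(X/Z)\oplus(Y/Z)$; therefore $h|_{Y/Z}$ is the inverse of the adjoint of $h|_{X/Z}$ with respect to the induced perfect $\rD$-sesquilinear pairing, and since the reduced norm satisfies $\det(T^*)=(\det T)^\iota$ we get, with $b:=\det(h|_{X/Z})\in\rE^\times$, that $\det(h|_{Y/Z})=(b^\iota)^{-1}$. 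As $\det(h|_X)=\det(h|_Z)\,\det(h|_{X/Z})$ and likewise for $Y$, the quotient $\det(h|_X)/\det(h|_Y)$ equals $b\,b^\iota\in\RN^\times$, so $\nu_U$ is well defined. Property P2 is immediate because $h\mapsto\det(h|_X)$ is a continuous homomorphism on $\oP(X)$; property P1 holds because $(ghg^{-1})|_{gX}$ is conjugate, via the $\rD$-linear isomorphism $g|_X\colon X\to gX$, to $h|_X$, hence has the same reduced norm. For surjectivity, fix a Lagrangian $X$ of positive $\rD$-dimension (possible since $U\ne0$ is split), identify the Siegel Levi $M(X)$ with $\GL_\rD(X)$ by restriction to $X$, and for a given $a\in\rE^\times$ choose $d\in\rD^\times$ of reduced norm $a$ (the reduced norm $\rD^\times\to\rE^\times$ is surjective over a non-archimedean local field); the element of $M(X)$ scaling one $\rD$-basis vector of $X$ by $d$ and fixing the others then has $\nu_U$-value $a\,\RN^\times$.

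For the isomorphism: the map $\chi\mapsto\chi\circ\nu_U$ takes values in $\Hom(\oG(U)_{\mathrm{split}},A)$ (P1 and P2 are inherited) and is injective because $\nu_U$ is surjective, so it suffices to show that every $f\in\Hom(\oG(U)_{\mathrm{split}},A)$ factors through $\nu_U$. Fix a Lagrangian $X$ and write $\oP(X)=M(X)\ltimes N(X)$. The crucial observation is that $N(X)\subseteq[\oP(X),\oP(X)]$: the unipotent radical of a Lagrangian stabilizer is abelian (for $n\in N(X)$, $n-\mathrm{id}$ maps $U$ into $X$ and annihilates $X$, so $N(X)$ is a vector group), the centre $\rE^\times$ of $M(X)$ acts on it through a nontrivial character $c:\rE^\times\to\rF^\times$ (a power of the norm of $\rE/\rF$), and for $\lambda_0\in\rE^\times$ with $c(\lambda_0)\ne1$ one has $[\lambda_0\,\mathrm{id},n]=(c(\lambda_0)-1)\cdot n$, which runs over all of $N(X)$ as $n$ does. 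Since $A$ is abelian, $f|_{\oP(X)}$ kills both $N(X)$ and $[M(X),M(X)]$; because $\mathrm{SK}_1$ is trivial over a non-archimedean local field, $M(X)^{\ab}\cong\rE^\times$ via the reduced norm, so $f|_{\oP(X)}=\psi_X\circ\det(\cdot|_X)$ for a unique continuous homomorphism $\psi_X:\rE^\times\to A$. Transitivity of $\oG(U)$ on Lagrangians (Witt's extension theorem \cite[Theorem 9.1]{Sch}) together with P1 and the identity $\det((ghg^{-1})|_{gX})=\det(h|_X)$ forces all the $\psi_X$ to coincide, say with $\psi$; and for $h$ in the common Levi $\oP(X)\cap\oP(Y)$ of two transverse Lagrangians, writing $a=\det(h|_X)$, the equalities $f(h)=\psi(a)$ and $f(h)=\psi(\det(h|_Y))=\psi((a^\iota)^{-1})$ give $\psi(a\,a^\iota)=1$ for all $a\in\rE^\times$, \ie $\psi$ is trivial on $\RN^\times$. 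Hence $\psi$ descends to $\chi\in\Hom(\rE^\times/\RN^\times,A)$, and $f=\chi\circ\nu_U$ on each $\oP(X)$, hence on $\oG(U)_{\mathrm{split}}=\bigcup_X\oP(X)$.

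The step I expect to be the main obstacle is the vanishing $f|_{N(X)}=1$ for an arbitrary abelian topological group $A$. For $A=\C^\times$ this is immediate by Pontryagin duality, since $\widehat{N(X)}^{M(X)}=1$; but for general $A$ one cannot argue on the dual, and must instead exhibit each element of $N(X)$ as a single commutator inside $\oP(X)$ — which is precisely what the scalar action of the centre of $M(X)$ provides once one notes that $N(X)$ is abelian. The remaining ingredients (Witt's extension theorem, surjectivity of the reduced norm, triviality of $\mathrm{SK}_1$ over local fields, and the behaviour of the reduced norm under adjoints) are all standard.
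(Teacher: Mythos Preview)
Your argument is correct. The paper itself omits the proof entirely, remarking only that the lemma is elementary, so there is no proof in the paper to compare against. Your route---reducing to $Z^\perp/Z$ for well-definedness, exhibiting each element of $N(X)$ as a single commutator with a central scalar in the Levi to kill $N(X)$ for arbitrary abelian $A$, invoking triviality of $\mathrm{SK}_1$ over local fields to identify $M(X)^{\mathrm{ab}}$ with $\rE^\times$, and then using a pair of transverse Lagrangians to force $\psi|_{\RN^\times}=1$---is exactly the kind of direct verification the authors presumably had in mind. One minor remark: the character $c(\lambda)=\lambda\lambda^\iota$ you use is literally the norm $\rE/\rF$ only when $\rD$ is a quadratic extension; in the other cases $\rE=\rF$ and it is $\lambda\mapsto\lambda^2$, but of course it is still nontrivial, which is all you need.
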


Now assume that $U$ is a symplectic space. Then we have two natural maps (both two-to-one):
\[
 \widetilde \Sp(U)_{\mathrm{split}}:=\oGb(U)_{\mathrm{split}}\rightarrow \Sp(U)_{\mathrm{split}}:=\oG(U)_{\mathrm{split}}
 \]
and
\[
  \RK=\oHil(\rF)\rightarrow \rF^\times /\RN^\times,\quad (a,t)\mapsto a.
\]
Here the first map is obtained by restricting the  metaplectic cover. In what follows we define a map
 \[
  \widetilde \nu_U\in \Hom(\widetilde \Sp(U)_{\mathrm{split}}, \oHil(\rF))
  \]
 which lifts $\nu_U\in \Hom(\Sp(U)_{\mathrm{split}}, \rF^\times /\RN^\times)$.

Write $\sigma_\psi=(V_\psi, \omega_\psi)$ for the unique element of $\widehat \CW_U^+$ so that $V_\psi$ is one dimensional and has a vector $v_0$ such that $\la v_0,v_0\ra_{V_\psi}=1$. Applying the arguments of Section \ref{k1}, we get a function $\kappa_{\sigma_\psi}\in \Hom(\widetilde \Sp(U)_{\mathrm{split}}, \C^\times)$.

On the other hand, define a character $\gamma_\psi$ on $\oHil(\rF)$ by
\begin{equation}\label{gammap}
   \gamma_{\psi}(a,t):=t\,\frac{\gamma(x\mapsto \psi(ax^2))}{\gamma(x\mapsto \psi(x^2))}\in \C^\times, \quad (a,t)\in \oHil(\rF),
\end{equation}
where the two $\gamma$'s on the right hand side of \eqref{gammap} stand for Weil
indices (see \cite[Section 14]{Weil} or \cite{Weis}) of
non-degenerate characters (on $\rF$) of degree two.

\begin{lem}\label{ksp}
{\rm (a)} There exists a unique element $\widetilde \nu_U\in \Hom(\widetilde \Sp(U)_{\mathrm{split}}, \oHil(\rF))$
 which lifts $\nu_U$ and makes the diagram
\[
 \begin{CD}
            \widetilde \Sp(U)_{\mathrm{split}} @>\widetilde \nu_U >>  \oHil(\rF)\\
            @VV \kappa_{\sigma_\psi} V           @VV\gamma_{\psi} V\\
           \C^\times @= \C^\times\\
  \end{CD}
\]
commute. Moreover, $\widetilde \nu_U$ is independent of the choice
of the non-trivial unitary character $\psi$.

{\rm (b)} The map $\widetilde \nu_U$ is surjective, and via pull-back it yields a group isomorphism
\[
  \Hom(\widetilde \Sp(U)_{\mathrm{split}},A)\cong  \Hom( \oHil(\rF),A)
\]
for every abelian topological group $A$.
\end{lem}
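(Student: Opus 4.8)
The plan is to establish Lemma \ref{ksp} by analyzing the $\oGb(U)$-conjugation invariant functions on $\widetilde{\Sp}(U)_{\mathrm{split}}$ and comparing them with the explicit character $\kappa_{\sigma_\psi}$. First I would record what $\nu_U$ does on a two-to-one cover: since $\widetilde{\Sp}(U)_{\mathrm{split}}\to \Sp(U)_{\mathrm{split}}$ has kernel $\{1,\varepsilon_U\}$ and the target $\oHil(\rF)\to \rF^\times/\RN^\times$ has kernel $\{(1,1),(1,-1)\}$, a lift $\widetilde\nu_U$ of $\nu_U$ is determined up to twisting by a map into $\{(1,\pm1)\}$, and the constraint that the lift respect the group-homomorphism structure on each $\oPb(X)$ means such an ambiguity is a genuine character of $\widetilde{\Sp}(U)_{\mathrm{split}}$ trivial on $\oGb(U)_{\mathrm{split}}\cap\Sp(U)$. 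The commutativity of the square with $\gamma_\psi$ and $\kappa_{\sigma_\psi}$ is then what pins the lift down: I would argue that $\gamma_\psi$ is a \emph{faithful} character of $\oHil(\rF)$ on the kernel subgroup $\{(1,\pm 1)\}$ (indeed $\gamma_\psi(1,-1)=-1$), so there is at most one lift making the diagram commute, provided $\kappa_{\sigma_\psi}(\varepsilon_U)=-1$, i.e.\ $\omega_{\sigma_\psi}$ is genuine in the sense of the one-dimensional space $V_\psi$ having $\dim V_\psi=1$ odd. That genuineness is exactly the statement recorded after Definition \ref{defenh} and in Lemma \ref{extension00}, applied to $(U,V_\psi)$.

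For existence, I would give the lift concretely. Realize $\sigma_\psi=(V_\psi,\omega_\psi)$: the symplectic space $U\otimes_\rD V_\psi$ is essentially $U$ itself (since $V_\psi$ is one-dimensional with norm $1$), and for a Lagrangian $X\subset U$ the Schr\"odinger model on $\CS(X)$ gives, via equation \eqref{lambdax}, a formula for $\kappa_{\sigma_\psi,X}$ on $\oPb(X)$ in terms of Weil indices — this is precisely the classical computation (as in \cite{Weil}, \cite[Section 14]{Weil}) that the action of the Levi $\GL(X)$ on the Schr\"odinger functional picks up the factor $\gamma(x\mapsto\psi(\langle\cdot\rangle x^2))/\gamma(x\mapsto\psi(x^2))$ composed with $\det|_X$. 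Comparing with the definition \eqref{gammap} of $\gamma_\psi$ on $\oHil(\rF)$, I would \emph{define} $\widetilde\nu_U$ on $\oPb(X)$ by $h\mapsto(\det(h|_X)\RN^\times,\ s_X(h))$ where $s_X(h)\in\{\pm1\}$ is the unique sign making $\gamma_\psi$ of the pair equal $\kappa_{\sigma_\psi}(h)$; well-definedness across different $X$ is then Lemma \ref{kudla} applied to $\sigma=\sigma_\psi$ (which gives agreement of $\kappa_{\sigma_\psi,X}$ and $\kappa_{\sigma_\psi,Y}$ on $\oPb(X)\cap\oPb(Y)$) together with the corresponding compatibility of $\nu_U$ from Lemma \ref{descg}. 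That $\widetilde\nu_U$ has the two properties P1, P2 follows because $\kappa_{\sigma_\psi}$ does and $\gamma_\psi$ is an isomorphism onto its image from a quotient, so P1/P2 transfer. Independence of $\psi$ I would get by observing that replacing $\psi$ by $\psi_a(x):=\psi(ax)$ rescales both $\kappa_{\sigma_\psi}$ and the Weil-index data in a way that cancels in the ratio defining $\gamma_\psi$; alternatively, $\widetilde\nu_U$ is characterized by the $\psi$-independent property of lifting $\nu_U$ and taking value $(1,-1)$ on $\varepsilon_U$ and agreeing with $\nu_U$-pullback of the canonical second-order data, and uniqueness from part (a) forces independence.

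For part (b): surjectivity of $\widetilde\nu_U$ follows from surjectivity of $\nu_U$ (Lemma \ref{descg}) together with the fact that $\varepsilon_U\in\widetilde{\Sp}(U)_{\mathrm{split}}$ maps to $(1,-1)$, so the image contains the full kernel of $\oHil(\rF)\to\rF^\times/\RN^\times$ and surjects onto the quotient, hence is everything. For the isomorphism $\Hom(\widetilde{\Sp}(U)_{\mathrm{split}},A)\cong\Hom(\oHil(\rF),A)$, injectivity of pull-back is immediate from surjectivity of $\widetilde\nu_U$; for surjectivity of pull-back I would take $f\in\Hom(\widetilde{\Sp}(U)_{\mathrm{split}},A)$ and show it factors through $\widetilde\nu_U$: its restriction to $\Sp(U)$-part factors through $\nu_U$ by Lemma \ref{descg}, reducing to the question on $\{1,\varepsilon_U\}$, and one checks the value $f(\varepsilon_U)$ is consistent (uses that $\varepsilon_U$ is central and that P1 forces $f$ on $\varepsilon_U$ to be a well-defined element of $A[2]$, matching the $(1,-1)$-component).

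I expect the main obstacle to be the existence half of part (a): verifying cleanly that the Schr\"odinger-model computation of $\kappa_{\sigma_\psi,X}$ on the Siegel Levi really produces the Weil-index ratio appearing in \eqref{gammap}, so that the candidate $\widetilde\nu_U$ lands in $\oHil(\rF)$ and the square commutes on the nose (not just up to a sign that could vary with $X$). The coherence of these signs across all Lagrangians $X$ — i.e.\ that the local Weil-index normalizations glue — is the delicate point; I would handle it by reducing via Lemma \ref{kudla} to a single $\oPb(X)$ and its intersections, and within $\oPb(X)$ use the explicit cocycle description of the metaplectic cover over the Siegel parabolic (Rao's cocycle / Kudla's splitting \cite[Proposition 4.1]{Ku2}), where the relevant identity is the standard multiplicativity of Weil indices $\gamma(\psi\circ q_1)\gamma(\psi\circ q_2)=\gamma(\psi\circ(q_1\oplus q_2))\cdot(\text{Hilbert symbol correction})$ — which is exactly the group law \eqref{atat} on $\oHil(\rF)$.
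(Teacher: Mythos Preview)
Your approach to part (a) differs from the paper's. For existence, you propose computing $\kappa_{\sigma_\psi}$ directly via Rao's explicit metaplectic cocycle over the Siegel parabolic, reading off the Weil-index ratio, and defining $\widetilde\nu_U$ componentwise. The paper argues indirectly: it introduces a second character $\psi'(x)=\psi(\alpha x)$ and observes that the product $\kappa_{\sigma_\psi}\cdot\kappa_{\sigma_{\psi'}}$ arises from a two-dimensional (hence even) orthogonal space, so it descends to $\Sp(U)$ and is given by the closed formula $\kappa_{\sigma_\psi}(h)\,\kappa_{\sigma_{\psi'}}(h)=(\nu_U(h_0),-\alpha)_\rF$. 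Matching this against the identity $\gamma_\psi(a,t)\,\gamma_{\psi'}(a,t)=(a,-\alpha)_\rF$ shows at once that $\kappa_{\sigma_{\psi'}}(h)=\gamma_{\psi'}(\nu_U(h_0),t_h)$ with $t_h\in\{\pm1\}$ independent of $\psi'$, giving existence and $\psi$-independence simultaneously without ever unpacking a cocycle. Your direct route is valid in principle (the Rao--Kudla formulas do produce the required Weil-index expression on the Levi) but more laborious, and the paper's two-character trick is the device you were missing.

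There is a genuine gap in your independence-of-$\psi$ argument. Your ``alternative'' characterization --- that $\widetilde\nu_U$ lifts $\nu_U$, sends $\varepsilon_U$ to $(1,-1)$, and lies in $\Hom(\widetilde{\Sp}(U)_{\mathrm{split}},\oHil(\rF))$ --- does \emph{not} determine $\widetilde\nu_U$ uniquely: for any $b\in\rF^\times/(\rF^\times)^2$ the twist $h\mapsto\widetilde\nu_U(h)\cdot\bigl(1,(\nu_U(h_0),b)_\rF\bigr)$ is another such lift (it is still a homomorphism on each $\oPb(X)$ by bimultiplicativity of the Hilbert symbol, and still sends $\varepsilon_U\mapsto(1,-1)$ since $\nu_U(1)=1$). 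So the uniqueness clause of (a) cannot by itself yield $\psi$-independence. Your first ``cancellation'' idea can be salvaged, but making it precise amounts to proving $\kappa_{\sigma_{\psi'}}/\kappa_{\sigma_\psi}=(\nu_U(\cdot),\alpha)_\rF$, which is exactly the paper's product identity again. Your sketch of part (b) is fine.
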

\begin{proof} The uniqueness assertion of Part (a) is obvious. Let $\psi'$ be an arbitrary non-trivial unitary character of $\rF$.  Replacing the fixed character $\psi$ by $\psi'$ in the previous arguments, we get two functions
\[
  \kappa_{\sigma_{\psi'}}\in  \Hom(\widetilde \Sp(V)_{\mathrm{split}},\C^\times)\quad \textrm{and}\quad \gamma_{\psi'}: \oHil(\rF)\rightarrow \C^\times.
\]
It is known that \cite[Corollary A.5.]{Rao}
\begin{equation}\label{gg}
 \gamma_{\psi}(a,t)\, \gamma_{\psi'}(a,t)=(a,-\alpha)_\rF,\quad (a,t)\in \oHil(\rF),
\end{equation}
where $\alpha\in \rF^\times$ is determined by the formula
\[
  \psi'(x)=\psi(\alpha x),\quad x\in \rF.
\]

Let $h\in \widetilde \Sp(U)_{\mathrm{split}}$ and denote by $h_0$
its image under the map $\widetilde
\Sp(U)_{\mathrm{split}}\rightarrow \Sp(U)_{\mathrm{split}}$. By
using the Schr\"{o}dinger model for the dual pair of $\Sp(U)$ and an
even orthogonal group \cite[Section II.4]{Ku3}, we have that
\begin{equation}\label{wa4}
    \kappa_{\sigma_\psi}(h)\, \kappa_{\sigma_{\psi'}}(h)=(\nu_U(h_0), -\alpha)_\rF.
\end{equation}
Then it is elementary to see that \eqref{gg} and \eqref{wa4} imply that
\[
  \kappa_{\sigma_{\psi'}}(h)=\gamma_{\psi'}(\nu_U(h_0), t_h),
\]
where $t_h\in\{\pm 1\}$ is independent of $\psi'$. Put $\widetilde
\nu_U(h):=(\nu_U(h_0), t_h)$. It is then routine to check that
$\widetilde \nu_U\in \Hom(\widetilde \Sp(U)_{\mathrm{split}},
\oHil(\rF))$, and $\widetilde \nu_U$ has all properties of part (b)
of the lemma.
\end{proof}

\vvsp
We are back in the general case. Recall the compact abelian group $\RK$ from the beginning of this section. Define ${\bar \nu}_U\in \Hom(\oGb(U)_{\mathrm{split}},\RK)$ by
\begin{equation}
\label{muprime}
  {\bar \nu}_U:=\left\{
              \begin{array}{ll}
                \widetilde \nu_U, & \hbox{if  $U$ is a symplectic space;} \\
                \nu_U, & \hbox{otherwise.}
              \end{array}
            \right.
\end{equation}
For all cases, combining Lemmas \ref{descg} and \ref{ksp}, we get an isomorphism
 \begin{equation}
 \label{muinverse}
 ({\bar \nu_U})_*: \quad \Hom(\oGb(U)_{\mathrm{split}}, \C^\times) \overset{\sim}{\rightarrow} \rK^*.
 \end{equation}
We then have the homomorphism
\begin{equation}\label{kc}
  \xi_U: \CW_U\rightarrow \rK^*,\quad \mathbf t\mapsto ({\bar \nu}_U)_* (\kappa_\mathbf t).
\end{equation}

\begin{dfnl}
The Kudla homomorphism
\[
  \xi_\infty: \CW_\infty\rightarrow \RK^*
\]
 is the composition of $\xi_U$ with the natural isomorphism $\CW_\infty\rightarrow \CW_U$ (recall that $U$ is assumed to be split and non-zero in this subsection).
\end{dfnl}

\begin{lem}\label{k0}
The Kudla homomorphism $\xi_\infty$ is independent of the non-zero split
$\epsilon$-Hermitian right $\rD$-vector spaces $U$.
\end{lem}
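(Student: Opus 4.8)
The plan is to derive the lemma from a single compatibility statement: for every $\rD$-linear isometric embedding $\varphi\colon U\hookrightarrow U'$ of \emph{non-zero split} $\epsilon$-Hermitian right $\rD$-vector spaces, one has
\[
   \xi_U\circ\mathrm r_\varphi=\xi_{U'}\colon\ \CW_{U'}\longrightarrow\RK^*.
\]
Granting this, the lemma follows quickly. Given two non-zero split spaces $U_1,U_2$, their orthogonal direct sum $U':=U_1\oplus U_2$ is again non-zero and split (both $\dim$ and $\rank$ are additive under $\oplus$), and it carries isometric embeddings $U_i\hookrightarrow U'$. By Lemma \ref{rewg} the resulting maps $\mathrm r^{U'}_{U_i}\colon\CW_{U'}\to\CW_{U_i}$ are the transition maps of the inverse system defining $\CW_\infty$; since non-zero split spaces fall into the cases (i)--(iii) of the last lemma of Section \ref{EOR}, the natural maps $\CW_\infty\to\CW_{U_1}$, $\CW_\infty\to\CW_{U_2}$, $\CW_\infty\to\CW_{U'}$ are all isomorphisms, and $\mathrm r^{U'}_{U_i}\circ(\CW_\infty\to\CW_{U'})=(\CW_\infty\to\CW_{U_i})$ by the universal property. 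Hence
\[
   \xi_{U_i}\circ(\CW_\infty\to\CW_{U_i})=\xi_{U_i}\circ\mathrm r^{U'}_{U_i}\circ(\CW_\infty\to\CW_{U'})=\xi_{U'}\circ(\CW_\infty\to\CW_{U'}),
\]
which is manifestly independent of $i$; this is exactly the assertion that $\xi_\infty$ is independent of the chosen $U$.

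It thus remains to prove the displayed compatibility. Fix $\varphi\colon U\hookrightarrow U'$, let $U^\perp$ be the orthogonal complement of $U$ in $U'$ (itself split, since $\dim$ and $\rank$ are additive), and let $\iota\colon\oGb(U)\to\oGb(U')$ be the genuine homomorphism obtained by restricting \eqref{embvvp1} to the first factor; for Lagrangians $X\subset U$ and $Z\subset U^\perp$ it maps $\oPb(X)$ into $\oPb(X\oplus Z)$, so it carries $\oGb(U)_{\mathrm{split}}$ into $\oGb(U')_{\mathrm{split}}$. The key point is the identity
\[
   \kappa_{\sigma'}\circ\iota=\kappa_{\mathrm r_\varphi(\sigma')}\qquad\text{for every }\sigma'=(V,\omega')\in\widehat\CW^+_{U'}.
\]
To prove it, realize $\omega'$ in the Schr\"odinger model attached to a complete polarization of $U'\otimes_\rD V$ having $(X\oplus Z)\otimes_\rD V$ as one of its halves; by the restriction decomposition \eqref{rdec} and Lemma \ref{haaro}, the invariant functional $\lambda_{(X\oplus Z)\otimes_\rD V}$ on $\omega'$ is the tensor product of the invariant functional $\lambda_{X\otimes_\rD V}$ on $\omega'|_U$ and the invariant functional $\lambda_{Z\otimes_\rD V}$ on $\omega'|_{U^\perp}$ (up to normalization of Haar measures, which plays no role). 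Any $h\in\oPb(X)$ acts on $X\oplus Z$ by $h|_X\oplus\mathrm{id}_Z$, so $|\iota(h)|_{X\oplus Z}=|h|_X$, while $\dim\mathrm r_\varphi(\sigma')=\dim\sigma'$ since restriction leaves $V$ unchanged; comparing the two transformation laws of the form \eqref{lambdax}, once for $\sigma'$ on $\oPb(X\oplus Z)$ and once for $\mathrm r_\varphi(\sigma')$ on $\oPb(X)$, forces $\kappa_{\sigma',X\oplus Z}(\iota(h))=\kappa_{\mathrm r_\varphi(\sigma'),X}(h)$. This is the asserted identity on $\oPb(X)$; since both sides are $\oGb(U)$-conjugation invariant and $\oGb(U)$ is transitive on Lagrangians of $U$ (Witt), it holds on all of $\oGb(U)_{\mathrm{split}}$. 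Passing to the Grothendieck group and then to Witt towers as in \eqref{kc}, we get $\kappa_{\mathbf t'}\circ\iota=\kappa_{\mathrm r_\varphi(\mathbf t')}$ for all $\mathbf t'\in\CW_{U'}$.

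The second, easier ingredient is $\bar\nu_{U'}\circ\iota=\bar\nu_U$ on $\oGb(U)_{\mathrm{split}}$. For $U$ not symplectic this is immediate from the definition of $\nu_U$ via the reduced norm on the Lagrangian, since $\det(\iota(h)|_{X\oplus Z})=\det(h|_X)$. For $U$ symplectic, $\bar\nu_U=\widetilde\nu_U$ is characterized in Lemma \ref{ksp} by $\kappa_{\sigma_\psi}=\gamma_\psi\circ\widetilde\nu_U$; here $\oG(U)$ is perfect, so $(\oG(U))^*$ is trivial and $\sigma_\psi^U$ is literally the unique element of $\widehat\CW^+_U$ with one-dimensional $V$ of the prescribed form, whence $\mathrm r_\varphi(\sigma_\psi^{U'})=\sigma_\psi^U$. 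Applying the $\kappa$-identity to $\sigma_\psi^{U'}$ yields $\kappa_{\sigma_\psi^{U'}}\circ\iota=\kappa_{\sigma_\psi^U}$; combined with $\nu_{U'}\circ\iota=\nu_U$ and the fact that $\gamma_\psi(a,t)=t\,\gamma_\psi(a,1)$ with $\gamma_\psi(a,1)\neq 0$, this forces the $\{\pm1\}$-components to agree, i.e. $\widetilde\nu_{U'}\circ\iota=\widetilde\nu_U$. Finally, by \eqref{muinverse}, if $\chi'\in\RK^*$ is the unique character with $\chi'\circ\bar\nu_{U'}=\kappa_{\mathbf t'}$ --- that is, $\chi'=\xi_{U'}(\mathbf t')$ --- then precomposing with $\iota$ gives $\chi'\circ\bar\nu_U=\kappa_{\mathrm r_\varphi(\mathbf t')}$, i.e. $\chi'=\xi_U(\mathrm r_\varphi(\mathbf t'))$, which is the desired equality $\xi_{U'}=\xi_U\circ\mathrm r_\varphi$. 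I expect the $\kappa$-identity to be the main obstacle: it requires keeping careful track of how the Schr\"odinger functional and its normalizing factor $|\cdot|_X^{\dim\sigma/2}$ transform under the restriction decomposition \eqref{rdec}, and it is precisely at the step of choosing the Lagrangians $X\subset U$ and $Z\subset U^\perp$ that the split hypothesis is used.
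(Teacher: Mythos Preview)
Your proof is correct and follows essentially the same approach as the paper's: both reduce to the compatibility $\xi_U\circ\mathrm r_\varphi=\xi_{U'}$ for an isometric embedding $\varphi:U\hookrightarrow U'$ of non-zero split spaces, and both deduce this from the commutativity $\bar\nu_{U'}\circ\iota=\bar\nu_U$. The paper simply asserts the commutative square for $\bar\nu$ and calls the result a ``direct consequence,'' whereas you supply the two ingredients explicitly --- the $\kappa$-identity $\kappa_{\sigma'}\circ\iota=\kappa_{\mathrm r_\varphi(\sigma')}$ via the Schr\"odinger model and the factorization \eqref{rdec}, and the $\bar\nu$-compatibility (including a clean argument in the symplectic case using the characterization in Lemma~\ref{ksp}). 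Your added reduction step, passing through $U'=U_1\oplus U_2$ to compare two arbitrary non-zero split spaces, makes explicit what the paper leaves to the reader about the inverse system.
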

\begin{proof}
Let $\varphi:U\rightarrow U'$ be a $\rD$-linear isometric embedding of non-zero split
$\epsilon$-Hermitian right $\rD$-vector spaces. Recall the restriction homomorphism $\mathrm r_\varphi: \CW_{U'}\rightarrow \CW_U$ from \eqref{embvvp4}. It suffices to show that $\xi_U \circ \mathrm r_\varphi =\xi_{U'}$.
This is a direct consequence of the fact that the
diagram
\[
  \begin{CD}
            \oGb(U)_{\mathrm{split}} @>>> \oGb(U')_{\mathrm{split}}\\
            @VV {{\bar \nu}_U}  V           @VV{\bar \nu}_{U'}  V\\
           \rK @= \rK\\
  \end{CD}
\]
commutes, where the top horizontal arrow is obtained by restricting the homomorphism \eqref{embvvp1}.
\end{proof}

\subsection{The group $\widehat \CW_\infty$}
\label{k3}

Recall the homomorphisms
\[
  \dim: \widehat \CW_0\rightarrow \Z\quad \textrm{and}\quad \disc: \widehat \CW_0\rightarrow \Delta.
\]
Still write
\[
  \dim: \widehat \CW_\infty\rightarrow \Z\quad \textrm{and}\quad \disc: \widehat \CW_\infty\rightarrow \Delta
\]
for their respective compositions with the natural homomorphism $\widehat \CW_\infty\rightarrow \widehat \CW_0$. Denote by
\be \label{kcw}
  \widehat \xi_\infty: \widehat \CW_\infty\rightarrow \RK^*
\ee
the compositions of $\xi_\infty: \CW_\infty\rightarrow \RK^*$ with the natural homomorphism $\widehat \CW_\infty\rightarrow \CW_\infty$.

\begin{thml}\label{cwinf}
The group $\widehat \CW_\infty$ is canonically isomorphic to the group of Table \ref{tablecwi}. Under this isomorphism, the homomorphisms $\dim: \widehat \CW_\infty\rightarrow \Z$ and $\disc: \widehat \CW_\infty\rightarrow \Delta$ are identical to the obvious projections, and $\widehat \xi _\infty: \widehat \CW_\infty\rightarrow \RK^*$ is identical to the obvious projection except for the following two cases:
\begin{itemize}
  \item[(i)] when $\epsilon=1$ and $\rD$ is a quaternion algebra, $\widehat \xi_\infty$ is identical to the homomorphism
\be \label{yamana}
  (m,\delta)\mapsto ((-1)^m\delta, \,\cdot\,)_\rF;
\ee
  \item[(ii)]  when $\epsilon=-1$ and $\rD=\rF$, $\widehat \xi_\infty$ is identical to the homomorphism
\be\label{homsym}
  (m,(\delta,t))\mapsto \left\{
                          \begin{array}{ll}
                            \left((-1)^\frac{m^2-m}{2} \delta, \, p_{\rF}(\,\cdot\,)\right)_\rF, & \hbox{if $m$ is even;} \\
                            \gamma_{\psi'}, & \hbox{if $m$ is odd,}
                          \end{array}
                        \right.
\ee
\end{itemize}
where $p_\rF:  \oHil(\rF)\rightarrow \rF^\times/\RN^\times$ is the projection map, $\psi'$ is the character of $\rF$ given by
\[
  \psi'(x):=\psi((-1)^\frac{m^2-m}{2} \delta x), \quad x\in\rF,
\]
and $\gamma_{\psi'}$ is defined as in \eqref{gammap}.
\end{thml}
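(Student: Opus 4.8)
The plan is to realize $\widehat{\CW}_\infty$ as a group extension with explicitly known kernel and quotient, and then to pin down both the extension and the homomorphism $\widehat{\xi}_\infty$ by evaluating the Kudla character $\kappa$ on a small generating set. First I would pass to the inverse limit in Proposition \ref{exactcw}(a): the exact sequences $1\to(\oG(U))^*\to\widehat{\CW}_U\xrightarrow{\widehat{q}_U}\widehat{\CW}_0\to1$ are compatible with the restriction maps $\widehat{\mathrm r}^{U'}_U$ (one has $\widehat{q}_U\circ\widehat{\mathrm r}^{U'}_U=\widehat{q}_{U'}$ since restriction does not change the underlying space $V$, and $\widehat{\mathrm r}^{U'}_U$ carries $(\oG(U'))^*$ into $(\oG(U))^*$ by pullback along $\oG(U)\to\oG(U')$). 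By the second assertion of Theorem \ref{cq} the system $\{(\oG(U))^*\}_U$ is eventually constant with value $\RA_\infty^*$, hence Mittag--Leffler, so the inverse limit of the sequences is again exact:
\[
  1\to\RA_\infty^*\to\widehat{\CW}_\infty\xrightarrow{\widehat{q}_\infty}\widehat{\CW}_0\to1 .
\]
Theorem \ref{wg0} identifies $\widehat{\CW}_0$ with the image of $\dim\times\disc$ in $\Z\times\Delta$, and Theorem \ref{cq} identifies $\RA_\infty^*$ with the character group of the group of Table \ref{tnu}. Since $\dim$ and $\disc$ factor through $\widehat{q}_\infty$, they automatically become the obvious projections for any presentation of $\widehat{\CW}_\infty$ compatible with this sequence, so everything reduces to computing $\widehat{\xi}_\infty$ and the extension class.

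Since $\widehat{\CW}_\infty$ is generated by $\RA_\infty^*$ together with a chosen lift of a generating set of $\widehat{\CW}_0$, and $\widehat{\CW}_0$ is generated (by diagonalization of forms) by the classes of one-dimensional spaces $\langle a\rangle$ and the hyperbolic plane $\H$, it suffices to evaluate $\widehat{\xi}_\infty$ on $\RA_\infty^*$ and on each $\langle a\rangle$; one has $\kappa_{\H_U}=1$ by construction. For $\chi\in(\oG(U))^*$ the corresponding element of $\widehat{\CW}_U$ is $(0,\bar\chi\otimes\psi)$, of dimension $0$, so \eqref{lambdax} gives $\kappa_{(0,\bar\chi\otimes\psi)}=\bar\chi|_{\oGb(U)_{\mathrm{split}}}$; thus $\widehat{\xi}_\infty|_{\RA_\infty^*}$ is the composite of the restriction map $(\oG(U))^*\to\Hom(\oG(U)_{\mathrm{split}},\C^\times)$ with the isomorphism $(\bar\nu_U)_*$ of \eqref{muinverse}. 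Combining Lemma \ref{descg} (and Lemma \ref{ksp} in the symplectic case) with the explicit descriptions of $\mu_U$ in Cases 1--4 of Section \ref{commutator}, one checks that $\RA_\infty^*\to\RK^*$ is injective unless $U$ is a symmetric bilinear space, in which case the sign character is trivial on every Siegel parabolic and the map is surjective with kernel of order $2$ generated by that character; this accounts for the extra $\Z/2\Z$ factor appearing in Table \ref{tablecwi} in that case.

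The remaining computation, $\widehat{\xi}_\infty$ on a one-dimensional $V=\langle a\rangle$, is the technical core, and it is exactly the ``Schr\"odinger functional'' calculation underlying the doubling method of Section \ref{kr}. Realizing $\omega|_{\oH(U\otimes_\rD V)}$ in the Schr\"odinger model attached to the Lagrangian $X\otimes_\rD V$, the Siegel parabolic $\oPb(X)$ of $\oGb(U)$ acts on the $X\otimes_\rD V$-invariant functional through $\kappa_{(\langle a\rangle,\omega),X}(h)\,\abs{h}_X^{1/2}$, and this is the same local computation that produces the character $\chi_V$ recorded in Section \ref{kr}, now with $U$ in place of $U^\square$ and $\dim V=1$. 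When $U$ is a quaternionic Hermitian space this gives $\kappa_{(\langle a\rangle,\omega),X}(h)=\big((-1)\,\disc\langle a\rangle,\,\nu_U(h)\big)_\rF$; taking a diagonalization $V\cong\langle a_1,\dots,a_m\rangle$ and using bimultiplicativity of the Hilbert symbol together with $\disc V=\prod_i\disc\langle a_i\rangle$ yields $\widehat{\xi}_\infty(V)=\big((-1)^m\disc V,\,\cdot\,\big)_\rF$, which is \eqref{yamana}. When $U$ is symplectic, Lemma \ref{ksp} identifies $\kappa_{\sigma_\psi}$ with $\gamma_\psi\circ\bar\nu_U$; replacing $\psi$ by $\psi_a\colon x\mapsto\psi(ax)$ treats $\langle a\rangle$, and multiplying the resulting Weil-index characters over a diagonalization and iterating the cocycle relation \eqref{gg} collapses the product to a form depending only on $\dim V$, $\disc V$ and $\operatorname{hass}V$, namely \eqref{homsym}; the even/odd dichotomy and the sign $(-1)^{(m^2-m)/2}$ emerge from this iteration and from the twisted group law \eqref{atat} on $\oHil(\rF)$. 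In every other case the analogous one-dimensional calculation shows $\widehat{\xi}_\infty$ of $\langle a\rangle$ is, through $(\bar\nu_U)_*$, the ``obvious'' coordinate of the target, so $\widehat{\xi}_\infty$ is a projection; the asserted independence of $\psi$ outside the odd-dimensional symmetric bilinear case is visible at this stage, being built into $\nu_U$, into $\widetilde\nu_U$ via Lemma \ref{ksp}(a), and into $\widehat{\xi}_\infty$ via Lemma \ref{k0}.

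Finally I would assemble. Combining the exact sequence with the two families of computations, the map $(\dim,\disc,\widehat{\xi}_\infty)$, augmented when $U$ is a symmetric bilinear space by the sign character $\oG(U)\to\{\pm1\}$, is injective: any element of its kernel lies in $\RA_\infty^*$ (since $\dim\times\disc$ is injective on $\widehat{\CW}_0$ by Theorem \ref{wg0}) and is killed by $\widehat{\xi}_\infty$, hence is trivial except when $U$ is a symmetric bilinear space, where the added sign coordinate separates the residual $\Z/2\Z$. Surjectivity onto the group of Table \ref{tablecwi} follows by producing preimages of a generating set: characters realize the image of $\RA_\infty^*$ in $\RK^*$, while one-dimensional spaces (and their formal differences) realize the remaining generators of $\widehat{\CW}_0$ and, in the two exceptional cases, the rest of $\RK^*$ — here one uses the explicit formulas \eqref{yamana}, \eqref{homsym} together with surjectivity of $\disc$ onto $\Delta$ — and a comparison of orders using Theorems \ref{wg0} and \ref{cq} finishes it. The main obstacle is the Weil-index bookkeeping behind \eqref{homsym}: one must prove that $\prod_i\gamma_{\psi_{a_i}}$ over any diagonalization depends only on $(\dim V,\disc V,\operatorname{hass}V)$ and collapses precisely to the stated closed form, getting the quadratic-in-$m$ sign and the even/odd split right, which requires carefully reconciling all the normalization conventions (the chosen Weil index $\gamma_\psi$, the twisted multiplication \eqref{atat}, and the precise form of $\widetilde\nu_U$).
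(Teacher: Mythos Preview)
Your approach is essentially the same as the paper's: both use the exact sequence $1\to(\oG(U))^*\to\widehat{\CW}_U\to\widehat{\CW}_0\to1$ from Proposition~\ref{exactcw}(a), identify the kernel via Theorem~\ref{cq} and the quotient via Theorem~\ref{wg0}, and pin down $\widehat\xi_\infty$ by computing the Kudla character on one-dimensional spaces (Lemma~\ref{ksp} for the symplectic case, the Yamana/Schr\"odinger-model calculation for the quaternionic Hermitian case). The only organizational difference is that the paper simply fixes one sufficiently large split $U$ and runs through the five cases separately, writing down the isomorphism directly in each, whereas you take the inverse limit with a Mittag--Leffler argument and aim for a more uniform assembly via injectivity of $(\dim,\disc,\widehat\xi_\infty)$ augmented by the sign character; both routes rest on the same computations, and your ``comparison of orders'' at the end should really be read as a structural comparison (the groups involved are infinite).
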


\begin{table}[h]
\caption{The generalized Witt-Grothendieck group $\widehat \CW_\infty$}\label{tablecwi}
\centering 
\begin{tabular}{c c c c c c c} 
\hline
$\rD$ & \vline & $\rF$ & quadratic extension & quaternion algebra\\
\hline
 $\epsilon=1$ & \vline & $2\Z\times \left(\frac{\rF^\times}{\RN^\times}\right)^* \times \{\pm 1\}^*$ & $\left(\Z\times_{\Z/2\Z} \frac{\rE_{\pm}^\times}{\mathrm N^\times}\right)\times_{\Z/2\Z}\left(\frac{\rE^\times}{\RN^\times}\right)^*$  & $\phantom{\frac{\overbrace a}{\underbrace a}}\Z\times \frac{\rF^\times}{\RN^\times}\phantom{\frac{\overbrace a}{a}}$\\ 
\hline
$\epsilon=-1$ & \vline &$\Z\times \oHil(\rF)$ & $\left(\Z\times \frac{\rF^\times}{\mathrm N^\times}\right)\times_{\Z/2\Z} \left(\frac{\rE^\times}{\RN^\times}\right)^*$  & $\phantom{\frac{\overbrace a}{\underbrace a}}\Z \times\left(\frac{\rF^\times}{\RN^\times}\right)^*\phantom{\frac{\overbrace a}{a}}$  \\
\hline  
\end{tabular}
\label{table:nonlin} 
\end{table}

The fiber product $\Z\times_{\Z/2\Z} \frac{\rE_{\pm}^\times}{\mathrm N^\times}$ of  Table \ref{tablecwi} is the same as in Table \ref{tablecw0}.
For the data in the definitions of the other fiber products in Table \ref{tablecwi}, we are given the homomorphism $\left(\frac{\rE^\times}{\RN^\times}\right)^*\rightarrow \Z/2\Z$ whose kernel equals $\left(\frac{\rE^\times}{\rF^\times}\right)^*\subset \left(\frac{\rE^\times}{\RN^\times}\right)^*$, the  homomorphism $\Z\times_{\Z/2\Z} \frac{\rE_{\pm}^\times}{\mathrm N^\times}\rightarrow \Z/2\Z$ whose kernel equals $2\Z\times \frac{\rF^\times}{\mathrm N^\times}$, and the homomorphism $\Z\times \frac{\rF^\times}{\mathrm N^\times}\rightarrow \Z/2\Z$ whose kernel equals $2\Z\times \frac{\rF^\times}{\mathrm N^\times}$.

\vsp We prove Theorem \ref{cwinf} case by case in what follows.
Since there is a canonical isomorphism $\widehat \CW_\infty\cong
\widehat \CW_U$, there is no harm to replace $\widehat \CW_\infty$
by $\widehat \CW_U$ in the proof. Here $U$ is a non-zero split
$\epsilon$-Hermitian right $\rD$-vector space, as before. Write
\be\label{xiuh}
\widehat \xi_U: \widehat \CW_U\rightarrow \RK^* \ee for
the composition of $\xi_U: \CW_U\rightarrow \RK^*$ with the quotient
map $\widehat \CW_U\rightarrow \CW_U$.

Recall the homomorphism $\mu_U: \oG(U)\rightarrow \RA_\infty$ in \eqref{Ainfty} (Section \ref{commutator}).

\vsp

\noindent {\bf Case 1}: $U$ is a symmetric bilinear space.
 We have that
\begin{eqnarray*}
  \widehat \CW_U &=&  \Z\,\H_U \oplus (\oG(U))^*\qquad \qquad\,\,\,\, \textrm{by Proposition \ref{exactcw}}\\
   &=& 2\Z\oplus \left(\frac{\rF^\times}{\RN^\times} \times \{\pm 1\}\right)^* \qquad \textrm{by Theorem \ref{cq}}\\
   &=& 2\Z\times \left(\frac{\rF^\times}{\RN^\times}\right)^* \times \{\pm 1\}^*.
\end{eqnarray*}
Theorem \ref{cwinf} in this case then follows by noting that the diagram
\[
  \begin{CD}
            \oG(U)_{\mathrm{split}} @>\textrm{inclusion}>>  \oG(U)\\
            @VV {{\nu}_U}  V           @VV{\mu_U}  V\\
           \RK=\rF^\times/\RN^\times @>\textrm{inclusion}>> \RA _{\infty}=\rF^\times/ \RN^\times \times \{\pm 1\}\\
  \end{CD}
\]
commutes.

\vsp

\noindent {\bf Case 2}: $U$ is a symplectic space.  We have that
\begin{eqnarray*}
  \widehat \CW_U &=& \widehat \CW_0\qquad\quad \qquad \,\,\textrm{by Proposition \ref{exactcw} and Theorem \ref{cq}}\\
   &=& \Z\times \oHil (\rF). \qquad \textrm{by Theorem \ref{wg0}.}
   \end{eqnarray*}
Note that \eqref{gg} implies that \eqref{homsym} is a group homomorphism, and Lemma \ref{ksp} implies that $\widehat \xi_U$ and the map \eqref{homsym} are identical at all elements of $\widehat \CW_U^+$ of dimension one. Therefore Theorem \ref{cwinf} in this case follows.

\vsp

\noindent {\bf Case 3}: $U$ is a Hermitian space or a skew-Hermitian space. It follows from the discussion of \cite[Section 1]{HKS} that the image of
\begin{equation}\label{ktimesx}
  \widehat q_U\times \widehat \xi_U: \widehat \CW_U\rightarrow \widehat \CW_0\times (\RE^\times /\RN^\times)^*
 \end{equation}
is contained in the fibre product $\widehat \CW_0\times_{\Z/2\Z} (\RE^\times /\RN^\times)^*$.  In view of the exact sequence \eqref{exacthat}, Theorem \ref{cwinf} in this case follows by noting that the diagram
\[
  \begin{CD}
            (\oG(U))^* @>\textrm{the inclusion}>>  \widehat \CW_U\\
            @V \textrm{the isomorphism induced by $\mu_U$} V V           @VV \widehat \xi_U V\\
           \RA _{\infty}^*=(\RE^\times/\rF^\times)^* @>\textrm{the inclusion}>>   \RK ^*= (\RE^\times/\RN^\times)^*\\
  \end{CD}
\]
commutes.

\vsp

\noindent {\bf Case 4}: $U$ is a quaternionic Hermitian space.  We have that
\begin{eqnarray*}
  \widehat \CW_U &=& \widehat \CW_0\qquad \qquad \qquad \,\,\,\,\, \textrm{by Proposition \ref{exactcw} and Theorem \ref{cq}}\\
   &=& \Z\times (\rF^\times/\RN^\times). \qquad \textrm{by Theorem \ref{wg0}.}
   \end{eqnarray*}
By \cite[Section 6]{Ya}, we know that $\widehat \xi_\infty$ equals the map \eqref{yamana}.

\vsp

\noindent {\bf Case 5}:  $U$ is a quaternionic skew-Hermitian space. Note that the diagram
\[
  \begin{CD}
            (\oG(U))^* @>\textrm{the inclusion}>>  \widehat \CW_U\\
            @V \textrm{the isomorphism induced by $\mu_U$} V  V           @VV \widehat \xi_U V\\
           \RA _{\infty}^*=(\RF^\times/\RN^\times)^* @=   \RK ^*= (\RF^\times/\RN^\times)^*\\
  \end{CD}
\]
commutes.  Using  Theorem \ref{cq} and the exact sequence \eqref{exacthat}, this implies that the homomorphism
\[
  \dim \times \widehat \xi_U: \widehat \CW_U\rightarrow \Z\times (\RF^\times/\RN^\times)^*
\]
is an isomorphism, and Theorem \ref{cwinf} holds in this case.

\vsp

In conclusion, we have proved Theorem \ref{cwinf} in all cases. As a direct consequence of Theorem \ref{cwinf}, we have the following

\begin{corl}\label{kgp0}
The Kudla homomorphism $\xi_\infty: \CW_\infty\rightarrow \RK^*$ is
surjective and its kernel has order $2$. The non-trivial element of
the kernel has anisotropic degree (see \eqref{def-deg}) $\mathrm
d_{\rD,\epsilon}$.
\end{corl}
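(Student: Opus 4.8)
The plan is to read off the statement directly from the explicit description of $\widehat \CW_\infty$ in Theorem \ref{cwinf}, together with the short exact sequence relating $\CW_\infty$ to $\widehat\CW_\infty$. Recall from the Introduction (Proposition \ref{exactcw}, part (c), passed to the limit) that $\CW_\infty \cong \widehat\CW_\infty/(\Z\cdot \H_\infty)$, where $\H_\infty$ is the image of the hyperbolic plane; concretely, under the identifications of Table \ref{tablecwi}, quotienting by $\H_\infty$ amounts to reducing the $\Z$-component (the dimension) modulo $2$. So the first step is: compute $\CW_\infty$ from Table \ref{tablecwi} by replacing the factor $\Z$ (or $2\Z$) by $\Z/2\Z$ (or by $0$), keeping track of the fiber products.

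Next I would compute the Kudla homomorphism $\xi_\infty$ on this quotient. Since $\widehat\xi_\infty$ kills $\H_\infty$ (because $\kappa_{\H_U}=1$ by the remark after \eqref{kappap}), it factors through $\CW_\infty$ and equals $\xi_\infty$. Now I go case by case through the five types, exactly matching the case division in the proof of Theorem \ref{cwinf}:
\begin{itemize}
\item When $U$ is a symmetric bilinear space, $\CW_\infty = (\rF^\times/\RN^\times)^* \times \{\pm 1\}^*$ (the $2\Z$ dies), and $\xi_\infty$ is projection onto the first factor $\RK^* = (\rF^\times/\RN^\times)^*$; the kernel is $\{\pm 1\}^* \cong \Z/2\Z$, and its nontrivial element is the sign character, which by \eqref{degst}-style bookkeeping has anisotropic degree $\mathrm d_{\rD,\epsilon}=4$.
\item When $U$ is a symplectic space, $\CW_\infty = \Z/2\Z \times \oHil(\rF)$ after quotienting; $\xi_\infty$ is given by \eqref{homsym}, and using the identity \eqref{gg} one checks directly that the kernel is the order-$2$ subgroup generated by the anisotropic tower of dimension $\mathrm d_{\rD,\epsilon}=0$, i.e. the element $(0,(1,1))$ — one has to verify that $(1,(1,-1))\in\CW_0$ does \emph{not} lie in the kernel, which follows since $\gamma_{\psi'}$ is a nontrivial character.
\item When $U$ is Hermitian or skew-Hermitian, $\CW_\infty$ sits in the fiber product $\widehat\CW_0 \times_{\Z/2\Z} (\RE^\times/\RN^\times)^*$; after reducing dimension mod $2$ one gets that $\widehat q_\infty \times \widehat\xi_\infty$ restricted to $\CW_\infty$ has image $\{(0,1)\} \cup (\text{other component})$ and the kernel of $\xi_\infty$ alone is order $2$, generated by the unique anisotropic $2$-dimensional tower.
\item When $U$ is quaternionic Hermitian, $\CW_\infty = \Z/2\Z \times (\rF^\times/\RN^\times)$ and $\widehat\xi_\infty$ is \eqref{yamana}, $(m,\delta)\mapsto((-1)^m\delta,\,\cdot\,)_\rF$; the kernel consists of $(m,\delta)$ with $(-1)^m\delta \in \RN^\times$, i.e. (for $m\in\{0,1\}$) either $(0,1)$ or $(1,-1\cdot\RN^\times)$ — an order-$2$ group whose nontrivial element $(1,(-1)\RN^\times)$ is the anisotropic tower of dimension $\mathrm d_{\rD,\epsilon}=1$.
\item When $U$ is quaternionic skew-Hermitian, $\dim\times\widehat\xi_\infty$ is an isomorphism onto $\Z\times(\rF^\times/\RN^\times)^*$, so after quotienting $\dim$ mod $2$ the kernel of $\xi_\infty$ is $\Z/2\Z$, generated by the anisotropic tower of dimension $\mathrm d_{\rD,\epsilon}=3$.
\end{itemize}

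In each case the nontrivial kernel element is an anisotropic Witt tower, and I must identify its anisotropic degree with $\mathrm d_{\rD,\epsilon}$; for this I would invoke Proposition \ref{danis} (the classification of anisotropic spaces), noting that the kernel element always projects under $\widehat q_\infty$ to a tower whose anisotropic representative is $V^\circ$ (the unique maximal anisotropic space), possibly twisted by a character of $\oG(U)$ that does not change the underlying space. The surjectivity of $\xi_\infty$ is immediate from the surjectivity of $\widehat\xi_U$ established in the proof of Theorem \ref{cwinf} (or can be read off the tables).

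The main obstacle is purely bookkeeping-theoretic: one must keep the various fiber products in Table \ref{tablecwi} straight while passing to the quotient by $\Z\cdot\H_\infty$, and one must be careful in the symplectic and Hermitian cases where $\widehat\xi_\infty$ is \emph{not} a coordinate projection but involves Weil indices (via \eqref{homsym}, \eqref{gammap}) or the Hasse-invariant-type correction of \eqref{yamana}. The genuinely non-formal input — that $\xi_\infty$ really has kernel exactly $\Z/2\Z$ and not something larger — ultimately rests on the nontriviality of the Weil index character $\gamma_\psi$ and the nondegeneracy of the Hilbert symbol pairing, both of which are classical. Since all of this is already packaged inside Theorem \ref{cwinf}, the corollary follows by inspection of the tables; I would present it as such, spelling out one or two representative cases and leaving the rest to the reader.
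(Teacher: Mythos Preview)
Your approach---reading $\CW_\infty$ and $\xi_\infty$ off Table~\ref{tablecwi} after quotienting by $\Z\cdot\H_\infty$---is exactly what the paper intends: there the corollary is asserted as ``a direct consequence of Theorem~\ref{cwinf}'' with no written proof.

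That said, your case-by-case execution contains systematic errors. You have the values of $\mathrm d_{\rD,\epsilon}$ backwards: this quantity is the maximal anisotropic dimension on the $V$-side ($-\epsilon$-Hermitian spaces), not the $U$-side. Thus when $U$ is symmetric bilinear, $V$ is symplectic and $\mathrm d_{\rD,\epsilon}=0$; when $U$ is symplectic, $V$ is symmetric bilinear and $\mathrm d_{\rD,\epsilon}=4$; when $U$ is quaternionic Hermitian, $\mathrm d_{\rD,\epsilon}=3$; when $U$ is quaternionic skew-Hermitian, $\mathrm d_{\rD,\epsilon}=1$. Correspondingly, in the symplectic case your proposed kernel generator $(0,(1,1))$ is the identity of $\CW_\infty$, not a nontrivial element; the actual nontrivial kernel element is represented by the $4$-dimensional anisotropic quadratic space $V^\circ$. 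In the quaternionic Hermitian case, the element $(1,-1\cdot(\rF^\times)^2)$ you name does generate the kernel in $\CW_\infty$, but its anisotropic representative has dimension $3$, not $1$ (recall from Theorem~\ref{wg0} that $(1,-1)\notin\widehat\CW_0^+$). Once these values are corrected, your argument goes through as written.
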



\section{Degenerate principal series and the doubling method}
\label{dpsdm}

\subsection{Degenerate principal series
representations}\label{dps}

Let $U$ be a split $\epsilon$-Hermitian right $\rD$-vector space,
with a Lagrangian subspace $X$. For each character $\chi\in
(\oPb(X))^*$, put
  \begin{equation}
\label{degene}
  \operatorname I(\chi):=\{f\in \con^\infty(\oGb(U))\mid
f(px)=\chi(p) f(x),\,p\in \oPb(X),\,x\in \oGb(U)\}.
\end{equation}
Under right translations, this is a smooth representation of
$\oGb(U)$.

Define a group homomorphism
\[
  \widehat \CW_U\rightarrow (\oPb(X))^*, \quad \sigma\mapsto \chi_{\sigma,X}
\]
so that
\[
  \chi_{\sigma,X}(h):=\kappa_{\sigma,X}(h)\abs{h}_X^{\frac{\dim \sigma}{2}},
  \quad h\in \oPb(X)
\]
for all $\sigma\in \widehat \CW_U^+$. See equation \eqref{lambdax}.

For each $\sigma\in \widehat \CW_U$, we associate an important
subrepresentation $\RQ_\sigma$ of $\operatorname I(\chi_{\sigma,X})$
as follows: if $\sigma\notin \widehat \CW_U^+$, we simply put
$\RQ_\sigma=0$; if $\sigma=(V,\omega)\in \widehat \CW_U^+$, we
define $\RQ_\sigma$ to be the image of the $\oGb(U)$-intertwining
linear map
\begin{equation}
\label{embedding}
   \begin{array}{rcl}
     \omega&\rightarrow &\operatorname I(\chi_{\sigma, X}),\\
     \phi&\mapsto & (g\mapsto \lambda_{X\otimes_\rD V}(g\cdot \phi)),
  \end{array}
\end{equation}
where  the functional $\lambda_{X\otimes_\rD V}$ is as in
\eqref{lambdax}. The following result is well-known (see
\cite[Theorem II.1.1]{Ra1} and  \cite[Chapter 3, Theorem
IV.7]{MVW}):

\begin{prpl}\label{coinv}
Let $\sigma=(V,\omega)\in \widehat \CW_U^+$. Extend $\omega$ to a
smooth oscillator representation of $\oJb(U,V)$ so that the
functional $\lambda_{X\otimes_\rD V}$ is $\oGb(V)$-invariant. Then
the homomorphism \eqref{embedding} descends to an isomorphism
$\omega_{\oGb(V)}\cong \RQ_\sigma$, where $\omega_{\oGb(V)}$ denotes
the maximal quotient of $\omega$  on which $\oGb(V)$ acts trivially.
\end{prpl}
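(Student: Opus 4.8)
I denote by $T\colon\omega\to\operatorname{I}(\chi_{\sigma,X})$ the map in \eqref{embedding}. It is well defined: for $\phi\in\omega$ the function $g\mapsto\lambda_{X\otimes_\rD V}(g\cdot\phi)$ is locally constant because $\omega$ is smooth, and it transforms under $\oPb(X)$ by $\chi_{\sigma,X}$ by \eqref{lambdax}; it intertwines the right translation actions by construction, and its image is $\RQ_\sigma$ by definition. The first point is that $T$ factors through $\omega_{\oGb(V)}$, and this is immediate: $\oGb(U)$ and $\oGb(V)$ are the two commuting factors of the direct product sitting inside $\oJb(U,V)$, and the extension of $\omega$ was chosen so that $\lambda_{X\otimes_\rD V}$ is $\oGb(V)$-invariant, so for $h\in\oGb(V)$, $g\in\oGb(U)$ one gets $T(h\cdot\phi-\phi)(g)=\lambda_{X\otimes_\rD V}(h\cdot g\cdot\phi)-\lambda_{X\otimes_\rD V}(g\cdot\phi)=0$. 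Thus $T$ descends to a surjection $\bar T\colon\omega_{\oGb(V)}\twoheadrightarrow\RQ_\sigma$, and everything reduces to showing $\bar T$ is injective, i.e.\ $\ker T\subseteq\ker(\omega\to\omega_{\oGb(V)})$.

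For the injectivity I would pass to a mixed Schr\"odinger model. Choose a complete polarization $U=X\oplus X^\vee$; then $X\otimes_\rD V$ is a Lagrangian of the symplectic $\rF$-space $W:=U\otimes_\rD V=(X\otimes_\rD V)\oplus(X^\vee\otimes_\rD V)$, and we realize $\omega$ on $\CS(X\otimes_\rD V)$ with $\oH(W)$ acting as in \eqref{acth} (so $X\otimes_\rD V$ acts by translations). With the extension pinned down by the proposition one has, up to a nonzero scalar, $\lambda_{X\otimes_\rD V}(\phi)=\int_{X\otimes_\rD V}\phi$; $\oGb(V)$ acts through the geometric, measure-preserving action of $\oG(V)$ on $X\otimes_\rD V$ coming from the $V$-factor, so $\omega_{\oGb(V)}=\CS(X\otimes_\rD V)_{\oG(V)}$; and the unipotent radical $\bar N$ of the parabolic of $\oGb(U)$ opposite to $\oPb(X)$ acts by multiplication by quadratic characters $y\mapsto\psi(q_B(y))$, where $B$ ranges over the (abelian) group $\bar N$, naturally a space of Hermitian-type forms, and $q_B(y)$ depends on $y$ only through its Gram matrix. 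Since $\oPb(X)\bar N$ is open dense in $\oGb(U)$ and a locally constant function vanishing on a dense open set is zero, $T\phi$ is determined by its restriction to $\bar N$, and there $T\phi(\bar n_B)=\int_{X\otimes_\rD V}\psi(q_B(y))\,\phi(y)\,dy$; this is the value at $B$ of the Fourier transform of the pushforward of the measure $\phi\,dy$ along the Gram map $\mathfrak G\colon X\otimes_\rD V\to\bar N$. Hence $\ker T=\{\phi:\mathfrak G_*(\phi\,dy)=0\}$.

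It then remains to match $\ker\mathfrak G_*$ with $\ker(\CS(X\otimes_\rD V)\to\CS(X\otimes_\rD V)_{\oG(V)})$. The inclusion ``$\supseteq$'' is automatic since $\mathfrak G$ is $\oG(V)$-invariant and $\oG(V)$ preserves $dy$. The reverse inclusion is the substantive part: stratify $X\otimes_\rD V$ by the isometry type of the $\rD$-span of the coordinates of $y$ (equivalently, by $\oG(V)$-orbit type), and argue down the strata, using Witt's extension theorem to identify a fibre of $\mathfrak G$ over an open stratum with a single $\oG(V)$-orbit, and the uniqueness of invariant functionals (Lemma \ref{haaro}) on each orbit and on its stabilizer, which has the form $\oG(V_0^\perp)\ltimes(\text{a Heisenberg group})$. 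This is exactly the computation of \cite[Theorem II.1.1]{Ra1} and \cite[Chapter 3, Theorem IV.7]{MVW}, which I would follow. I expect the main obstacle to be the degenerate strata, where the coordinates of $y$ are linearly dependent or span a non-anisotropic subspace, so that fibres of $\mathfrak G$ are unions of several $\oG(V)$-orbits and the $\oG(V)$-coinvariants of the relevant Schwartz spaces pick up contributions from the radicals; controlling these, together with the metaplectic and modulus normalizations, is the delicate bookkeeping that makes the statement non-trivial.
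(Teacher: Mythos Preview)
Your sketch is correct and in fact goes well beyond what the paper does: the paper offers no proof of this proposition at all, merely stating that the result is well-known and citing \cite[Theorem II.1.1]{Ra1} and \cite[Chapter 3, Theorem IV.7]{MVW}. Your outline --- factoring through coinvariants via the $\oGb(V)$-invariance of $\lambda_{X\otimes_\rD V}$, passing to the Schr\"odinger model on $\CS(X\otimes_\rD V)$, restricting to the big cell to identify $\ker T$ with the kernel of pushforward along the Gram map, and then invoking the orbit-by-orbit analysis --- is exactly the argument carried out in those references, and you correctly cite them for the substantive step. The only remark is that your closing worry about the ``degenerate strata'' is perhaps overstated: the filtration by rank in \cite[Chapter 3, IV]{MVW} handles all strata uniformly, and the bookkeeping, while careful, is not where the difficulty lies.
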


The first key point of this article is the following proposition,
which is responsible for the upper bound in conservation relations.
\begin{prpl}\label{quotient0}
Let $\sigma_1$ and $\sigma_2$ be two elements of $\widehat \CW_U$ such that
\[
\left\{
  \begin{array}{l}
   \sigma_1-\sigma_2\textrm{ represents the anti-split Witt tower in $\CW_U$};\\
   \dim \sigma_1+\dim \sigma_2=\dim U+\mathrm d_{\rD, \epsilon}-2;\, \textrm{ and}\\
   \dim \sigma_1 \geq \dim \sigma_2.
  \end{array}
\right.
  \]
Then $\operatorname I(\chi_{\sigma_1,X})/\RQ_{\sigma_1}\cong
\RQ_{\sigma_2}$ as representations of $\oGb(U)$.
\end{prpl}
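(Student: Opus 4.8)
The plan is to deduce Proposition~\ref{quotient0} from the known composition-series structure of the degenerate principal series $\operatorname I(\chi)$ of $\oGb(U)$ at the special inducing characters that occur, as established by Kudla--Rallis \cite{KR1,KR3} in the orthogonal--symplectic case, by Harris--Kudla--Sweet \cite{HKS} and Gong--Greni\'{e} \cite{GG} in the unitary case, and by Yamana \cite{Ya} in the quaternionic cases. The first step is to translate the two hypotheses on $\sigma_1,\sigma_2$ into statements about $\chi_{\sigma_1,X}$ and $\chi_{\sigma_2,X}$. Writing $\chi_{\sigma,X}=\kappa_{\sigma,X}\cdot\abs{\,\cdot\,}_X^{\dim\sigma/2}$, the ``continuous'' part of the parameter is the exponent $\dim\sigma/2$, while the ``finite'' part $\kappa_{\sigma,X}$ depends only on the Witt tower of $\sigma$ (since $\kappa_{\H_U}=1$) and carries the same data as $\widehat\xi_U(\sigma)\in\RK^*$ under the isomorphism $(\bar\nu_U)_*$ of \eqref{muinverse}. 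Because $\mathbf t_U^\circ$ lies in $\ker\xi_U$ (Corollary~\ref{kgp0}), the hypothesis that $\sigma_1-\sigma_2$ represents the anti-split Witt tower forces $\kappa_{\sigma_1,X}=\kappa_{\sigma_2,X}$; and the hypothesis $\dim\sigma_1+\dim\sigma_2=\dim U+\mathrm d_{\rD,\epsilon}-2$ says precisely that $\chi_{\sigma_1,X}$ and $\chi_{\sigma_2,X}$ are interchanged by the standard (long) intertwining operator attached to the Siegel parabolic $\oPb(X)$, i.e.\ that the pair sits exactly at the reflection point of the family, while $\dim\sigma_1\geq\dim\sigma_2$ puts $\chi_{\sigma_1,X}$ on the dominant side. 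Finally, by Proposition~\ref{coinv}, $\RQ_{\sigma_i}$ is the image of the intertwining map $\omega_i\to\operatorname I(\chi_{\sigma_i,X})$ of \eqref{embedding}.

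With this dictionary, the core of the argument is the assertion that the suitably renormalized intertwining operator $M\colon \operatorname I(\chi_{\sigma_1,X})\to \operatorname I(\chi_{\sigma_2,X})$, holomorphic and nonzero at the point in question, has kernel exactly $\RQ_{\sigma_1}$ and image exactly $\RQ_{\sigma_2}$; granting this, $\operatorname I(\chi_{\sigma_1,X})/\RQ_{\sigma_1}=\operatorname I(\chi_{\sigma_1,X})/\ker M\cong\operatorname{im}M=\RQ_{\sigma_2}$, which is the Proposition. Both equalities are to be extracted from the cited structure theorems once the parameters are matched: at $\chi_{\sigma_1,X}$ the module $\operatorname I(\chi_{\sigma_1,X})$ has $\RQ_{\sigma_1}$ as a maximal submodule with irreducible (or zero) quotient equal to $\operatorname{im}M$; and $\RQ_{\sigma_2}$ is the corresponding submodule of $\operatorname I(\chi_{\sigma_2,X})$, identified via Proposition~\ref{coinv} and the nonvanishing of the doubling zeta integral \cite{PSR,LR,KR3}. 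The identification $\operatorname{im}M=\RQ_{\sigma_2}$ is concretely the assertion that $M$, composed with the embedding $\omega_1\to\operatorname I(\chi_{\sigma_1,X})$, lands in and surjects onto $\RQ_{\sigma_2}$, a form of the doubling see-saw identity. Two degenerate cases must be dealt with separately: if $\dim\sigma_2<0$ then $\RQ_{\sigma_2}=0$ and one needs only that $\omega_1\to\operatorname I(\chi_{\sigma_1,X})$ is already surjective (the stable-range end, where $\operatorname I(\chi_{\sigma_1,X})$ is irreducible); and if $\dim\sigma_1=\dim\sigma_2$ then $\operatorname I(\chi_{\sigma_1,X})=\operatorname I(\chi_{\sigma_2,X})$, $M$ is the self-dual operator, and the claim becomes that this module is the direct sum of its two constituents $\RQ_{\sigma_1}$ and $\RQ_{\sigma_2}$, which are genuinely distinct because $\sigma_1$ and $\sigma_2$ lie in different Witt towers.

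The step I expect to be the real obstacle is the uniform matching of parameters. One must verify, case by case over the four families of Table~\ref{tableg} and using the explicit description of $\widehat\xi_\infty$ in Theorem~\ref{cwinf}, that $\kappa_{\sigma_1,X}$ is exactly the finite-order character for which the references above establish that $\operatorname I(\chi_{\sigma_1,X})$ sits at the predicted reducibility point with $\RQ_{\sigma_1}\subsetneq\operatorname I(\chi_{\sigma_1,X})$, and that the anti-split normalization of the ``discriminant'' (encoded by $\mathbf t_U^\circ$ and the Kudla homomorphism $\xi_\infty$) matches the discriminant hypotheses used there. This is delicate in the unitary case, where the relevant character of $\RE^\times/\RN^\times$ has to be tracked through Hilbert~90 and the doubling, and is most delicate in the symplectic case, where $\oGb(U)$ is genuinely metaplectic, $\bar\nu_U$ involves the Weil-index character $\gamma_\psi$ of \eqref{gammap}, and one must invoke the Schr\"{o}dinger-model computation underlying Lemma~\ref{ksp}. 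It is precisely to make this bookkeeping manageable --- and uniform across all irreducible dual pairs of type~I --- that the Kudla-character formalism of Section~\ref{kk} was introduced; once the parameters are aligned, Proposition~\ref{quotient0} follows by assembling the case-by-case structural inputs.
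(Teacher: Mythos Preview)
Your approach is essentially the paper's: the proposition is not proved from scratch but is assembled from the known structure theorems for degenerate principal series in the literature, and your dictionary via the Kudla character correctly explains why the hypotheses on $\sigma_1,\sigma_2$ place the inducing parameter at exactly the reducibility point those references analyze. The paper's own proof is a one-line citation, so your elaboration of the mechanism (intertwining operator, kernel/image identification, degenerate endpoints) is more than what is required but is consistent with it.

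One bibliographic correction: the precise structural results you need are in Kudla--Rallis \cite{KR2} for the symplectic/metaplectic case and Kudla--Sweet \cite{KS} for the unitary case, together with Yamana \cite{Ya} for the quaternionic cases; the references you cite (\cite{KR1,KR3,HKS,GG}) are adjacent but do not contain the full composition-series statements at the ramified points in the form you invoke.
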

\begin{proof} The assertion follows from the work of Kudla-Rallis \cite[Introduction]{KR2}, Kudla-Sweet \cite[Theorem 1.2]{KS}, and Yamana \cite[Introduction]{Ya}.
\end{proof}
\noindent {\bf Remarks}: (a) We say that an element $\sigma\in \widehat \CW_U$ represents an element $\mathbf t\in \CW_U$ if $\sigma$ maps to $\mathbf t$ under the natural homomorphism $\widehat \CW_U\rightarrow \CW_U$ (we will apply this terminology to an arbitrary $\epsilon$-Hermitian right $\rD$-vector space, including the archimedean case).

(b) Write $\dim U=2r$ and put
\be
\label{defrhor}
  \rho_r =\frac{2r+\mathrm d_{\rD,\epsilon}-2}{4}.
\ee
Then $2\rho _r$ coincides with the normalized exponent of the modulus character of $\oP(X)$: the multiple of a left invariant Haar measure on $\oPb(X)$ by the function $\abs{\,\cdot\,}_X^{2\rho_r}$ (see \eqref{absx}) is a right invariant Haar measure on $\oPb(X)$. The condition $\dim \sigma+\dim \sigma'=\dim
U+\mathrm d_{\rD, \epsilon}-2$ of Proposition \ref{quotient0} then amounts to $\frac{\dim \sigma}{2}
+\frac{\dim \sigma'}{2}=2\rho _r$.

(c) Let $\sigma_1, \sigma_2\in \widehat \CW_U$. Assume that
\[
\left\{
  \begin{array}{l}
   \sigma_1-\sigma_2\textrm{ represents the anti-split Witt tower in $\CW_U$}; \textrm{ and}\\
   \dim \sigma_1+\dim \sigma_2=\dim U+\mathrm d_{\rD, \epsilon}-2.
  \end{array}
\right.
  \]
Then Proposition \ref{conserv0} implies that $\rank\, \sigma_1\geq
\rank\, U$ if and only if $\sigma_2\notin \widehat \CW_U^+$.
Therefore Proposition \ref{quotient0} implies that
\be\label{stable0} \RQ_\sigma=\operatorname I(\chi_{\sigma,X})\quad
\textrm{for all $\sigma\in \widehat \CW_U^+$ such that $\rank
\,\sigma\geq \rank\, U$.} \ee

\subsection{The doubling method}\label{dm}
Now we allow $U$ to be non-split, that is, $U$ is an arbitrary
$\epsilon$-Hermitian right $\rD$-vector space. Put
\[
  U^\square:=U\oplus U^-
\]
as in \eqref{squareu}.  Then
\[
  U^\triangle:=\{(u,u)\mid u\in U\}
\]
is a Lagrangian subspace of $U^\square$. As in Section \ref{dps}, we
have a subgroup $\oPb(U^\triangle)$ of $\oGb(U^\square)$, and a
representation $\operatorname I(\chi)$ of $\oGb(U^\square)$ for each
character $\chi\in (\oPb(U^\triangle))^*$. As in \eqref{embvvp1}, we
have a natural homomorphism $\oGb(U)\times \oGb(U^-)\rightarrow
\oGb(U^\square)$.

The theory of local zeta integrals \cite{PSR,LR} implies the following

\begin{lem}\label{zeta} Let $\pi\in \Irr(\oGb(U))$ and let
$\chi\in (\oPb(U^\triangle))^*$. When $U$ is a symplectic space,
assume that $\varepsilon_U$ acts through the scalar multiplication
by $\chi(\varepsilon_{U^\square})$ in $\pi$. Then
\[
  \Hom_{\oGb(U)}(\operatorname{I}(\chi), \pi)\neq 0.
\]
\end{lem}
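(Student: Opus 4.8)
The plan is to prove $\Hom_{\oGb(U)}(\operatorname{I}(\chi), \pi)\neq 0$ by the doubling method, realizing the pairing between $\pi$ and the degenerate principal series $\operatorname{I}(\chi)$ of $\oGb(U^\square)$ through a local zeta integral. The key observation is that $\oGb(U)\times\oGb(U^-)$ sits inside $\oGb(U^\square)$, and that $\pi$ on the first factor paired against $\pi^\vee$ (suitably normalized) on the second factor embeds into the restriction of $\operatorname{I}(\chi)$ to this subgroup. Concretely, one considers, for matrix coefficients $f_\pi$ of $\pi$ and a section $\Phi\in\operatorname{I}(\chi)$, the integral
\[
  Z(\Phi, f_\pi) = \int_{\oGb(U)} \Phi\bigl(\iota(g,1)\bigr)\, f_\pi(g)\, dg,
\]
where $\iota:\oGb(U)\times\oGb(U^-)\to\oGb(U^\square)$ is the natural embedding and the integration is over a suitable quotient making the integrand well-defined. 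This is precisely the local doubling zeta integral of Piatetski-Shapiro--Rallis; its basic properties are recorded in \cite{PSR,LR}.

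First I would check that the genuineness hypothesis is exactly what is needed for the integrand to make sense: when $U$ is symplectic, $\varepsilon_U$ must act in $\pi$ by the same scalar $\chi(\varepsilon_{U^\square})$ by which it acts (via $\iota$) on $\operatorname{I}(\chi)$, so that $\Phi(\iota(g,1))f_\pi(g)$ descends to a function on $\oG(U)$ (or is at least a genuine function with the right central behavior to integrate against $dg$). In the non-symplectic cases there is no metaplectic issue and the hypothesis is vacuous. Second, one invokes the fundamental nonvanishing result from the theory of local zeta integrals: for a suitable choice of section $\Phi$ (a "good section," e.g.\ one supported near the big cell, or a Godement section) and a suitable matrix coefficient $f_\pi$, the integral $Z(\Phi,f_\pi)$ converges (after meromorphic continuation in the relevant complex parameter, which here is fixed by $\dim\sigma$ and absorbed into $\chi$) and is nonzero. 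This is the content of the local theory in \cite{PSR,LR}. Third, one observes that $\Phi\mapsto Z(\Phi, f_\pi)$, for fixed $f_\pi$ realized as $\langle \pi(\cdot)v, v^\vee\rangle$, is a nonzero element of $\Hom_{\oGb(U)}(\operatorname{I}(\chi),\pi)$: the $\oGb(U)$-equivariance follows from the left-translation behavior built into matrix coefficients, and nonvanishing follows from the previous step.

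The main obstacle — really the only nontrivial input — is the nonvanishing of the local zeta integral for an appropriate choice of data. One must ensure that the character $\chi$ arising here (which by the construction of $\chi_{\sigma,X}$ has the form $\kappa_{\sigma,X}\cdot|\cdot|_X^{\dim\sigma/2}$, in particular lies in the range where the doubling integral is either absolutely convergent or has a well-controlled meromorphic continuation) does not land on a pole or zero of the normalizing factors in a way that would kill the $\Hom$ space. For the cases relevant to the conservation relations the parameter is in the convergent or "first-term" range, so this is handled by the cited references; I would simply quote \cite{PSR,LR} (and, for the archimedean and general unitary/quaternionic bookkeeping, the treatments there) rather than redo the analysis. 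Thus the proof is essentially an application: set up the doubling integral, check the genuineness compatibility, and cite the local zeta integral machinery for convergence and nonvanishing.
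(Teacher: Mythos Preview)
Your proposal is correct and takes essentially the same approach as the paper: the paper gives no proof at all, merely prefacing the lemma with ``The theory of local zeta integrals \cite{PSR,LR} implies the following,'' and your sketch is precisely an elaboration of how those references yield the nonvanishing via the doubling zeta integral. One minor remark: the lemma is stated for an \emph{arbitrary} character $\chi\in(\oPb(U^\triangle))^*$, not only those of the form $\chi_{\sigma,X}$, so your concern about the parameter lying in a convenient range is slightly misplaced---the point (handled in \cite{LR}) is that with good sections the normalized zeta integral is nonzero at every value of the parameter.
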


For each $\sigma\in \widehat \CW_U$, put
\[
  \mathcal R_\sigma:=\left\{
                       \begin{array}{ll}
                         \{\pi\in \Irr(\oGb(U))\mid \Hom_{\oGb(U)}(\omega, \pi)\neq 0\}, & \hbox{if $\sigma=(V,\omega)\in \widehat \CW_U^+$;} \\
                         \emptyset, & \hbox{if $\sigma\notin \widehat \CW_U^+$.}
                       \end{array}
                     \right.
\]
The following result gives a sufficient
condition for non-vanishing of theta lifting.

\begin{lem}\label{cn0}
Let $\sigma^\square\in \widehat \CW_{U^\square}$ and let $\sigma:=\widehat{\mathrm r}^{U^\square}_U(\sigma^\square)\in \widehat \CW_{U}$ (see Lemma \ref{rewg}).
Then for all $\pi\in \Irr(\oGb(U))$,
\[
  \Hom_{\oGb(U)}(\RQ_{\sigma^\square}, \pi)\neq 0\quad \textrm{implies}\quad \pi\in \CR_\sigma.
\]
Here $\RQ_{\sigma^\square}$ is a subrepresentation of the
representation $\operatorname I(\chi_{\sigma^\square, U^\triangle})$
of $\oGb(U^\square)$, as in Section \ref{dps}.
\end{lem}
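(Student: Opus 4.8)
The plan is to exploit two facts already available: that $\RQ_{\sigma^\square}$ is, by its very construction, a quotient of the oscillator representation attached to $\sigma^\square$, and that the restriction decomposition \eqref{rdec} identifies the $\oGb(U)$-action on that oscillator representation with (an amplification of) the one underlying $\sigma$.

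First I would dispose of the vacuous case: if $\sigma^\square\notin\widehat\CW_{U^\square}^+$ then $\RQ_{\sigma^\square}=0$ and the hypothesis fails, so assume $\sigma^\square=(V,\omega^\square)\in\widehat\CW_{U^\square}^+$ with $\omega^\square$ a genuine smooth oscillator representation of $\oJb_{U^\square}(V)$. By the definition of $\RQ_{\sigma^\square}$ in Section~\ref{dps} (it is the image of the intertwining map \eqref{embedding}; equivalently, by Proposition~\ref{coinv}, the maximal $\oGb(V)$-trivial quotient of $\omega^\square$), there is a surjective $\oGb(U^\square)$-equivariant map $\omega^\square\twoheadrightarrow\RQ_{\sigma^\square}$. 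Restricting along the doubling embedding $\oGb(U)\hookrightarrow\oGb(U^\square)$ of Section~\ref{dm} (which arises from \eqref{embvvp1}), this stays a surjection of $\oGb(U)$-representations; composing it with a non-zero element of $\Hom_{\oGb(U)}(\RQ_{\sigma^\square},\pi)$ produces a non-zero $f\in\Hom_{\oGb(U)}(\omega^\square,\pi)$ (non-zero precisely because $\omega^\square\to\RQ_{\sigma^\square}$ is onto).

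Next I would feed in \eqref{rdec}: via the homomorphism $\oJb_U(V)\times\oJb_{U^-}(V)\to\oJb_{U^\square}(V)$ of \eqref{embvvp2} one has $\omega^\square\cong\omega\otimes\omega'$, where $\omega:=\omega^\square|_U$ is exactly the oscillator representation underlying $\sigma=\widehat{\mathrm r}^{U^\square}_U(\sigma^\square)=(V,\omega^\square|_U)$ by \eqref{embvvp3}, and $\omega':=\omega^\square|_{U^-}$ is a genuine oscillator representation of $\oJb_{U^-}(V)$. The doubling copy of $\oGb(U)$ lies in the first factor $\oJb_U(V)$ and fixes $U^-$ pointwise, hence acts on $\omega\otimes\omega'$ through $\omega$ alone, that is, trivially on the tensor factor $\omega'$. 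Since $f\neq 0$, there is a vector $w'\in\omega'$ for which $w\mapsto f(w\otimes w')$ is not identically zero; triviality of the $\oGb(U)$-action on $w'$ makes this a non-zero element of $\Hom_{\oGb(U)}(\omega,\pi)$. Therefore $\pi\in\CR_\sigma$, as required.

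The argument is essentially formal, so I do not anticipate a genuine obstacle. The one point that needs care is the compatibility of the doubling embedding $\oGb(U)\hookrightarrow\oGb(U^\square)$ of Section~\ref{dm} with the homomorphism \eqref{embvvp2} that yields \eqref{rdec}, so that $\oGb(U)$ really does act trivially on the $\omega'$-factor; when $U$ is a symplectic space this requires keeping track of the metaplectic covers, but it is guaranteed by the ``genuine'' normalization built into \eqref{embvvp1} (which sends $\varepsilon_U\mapsto\varepsilon_{U^\square}$).
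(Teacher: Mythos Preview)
Your argument is correct and is essentially the paper's own proof: reduce from $\RQ_{\sigma^\square}$ to $\omega^\square$ via the defining surjection, then use the restriction decomposition \eqref{rdec} (equivalently, that $\omega^\square|_{\oGb(U)}$ is a direct sum of copies of $\omega|_{\oGb(U)}$) to pass from $\Hom_{\oGb(U)}(\omega^\square,\pi)\neq 0$ to $\Hom_{\oGb(U)}(\omega,\pi)\neq 0$. Your explicit treatment of the vacuous case $\sigma^\square\notin\widehat\CW_{U^\square}^+$ and your remark on metaplectic compatibility are welcome additions but not substantive departures.
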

\begin{proof}
Write  $\sigma^\square=(V, \omega^\square)$.  Then $\RQ_{\sigma^\square}$ is isomorphic to a quotient of $(\omega^\square)|_{\oGb(U^\square)}$. Therefore
\[
  \Hom_{\oGb(U)}(\RQ_{\sigma^\square}, \pi)\neq 0\quad \textrm{implies}\quad \Hom_{\oGb(U)}(\omega^\square, \pi)\neq 0.
\]
Write  $\sigma=(V, \omega)$. Then $(\omega^\square)|_{\oGb(U)}$ is isomorphic to a direct sum of smooth representations which are isomorphic to $\omega |_{\oGb(U)}$. Therefore
\[
  \Hom_{\oGb(U)}(\omega^\square, \pi)\neq 0\quad \textrm{implies}\quad \Hom_{\oGb(U)}(\omega, \pi)\neq 0.
\]
\end{proof}

On the other hand, we have
\begin{lem}\label{cn1}
Let $\sigma^\square\in \widehat \CW_{U^\square}^+$ and let $\sigma:=\widehat{\mathrm r}^{U^\square}_U(\sigma^\square)\in \widehat \CW_{U}^+$ (see Lemma \ref{rewg}). Assume that $\sigma^\square$ is anisotropic and
$\xi_{U^\square}(\sigma^\square)$ is trivial (see \eqref{xiuh}).
Then for all $\pi\in \Irr(\oGb(U))$,
\[
 \pi\in \CR_\sigma \quad \textrm{implies}\quad \Hom_{\oGb(U)\times \oGb(U^-)}
 (\RQ_{\sigma^\square}, \pi \otimes  \pi^\vee)\neq 0.
\]
\end{lem}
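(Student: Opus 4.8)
The plan is to reduce the statement, via Proposition~\ref{coinv}, to a question about $\oGb(V)$-coinvariants of $\omega^\square$, and then to build the required homomorphism by ``doubling'' a nonzero map $\omega^\square|_U\to\pi$ and pairing off the $\oGb(V)$-action. Write $\sigma^\square=(V,\omega^\square)$, so that $\sigma=\widehat{\mathrm r}^{U^\square}_U(\sigma^\square)=(V,\omega_1)$ with $\omega_1:=\omega^\square|_U$, and put $\omega_2:=\omega^\square|_{U^-}$. First I would fix, as in Proposition~\ref{coinv}, an extension of $\omega^\square$ to a smooth oscillator representation of $\oJb(U^\square,V)$ for which $\lambda_{U^\triangle\otimes_\rD V}$ is $\oGb(V)$-invariant; then $\RQ_{\sigma^\square}\cong(\omega^\square)_{\oGb(V)}$ as $\oGb(U^\square)$-modules, and since $\oGb(V)$ acts trivially on $\pi\otimes\pi^\vee$ this identifies
\[
  \Hom_{\oGb(U)\times\oGb(U^-)}(\RQ_{\sigma^\square},\,\pi\otimes\pi^\vee)
  \;\cong\;
  \Hom_{\oGb(U)\times\oGb(U^-)\times\oGb(V)}(\omega^\square,\,\pi\otimes\pi^\vee),
\]
with $\oGb(V)$ acting trivially on the target. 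It thus suffices to exhibit one nonzero such homomorphism.

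Next I would record the structure of $\omega^\square$ under the relevant restriction. Since $U\perp U^-$ in $U^\square$, we have $U\otimes_\rD V\perp U^-\otimes_\rD V$ inside $U^\square\otimes_\rD V$, hence $\oH(U^\square\otimes_\rD V)=\oH(U\otimes_\rD V)\times_\rF\oH(U^-\otimes_\rD V)$; combining the Stone--von Neumann theorem (Theorem~\ref{stone}) with \eqref{rdec} and the diagonal $\oGb(V)$-action gives
\[
  \omega^\square|_{\oGb(U)\times\oGb(U^-)\times\oGb(V)}\;\cong\;\omega_1\otimes\omega_2,
\]
with $\oGb(U)$ and $\oGb(U^-)$ acting on the two tensor factors and $\oGb(V)$ diagonally. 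Moreover $U^-\otimes_\rD V=(U\otimes_\rD V)^-$, so passing from $U$ to $U^-$ replaces the central character $\psi$ by $\psi^{-1}$, and the oscillator representation for $\psi^{-1}$ is the contragredient of the one for $\psi$; together with the fact that the hypotheses force $\sigma^\square$ to be one of the two self-dual elements of $\ker\xi_{U^\square}$ (so that no character twist intervenes), this gives $\omega_2\cong\omega_1^\vee$ as representations of $\oGb(U^-)=\oGb(U)$ and of $\oGb(V)$. The target Hom space therefore becomes $\Hom_{\oGb(U)\times\oGb(U^-)\times\oGb(V)}(\omega_1\otimes\omega_1^\vee,\,\pi\otimes\pi^\vee)$, with $\oGb(V)$ diagonal on the source and trivial on the target.

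Now I would use $\pi\in\CR_\sigma$. As $V$ is anisotropic and $\rF$ is non-archimedean, $\oGb(V)$ is compact, so every smooth $\oGb(V)$-module is semisimple; decomposing $\omega_1$ into $\oGb(V)$-isotypic components and using $\Hom_{\oGb(U)}(\omega_1,\pi)\neq0$, there is an irreducible $\rho\in\Irr(\oGb(V))$ together with a nonzero (hence surjective, $\pi\otimes\rho$ being irreducible) $\oGb(U)\times\oGb(V)$-map $\Phi\colon\omega_1\to\pi\otimes\rho$. By the compatibility of the local theta correspondence with contragredients (the MVW involution, \cite{MVW}), together with $\omega_2\cong\omega_1^\vee$, there is likewise a nonzero surjective $\oGb(U^-)\times\oGb(V)$-map $\Phi'\colon\omega_2\to\pi^\vee\otimes\rho^\vee$. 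Then $\Phi\otimes\Phi'\colon\omega_1\otimes\omega_2\to(\pi\otimes\rho)\otimes(\pi^\vee\otimes\rho^\vee)$ is surjective, $\oGb(U)\times\oGb(U^-)$-equivariant, and $\oGb(V)$-equivariant for the diagonal actions; composing with the canonical $\oGb(V)$-invariant pairing $\rho\otimes\rho^\vee\to\C$ yields a nonzero $\oGb(U)\times\oGb(U^-)$-equivariant and $\oGb(V)$-invariant map $\omega^\square\to\pi\otimes\pi^\vee$, which by the first step provides the desired nonzero element of $\Hom_{\oGb(U)\times\oGb(U^-)}(\RQ_{\sigma^\square},\pi\otimes\pi^\vee)$.

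The main obstacle is the second step: identifying $\omega^\square|_{U^-}$ with the contragredient of $\omega^\square|_U$ \emph{without a character twist}. A priori these are oscillator representations of the same group $\oJb_U(V)$ with opposite central characters, so by Lemma~\ref{uniqueo} they agree only up to a character of $\oGb(U)$, and the vanishing of this twist is exactly what uses the hypotheses ``$\sigma^\square$ anisotropic'' and ``$\xi_{U^\square}(\sigma^\square)$ trivial'', via the description of $\widehat\CW_\infty$ and of $\ker\xi_\infty$ in Theorem~\ref{cwinf} and Corollary~\ref{kgp0}. A secondary point is to keep the extension of $\omega^\square$ fixed throughout, so that the $\oGb(V)$-action used in Proposition~\ref{coinv} is literally the diagonal action appearing in the decomposition of $\omega^\square$.
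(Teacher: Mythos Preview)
Your overall strategy matches the paper's: reduce via Proposition~\ref{coinv} to $\oGb(V)$-coinvariants, decompose $\omega^\square$ as $\omega\otimes\omega^-$ under $\oJb_U(V)\times\oJb_{U^-}(V)$, identify $\omega^-$ with the contragredient of $\omega$ using the triviality of $\xi_{U^\square}(\sigma^\square)$, and then manufacture a map $\omega\otimes\omega^-\to\pi\otimes\pi^\vee$ by tensoring two maps and contracting an auxiliary $\tau\in\Irr(\oGb(V))$.

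The gap is in how you pass from $\Hom_{\oGb(U)\times\oGb(V)}(\omega_1,\pi\otimes\rho)\neq0$ to $\Hom_{\oGb(U^-)\times\oGb(V)}(\omega_2,\pi^\vee\otimes\rho^\vee)\neq0$. Taking contragredients of a surjection $\omega_1\twoheadrightarrow\pi\otimes\rho$ only produces an \emph{injection} $\pi^\vee\otimes\rho^\vee\hookrightarrow\omega_1^\vee\cong\omega_2$, whereas you need a map \emph{out of} $\omega_2$. You invoke the MVW involution to bridge this, but as the paper explicitly remarks immediately after its own proof, MVW-involutions do not exist for non-archimedean quaternionic groups \cite{LST}, and Lemma~\ref{cn1} is formulated precisely so as to cover that case (it is used in Lemma~\ref{dim1}). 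The paper's argument avoids MVW entirely: since $V$ is anisotropic, $\oGb(V)$ is compact and (the oscillator being unitarizable) both $\pi$ and $\tau$ are unitarizable; one then takes the \emph{complex conjugate} of the nonzero map $\omega\to\pi\otimes\tau$, obtaining directly a nonzero map $\bar\omega\to\bar\pi\otimes\bar\tau\cong\pi^\vee\otimes\tau^\vee$, with $\bar\omega$ identified with $\omega^-$. This is the intended role of the anisotropy hypothesis on $\sigma^\square$, and it replaces your MVW step uniformly across all cases.
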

\begin{proof} Write $\sigma^\square=(V,\omega^\square)$. As in \eqref{rdec}, write
\[
   \omega^\square|_{\oJb_U(V)\times \oJb_{U^-}(V)}=\omega \otimes \omega^-,
\]
where $\omega$ and $\omega^-$ are smooth oscillator representations
of $\oJb_U(V)$ and $\oJb_{U^-}(V)$, respectively. The triviality of
$\xi_{U^\square}(\sigma^\square)$  implies that $\omega$ and
$\omega^-$ are the contragredient representations of each other with
respect to the isomorphism
 \[
  \begin{array}{rcl}
  \oGb(U)\ltimes \oH(U\otimes_\rD V)&\rightarrow & \oGb(U^-)\ltimes
  \oH(U^-\otimes_\rD V),\\
 (g, (w,t))&\mapsto & (g, (w,-t)).
  \end{array}
\]
Extend $\omega$ and $\omega^-$ to representations of $\oJb(U,V)$ and
$\oJb(U^-,V)$, respectively, so that they are the contragredient
representations of each other with respect to the isomorphism
 \[
  \begin{array}{rcl}
  (\oGb(U)\times \oGb(V))\ltimes \oH(U\otimes_\rD V)&\rightarrow &
  (\oGb(U^-)\times \oGb(V))\ltimes  \oH(U^-\otimes_\rD V),\\
 ((g,h),(w,t))&\mapsto & ((g,h),(w,-t)).
  \end{array}
\]

Assume that $\pi\in \CR_\sigma$. Then there is an irreducible
representation $\tau\in \Irr(\oGb(V))$ such that (\cf \cite[Chapter
3, IV.4]{MVW})
\begin{equation}
\label{homnonzero}
  \Hom_{\oGb(U)\times \oGb(V)}(\omega, \pi\otimes \tau)\neq 0.
\end{equation}
Since $V$ is anisotropic, both $\pi$ and $\tau$ are unitarizable. By taking complex conjugations on the representations in \eqref{homnonzero}, we have that
\begin{equation}
\label{homnonzero2}
  \Hom_{\oGb(U^-)\times \oGb(V)}(\omega^-, \pi^\vee \otimes \tau^\vee)\neq 0.
\end{equation}
Combining \eqref{homnonzero} and \eqref{homnonzero2}, we have that
\[
    \Hom_{\oGb(U)\times \oGb(U^-)}((\omega \otimes \omega^-)_{\oGb(V)},
    \pi \otimes \pi^\vee)\neq 0,
\]
where a subscript ``${\oGb(V)}$" indicates the maximal quotient on
which $\oGb(V)$ acts trivially. The lemma then follows, by
Proposition \ref{coinv}.
\end{proof}

\noindent {\bf Remark}: The lemma above is a variant of a more
well-known result in the literature on local theta correspondence
(\cite[Proposition 3.1]{HKS} and \cite[Proposition 1.5]{Ku3}). Note
that we include the non-archimedean quaternionic case, for which
MVW-involutions do not exist \cite{LST}. To compensate this, the
space $V$ is assumed to be anisotropic, which is what we need (for
Lemma \ref{dim1}).

\subsection{Non-vanishing of theta lifting}\label{nonvs}

Concerning non-vanishing of theta lifting, we have
\begin{prpl}\label{dict0}
Let $\sigma_1$ and $\sigma_2$ be two elements of $\widehat \CW_U$ such that
\[
\left\{
  \begin{array}{l}
   \sigma_1-\sigma_2\textrm{ represents the anti-split Witt tower in $\CW_U$};\, \textrm{ and}\\
   \dim \sigma_1+\dim \sigma_2=2\dim U+\mathrm d_{\rD, \epsilon}-2.
  \end{array}
\right.
  \]
Then
\[
  \CR_{\sigma_1} \cup \CR_{\sigma_2}=\{\pi\in \Irr(\oGb(U))\mid \pi \textrm{ is genuine with respect to $\sigma_1$ (and $\sigma_2$)}\}.
\]
\end{prpl}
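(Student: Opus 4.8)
The plan is to prove the two inclusions separately. The inclusion $\CR_{\sigma_1}\cup\CR_{\sigma_2}\subseteq\{\pi\in\Irr(\oGb(U))\mid\pi\ \text{is genuine with respect to}\ \sigma_1\}$ is formal: writing $\sigma_i=(V_i,\omega_i)$, if $\Hom_{\oGb(U)}(\omega_i,\pi)\neq 0$ then $\pi$ is genuine with respect to $\sigma_i$ (when $U$ is symplectic, $\varepsilon_U$ must act on $\pi$ by the same scalar $(-1)^{\dim\sigma_i}$ by which it acts on $\omega_i$), and the hypothesis $\dim\sigma_1+\dim\sigma_2=2\dim U+\mathrm d_{\rD,\epsilon}-2$ forces $\dim\sigma_1\equiv\dim\sigma_2\pmod 2$, so ``genuine with respect to $\sigma_1$'' and ``genuine with respect to $\sigma_2$'' are the same condition. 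The substance is the reverse inclusion, the occurrence half of the conservation relations, which I would prove by the doubling method, pushing the problem to the \emph{split} group $\oGb(U^\square)$, $U^\square=U\oplus U^-$, where the submodule structure of the degenerate principal series attached to the Lagrangian $U^\triangle$ is known (Proposition~\ref{quotient0}).

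For this, fix $\pi$ genuine with respect to $\sigma_1$ and (using the symmetry of the statement) assume $\dim\sigma_1\ge\dim\sigma_2$. First I would lift $(\sigma_1,\sigma_2)$ to $\widehat\CW_{U^\square}$: produce $\sigma_1^\square,\sigma_2^\square\in\widehat\CW_{U^\square}$ and a character $\chi\in(\oG(U))^*$ with $\widehat{\mathrm r}^{U^\square}_U(\sigma_i^\square)=\sigma_i+\chi$ and with $\sigma_1^\square-\sigma_2^\square$ representing the anti-split Witt tower of $\CW_{U^\square}$ (see below). Then $\dim\sigma_i^\square=\dim\sigma_i$, so $\dim\sigma_1^\square+\dim\sigma_2^\square=\dim U^\square+\mathrm d_{\rD,\epsilon}-2$ with $\dim\sigma_1^\square\ge\dim\sigma_2^\square$, and Proposition~\ref{quotient0} gives a short exact sequence of smooth $\oGb(U^\square)$-representations
\[
0\longrightarrow\RQ_{\sigma_1^\square}\longrightarrow\operatorname I(\chi_{\sigma_1^\square,U^\triangle})\longrightarrow\RQ_{\sigma_2^\square}\longrightarrow 0 .
\]
Put $\pi':=\bar\chi\otimes\pi$, where $\bar\chi$ is the pull-back of $\chi$ to $\oGb(U)$. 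Since $\bar\chi$ is trivial on $\varepsilon_U$ while $\chi_{\sigma_1^\square,U^\triangle}(\varepsilon_{U^\square})=(-1)^{\dim\sigma_1^\square}$, the genuineness hypothesis of Lemma~\ref{zeta} holds for $\pi'$, so $\Hom_{\oGb(U)}(\operatorname I(\chi_{\sigma_1^\square,U^\triangle}),\pi')\neq 0$. Restricting the exact sequence to $\oGb(U)$ and applying the left-exact functor $\Hom_{\oGb(U)}(-,\pi')$, at least one of $\Hom_{\oGb(U)}(\RQ_{\sigma_1^\square},\pi')$, $\Hom_{\oGb(U)}(\RQ_{\sigma_2^\square},\pi')$ is nonzero; by Lemma~\ref{cn0} this yields $\pi'\in\CR_{\sigma_1+\chi}$ or $\pi'\in\CR_{\sigma_2+\chi}$. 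Since $\CR_{\sigma_i+\chi}=\bar\chi\otimes\CR_{\sigma_i}$, untwisting gives $\pi\in\CR_{\sigma_1}$ or $\pi\in\CR_{\sigma_2}$, which completes ``$\supseteq$''.

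The only delicate point is the lift of $(\sigma_1,\sigma_2)$; the analytic engine (Proposition~\ref{quotient0} and the local zeta integrals of Lemma~\ref{zeta}) is cited wholesale, and the remainder is transporting the anti-split and dimension conditions through the restriction homomorphisms. Because $\widehat{\mathrm r}^{U^\square}_U$ and $\mathrm r^{U^\square}_U$ are compatible with the projections to $\widehat\CW_0$, $\CW_0$ and are functorial, they send $\mathbf t_{U^\square}^\circ$ to $\mathbf t_U^\circ$; combined with $\H_U=\widehat{\mathrm r}^{U^\square}_U(\H_{U^\square})$ and Proposition~\ref{exactcw}(c) this shows that $\sigma_1-\sigma_2$, which represents $\mathbf t_U^\circ$, already lies in $\widehat{\mathrm r}^{U^\square}_U(\widehat\CW_{U^\square})$, so by Lemma~\ref{decw} the elements $\sigma_1,\sigma_2$ lie in one coset of $\widehat{\mathrm r}^{U^\square}_U(\widehat\CW_{U^\square})$ in $\widehat\CW_U$; this produces $\chi$ together with lifts $\sigma_i^\square$. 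Then $\widehat{\mathrm r}^{U^\square}_U(\sigma_1^\square-\sigma_2^\square)=\sigma_1-\sigma_2$ represents $\mathbf t_U^\circ$, so $[\sigma_1^\square-\sigma_2^\square]$ and $\mathbf t_{U^\square}^\circ$ have the same image in $\CW_U$; by an elementary chase of the exact sequences of Proposition~\ref{exactcw}, $[\sigma_1^\square-\sigma_2^\square]-\mathbf t_{U^\square}^\circ$ is represented by some $\eta\in\ker\!\big((\oG(U^\square))^*\to(\oG(U))^*\big)$, and replacing $\sigma_1^\square$ by $\sigma_1^\square-\eta$ --- which alters neither $\widehat{\mathrm r}^{U^\square}_U(\sigma_1^\square)$ (as $\eta$ is trivial on $\oG(U)$) nor $\dim\sigma_1^\square$ --- makes $\sigma_1^\square-\sigma_2^\square$ represent $\mathbf t_{U^\square}^\circ$ exactly. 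I expect this bookkeeping, and in particular tracking the character twist $\chi$ through $\CR_{\sigma_i+\chi}=\bar\chi\otimes\CR_{\sigma_i}$, to be the part most prone to slips, while the conceptual content is supplied entirely by Proposition~\ref{quotient0}.
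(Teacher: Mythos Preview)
Your proposal is correct and follows essentially the same approach as the paper's proof: lift $\sigma_1,\sigma_2$ to $\widehat\CW_{U^\square}$, apply Proposition~\ref{quotient0} to get the short exact sequence of degenerate principal series constituents, invoke Lemma~\ref{zeta} to see $\pi$ in $\operatorname I(\chi_{\sigma_1^\square,U^\triangle})$, and use Lemma~\ref{cn0} to conclude. The only difference is cosmetic: the paper absorbs your character twist $\chi$ and the adjustment by $\eta$ into a single ``without loss of generality'' appeal to Lemma~\ref{decw}, and rather than fixing $\dim\sigma_1\ge\dim\sigma_2$ it simply notes that Proposition~\ref{quotient0} gives one of two exact sequences; your explicit bookkeeping of the lift and the twist is a faithful unpacking of that WLOG.
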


\begin{proof}
By Lemma \ref{decw} and without loss of generality, we assume that there exist $\sigma_i^\square\in
\widehat \CW_{U^\square}$  such that $\sigma_i=\widehat{\operatorname r}^{U^\square}_U(\sigma_i^\square)$ ($i=1,2$), and $\sigma_1^\square-\sigma_2^\square$ represents the anti-split Witt tower in $\CW_{U^\square}$.

By Proposition \ref{quotient0}, we either have a short exact sequence
\[
  0\rightarrow \RQ_{\sigma_1^\square}\rightarrow \operatorname I(\chi_{\sigma_1^\square,U^\triangle})\rightarrow \RQ_{\sigma_2^\square}\rightarrow 0,
\]
or have a short exact sequence
\[
  0\rightarrow \RQ_{\sigma_2^\square}\rightarrow \operatorname I(\chi_{\sigma_2^\square,U^\triangle})\rightarrow \RQ_{\sigma_1^\square}\rightarrow 0.
\]
Then Lemma \ref{zeta} implies that
\[
  \Hom_{\oGb(U)}(\RQ_{\sigma_1^\square},\pi)\neq 0\quad \textrm{or}\quad \Hom_{\oGb(U)}(\RQ_{\sigma_2^\square},\pi)\neq 0,
\]
for all $\pi\in  \Irr(\oGb(U))$ which is genuine with respect to $\sigma_1$ and $\sigma_2$.
By Lemma \ref{cn0},
\[
  \pi\in \CR_{\sigma_1}\quad \textrm{or}\quad \pi\in \CR_{\sigma_2}.
\]
This proves the proposition.
\end{proof}

The upper bound in Theorem \ref{main} is an easy consequence of Proposition \ref{dict0}:

\begin{corl}\label{upper}
Let $\pi\in \Irr(\oGb(U))$. Let $\mathbf t_1, \mathbf t_2\in \CW_U$
be two elements so that $\mathbf t_1-\mathbf t_2=\mathbf t_U^\circ$.
Assume that $\pi$ is genuine with respect to $\mathbf t_1$ (and
hence genuine with respect to $\mathbf t_2$). Then
\[
   \operatorname n_{\mathbf t_1}(\pi)+\operatorname n_{\mathbf t_2}(\pi)\leq 2\dim U+\mathrm d_{\rD,\epsilon}.
\]
\end{corl}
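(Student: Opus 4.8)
The plan is to argue by contradiction and reduce everything to Proposition \ref{dict0}, paralleling the quaternionic Hermitian model computation of Section \ref{kr} but carried out directly inside the group $\widehat\CW_U$. Two preliminary observations are needed. First, since $q_U$ is the natural transition map $\CW_U\to\CW_0$ we have $q_U(\mathbf t_U^\circ)=\mathbf t_0^\circ$, hence $q_U(\mathbf t_1)-q_U(\mathbf t_2)=\mathbf t_0^\circ$; moreover $\widehat q_U^+$ preserves dimension and split rank, so it carries the anisotropic representative of $\mathbf t_i$ to the anisotropic element of $q_U(\mathbf t_i)$, giving $\deg\mathbf t_i=\deg q_U(\mathbf t_i)$. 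Proposition \ref{conserv0} then yields $\deg\mathbf t_1+\deg\mathbf t_2=\mathrm d_{\rD,\epsilon}$. Second, every element of a fixed Witt tower $\mathbf t_i\subset\widehat\CW_U^+$ has dimension $\equiv\deg\mathbf t_i\pmod 2$, so $\operatorname n_{\mathbf t_i}(\pi)\equiv\deg\mathbf t_i\pmod 2$.

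Now suppose, for contradiction, that $\operatorname n_{\mathbf t_1}(\pi)+\operatorname n_{\mathbf t_2}(\pi)>2\dim U+\mathrm d_{\rD,\epsilon}$. Set $d_1:=\operatorname n_{\mathbf t_1}(\pi)-2$ and $d_2:=2\dim U+\mathrm d_{\rD,\epsilon}-2-d_1$. The two parity observations give $d_1\equiv\deg\mathbf t_1$ and $d_2\equiv\deg\mathbf t_2\pmod 2$; since the kernel of $\widehat\CW_U\to\CW_U$ is $\Z\cdot\H_U$ with $\dim\H_U=2$ (Proposition \ref{exactcw}), I may choose $\sigma_i\in\widehat\CW_U$ representing $\mathbf t_i$ with $\dim\sigma_i=d_i$. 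Then $\sigma_1-\sigma_2$ represents $\mathbf t_1-\mathbf t_2=\mathbf t_U^\circ$, the anti-split Witt tower, and $\dim\sigma_1+\dim\sigma_2=2\dim U+\mathrm d_{\rD,\epsilon}-2$; moreover $\pi$ is genuine with respect to $\sigma_1$ and $\sigma_2$ (because $\dim\sigma_i\equiv\deg\mathbf t_i\pmod 2$ and $\pi$ is genuine with respect to $\mathbf t_1$, hence to $\mathbf t_2$). Thus Proposition \ref{dict0} applies and forces $\pi\in\mathcal R_{\sigma_1}\cup\mathcal R_{\sigma_2}$. If $\pi\in\mathcal R_{\sigma_1}$, then $\sigma_1\in\widehat\CW_U^+$, so $\sigma_1\in\mathbf t_1$ and $\operatorname n_{\mathbf t_1}(\pi)\le\dim\sigma_1=\operatorname n_{\mathbf t_1}(\pi)-2$, absurd; if $\pi\in\mathcal R_{\sigma_2}$, then likewise $\operatorname n_{\mathbf t_2}(\pi)\le\dim\sigma_2=2\dim U+\mathrm d_{\rD,\epsilon}-\operatorname n_{\mathbf t_1}(\pi)<\operatorname n_{\mathbf t_2}(\pi)$ by the contradiction hypothesis, again absurd. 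This contradiction establishes the asserted inequality.

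All of the analytic content has already been absorbed into Proposition \ref{dict0} (which in turn rests on Proposition \ref{quotient0} about the structure of the degenerate principal series, and on Lemma \ref{zeta} from the theory of local zeta integrals), so there is no serious obstacle remaining at this stage. The two points that do need a little care are: the $\sigma_i$ chosen above need not lie in $\widehat\CW_U^+$ --- but in that case $\mathcal R_{\sigma_i}=\emptyset$, and the disjunction furnished by Proposition \ref{dict0} simply throws the occurrence onto the other Witt tower, so the contradiction persists; and one must keep track of parities to ensure that $\deg\mathbf t_1+\deg\mathbf t_2=\mathrm d_{\rD,\epsilon}$ is compatible with the dimensions $d_1,d_2$ assigned to $\sigma_1,\sigma_2$.
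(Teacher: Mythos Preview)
Your proof is correct and follows essentially the same approach as the paper's: both argue by contradiction, use the parity relation $\deg\mathbf t_1+\deg\mathbf t_2=\mathrm d_{\rD,\epsilon}$ coming from Proposition \ref{conserv0}, choose representatives $\sigma_i\in\widehat\CW_U$ of $\mathbf t_i$ with $\dim\sigma_1+\dim\sigma_2=2\dim U+\mathrm d_{\rD,\epsilon}-2$, and then invoke Proposition \ref{dict0} to reach a contradiction. Your version is simply more explicit about the parity bookkeeping and the specific choice $d_1=\operatorname n_{\mathbf t_1}(\pi)-2$, whereas the paper just asserts the existence of suitable $\sigma_1,\sigma_2$ after noting that the assumed strict inequality forces $\operatorname n_{\mathbf t_1}(\pi)+\operatorname n_{\mathbf t_2}(\pi)=2\dim U+\mathrm d_{\rD,\epsilon}+2k$ with $k>0$.
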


\begin{proof}

Write $\tilde{\mathbf t}_i\subset \widehat \CW_U$ for the inverse image of $\mathbf t_i$ under the natural homomorphism $\widehat \CW_U\rightarrow \CW_U$ ($i=1,2$). Then
\[
  \operatorname n_{\mathbf t_i}(\pi)=\min \{\dim \sigma_i\mid \sigma_i\in \tilde{\mathbf t}_i, \, \pi\in \CR_{\sigma_i}\}.
\]
Assume that
\[
   \operatorname n_{\mathbf t_1}(\pi)+\operatorname n_{\mathbf t_2}(\pi)>2\dim U+\mathrm d_{\rD,\epsilon}.
\]
Then Proposition \ref{conserv0} implies that
\[
   \operatorname n_{\mathbf t_1}(\pi)+\operatorname n_{\mathbf t_2}(\pi)= 2\dim U+\mathrm d_{\rD,\epsilon}+2k
\]
for some integer $k>0$.
Therefore there exist $\sigma_1\in \tilde{\mathbf t}_1$ and $\sigma_2\in\tilde{\mathbf t}_2$ so that
\[
  \left\{
    \begin{array}{l}
   \pi\notin \CR_{\sigma_1} \,\textrm{ and }\,   \pi\notin \CR_{\sigma_2};\,\textrm{ and} \\
     \dim \sigma_1+\dim \sigma_2= 2\dim U+\mathrm d_{\rD,\epsilon}-2.\\
        \end{array}
  \right.
  \]
This contradicts Proposition \ref{dict0}.
\end{proof}

\noindent {\bf Remarks}: (a) Let $\sigma_1$ and $\sigma_2$ be as in
Proposition \ref{dict0}. Proposition \ref{conserv0} implies that
$\sigma_1$ is in the stable range (that is, $\rank \,\sigma_1\geq
\dim U$) if and only if $\sigma_2\notin \widehat \CW_U^+$. Therefore
Proposition \ref{dict0} implies that \cite[Propositions 4.3 and
4.5]{Ku3}
\be \label{stablerange}
  \CR_{\sigma}=\{\pi\in \Irr(\oGb(U))\mid \pi \textrm{ is genuine with respect to $\sigma$}\}
\ee
for all $\sigma\in \widehat \CW_U^+$  in the stable range.

(b) It is easy to see that Theorem \ref{main} (the conservation
relations) is equivalent to the following: for all $\sigma_1,
\sigma_2\in \widehat \CW_U$ as in Proposition \ref{dict0}, we have
that
\[
  \CR_{\sigma_1} \sqcup \CR_{\sigma_2}=\{\pi\in \Irr(\oGb(U))\mid \pi \textrm{ is genuine with respect to $\sigma_1$ (and $\sigma_2$)}\}.
\]
For $\dim\, \sigma_1=\dim \, \sigma_2$, the above assertion is
called theta dichotomy in the literature \cite{KR3, HKS}. The theta
dichotomy was established by Harris \cite[Theorem 2.1.7]{Ha} (for
unitary-unitary dual pairs), and by Zorn \cite[Theorem 1.1]{Zo} and
Gan-Gross-Prasad \cite[Theorem 11.1]{GGP} (for orthogonal-symplectic
dual pairs). For a related work of Prasad, see \cite{Pra}.

\section{Non-occurrence of the trivial representation before stable range}
\label{NOtrivial}
Let $U$ be an $\epsilon$-Hermitian right $\rD$-vector space. Recall that $\mathbf t_U^\circ\in \CW_U$ denotes the anti-split Witt tower (Section \ref{kk}).
The main purpose of this section is to show the following proposition,
which is the second key point of this article and which is responsible for the lower bound in conservation
relations.

\begin{prpl}\label{trivialv-ind} One has that
\[
\operatorname n_{\mathbf t_U^\circ}(1_U)\geq 2\dim U+\mathrm d_{\rD,\epsilon}.
\]
Here and as before $1_U\in \Irr(\oGb(U))$ stands for the trivial representation.
\end{prpl}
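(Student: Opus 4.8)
The plan is to follow the method of Rallis \cite{Ra1,Ra2}: reformulate the non-occurrence as the vanishing of a space of vector-valued $\oG(U)$-invariant distributions on a vector space built from $U$, and then destroy that space by a stratification argument combined with the Fourier transform. By Kudla's persistence principle it suffices to treat the element $\sigma=(V,\omega)$ of $\mathbf t_U^\circ$ of largest dimension strictly below the bound, namely $\dim\sigma=2\dim U+\mathrm d_{\rD,\epsilon}-2$. By Proposition \ref{danis} and the construction of $\mathbf t_U^\circ$ in Section \ref{kk} we may write $V=V^\circ\oplus V'$, where $V^\circ$ is the unique anisotropic $-\epsilon$-Hermitian space of dimension $\mathrm d_{\rD,\epsilon}$ and $V'$ is split with a Lagrangian $Y'$ of $\rD$-dimension $\dim U-1$. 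Passing to the corresponding mixed model, $\omega\cong\omega^\circ\,\widehat\otimes\,\CS(\mathcal N)$ with $\mathcal N:=U\otimes_\rD Y'\cong U^{\dim U-1}$, where $\omega^\circ$ is the anisotropic oscillator representation of $\oJb_U(V^\circ)$ and $\oGb(U)$ acts on $\CS(\mathcal N)$ through its natural (diagonal) action on $\mathcal N$, twisted by the possibly non-trivial character of $\oGb(U)$ determined by $\mathbf t_U^\circ$. Thus $\Hom_{\oGb(U)}(\omega,1_U)$ is identified with a space of continuous $\oG(U)$-equivariant maps $T\colon\CS(\mathcal N)\to\overline{\omega^\circ}$, i.e.\ of suitably equivariant $\overline{\omega^\circ}$-valued $\oG(U)$-invariant distributions on $\mathcal N$, and the task is to prove this space is zero.

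The first half of the argument reduces the support of any such $T$ to the null cone. Stratify $\mathcal N$ by $\oG(U)$-orbit type, recorded by the $\rD$-span $U_x\subseteq U$ of (the components of) $x$ together with its induced form. If $U_x$ has a nonzero nondegenerate part, then a point of the stratum has stabilizer meeting the reductive group $\oG(U_x^\perp)$ with $\dim U_x^\perp\ge1$, and the contribution of $T$ along the stratum is controlled by a vector of $\overline{\omega^\circ}$ transforming by a definite character under $\oG(U_x^\perp)$ acting through the oscillator representation of the pair $(\oG(U_x^\perp),\oG(V^\circ))$. Since $\dim V^\circ=\mathrm d_{\rD,\epsilon}<2\dim U_x^\perp+\mathrm d_{\rD,\epsilon}$, and since by the coherence of the Kudla homomorphism (Lemma \ref{k0}) the relevant twisted oscillator representation of $\oG(U_x^\perp)$ again represents $\mathbf t_{U_x^\perp}^\circ$, the Proposition applied inductively to $U_x^\perp$ forbids such a vector; the base case $\dim U_x^\perp=1$ with $V^\circ$ anisotropic is Lemma \ref{dim1}, which is not accessible by orbit analysis and is proved separately by the doubling method, relating occurrence to the structure of a degenerate principal series of $\oGb((U_x^\perp)^\square)$ through Lemma \ref{cn1} and Proposition \ref{quotient0}. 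The partially isotropic strata are reduced to this after splitting off the nondegenerate part of $U_x$. Hence $T$ is supported on the null cone $\mathcal C\subseteq\mathcal N$ consisting of those $x$ whose $\rD$-span is totally isotropic in $U$.

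The second half kills $T$ on $\mathcal C$. One treats the $\oG(U)$-orbits in $\mathcal C$ in turn: on the small orbits (the origin and those of low rank) $T$ vanishes by a stabilizer-versus-codimension count; along the open orbits of $\mathcal C$ one shows $T$ is homogeneous of a precise degree, computed from $\rho_r$ as in \eqref{defrhor} and from $\dim V$. The partial Fourier transform on $\CS(\mathcal N)$ is implemented by an element of the metaplectic group that commutes with the image of $\oGb(U)$, so $\widehat T$ is again an invariant distribution of the same type; by the previous paragraph it too is supported on $\mathcal C$. Comparing the homogeneity degree of $T$ with the one it forces on $\widehat T$, and using that $\dim V<2\dim U+\mathrm d_{\rD,\epsilon}$ makes $\dim\mathcal C$ too small to support simultaneously a distribution of that degree and its Fourier transform, one concludes $T=0$.

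I expect the main obstacle to be this last step: establishing the exact homogeneity of $T$ along the non-closed top strata of the null cone and converting the Fourier transform into a numerical contradiction; the other genuinely delicate point is the base case $\dim U=1$, invisible to elementary orbit analysis, which forces the use of the doubling method and the known structure of the relevant degenerate principal series.
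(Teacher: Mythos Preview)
Your proposal is correct and follows the same strategy as the paper: the mixed-model reformulation as $\Hom_{\oG(U)}(\CS(U^{d-1})\otimes\omega_U^\circ,1_U)=0$, the inductive reduction to the null cone with the base case $\dim U=1$ handled via the doubling method (Lemma \ref{dim1}), and the endgame on the null cone by homogeneity on the top strata plus the Fourier transform (Lemma \ref{homf}).

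One small correction: the vanishing on the small null-cone orbits $\Gamma_i$ with $2i<d$ is not a ``stabilizer-versus-codimension count'' in the sense of controlling transverse jets. In the $p$-adic setting there are no higher derivatives, so Frobenius reciprocity on a locally closed stratum gives exactly $\Hom_{\mathrm{stab}}(\delta\otimes\omega_U^\circ,1_U)$, and the point is that the stabilizer of any $x\in\Gamma_i$ contains $\oG(U_0)$ for some nonzero nondegenerate $U_0\subset U$ (since $2i<d$), whence $\Hom_{\oG(U_0)}(\omega_{U_0}^\circ,1_{U_0})=0$ by Lemma \ref{vtrivial1}. In other words, the small null-cone orbits are killed by the \emph{same} mechanism as the off-null-cone strata, not by a separate dimension argument.
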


Proposition \ref{trivialv-ind} is proved in \cite[Appendix]{Ra1}, \cite[Lemma 4.2]{KR3}
and \cite[Theorem 2.9]{GG}, respectively for orthogonal groups, symplectic groups, and unitary groups. Only the quaternionic
case is new. Because of the lack of MVW-involutions, the
approach of \cite{KR3} and \cite{GG}, which uses the doubling method, does
not work for this case. We will follow the idea
of Rallis (\cite{Ra1, Ra2}, which treat the case of orthogonal
groups) to provide a uniform proof of Proposition \ref{trivialv-ind}.

By the argument of Section \ref{secstra}, Proposition \ref{trivialv-ind} implies the following
\begin{prpl}\label{lower0}
Let $\pi\in \Irr(\oGb(U))$. Let $\mathbf t_1, \mathbf t_2\in \CW_U$ be two elements so
that $\mathbf t_1-\mathbf t_2=\mathbf t_U^\circ$. Assume that $\pi$ is genuine with respect to $\mathbf t_1$ (and hence genuine with respect to $\mathbf
t_2$). Then
\[
   \operatorname n_{\mathbf t_1}(\pi)+\operatorname n_{-\mathbf t_2}(\pi^\vee)\geq 2\dim U+\mathrm d_{\rD,\epsilon}.
\]
\end{prpl}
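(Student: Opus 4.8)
The plan is to deduce Proposition \ref{lower0} from Proposition \ref{trivialv-ind} by the see-saw argument already sketched in Section \ref{secstra}: the only ingredients are the definition of the monoid structure on $\widehat\CW_U^+$, the compatibility of contragredients with theta lifting, and the bound of Proposition \ref{trivialv-ind}.

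First I would fix witnesses for the two first occurrence indices. Choose $\sigma_1=(V_1,\omega_1)\in\mathbf t_1\subset\widehat\CW_U^+$ with $\dim\sigma_1=\operatorname n_{\mathbf t_1}(\pi)$ and $\pi\in\CR_{\sigma_1}$, that is, with $\Hom_{\oGb(U)}(\omega_1,\pi)\ne 0$. Before doing the analogue for $\pi^\vee$ I would record the routine point that $\operatorname n_{-\mathbf t_2}(\pi^\vee)$ is meaningful: $\varepsilon_U$ has order $2$, so it acts on $\pi^\vee$ by the same sign as on $\pi$; and since $\dim\H_U=2$, the parity of the dimension is constant along $-\mathbf t_2$ and equals that of $\mathbf t_2$; hence $\pi$ genuine with respect to $\mathbf t_2$ forces $\pi^\vee$ genuine with respect to $-\mathbf t_2$, and finiteness of $\operatorname n_{-\mathbf t_2}(\pi^\vee)$ then follows from occurrence in the stable range \eqref{stableint}. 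Now choose $\sigma_2=(V_2,\omega_2)\in -\mathbf t_2\subset\widehat\CW_U^+$ with $\dim\sigma_2=\operatorname n_{-\mathbf t_2}(\pi^\vee)$ and $\pi^\vee\in\CR_{\sigma_2}$.

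Next I would tensor. Pick nonzero $f_1\in\Hom_{\oGb(U)}(\omega_1,\pi)$ and $f_2\in\Hom_{\oGb(U)}(\omega_2,\pi^\vee)$; both are surjective since $\pi$ and $\pi^\vee$ are irreducible. Then $f_1\otimes f_2$ is a surjective $\oGb(U)$-map $\omega_1\otimes\omega_2\to\pi\otimes\pi^\vee$ for the diagonal actions, and post-composing with the canonical $\oGb(U)$-invariant pairing $\pi\otimes\pi^\vee\to 1_U$ yields a nonzero element of $\Hom_{\oGb(U)}(\omega_1\otimes\omega_2,1_U)$. By the definition of the addition on $\widehat\CW_U^+$, the element $\sigma_1+\sigma_2=(V_1\oplus V_2,\omega_1\otimes\omega_2)$ lies in $\widehat\CW_U^+$ with underlying oscillator representation $\omega_1\otimes\omega_2$, so this says exactly $1_U\in\CR_{\sigma_1+\sigma_2}$. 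Because the quotient map $\widehat\CW_U^+\to\CW_U$ is a monoid homomorphism into the group $\CW_U$, we have $\sigma_1+\sigma_2\in\mathbf t_1+(-\mathbf t_2)=\mathbf t_1-\mathbf t_2=\mathbf t_U^\circ$. Hence, by the definition \eqref{Symp-index} of the first occurrence index,
\[
  \operatorname n_{\mathbf t_1}(\pi)+\operatorname n_{-\mathbf t_2}(\pi^\vee)=\dim\sigma_1+\dim\sigma_2=\dim(\sigma_1+\sigma_2)\ \geq\ \operatorname n_{\mathbf t_U^\circ}(1_U),
\]
and Proposition \ref{trivialv-ind} bounds the right-hand side below by $2\dim U+\mathrm d_{\rD,\epsilon}$, which is the assertion.

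The one genuinely hard input is Proposition \ref{trivialv-ind} itself, the late occurrence of $1_U$ in the anti-split Witt tower, which I am taking as given here; its proof is carried out in Section \ref{NOtrivial} by Rallis's method (reduction to the null cone, vanishing on small orbits, homogeneity on the main orbits, and the Fourier transform). All the remaining points above --- surjectivity of the intertwining maps, nonvanishing of the composed Hom, the parity/genuineness bookkeeping, and the identification of $\sigma_1+\sigma_2$ as a representative of $\mathbf t_U^\circ$ --- are routine.
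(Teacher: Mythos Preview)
Your proposal is correct and follows exactly the argument the paper gives in Section \ref{secstra} (and explicitly invokes for Proposition \ref{lower0}): pick witnesses $\sigma_1,\sigma_2$ for the two first occurrence indices, tensor to obtain a nonzero $\oGb(U)$-map $\omega_1\otimes\omega_2\to 1_U$, identify $\sigma_1+\sigma_2$ as a representative of $\mathbf t_U^\circ$, and apply Proposition \ref{trivialv-ind}. The extra bookkeeping you add (genuineness of $\pi^\vee$ with respect to $-\mathbf t_2$, surjectivity of the intertwiners) is routine and consistent with the paper's terse treatment.
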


As demonstrated in Section \ref{secstra}, Corollary \ref{upper} and Proposition \ref{lower0} then imply Theorem \ref{main}.

\vsp

Proposition \ref{trivialv-ind} is clear when $U=0$. For the rest of this section, assume that $U\neq 0$, and put $d:=\dim U>0$.
Write $\sigma_U^\circ=(V^\circ, \omega_U^\circ)$ for the anisotropic element of $\mathbf
t_U^\circ$. We view $\omega_U^\circ$ as a representation of
$\oG(U)$ (when $U$ is a symplectic space, the restriction of
$\omega_U^\circ$ to $\oGb(U)$ descends to a representation of $\oG(U)$).
Likewise, view $1_U$ as the trivial representation of $\oG(U)$. Then Proposition \ref{trivialv-ind} amounts to the following
\begin{prpl} \label{vahom} One has that
\begin{equation}
  \Hom_{\oG(U)}(\CS(U^{d-1})\otimes \omega_U^\circ,
  1_U)=0,
\end{equation}
where $U^{d-1}$ carries the diagonal action of $\oG(U)$, and $\CS(U^{d-1})$ carries the induced action of $\oG(U)$.
\end{prpl}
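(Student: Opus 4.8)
The plan is to follow Rallis's method \cite{Ra1,Ra2}: study the $\oG(U)$-invariant functionals as invariant distributions unfolded against the null cone, via an induction on $d=\dim U$. First I would realize $\omega_U^\circ$ in a Schr\"odinger model $\CS(\CL)$ attached to a Lagrangian $\CL\subseteq U\otimes_\rD V^\circ$, so that an element of $\Hom_{\oG(U)}(\CS(U^{d-1})\otimes\omega_U^\circ,1_U)$ becomes an $\oG(U)$-invariant distribution $T$ on $U^{d-1}\times\CL$, where $\oG(U)$ acts geometrically on $U^{d-1}$ and through the Weil action on $\CS(\CL)$; equivalently, an $\oG(U)$-equivariant, $(\omega_U^\circ)^\vee$-valued distribution on $U^{d-1}$. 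When $d=1$ the space $U^{d-1}$ is a point and the assertion reduces to $\Hom_{\oG(U)}(\omega_U^\circ,1_U)=0$ with $\dim U=1$ and $V^\circ$ anisotropic, which is Lemma \ref{dim1} (proved independently through the doubling method, there being no MVW-involution in the quaternionic case). So I would take $d\geq 2$ and assume Proposition \ref{vahom} in all lower dimensions.

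The next step is to prove that $T$ is supported on the null cone $\CN\subseteq U^{d-1}$, consisting of the tuples $(u_1,\dots,u_{d-1})$ whose span is a totally isotropic $\rD$-subspace of $U$ --- equivalently, where the Gram matrix $(\la u_i,u_j\ra_U)$ vanishes. Stratifying $U^{d-1}$ by the isometry type of the span $W$, on a stratum for which $W$ is not totally isotropic one selects a nonzero non-degenerate $W_0\subseteq W$, whence $\dim W_0^\perp\leq d-1<d$. By Witt's extension theorem \cite[Theorem 9.1]{Sch} the $\oG(U)$-orbits in the stratum are of the form $\oG(U)/\oG(W_0^\perp)$ (up to finitely many isometry types of $W$), so localizing equivariant distributions along such an orbit reduces a nonzero $T$ supported there to a $\oG(W_0^\perp)$-equivariant distribution on $\CS\bigl((W_0^\perp)^{\dim W_0^\perp-1}\bigr)\otimes\omega_{W_0^\perp}^\circ$, twisted by the modulus character of the slice. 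Since the restriction of $\omega_U^\circ$ to $\oGb(W_0^\perp)\ltimes\oH(W_0^\perp\otimes_\rD V^\circ)$ is again the oscillator representation attached to the same maximal anisotropic space $V^\circ$, and exactly $\dim W_0^\perp-1$ vector variables survive, that distribution vanishes by the induction hypothesis, or by Lemma \ref{dim1} when $\dim W_0^\perp=1$. Hence $\supp T\subseteq\CN$.

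It then remains to exclude nonzero $\oG(U)$-invariant distributions supported on $\CN$. I would stratify $\CN$ by the dimension $k$ of the span, $0\leq k\leq\min(d-1,\rank U)$, the $\oG(U)$-orbits on each stratum being governed by the parabolic $\oP$ stabilizing a fixed totally isotropic $k$-space. First, $T$ cannot be supported on the ``small'' strata with $k<\min(d-1,\rank U)$; this is the analogue of the vanishing on small orbits, obtained by the same slicing-and-induction mechanism together with the stable-range statement \eqref{stablerange}. For a $T$ supported on the closure of the main stratum, I would compute its behaviour under the dilation action (by $\rF^\times$, or $\rD^\times$ in the quaternionic case) on $U^{d-1}$, which normalizes $\oG(U)$ and acts on $\omega_U^\circ$ via the Weil representation: invariance of $T$ then pins down its homogeneity degree along $\CN$. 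Finally I would apply the Fourier transform on the symplectic space $U\otimes_\rD\bigl(V^\circ\oplus(d-1)\H\bigr)$, realized as the product of a Fourier transform on $\CS(U^{d-1})$ with one on $\CS(\CL)$; this intertwining operator commutes with $\oG(U)$, so the transform of $T$ is again $\oG(U)$-invariant and hence, by the previous step, again supported on $\CN$ --- but the homogeneity degrees of $T$ and of its transform turn out to be incompatible, forcing $T=0$ and closing the induction.

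The hard part will be this last step: establishing the precise vanishing on the small strata of $\CN$ and, on the main stratum, carrying out the homogeneity computation and the Fourier-transform contradiction, uniformly across all the types of classical groups and in particular in the genuinely new quaternionic cases, where the $\oG(U)$-orbit structure on $U^{d-1}$ and on $\CN$ must be worked out by hand. The reduction steps are comparatively formal; the real analytic input is concentrated here, and in the base case Lemma \ref{dim1}, whose quaternionic instance likewise forces a detour through the doubling method instead of an MVW-involution argument.
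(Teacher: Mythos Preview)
Your overall architecture is exactly the paper's: induction on $d$, reduction to the null cone $\Gamma$, then within $\Gamma$ kill the small strata and finish the main stratum by a homogeneity computation contradicted by a Fourier transform. The reduction-to-null-cone step, though your phrasing is loose (the $\oG(U)$-stabilizer of a tuple is $\oG(W^\perp)$, not $\oG(W_0^\perp)$), can be made precise exactly as the paper does in Lemmas~\ref{vhompl}--\ref{reducn}: fix a small nondegenerate $U_0\subset U$ of dimension $d_0$ ($d_0=2$ in the symplectic case, $d_0=1$ otherwise), slice at tuples whose first $d_0$ coordinates form a basis of $U_0$, and apply Frobenius reciprocity together with the induction hypothesis on $U_0^\perp$.

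There is, however, a genuine gap in your small-orbit step. You invoke the stable-range statement \eqref{stablerange}, but that is an \emph{occurrence} result and cannot produce vanishing. The correct input is a separate lemma, Lemma~\ref{vtrivial1}: $\Hom_{\oG(U_0)}(\omega_{U_0}^\circ,1_{U_0})=0$ for \emph{every} nonzero $U_0$. This is neither the induction hypothesis of Proposition~\ref{vahom} (there is no $\CS(U_0^{d_0-1})$ factor here) nor stable range; it is proved independently by reducing to a one-dimensional subspace and invoking Lemma~\ref{dim1} (or, in the symplectic case, using that the Kudla character is unitary, Lemma~\ref{splitvhom}). With this lemma in hand, the small-orbit argument (Lemma~\ref{vsmall}) is: on a stratum $\Gamma_i$ with $2i<d$, the orthogonal complement of the span contains a nonzero nondegenerate $U_0$, and restricting to $\oG(U_0)$ already forces vanishing. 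Note the criterion is $2i<d$, not $i<\min(d-1,\rank U)$; in particular, when $U$ is \emph{not} split every null-cone stratum is small in this sense and the homogeneity/Fourier step is never needed --- that step runs only for split $U$ on the open stratum $\Gamma_r$ with $d=2r$. Finally, the relevant Fourier transform is simply on $U^{d-1}$ using the $\epsilon$-Hermitian form on $U$ (equation~\eqref{invg}), not on the ambient symplectic space; it commutes with $\oG(U)$ and its scaling behaviour \eqref{inva} yields the homogeneity mismatch.
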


\subsection{Non-occurrence of $1_U$ in $\omega_U^\circ$}
\label{Sub5.2} Let $V$ be a $-\epsilon$-Hermitian left $\rD$-vector
space. We start with the following observation, which is easily seen
from the Schr\"{o}dinger model of an oscillator representation. See
\cite{Li2}, and for a fuller treatment see \cite[Part II, Section
3]{Ho2}.

\begin{lem}\label{nx}
 Let $\omega$ be a smooth oscillator representation of $\oJ_U(V)$. Assume that $U$ is split with a Lagrangian subspace $X$, and $V$ is anisotropic. Then every linear
functional on $\omega$ is $\operatorname N(X)$-invariant if and only
if it is invariant under $X\otimes_\rD V\subset \oH(U\otimes_\rD V)$, where
$\operatorname N(X)$ denotes the unipotent radical of $\oP(X)$.
\end{lem}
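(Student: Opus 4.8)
The plan is to compute everything in a Schr\"odinger model of $\omega$ adapted to the Lagrangian $X$. First I would fix a Lagrangian complement $Y$ of $X$ in $U$; then $W:=U\otimes_\rD V$ carries the complete polarization $W=(Y\otimes_\rD V)\oplus(X\otimes_\rD V)$, and I realize $\omega|_{\oH(W)}$ on $\CS(Y\otimes_\rD V)$ as in \eqref{acth}, so that $Y\otimes_\rD V$ acts by translations while $X\otimes_\rD V$ acts by multiplication by the characters $\xi\mapsto\psi(2\la\xi,x_0\ra_W)$ ($x_0\in X\otimes_\rD V$). The image of $\operatorname{N}(X)\subset\oG(U)$ in $\Sp(W)$ (under $n\mapsto n\otimes\mathrm{id}_V$) fixes $X\otimes_\rD V$ pointwise, so $\omega(\operatorname{N}(X))$ commutes with all of these modulations and hence consists of multiplication operators; comparing their effect on the translations coming from $Y\otimes_\rD V$ pins down, in the standard way (see \cite{Li2} and \cite[Part II, Section 3]{Ho2}), that $\omega(n)$ is multiplication by $\xi\mapsto\psi(q_n(\xi))$ with $q_n(\xi):=c_0\,\la(n\otimes\mathrm{id}_V)\xi-\xi,\xi\ra_W$ for a fixed $c_0\in\rF^\times$ — a quadratic form on $Y\otimes_\rD V$, noting $(n\otimes\mathrm{id}_V)\xi-\xi\in X\otimes_\rD V$, and $n\mapsto q_n$ is $\rF$-linear because $\operatorname{N}(X)$ is abelian (and, in characteristic zero, no linear correction term survives).

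Next I would record the elementary fact that a linear functional $\lambda$ on $\CS(Y\otimes_\rD V)$ is invariant under multiplication by a set $S$ of locally constant $\C^\times$-valued functions if and only if $\lambda$ is supported on $\{\xi:\ s(\xi)=1\ \text{for all}\ s\in S\}$, i.e.\ $\lambda(\phi)=0$ whenever $\supp\phi$ misses that set; the nontrivial direction uses that each $s$ is locally constant, a compactness and partition-of-unity argument, and the identity $\phi_k=(s_k-1)^{-1}\big((s_k-1)\phi_k\big)$ on small compact opens on which $s_k$ is a constant $\ne1$. Applying this to the modulations coming from $X\otimes_\rD V$ and to the set $\{\,\xi\mapsto\psi(q_n(\xi))\ :\ n\in\operatorname{N}(X)\,\}$ gives: $\lambda$ is $X\otimes_\rD V$-invariant iff it is supported on $\{\xi:\ \la\xi,x_0\ra_W\in\ker\psi\ \forall x_0\}$, and $\operatorname{N}(X)$-invariant iff it is supported on $Z_0:=\{\xi:\ q_n(\xi)\in\ker\psi\ \forall n\in\operatorname{N}(X)\}$.

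The crux is that both of these sets reduce to $\{0\}$. For the first this is immediate: $x_0\mapsto\la\xi,x_0\ra_W$ is $\rF$-linear, so if nonzero its image is all of $\rF\not\subseteq\ker\psi$, which forces $\xi$ into the radical of $\la\,,\,\ra_W$, i.e.\ $\xi=0$ (here $\rF$ has characteristic zero, so the factor $2$ is harmless); hence $X\otimes_\rD V$-invariance is equivalent to $\lambda\in\C\,\delta_0$. For $Z_0$ — the only place the anisotropy of $V$ is used — fix dual bases $\{x_j\}$ of $X$ and $\{y_j\}$ of $Y$ ($\la x_i,y_j\ra_U=\delta_{ij}$) and a nonzero $\xi=\sum_j y_j\otimes v_j\in Y\otimes_\rD V$; choose $j_0$ with $v_{j_0}\ne0$ and put $c:=\la v_{j_0},v_{j_0}\ra_V$, which is nonzero because $V$ is anisotropic and satisfies $c^\iota=-\epsilon c$. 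Then $\beta_0\colon Y\to X$ with $\beta_0(y_{j_0})=x_{j_0}c$ and $\beta_0(y_j)=0$ ($j\ne j_0$) obeys the symmetry condition defining $\operatorname{N}(X)$, so it arises from some $n_0\in\operatorname{N}(X)$, and a short computation with \eqref{tensor-form} gives $q_{n_0}(\xi)=\pm\,c_0\,c^2\ne0$ ($c^2\ne0$ since $\rD$ is a division algebra). As $n\mapsto q_n(\xi)$ is $\rF$-linear, $\{q_{tn_0}(\xi):t\in\rF\}=\rF\not\subseteq\ker\psi$, so $q_n(\xi)\notin\ker\psi$ for some $n\in\operatorname{N}(X)$; thus $\xi\notin Z_0$, and $Z_0=\{0\}$. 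Therefore the two invariance conditions coincide, which is the assertion of the lemma. (Anisotropy cannot be dropped: if $v_{j_0}$ were isotropic, then $\xi=y_{j_0}\otimes v_{j_0}$ would lie in $Z_0$, giving $\operatorname{N}(X)$-invariant functionals that are not $X\otimes_\rD V$-invariant.)

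The step I expect to require the most care is extracting the multiplication formula for $\omega(n)$ in the Schr\"odinger model: one must check that $q_n$ is a genuine $\rF$-valued quadratic form (this is where $n\otimes\mathrm{id}_V\in\Sp(W)$ enters), and that conjugation by the dilating elements of the Levi of $\oP(X)$ eliminates, in characteristic zero, any spurious linear term in the exponent; the resulting sesquilinear identity then has to be run through each type $(\rD,\epsilon)$, where the ``$\iota$-symmetrization'' in \eqref{tensor-form} collapses to $c^2$ exactly as above. The support lemma on $\CS(Y\otimes_\rD V)$ is routine but, being used in both directions, should be stated and proved carefully.
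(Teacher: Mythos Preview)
Your proposal is correct and follows exactly the approach the paper indicates: the paper does not give a proof at all but simply remarks that the lemma ``is easily seen from the Schr\"odinger model of an oscillator representation'' and refers to \cite{Li2} and \cite[Part II, Section 3]{Ho2}. You have supplied precisely the Schr\"odinger-model computation those references contain --- identifying both $\operatorname{N}(X)$ and $X\otimes_\rD V$ as acting by multiplication operators, reducing each invariance condition to a support condition, and using the anisotropy of $V$ to show the relevant zero locus is $\{0\}$ --- so there is nothing to add beyond noting that your concern about a possible linear term in the exponent for $\omega(n)$ is easily disposed of: any smooth oscillator representation of $\oJ_U(V)$ differs from the standard Weil construction only by a character of $\oG(U)$, and every such character is trivial on the unipotent radical $\operatorname{N}(X)$ (conjugation by the Levi $\GL(X)$ leaves no nonzero invariant linear functional on $\operatorname{N}(X)$), so the Siegel unipotent really does act by the pure quadratic exponential $\psi(q_n(\cdot))$.
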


Consequently, we have
\begin{lem}\label{splitvhom}
Let $\omega$ be a smooth oscillator representation of
$\oJ_U(V)$. Assume that $U$ is split and non-zero, and
$V$ is anisotropic and non-zero. Then
\begin{equation*}
  \Hom_{\oG(U)}(\omega,  1_U)=0.
\end{equation*}
\end{lem}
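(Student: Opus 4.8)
The plan is to restrict a hypothetical $\oG(U)$-invariant functional to a Siegel parabolic subgroup and contradict the transformation law of the essentially unique Schr\"odinger functional. Since $U$ is split and non-zero it has a Lagrangian subspace $X$ with $\dim_\rD X=\rank\,U\ge 1$; let $\oP(X)\subset\oG(U)$ be the corresponding Siegel parabolic and $\N(X)$ its unipotent radical. First I would suppose, for contradiction, that there is a non-zero $\ell\in\Hom_{\oG(U)}(\omega,1_U)$. Since $\N(X)\subset\oP(X)\subset\oG(U)$, the functional $\ell$ is $\N(X)$-invariant, so by Lemma~\ref{nx} (this is the step that uses the anisotropy of $V$) it is invariant under $X\otimes_\rD V\subset\oH(U\otimes_\rD V)$. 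By Lemma~\ref{haaro} the space of such functionals is one-dimensional, spanned by $\lambda_{X\otimes_\rD V}$; hence $\ell$ is a non-zero scalar multiple of $\lambda_{X\otimes_\rD V}$, and in particular $\lambda_{X\otimes_\rD V}$ is $\oG(U)$-invariant, a fortiori $\oP(X)$-invariant.

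Next I would contradict this using the transformation law \eqref{lambdax}. Writing $\sigma=(V,\omega)\in\widehat \CW_U^+$, one has $\lambda_{X\otimes_\rD V}(h\cdot\phi)=\kappa_{\sigma,X}(h)\,\abs{h}_X^{\dim\sigma/2}\,\lambda_{X\otimes_\rD V}(\phi)$ for all $h\in\oPb(X)$, where $\dim\sigma=\dim V>0$ since $V\ne 0$. Invariance of $\lambda_{X\otimes_\rD V}$ under $\oP(X)$ would force the character $\chi_{\sigma,X}=\kappa_{\sigma,X}\,\abs{\,\cdot\,}_X^{\dim V/2}$ of $\oPb(X)$ to be trivial. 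Applying the absolute value $\C^\times\to\R^\times_+$, and using that $\kappa_{\sigma,X}$ is unitary, this would give $\abs{h}_X^{\dim V/2}=1$ for all $h\in\oP(X)$. But $\abs{\,\cdot\,}_X$ is a non-trivial positive character of $\oP(X)$ (take $h\in\GL(X)\subset\oP(X)$ to be multiplication by a uniformizer of $\rE$ on $X$; since $\dim_\rD X\ge1$ one gets $\abs{h}_X<1$), and $\dim V/2>0$, so $\abs{\,\cdot\,}_X^{\dim V/2}$ is non-trivial. This contradiction proves $\Hom_{\oG(U)}(\omega,1_U)=0$.

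The one point that is not purely formal is the assertion that $\kappa_{\sigma,X}$ is unitary, equivalently that the absolute value of $\chi_{\sigma,X}$ on $\oPb(X)$ equals $\abs{\,\cdot\,}_X^{\dim V/2}$; I would check this by writing out the $\oPb(X)$-action in the Schr\"odinger model $\CS(X\otimes_\rD V)$ (as in the proof of Lemma~\ref{kudla}), where $\lambda_{X\otimes_\rD V}$ is the integration functional and a Levi element $h$ acts by $\phi\mapsto c(h)\,\phi(h^{-1}\cdot\,)$ with $\abs{c(h)}=\abs{\det(h|_{X\otimes_\rD V})}_\rF^{-1/2}$; alternatively it follows from the unitarizability of $\omega$. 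Everything else is immediate from Lemmas~\ref{haaro} and~\ref{nx} and the definition of $\chi_{\sigma,X}$, so I expect the verification of this unitarity claim to be the only real work.
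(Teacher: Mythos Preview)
Your argument is correct and is essentially identical to the paper's proof: assume a non-zero $\oG(U)$-invariant functional exists, use Lemma~\ref{nx} and Lemma~\ref{haaro} to identify it with $\lambda_{X\otimes_\rD V}$, and then contradict \eqref{lambdax} via the unitarity of $\kappa_{\sigma,X}$. The paper compresses your second and third paragraphs into the single clause ``this contradicts the equality \eqref{lambdax}, as all Kudla characters are unitary,'' but the content is the same.
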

\begin{proof}
Let $X$ be a Lagrangian subspace of $U$. Assume that there is a non-zero
element $\lambda \in \Hom_{\oG(U)}(\omega,  1_U)$. Then Lemma
\ref{nx} implies that $\lambda$ is a scalar multiple of
$\lambda_{X\otimes_\rD V}$. This contradicts the equality
\eqref{lambdax}, as all Kudla characters are unitary.

\end{proof}

\begin{lem}\label{dim1}
If  $d=1$, then
\begin{equation}\label{1v}
  \Hom_{\oG(U)}(\omega_U^\circ,  1_U)=0.
\end{equation}
\end{lem}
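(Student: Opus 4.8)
The plan is to apply the doubling method, using that $U^\square:=U\oplus U^-$ is $2$-dimensional and split and that $\oG(U)$ is compact. The compactness is elementary: $U$ is not symplectic when $\dim U=1$, and in every other case a $1$-dimensional non-degenerate $\epsilon$-Hermitian space is anisotropic, so $\oG(U)$ is compact. Let $\sigma_{U^\square}^\circ=(V^\circ,\omega^\square)\in\widehat\CW_{U^\square}^+$ be the anisotropic representative of the anti-split Witt tower $\mathbf t_{U^\square}^\circ\in\CW_{U^\square}$, so that $\dim\sigma_{U^\square}^\circ=\mathrm d_{\rD,\epsilon}$ and $\xi_{U^\square}(\sigma_{U^\square}^\circ)$ is trivial (Corollary \ref{kgp0}). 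Since $\widehat{\mathrm r}^{U^\square}_U$ preserves the underlying space, hence its dimension and split rank, and carries $\mathbf t_{U^\square}^\circ$ to $\mathbf t_U^\circ$, uniqueness of the anisotropic representative gives $\widehat{\mathrm r}^{U^\square}_U(\sigma_{U^\square}^\circ)=\sigma_U^\circ$. Suppose, for contradiction, that $\Hom_{\oG(U)}(\omega_U^\circ,1_U)\neq 0$, i.e. $1_U\in\CR_{\sigma_U^\circ}$; as $1_U$ is unitarizable and $1_U^\vee\cong 1_U$, Lemma \ref{cn1} applied over $U^\square$ with $\pi=1_U$ yields $\Hom_{\oGb(U)\times\oGb(U^-)}(\RQ_{\sigma_{U^\square}^\circ},1_U\otimes 1_U^\vee)\neq 0$.

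The next step is to identify the cokernel of the inclusion $\RQ_{\sigma_{U^\square}^\circ}\hookrightarrow\operatorname I(\chi_{\sigma_{U^\square}^\circ,U^\triangle})$. Apply Proposition \ref{quotient0} over $U^\square$ with $\sigma_1=\sigma_{U^\square}^\circ$ and $\sigma_2\in\widehat\CW_{U^\square}$ determined by $\sigma_1-\sigma_2$ representing $\mathbf t_{U^\square}^\circ$ and $\dim\sigma_1+\dim\sigma_2=\dim U^\square+\mathrm d_{\rD,\epsilon}-2=\mathrm d_{\rD,\epsilon}$ (recall $\dim U^\square=2$). Then $\dim\sigma_2=0$, while $[\sigma_2]=[\sigma_1]-\mathbf t_{U^\square}^\circ$ is trivial in $\CW_{U^\square}$, which forces $\sigma_2=(0,\psi)$, whence $\RQ_{\sigma_2}$ is the trivial representation $1_{U^\square}$. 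Thus there is a short exact sequence of smooth $\oGb(U^\square)$-representations
\[
0\longrightarrow\RQ_{\sigma_{U^\square}^\circ}\longrightarrow\operatorname I(\chi_{\sigma_{U^\square}^\circ,U^\triangle})\longrightarrow 1_{U^\square}\longrightarrow 0 .
\]

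Finally I would run a Mackey computation. Set $H:=\oGb(U)\times\oGb(U^-)=\oG(U)\times\oG(U^-)$ (the covers split since $U^\square$ is not symplectic). Fixing a generator $u_*$ of $U$ and identifying $U^-$ with $U$ as $\rD$-spaces, the Lagrangian line of $U^\square$ spanned by $(u_*a,u_*b)$ is isotropic exactly when $ba^{-1}\in\oG(U)$, and the assignment of $ba^{-1}$ to this line is a bijection $\oPb(U^\triangle)\backslash\oGb(U^\square)\to\oG(U)$ intertwining the $H$-action with two-sided translation; this action is transitive, with stabilizer of $U^\triangle$ the diagonal $\oG(U)^\Delta$. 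Hence $\operatorname I(\chi_{\sigma_{U^\square}^\circ,U^\triangle})|_H\cong\operatorname{Ind}_{\oG(U)^\Delta}^{H}\big(\chi_{\sigma_{U^\square}^\circ,U^\triangle}|_{\oG(U)^\Delta}\big)$, a compact induction since $\oG(U)$ is compact, so by Frobenius reciprocity $\Hom_H\big(\operatorname I(\chi_{\sigma_{U^\square}^\circ,U^\triangle})|_H,1_U\otimes 1_U^\vee\big)$ is at most one-dimensional; it is nonzero, since the surjection of the displayed sequence restricts to a nonzero $H$-map onto $1_{U^\square}|_H=1_U\otimes 1_U^\vee$. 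As smooth representations of the compact group $H$ are semisimple, the restricted sequence splits over $H$, giving $\Hom_H(\operatorname I|_H,1_U\otimes 1_U^\vee)\cong\Hom_H(\RQ_{\sigma_{U^\square}^\circ}|_H,1_U\otimes 1_U^\vee)\oplus\Hom_H(1_{U^\square}|_H,1_U\otimes 1_U^\vee)$; since the left side is one-dimensional and the last summand is already one-dimensional, $\Hom_H(\RQ_{\sigma_{U^\square}^\circ}|_H,1_U\otimes 1_U^\vee)=0$, contradicting the first paragraph. This proves $\Hom_{\oG(U)}(\omega_U^\circ,1_U)=0$.

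The crux is the Mackey step: one must verify that $\oG(U)\times\oG(U^-)$ acts with a single orbit, and diagonal stabilizer, on the Lagrangians of $U^\square$, and that $\oG(U)$ is compact in each of the five types of $1$-dimensional $\epsilon$-Hermitian spaces. The remaining ingredients are bookkeeping on top of the already-established structure of degenerate principal series (Proposition \ref{quotient0}) and the doubling criterion (Lemma \ref{cn1}).
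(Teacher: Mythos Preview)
Your proof is correct and follows essentially the same route as the paper: doubling, Lemma \ref{cn1}, the exact sequence $0\to\RQ_{\sigma^\square}\to\operatorname I(\chi_{\sigma^\square,U^\triangle})\to 1_{U^\square}\to 0$ from Proposition \ref{quotient0}, and then a multiplicity-one argument over the compact group. The only difference is cosmetic: the paper restricts further to the single factor $\oG(U)\subset H$, identifies $\operatorname I(\chi_{\sigma^\square,U^\triangle})|_{\oG(U)}\cong\con^\infty(\oG(U))$, and invokes uniqueness of Haar measure to see that $1_U$ occurs with multiplicity one there, which is a slightly quicker version of your Mackey/Frobenius computation over $H=\oG(U)\times\oG(U^-)$.
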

\begin{proof}
Note that $d=1$ implies that $U$ is not a symplectic space, and $\oGb(U)=\oG(U)$ is a compact group.
Introduce the space $U^\square:=U\oplus U^-$ and its Lagrangian subspace $U^\triangle$ as in Section \ref{dm}. Write $\sigma^\square$ for the anisotropic element of the anti-split Witt tower $\mathbf t_{U^\square}^\circ$.  By Lemma \ref{cn1}, it suffices to show that
\be \label{homuu}
 \Hom_{\oG(U)\times \oG(U^-)}(\RQ_{\sigma^\square}, 1_U \otimes  1_{U^-})=0.
\ee
Note that \eqref{homuu} is implied by the following:
\be \label{homhaa}
 \Hom_{\oG(U)}(\RQ_{\sigma^\square}, 1_U)=0.
\ee

As a simple instance of Proposition \ref{quotient0}, we have an
exact sequence of representations of $\oG(U^\square)$:
\begin{equation}\label{exrho1}
  0\rightarrow \RQ_{\sigma^\square}\rightarrow \operatorname I(\chi_{\sigma^\square, U^\triangle})\rightarrow 1_{U^\square}\rightarrow 0.
\end{equation}
Note that
\[
  (\operatorname I(\chi_{\sigma^\square, U^\triangle}))|_{\oG(U)}\cong \con^\infty(\oG(U)),
\]
where $\oG(U)$ acts on $\con^\infty(\oG(U))$ by right translations. By \eqref{exrho1}, we have a decomposition (recall that $\oG(U)$ is compact)
\[
  \con^\infty(\oG(U))\cong (\RQ_{\sigma^\square})|_{\oG(U)}\oplus 1_U,
\]
and hence \eqref{homhaa} follows by the uniqueness of Haar measure.
\end{proof}

\begin{lem}\label{vtrivial1}
One has that
\begin{equation}\label{1v}
  \Hom_{\oG(U)}(\omega_U^\circ,  1_U)=0.
\end{equation}
\end{lem}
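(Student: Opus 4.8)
The plan is to reduce the statement to the two special cases already established: the split case (Lemma \ref{splitvhom}) and the one-dimensional case (Lemma \ref{dim1}); throughout, $\omega_U^\circ$ is viewed as a representation of $\oG(U)$, as in the paragraph preceding Lemma \ref{nx}. First I would dispose of the case that $U$ is a symplectic space. Then $(\rD,\epsilon)=(\rF,-1)$, so by Proposition \ref{danis} the space $V^\circ$ is an anisotropic symmetric bilinear space of dimension $\mathrm d_{\rD,\epsilon}=4\neq 0$; moreover every symplectic space over $\rF$ admits a symplectic basis and is therefore split. Since $U\neq 0$ is split and $V^\circ$ is anisotropic and non-zero, Lemma \ref{splitvhom} applies verbatim and yields $\Hom_{\oG(U)}(\omega_U^\circ,1_U)=0$.

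Assume henceforth that $U$ is not a symplectic space. Then $U$ admits an orthogonal basis, so by nondegeneracy (and $U\neq 0$) it contains a vector $u_0$ with $\la u_0,u_0\ra_U\neq 0$. Put $U_0:=u_0\rD$, a one-dimensional anisotropic subspace, and let $U'$ be its orthogonal complement, so that $U=U_0\perp U'$; note that $U_0$, $U'$ and $U$ are all non-symplectic, so that no metaplectic covers intervene. By \eqref{rdec} and restriction to $\oG(U_0)\times\oG(U')\subset\oJb_{U_0}(V^\circ)\times\oJb_{U'}(V^\circ)$, I obtain an isomorphism of $\oG(U_0)\times\oG(U')$-representations
\[
  \omega_U^\circ|_{\oG(U_0)\times\oG(U')}\;\cong\;\bigl(\omega_U^\circ|_{U_0}\bigr)\otimes\bigl(\omega_U^\circ|_{U'}\bigr),
\]
an external tensor product. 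The one point that needs a little care is the identification $\omega_U^\circ|_{U_0}\cong\omega_{U_0}^\circ$: by \eqref{embvvp3} one has $(V^\circ,\,\omega_U^\circ|_{U_0})=\widehat{\mathrm r}^{U}_{U_0}(\sigma_U^\circ)$, which represents the Witt tower $\mathrm r^{U}_{U_0}(\mathbf t_U^\circ)=\mathbf t_{U_0}^\circ$ (the anti-split towers being compatible under restriction, since they are all images of $\mathbf t_{\infty}^\circ\in\CW_\infty$); as its underlying space $V^\circ$ is anisotropic, it is the anisotropic representative $\sigma_{U_0}^\circ=(V^\circ,\omega_{U_0}^\circ)$ of that tower.

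Granting this, let $\lambda\in\Hom_{\oG(U)}(\omega_U^\circ,1_U)$. Restricting to $\oG(U_0)\times\{1\}$ and using the displayed decomposition, for each fixed vector $\phi'$ of $\omega_U^\circ|_{U'}$ the linear functional $\phi\mapsto\lambda(\phi\otimes\phi')$ on $\omega_{U_0}^\circ$ is $\oG(U_0)$-invariant, hence vanishes by Lemma \ref{dim1} applied to the one-dimensional space $U_0$. Thus $\lambda$ vanishes on $\omega_U^\circ|_{U_0}\otimes\omega_U^\circ|_{U'}$, that is, $\lambda=0$, which proves the lemma. The only genuine obstacle is the bookkeeping that identifies $\omega_U^\circ|_{U_0}$ with the anisotropic oscillator representation $\omega_{U_0}^\circ$ so that Lemma \ref{dim1} can be invoked; the rest is the elementary fact that an external tensor product carries no nonzero invariant functional once one of its factors does not, together with direct appeals to Lemmas \ref{splitvhom} and \ref{dim1}.
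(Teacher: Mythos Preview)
Your proof is correct and follows essentially the same approach as the paper: the symplectic case is handled by Lemma~\ref{splitvhom}, and in the non-symplectic case one splits off a one-dimensional orthogonal summand $U_0$, decomposes $\omega_U^\circ|_{\oG(U_0)}\cong \omega_{U_0}^\circ\otimes \omega_{(U_0^\perp)}^\circ$, and invokes Lemma~\ref{dim1}. The paper's proof is terser, writing directly $\Hom_{\oG(U_1)}(\omega_U^\circ,1_{U_1})=\Hom_{\oG(U_1)}(\omega_{U_1}^\circ\otimes\omega_{U_2}^\circ,1_{U_1})=0$ without spelling out the compatibility of anti-split towers under restriction that you make explicit.
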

\begin{proof}
When $U$ is a symplectic space, this is a special case of Lemma
\ref{splitvhom}. Now assume that $U$ is not a symplectic space. Then
there is an orthogonal decomposition $U=U_1\oplus U_2$ of $\epsilon$-Hermitian space
such that $\dim U_1=1$. Therefore
\[
  \Hom_{\oG(U)}(\omega_U^\circ,  1_U)\subset \Hom_{\oG(U_1)}(\omega_U^\circ,
  1_{U_1})=\Hom_{\oG(U_1)}(\omega_{U_1}^\circ\otimes \omega_{U_2}^\circ,
  1_{U_1})=0,
\]
by Lemma \ref{dim1}.
\end{proof}

\subsection{Vanishing on small orbits in the null cone}
\label{Sub5.4}
Put
\[
  \Gamma:=\{(v_1,v_2,\cdots,v_{d-1})\in U^{d-1}\mid \la
  v_i,v_j\ra_U=0,\, \,i,j=1,2,\cdots, d-1\},
\]
which is referred to as the null cone in $U^{d-1}$. Write
\[
   \Gamma:=\bigsqcup_{i=0}^{\rank\,U} \Gamma_i,
\]
where
\[
\Gamma_i:=\{(v_1,v_2,\cdots, v_{d-1})\in \Gamma\mid v_1,v_2,\cdots, v_{d-1}\textrm{ span a subspace of $U$ of dimension $i$}\}.
\]
Then each $\Gamma_i$ is a single $\oG(U)\times \GL_{d-1}(\rD)$-orbit. Here $\GL_{d-1}(\rD)$ acts on $U^{d-1}$ by
\be\label{guglact}
  h\cdot(v_1,v_2,\cdots,v_{d-1}):=(v_1,v_2,\cdots,v_{d-1})h^{-1}.
\ee

\begin{lem}\label{vsmall}
If $2i<d$, then
\[
 \Hom_{\oG(U)}(\CS(\Gamma_i)\otimes \omega_U^\circ, 1_U)=0.
\]
 \end{lem}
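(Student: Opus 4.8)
The plan is to fix a point in the orbit $\Gamma_i$, compute its stabilizer in $\oG(U)$, and apply Frobenius reciprocity to reduce the vanishing statement to a representation-theoretic assertion about this stabilizer, which will follow from Lemma \ref{vtrivial1} applied to a smaller space.

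First I would choose a convenient base point $\gamma_0=(v_1,\dots,v_{d-1})\in\Gamma_i$, where $v_1,\dots,v_i$ span an isotropic subspace $Z$ of dimension $i$ and $v_{i+1}=\cdots=v_{d-1}=0$. Since $2i<d=\dim U$, we may complete $Z$ to a polarization-type decomposition: there is an isotropic subspace $Z'$ with $Z\oplus Z'$ non-degenerate, and writing $U=(Z\oplus Z')\oplus U_1$ with $U_1$ the orthogonal complement, we have $\dim U_1=d-2i>0$ and $U_1$ carries the anti-split (indeed, one can arrange $U_1$ to contain $V^\circ$ when $U$ is not symplectic, or simply use $\dim U_1>0$). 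The stabilizer $H$ of $\gamma_0$ in $\oG(U)$ is the subgroup fixing $v_1,\dots,v_i$ pointwise; it contains the parabolic-type subgroup stabilizing the flag, and its ``Levi-like'' part surjects onto $\oG(U_1)$. By the orbit description $\Gamma_i\cong \oG(U)/H$ as $\oG(U)$-spaces (using transitivity on $\Gamma_i$ under $\oG(U)\times\GL_{d-1}(\rD)$ together with a choice of section), we get $\CS(\Gamma_i)\cong \operatorname{ind}_H^{\oG(U)}\delta$ for an appropriate modulus-type character $\delta$.

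Next I would apply Frobenius reciprocity (in the smooth, compactly-supported-induction form): $\Hom_{\oG(U)}(\CS(\Gamma_i)\otimes\omega_U^\circ,1_U)\cong\Hom_H(\omega_U^\circ|_H,\delta')$ for a suitable character $\delta'$ on $H$. Now restrict the oscillator representation $\omega_U^\circ$ along $H$. The key point is that $H$ contains the unipotent radical $N$ of the stabilizer of the isotropic space $Z$, and by the Schrödinger-model analysis (in the spirit of Lemma \ref{nx}), any $N$-quasi-invariant functional on $\omega_U^\circ$ must already be invariant under $Z\otimes_\rD V^\circ$; moreover $H$ also contains a copy of $\oG(U_1)$ acting on $\omega_U^\circ$ compatibly with $\omega_{U_1}^\circ$ via the orthogonal decomposition $\omega_U^\circ|_{\oJ_{U_1}(V^\circ)}\otimes(\text{rest})$. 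Tracking the action carefully, the restriction $\Hom_H(\omega_U^\circ,\delta')$ embeds into $\Hom_{\oG(U_1)}(\omega_{U_1}^\circ,\chi)$ for a character $\chi$; but by Lemma \ref{vtrivial1} (with $U$ replaced by $U_1\neq 0$, using that $\mathbf t_{U_1}^\circ$ is again the anti-split tower) this is zero, since all Kudla characters are unitary while the relevant twist $\chi$ need not be trivial — actually the cleanest route is to note $\Hom_{\oG(U_1)}(\omega_{U_1}^\circ,1_{U_1})=0$ and that the extra character factors split off by unitarity.

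The main obstacle I anticipate is the bookkeeping of the various characters: the modulus character $\delta$ of the induction $\CS(\Gamma_i)=\operatorname{ind}_H^{\oG(U)}(\cdots)$, the transformation character of the Schrödinger functional under the $\GL_i(\rD)$-part of $H$ (which involves $\abs{\,\cdot\,}_X$-type factors and the Kudla character $\kappa_{\sigma_U^\circ}$ as in \eqref{lambdax}), and the genuineness twists when $U$ is symplectic. I would organize this by first handling the unipotent radical $N\subset H$ to kill all functionals not coming from $Z\otimes_\rD V^\circ$-invariance, then restricting the surviving space to $\oG(U_1)\times\GL_i(\rD)$ and invoking the case already proved (Lemma \ref{vtrivial1}) for $U_1$; the $\GL_i(\rD)$-character then either matches $\delta'$ or forces the space to vanish, and in the matching case unitarity of all characters involved yields the contradiction exactly as in the proof of Lemma \ref{splitvhom}. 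A secondary subtlety is ensuring $U_1\neq 0$, which is precisely guaranteed by the hypothesis $2i<d$.
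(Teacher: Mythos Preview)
Your overall strategy---reduce via Frobenius reciprocity to the stabilizer of a point and then invoke Lemma~\ref{vtrivial1} for a smaller non-zero space---is exactly the paper's idea, but your execution has a genuine gap at the very first step.

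The claim ``$\Gamma_i\cong \oG(U)/H$ as $\oG(U)$-spaces'' is false. The set $\Gamma_i$ is a single $\oG(U)\times\GL_{d-1}(\rD)$-orbit, but under $\oG(U)$ alone it breaks into infinitely many orbits: by Witt's extension theorem two tuples $(v_1,\dots,v_{d-1})$ and $(w_1,\dots,w_{d-1})$ in $\Gamma_i$ are $\oG(U)$-conjugate if and only if the associated linear maps $\rD^{d-1}\to U$ have the same kernel, so the $\oG(U)$-orbits are parametrized by a Grassmannian of $(d-1-i)$-planes in $\rD^{d-1}$. Your parenthetical ``choice of section'' does not repair this; you cannot write $\CS(\Gamma_i)$ as a single compact induction from $H$, and the individual $\oG(U)$-orbits are closed but not open, so the usual open-orbit Frobenius reciprocity does not apply directly.

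The paper handles this by a simple trick you are missing: it lets $\GL_{d-1}(\rD)$ act as well (trivially on $\omega_U^\circ$ and $1_U$), observes that it suffices to prove vanishing of $\Hom_{\oG(U)\times L}$ for every compact open $L\subset\GL_{d-1}(\rD)$ (average a putative nonzero functional over $L$), and then notes that each $\oG(U)\times L$-orbit $O_i\subset\Gamma_i$ is \emph{open} since $L$ is open in $\GL_{d-1}(\rD)$. Now Frobenius reciprocity on $\CS(O_i)$ is legitimate.

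Once that is fixed, the paper's endgame is also considerably simpler than what you outline. There is no need for Lemma~\ref{nx}, unipotent radicals, or unitarity of Kudla characters. One just observes that since $2i<d$ there is a non-zero non-degenerate subspace $U_0\subset U$ perpendicular to all the $v_j$; then $\oG(U_0)$ sits inside the stabilizer $(\oG(U)\times L)_{\mathbf v}$ and the modulus character $\delta_{\mathbf v}$ is trivial on it. Restricting to $\oG(U_0)$ and using $\omega_U^\circ|_{\oG(U_0)}\cong\omega_{U_0}^\circ\otimes\omega_{U_0^\perp}^\circ$, the Hom space embeds into $\Hom_{\oG(U_0)}(\omega_{U_0}^\circ,1_{U_0})\otimes(\cdots)$, which vanishes by Lemma~\ref{vtrivial1}. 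Your $U_1$ is a perfectly good choice of this $U_0$, but the detour through $N(Z)$-invariance and Schr\"odinger functionals is unnecessary and, as written, leaves you tracking characters you never actually need to compute.
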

\begin{proof}
It suffices to show that for each compact open subgroup $L$ of $\GL_{d-1}(\rD)$,
\[
 \Hom_{\oG(U)\times L}(\CS(\Gamma_i)\otimes \omega_U^\circ, 1_U)=0,
\]
where $\omega_U^\circ$ and $1_U$ are viewed as representations of $\oG(U)\times L$ so that $L$ acts trivially. Let $O_i$ be a $\oG(U)\times L$-orbit in $\Gamma_i$ (which is open). We only need to show that
\[
 \Hom_{\oG(U)\times L}(\CS(O_i)\otimes \omega_U^\circ, 1_U)=0.
\]
By Frobenius reciprocity \cite[Chapter I, Proposition 2.29]{BZ}, one has that
\[
 \Hom_{\oG(U)\times L}(\CS(O_i)\otimes \omega_U^\circ, 1_U)=\Hom_{(\oG(U)\times L)_{\mathbf v}}(\delta_{\mathbf v} \otimes \omega_U^\circ, 1_U),
\]
where
\[
  \mathbf v=(v_1,v_2,\cdots, v_{d-1})\in O_i,
\]
$(\oG(U)\times L)_{\mathbf v}$ is the stabilizer of $\mathbf v$ in $\oG(U)\times L$, and $\delta_{\mathbf v}$ is a certain positive character on $(\oG(U)\times L)_{\mathbf v}$.

Since $2i<d$, there is a non-zero non-degenerate subspace $U_0$ of $U$ which is perpendicular to $v_1, v_2, \cdots, v_{d-1}$. Note that $\oG(U_0)\subset (\oG(U)\times L)_{\mathbf v}$ and $\delta_{\mathbf v}$ is trivial on  $\oG(U_0)$. Therefore,
\begin{eqnarray*}
   & & \Hom_{(\oG(U)\times L)_{\mathbf v}}(\delta_{\mathbf v} \otimes \omega_U^\circ, 1_U) \\
   &\subset &\Hom_{\oG(U_0)}(\omega_U^\circ, 1_U)\\
   &=&\Hom_{\oG(U_0)}(\omega_{U_0}^\circ\otimes \omega_{U_0^\perp}^\circ, 1_{U_0})\\
   &=&0 \qquad(\textrm{by Lemma \ref{vtrivial1}}).
\end{eqnarray*}
\end{proof}

\subsection{A homogeneity calculation for the main orbits in the null cone}
\label{Sub5.5}

Let $\rE^\times$  act on $U^{d-1}$ by
\[
  a\cdot x:=xa^{-1}, \quad a\in\rE^\times, \,x\in U^{d-1}.
\]
Denote by $\mathcal O_\rD$ the ring of integers in $\rD$:
\[
  \mathcal O_\rD:=\{a\in \rD\mid \abs{\det a}_\rE\leq 1\}.
\]
For each $i=0,1,\cdots, \rank \,U$, using the decomposition
\[
  \GL_{d-1}(\rD)=\GL_{d-1}(\mathcal O_\rD)
   \left\{
        \left[
               \begin{array}{cc}
                 g & 0 \\
                 * & h \\
               \end{array}
             \right]
              \in \GL_{d-1}(\rD)\mid g\in \GL_i(\rD), \,h\in \GL_{d-1-i}(\rD)
       \right\},
\]
it is easy to see that $\Gamma_i$ is a homogeneous space for the action of  $\oG(U)\times \GL_{d-1}(\mathcal O_\rD)$. Consequently, $\Gamma_i$ is a homogeneous space for the action of  $\oG(U)\times \GL_{d-1}(\mathcal O_\rD)\times \rE^\times$.

In the rest of this subsection, assume that $U$ is split, and write $d=2r>0$. Put
\[
  \rho_r:=\frac{2r+\mathrm d_{\rD,\epsilon}-2}{4},
\]
as in \eqref{defrhor}. Recall that $\delta_\rD$ (the degree of $\rD$
over $\rE$) equals $2$ if $\rD$ is a quaternion algebra, and equals
$1$ otherwise. The following lemma is an easy consequence of
\cite[Theorem 33D]{Lo}. We omit the details.
\begin{lem}\label{transo}
Up to scalar multiple, there exists a unique positive Borel measure $\mu_{\Gamma_r}$ on $\Gamma_r$ such that
\[
  (g,h,a)\cdot \mu_{\Gamma_r}=\abs{a}_\rE^{2 \delta_\rD^{2}\, r\rho_r}\mu_{\Gamma_r}, \qquad  (g,h,a)\in \oG(U)\times \GL_{d-1}(\mathcal O_\rD)\times \rE^\times,
\]
where $(g,h,a)\cdot \mu_{\Gamma_r}$ denotes the push-forward of $\mu_{\Gamma_r}$ through the action of $(g,h,a)$ on $\Gamma_r$.
\end{lem}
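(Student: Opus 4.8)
The plan is to recognize $\Gamma_r$ as a homogeneous space and then invoke the classical existence–uniqueness theorem for relatively invariant measures on a homogeneous space. Put
\[
  G:=\oG(U)\times \GL_{d-1}(\mathcal O_\rD)\times \rE^\times ,
\]
acting on $U^{d-1}$ as in the preceding discussion; recall that $\Gamma_r$ is a single $G$-orbit, so $\Gamma_r\cong G/H$ for $H$ the stabilizer in $G$ of a base point $\mathbf v_0\in\Gamma_r$. The group $G$ is unimodular, since $\oG(U)$ is reductive, $\GL_{d-1}(\mathcal O_\rD)$ is compact and $\rE^\times$ is abelian. Consider the continuous character
\[
  \chi:G\rightarrow\R^\times_+ ,\qquad \chi(g,h,a):=\abs{a}_\rE^{2\delta_\rD^{2}\,r\rho_r},
\]
which is trivial on $\oG(U)$ and on $\GL_{d-1}(\mathcal O_\rD)$. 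By \cite[Theorem 33D]{Lo}, a nonzero relatively invariant Borel measure on $G/H$ with multiplier $\chi$ exists, is automatically positive, and is unique up to a positive scalar, exactly when $\chi$ agrees on $H$ with the modular function $\Delta_H$ (normalized by the push-forward convention of the statement, and using $\Delta_G\equiv 1$). So the lemma reduces to the single identity $\chi|_H=\Delta_H$.

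To check it I would choose $\mathbf v_0$ adapted to a complete polarization $U=X\oplus Y$ with $\dim X=\dim Y=r$: fix a basis of $X$, and take $\mathbf v_0$ to be a $(d-1)$-tuple of vectors spanning $X$. Unwinding the action, $(g,h,a)\in H$ precisely when $g(\mathbf v_0)=\mathbf v_0\cdot(ha)$, where $ha\in\GL_{d-1}(\rD)$ denotes the product of $h$ with the central scalar $a$. This forces $g$ to stabilize $X$, so $g\in\oP(X)$; it pins $g|_X$ down to the automorphism of $X\cong\rD^{d-1}/K$ induced by $ha$, where $K$ is the $(r-1)$-dimensional kernel of $e_i\mapsto v_i$; and it requires $ha$ to preserve $K$. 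In particular the unipotent radical $\operatorname{N}(X)$ of $\oP(X)$ (which fixes $X$ pointwise, hence fixes $\mathbf v_0$) lies in $H$ as a closed normal subgroup, and $H/\operatorname{N}(X)$ is, up to a compact factor built from $\GL_{d-1}(\mathcal O_\rD)$ and the ``off-diagonal'' block $\Hom(\rD^{d-1}/K,K)$, just $\rE^\times$ acting on $X$ through scalars.

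Next I would compute $\Delta_H$. Since $\operatorname{N}(X)$ is a vector group over $\rF$ — a union of compact open subgroups — and the remaining pieces above are compact, the homomorphism $\Delta_H:H\rightarrow\R^\times_+$ is trivial on all of them; hence $\Delta_H(g,h,a)$ depends only on $a$ and equals $\abs{\det\bigl(\Ad(g);\,\Lie\operatorname{N}(X)\bigr)}$ (the $h$- and $a$-directions commute with $\oG(U)$, so act trivially on $\operatorname{N}(X)$). This is the modulus character $\delta_{\oP(X)}$ evaluated at $g$, and since $g|_X$ differs from the scalar $a\cdot\mathrm{id}_X$ by a compact factor it equals $\delta_{\oP(X)}(a\cdot\mathrm{id}_X)$. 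By the remark following Proposition \ref{quotient0}, $\delta_{\oP(X)}=\abs{\,\cdot\,}_X^{2\rho_r}$, and by \eqref{absx} the value of $\abs{\,\cdot\,}_X$ at $a\cdot\mathrm{id}_X$ is $\abs{a^{r\delta_\rD}}_\rE^{\delta_\rD}=\abs{a}_\rE^{r\delta_\rD^{2}}$, where $a^{r\delta_\rD}$ is the reduced norm over $\rE$ of the central scalar $a\cdot\mathrm{id}_X$ (as $\Mat_r(\rD)$ is central simple of degree $r\delta_\rD$ over $\rE$). Combining, $\Delta_H(g,h,a)=\abs{a}_\rE^{2\delta_\rD^{2}r\rho_r}=\chi(g,h,a)$, as required.

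The main obstacle is purely the bookkeeping of the last two paragraphs: pinning down $H$ exactly (the left/right $\rD$-module variances make this fussy), verifying that every direction of $H$ other than the scaling of $X$ is compact and so invisible to $\Delta_H$, and getting the factor $\delta_\rD^{2}$ and the exponent $2\rho_r$ right from the reduced-norm and absolute-value normalizations on $\rD$ versus $\rE$. Once $2\delta_\rD^{2}r\rho_r$ is identified as the value of $\delta_{\oP(X)}$ on a central torus element, existence, uniqueness and positivity are immediate from \cite[Theorem 33D]{Lo}; the case $r=1$ (so $d-1=1$, $K=0$, $\operatorname{N}(X)$ trivial) requires no separate treatment.
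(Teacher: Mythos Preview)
Your approach is correct and is exactly what the paper has in mind: it simply cites \cite[Theorem 33D]{Lo} and omits all details. You have carried out the stabilizer and modulus-character computation that the citation calls for, and your identification of the exponent $2\delta_\rD^{2}r\rho_r$ with $\delta_{\oP(X)}$ evaluated at the central torus element $a\cdot\mathrm{id}_X$ is the heart of the matter.
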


We will use the following convention for the rest of this section: given a group $G$ acting on two sets $A$ and $B$, then for every $g\in G$ and every map $\varphi: A\rightarrow B$, $g\cdot \varphi: A\rightarrow B$ is the map defined by
\[
  (g\cdot \varphi)(a):=g\cdot(\varphi(g^{-1}\cdot a)), \quad a\in A.
\]
If no action of $G$ is specified on a set $C$, we consider $C$ to carry the trivial action of $G$.

Note that each $\oG(U)$-orbit in $\Gamma$ is $\RE^\times$-stable. We
shall examine $\oG(U)$-orbits in $\Gamma_r$.

\begin{lem}\label{homd0}
Let $O_r$ be a $\oG(U)$-orbit in $\Gamma_r$. Then the space $\Hom_{\oG(U)}(\omega_U^\circ, \con^{\infty}(O_r))$ is one dimensional and every element $\lambda$ of the space satisfies
\[
  a \cdot \lambda=\abs{a}_\rE^{-2 \delta_\rD^{2}\, r\rho_1}\lambda, \quad a\in \rE^\times.\qquad(\,\rho_1=\frac{\mathrm d_{\rD,\epsilon}}{4}\,)
\]
\end{lem}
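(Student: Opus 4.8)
The plan is to compute the $\Hom$-space by Frobenius reciprocity, identify the generator as a matrix coefficient of the Schr\"odinger functional, and then trace the $\rE^\times$-action through this description using the transformation law \eqref{lambdax}.

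First I would fix a base point $\mathbf v=(v_1,\dots,v_{d-1})\in O_r$ and put $X:=v_1\rD+\dots+v_{d-1}\rD$, a totally isotropic $\rD$-subspace of dimension $r$ of the split space $U$, hence a Lagrangian. The restriction map $\oP(X)\to\GL(X)$ is surjective with kernel the unipotent radical $\operatorname{N}(X)$ (the elements of $\oP(X)$ acting trivially on $X$). Since the $v_i$ span $X$, any $g\in\oG(U)$ fixing $\mathbf v$ acts trivially on $X$, so the stabiliser of $\mathbf v$ in $\oG(U)$ is exactly $\operatorname{N}(X)$; thus $O_r\cong\oG(U)/\operatorname{N}(X)$ as a $\oG(U)$-space, and $\con^\infty(O_r)$ is the un-normalised smooth induction of the trivial character from $\operatorname{N}(X)$ to $\oG(U)$. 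Frobenius reciprocity \cite[Chapter I, Proposition 2.29]{BZ} then identifies $\Hom_{\oG(U)}(\omega_U^\circ,\con^\infty(O_r))$ with the space of $\operatorname{N}(X)$-invariant linear functionals on $\omega_U^\circ$. By Lemma \ref{nx} (applicable as $U$ is split and $V^\circ$ is anisotropic) this is the space of functionals invariant under $X\otimes_\rD V^\circ$, which by Lemma \ref{haaro} is one-dimensional, spanned by $\lambda_{X\otimes_\rD V^\circ}$. This gives the first assertion, the generator of $\Hom_{\oG(U)}(\omega_U^\circ,\con^\infty(O_r))$ being the matrix-coefficient map $\lambda\colon\phi\mapsto\bigl(g\mathbf v\mapsto\lambda_{X\otimes_\rD V^\circ}(g^{-1}\cdot\phi)\bigr)$ (recall $\omega_U^\circ$ is viewed as a representation of $\oG(U)$).

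Next I would compute how $\lambda$ transforms under $\rE^\times$. As $\rE$ is central in $\rD$, for $a\in\rE^\times$ the map $x\mapsto xa^{-1}$ of $X$ is $\rD$-linear, defining $m_a\in\GL(X)$; fix a lift $\tilde m_a\in\oP(X)$. Because the $\oG(U)$- and $\rE^\times$-actions on $U^{d-1}$ commute and $a\cdot\mathbf v=\mathbf v a^{-1}=\tilde m_a\cdot\mathbf v$, the $\rE^\times$-action on $O_r\cong\oG(U)/\operatorname{N}(X)$ is right translation by $\tilde m_a$ (well-defined since $\tilde m_a$ normalises $\operatorname{N}(X)$). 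Substituting into $\lambda$ and applying \eqref{lambdax} to $\lambda_{X\otimes_\rD V^\circ}$, together with $\dim\sigma_U^\circ=\dim V^\circ=\mathrm d_{\rD,\epsilon}$, one gets
\[
  a\cdot\lambda=\kappa_{\sigma_U^\circ,X}(\tilde m_a)\,\abs{\tilde m_a}_X^{\mathrm d_{\rD,\epsilon}/2}\,\lambda .
\]
It remains to evaluate the two scalars. For the first: $\sigma_U^\circ$ represents the anti-split Witt tower $\mathbf t_U^\circ$, which by construction (Section \ref{kk}) represents the kernel of the Kudla homomorphism, so $\xi_U(\mathbf t_U^\circ)$ is trivial; since \eqref{muinverse} is an isomorphism this forces the Kudla character $\kappa_{\sigma_U^\circ}=\kappa_{\mathbf t_U^\circ}$ to be the trivial map on $\oGb(U)_{\mathrm{split}}$, whence $\kappa_{\sigma_U^\circ,X}(\tilde m_a)=1$. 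For the second: by \eqref{absx}, with $\det$ there the reduced norm, $\abs{\tilde m_a}_X=\abs{\det m_a}_\rE^{\delta_\rD}$; since $m_a$ is the $\rD$-scalar $a^{-1}$ on the free rank-$r$ $\rD$-module $X$, $\det m_a=(a^{-1})^{r\delta_\rD}$ and hence $\abs{\tilde m_a}_X=\abs{a}_\rE^{-r\delta_\rD^2}$. As $\rho_1=\mathrm d_{\rD,\epsilon}/4$, raising to the power $\mathrm d_{\rD,\epsilon}/2=2\rho_1$ gives $\abs{\tilde m_a}_X^{\mathrm d_{\rD,\epsilon}/2}=\abs{a}_\rE^{-2\delta_\rD^2 r\rho_1}$, so $a\cdot\lambda=\abs{a}_\rE^{-2\delta_\rD^2 r\rho_1}\lambda$, as required. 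The degenerate case $\mathrm d_{\rD,\epsilon}=0$, i.e.\ $V^\circ=0$, is trivial: then $\omega_U^\circ$ and $\rho_1$ vanish and the $\Hom$-space is the line of constant functions on the single orbit $O_r$.

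I expect the one genuine subtlety to be the chain of identifications --- especially recognising that the central torus $\rE^\times$ acts on $O_r$ through a Levi element $\tilde m_a$ of the Siegel parabolic $\oP(X)$, which is exactly what permits invoking the Kudla-character transformation law \eqref{lambdax}. Beyond that the argument is a short reduced-norm computation and uses no input past Lemmas \ref{nx} and \ref{haaro} and the triviality of the Kudla character of the anti-split Witt tower.
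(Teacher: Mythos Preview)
Your proof is correct and follows the same approach as the paper: identify the stabiliser as $\operatorname{N}(X)$, apply Frobenius reciprocity together with Lemmas \ref{nx} and \ref{haaro} to get one-dimensionality, realise the $\rE^\times$-action via a Levi element $\tilde m_a\in\oP(X)$, and read off the scalar from \eqref{lambdax}. The paper is terser and simply states the outcome of \eqref{lambdax} without isolating the $\kappa$-factor; your explicit observation that $\kappa_{\sigma_U^\circ}$ is trivial because $\mathbf t_U^\circ$ lies in $\ker\xi_U$ (via Corollary \ref{kgp0} and the isomorphism \eqref{muinverse}) is exactly the reason this works and is a welcome clarification.

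One small slip in your closing aside: when $\mathrm d_{\rD,\epsilon}=0$ (so $U$ is a symmetric bilinear space and $V^\circ=0$), the representation $\omega_U^\circ$ is the sign character $\sgn_U$ of $\oO(U)$, not the trivial character, and the generator of the $\Hom$-space is the $\sgn_U$-equivariant function on $O_r$, not the constant function. This does not affect anything: your main argument already covers this case uniformly (with $\rho_1=0$), since $\sgn_U$ is trivial on every Siegel parabolic $\oP(X)$ and hence $\kappa_{\sigma_U^\circ,X}(\tilde m_a)=1$ as claimed.
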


\begin{proof}
Fix an element $\mathbf v=(v_1,v_2,\cdots,v_{d-1})\in O_r$. Denote by $X$ the Lagrangian subspace of $U$ spanned by $v_1,v_2,\cdots, v_{d-1}$. Fix a Lagrangian subspace $Y$ of $U$ which is complementary to $X$. For every $a\in \rE^\times$, denote by $m_a\in \oG(U)$ the element which stabilizes both $X$ and $Y$, and acts on $X$ through the scalar multiplication by $a$.

The stabilizer of $\mathbf v$ in $\oG(U)$ equals $\operatorname N(X)$, the unipotent radical of the parabolic subgroup $\oP(X)$. Therefore
\[
   O_r=\oG(U)/\operatorname N(X).
\]
The corresponding action of $\rE^\times$ on $\oG(U)/\operatorname  N(X)$ is given by
\[
  a\cdot(g\operatorname N(X))=gm_{a}^{-1}\operatorname N(X), \quad a\in E^\times.
\]

By Frobenius reciprocity,
\begin{equation}\label{fro}
  \Hom_{\oG(U)}(\omega_U^\circ, \con^{\infty}(O_Z))=\Hom_{\operatorname N(X)}(\omega_U^\circ, \C).
\end{equation}
It is easy to check that, under the identification \eqref{fro}, the action of $\rE^\times$ on the left hand side corresponds to the following action on the right hand side:
\[
  a\cdot \lambda :=\lambda \circ (\omega_U^\circ(m_a^{-1})), \quad a\in \rE^\times, \, \lambda \in \Hom_{\operatorname N(X)}(\omega_U^\circ, \C).
\]

By Lemma \ref{nx}, $\Hom_{\operatorname N(X)}(\omega_U^\circ, \C)$ is spanned by $\lambda_{X\otimes V^\circ}$, and \eqref{lambdax} implies that
\[
  \lambda_{X\otimes V^\circ}\circ
  (\omega_U^\circ(m_a^{-1}))=\abs{a}_\rE^{-2 \delta_\rD^{2}\, r\rho_1}\lambda_{X\otimes V^\circ},\qquad a\in \RE^\times.
  \]
  This proves the lemma.
\end{proof}

\begin{lem}\label{homdl}
Every element $\lambda\in \Hom_{\oG(U)}(\CS(\Gamma_r)\otimes\omega_U^\circ, 1_U)$ satisfies
\[
   a \cdot \lambda=\abs{a}_\rE^{2r \delta_\rD^2\,(\rho_r-\rho_1)}\lambda, \qquad a\in \rE^\times.
\]
\end{lem}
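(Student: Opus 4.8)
The plan is to reduce the assertion --- which concerns the top stratum $\Gamma_r$ of the null cone --- to the single-orbit computation of Lemma~\ref{homd0}, and then to keep careful track of one measure twist. First I would use the orbit map $\pi\colon\Gamma_r\to B:=\Gamma_r/\oG(U)$, which exhibits $\Gamma_r$ as a fibre bundle over a compact totally disconnected space $B$ (a Grassmannian over $\rD$), with fibres the $\oG(U)$-orbits $O_r\cong\oG(U)/\operatorname{N}(X)$, $X$ a Lagrangian of $U$. The point is that $\rE^\times$ acts trivially on $B$ (scaling a tuple does not change its $\oG(U)$-orbit, as already noted before Lemma~\ref{homd0}) and preserves each fibre. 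Over a small open $W\subset B$ I would pick base points for the orbits meeting $\pi^{-1}(W)$ whose underlying tuples all span one \emph{fixed} Lagrangian $X_0$ --- possible since the relevant frame bundle over $B$ is $p$-adically locally trivial --- which yields an identification $\pi^{-1}(W)\cong O_r\times W$ that is at once $\oG(U)$- and $\rE^\times$-equivariant, with both groups acting trivially on $W$, with $\oG(U)$ acting on $O_r=\oG(U)/\operatorname{N}(X_0)$ the standard way, and with $\rE^\times$ acting on $O_r$ by right translation through $m_a^{-1}$, the element of $\oG(U)$ that fixes a complement of $X_0$ and acts by the scalar $a$ on $X_0$. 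Then, using a locally constant partition of unity on the compact $B$ subordinate to finitely many such $W$, every $\phi\in\CS(\Gamma_r)$ is a finite sum of functions supported in sets $\pi^{-1}(W)$; and on $\CS(\pi^{-1}(W))\otimes\omega_U^\circ\cong\CS(W)\otimes\bigl(\CS(O_r)\otimes\omega_U^\circ\bigr)$, with $\oG(U)$ and $\rE^\times$ both trivial on the $\CS(W)$-factor, the restriction of $\lambda$ is a $\CS(W)$-family of elements of $\Hom_{\oG(U)}(\CS(O_r)\otimes\omega_U^\circ,1_U)$, so its $\rE^\times$-transformation is read off from that of the latter space. This reduces the lemma to showing that every element of $\Hom_{\oG(U)}(\CS(O_r)\otimes\omega_U^\circ,1_U)$ transforms under $\rE^\times$ by $\abs{a}_\rE^{\,2r\delta_\rD^2(\rho_r-\rho_1)}$.

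For that single-orbit statement I would argue as follows. Since $\oG(U)$ and $\operatorname{N}(X_0)$ are unimodular, $O_r$ carries a $\oG(U)$-invariant measure $\mu_0$, and integration against $\mu_0$ identifies $\con^\infty(O_r)$ with the smooth dual of $\CS(O_r)$; as $\omega_U^\circ$ is smooth (so any $\oG(U)$-map out of it lands in smooth vectors),
\[
  \Hom_{\oG(U)}(\CS(O_r)\otimes\omega_U^\circ,1_U)\;\cong\;\Hom_{\oG(U)}(\omega_U^\circ,\con^\infty(O_r)),
\]
and a change-of-variables computation shows this identification intertwines the $\rE^\times$-action on the left with $\abs{a}_\rE^{\,c_0}$ times the one on the right, where $a\cdot\mu_0=\abs{a}_\rE^{\,c_0}\mu_0$. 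To find $c_0$ I would disintegrate the measure $\mu_{\Gamma_r}$ of Lemma~\ref{transo} along $\pi$: its conditional measures on the fibres are $\oG(U)$-invariant, hence proportional to $\mu_0$, and since $\rE^\times$ is trivial on $B$ the transformation law of Lemma~\ref{transo} forces $c_0=2\delta_\rD^2 r\rho_r$ (equivalently $c_0$ is the modulus $\abs{\det(\operatorname{Ad}(m_a)|_{\operatorname{Lie}(\operatorname{N}(X_0))})}$, which can also be computed by hand). Combined with Lemma~\ref{homd0}, which gives that every element of $\Hom_{\oG(U)}(\omega_U^\circ,\con^\infty(O_r))$ transforms by $\abs{a}_\rE^{-2\delta_\rD^2 r\rho_1}$, this produces transformation by $\abs{a}_\rE^{\,2\delta_\rD^2 r\rho_r}\cdot\abs{a}_\rE^{-2\delta_\rD^2 r\rho_1}=\abs{a}_\rE^{\,2r\delta_\rD^2(\rho_r-\rho_1)}$, which is exactly what the lemma asks for.

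The hard part will be the bookkeeping rather than anything conceptual: (i) making the $\oG(U)$- and $\rE^\times$-equivariant local trivialisation of $\pi$ rigorous --- the device of choosing all base tuples to span one common Lagrangian $X_0$ is precisely what makes the $\rE^\times$-action uniform across fibres --- and (ii) getting the \emph{direction} and the value of the twist $c_0$ right in the duality $\CS(O_r)^\vee\cong\con^\infty(O_r)$, so that it combines with the exponent from Lemma~\ref{homd0} to the stated one. As a sanity check, $\rho_r-\rho_1=\tfrac{r-1}{2}$, so the target exponent is $r(r-1)\delta_\rD^2$, and the fact that it appears as the difference $2\delta_\rD^2 r\rho_r-2\delta_\rD^2 r\rho_1$ of the exponents of Lemmas~\ref{transo} and~\ref{homd0} is a good indication the argument is on track.
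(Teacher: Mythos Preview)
Your approach is correct and rests on the same two inputs as the paper --- Lemma~\ref{transo} for the measure twist and Lemma~\ref{homd0} for the single-orbit homogeneity --- so the conceptual content is identical. The difference is only in how the reduction to individual $\oG(U)$-orbits is packaged. You trivialize the fibration $\Gamma_r\to B$ locally over the compact Grassmannian base and use a partition of unity, which is geometrically transparent but entails the bookkeeping you flag in (i) and (ii). The paper instead exploits the auxiliary $\GL_{d-1}(\mathcal O_\rD)$-action: since $\Gamma_r$ is $\oG(U)\times\GL_{d-1}(\mathcal O_\rD)$-homogeneous, one may assume $\lambda$ is fixed by an open subgroup $L\subset\GL_{d-1}(\mathcal O_\rD)$ (harmless, as every vector in $\CS(\Gamma_r)\otimes\omega_U^\circ$ is already $L$-fixed for $L$ small); the resulting $L$-smooth distribution is then automatically a locally constant function times $\mu_{\Gamma_r}$, and one restricts it to each orbit via the injection $\con^\infty(\Gamma_r)\hookrightarrow\prod_{O_r}\con^\infty(O_r)$. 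This bypasses both the local trivialization and the explicit dualization $\CS(O_r)^\vee\cong\con^\infty(O_r)$, at the price of invoking the extra symmetry group. Your route would work equally well and has the minor advantage of not needing the ``without loss of generality'' reduction; the paper's is simply shorter.
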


\begin{proof} Without loss of generality, assume that $\lambda$ is fixed by an open subgroup of $\GL_{d-1}(\mathcal O_\rD)$. Then $\lambda$ naturally corresponds to an element of $\Hom_{\oG(U)}(\omega_U^\circ, \con^\infty(\Gamma_r)\mu_{\Gamma_r})$, where $\mu_{\Gamma_r}$ is as in Lemma \ref{transo}. Therefore the lemma follows by Lemmas \ref{transo} and \ref{homd0}, and by considering the following product of restriction maps:
\[
  \con^{\infty}(\Gamma_r)\hookrightarrow \prod_{O_r\textrm{ is a $\oG(U)$-orbit in $\Gamma_r$}} \con^\infty (O_r).
\]
\end{proof}

\subsection{The Fourier transform}
For every $\epsilon$-Hermitian right $\rD$-vector space $U'$, define the Fourier transform
\[
  \CF_{U'}: \CS(U')\rightarrow \CS(U')
\]
by
\[
  (\CF_{U'}(\phi))(x):=\int_{U'} \phi(y)\,\psi\left(\frac{\la x,y\ra_{U'}+\la x,y\ra_{U'}^\iota}{2}\right)\,dy,\quad \phi\in \CS(U'),\,x\in U',
\]
where $dy$ is a fixed Haar measure on $U'$. It is easy to check that
\begin{equation}\label{invg}
   \mathcal F_{U'}(g\cdot \phi)=g\cdot(\mathcal F_{U'}(\phi)), \quad g\in \oG(U'), \,\phi\in \CS(U'),
\end{equation}
and
\begin{equation}\label{inva}
    \mathcal F_{U'}(a\cdot \phi)=\abs{a}_\rE^{-\dim_\rE U'} (a^{-1})^\iota\cdot (\mathcal F_{U'}(\phi)), \quad a\in \rE^\times, \,\phi\in \CS(U'),
\end{equation}
where the action of $\rE^\times$ on $\CS(U')$ is given by
\[
  (a\cdot\phi)(x):=\phi(xa),\quad a\in \rE^\times, \,\phi\in \CS(U'),\,x\in U'.
\]

We refer the reader to the notation of the last subsection. For every linear functional $\lambda$ on $\CS(U^{d-1})\otimes\omega_U^\circ$, define its Fourier transform to be the linear functional
\[
  \widehat \lambda: \CS(U^{d-1})\otimes\omega_U^\circ\rightarrow \C,\quad \phi\otimes \phi'\mapsto \lambda(\CF_{U^{d-1}}(\phi)\otimes \phi').
\]
Using extension by zero, we get an inclusion
\[
   \CS(U^{d-1}\setminus \Gamma)\subset \CS(U^{d-1}).
\]
(Recall that $\Gamma $ is the null cone in $U^{d-1}$.)

\begin{lem}\label{homf}
Let $\lambda\in \Hom_{\oG(U)}(\CS(U^{d-1})\otimes\omega_U^\circ, 1_U)$. If both $\lambda$ and $\widehat \lambda$ vanish on the subspace $\CS(U^{d-1}\setminus \Gamma)\otimes\omega_U^\circ$, then $\lambda=0$.
\end{lem}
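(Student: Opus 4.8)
The plan is to combine the stratification of the null cone $\Gamma$ with the $\rE^\times$‑homogeneity bookkeeping from Lemmas \ref{vsmall} and \ref{homdl}, and then play the $\rE^\times$‑scaling on $U^{d-1}$ against the Fourier transform. First I would dispose of the non‑split case: if $U$ is not split then $2\,\rank U<d$, so \emph{every} stratum $\Gamma_i$ (with $0\le i\le\rank U$) has $2i<d$. Running the usual dévissage along the finite $\oG(U)$‑stable stratification — the exact sequences $0\to\CS(\mathcal O)\to\CS(Z)\to\CS(Z\setminus\mathcal O)\to 0$ for $\mathcal O$ open in a closed $\oG(U)$‑stable set $Z$, together with exactness of $-\otimes\omega_U^\circ$ and left‑exactness of $\Hom_{\oG(U)}(-,1_U)$ — Lemma \ref{vsmall} yields $\Hom_{\oG(U)}(\CS(\Gamma)\otimes\omega_U^\circ,1_U)=0$; since $\lambda$ kills $\CS(U^{d-1}\setminus\Gamma)\otimes\omega_U^\circ$ it factors through the restriction $\CS(U^{d-1})\otimes\omega_U^\circ\twoheadrightarrow\CS(\Gamma)\otimes\omega_U^\circ$, so $\lambda=0$ already. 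Hence from now on assume $U$ is split and write $d=2r$ with $r\ge 1$, so $\rank U=r$.

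The second step is to establish that $\lambda$ is $\rE^\times$‑homogeneous. By \eqref{invg} the Fourier transform commutes with $\oG(U)$, so $\widehat\lambda\in\Hom_{\oG(U)}(\CS(U^{d-1})\otimes\omega_U^\circ,1_U)$ as well, and by hypothesis both $\lambda$ and $\widehat\lambda$ vanish off $\Gamma$, hence descend to elements $\bar\lambda,\widehat{\bar\lambda}$ of $\Hom_{\oG(U)}(\CS(\Gamma)\otimes\omega_U^\circ,1_U)$; all the relevant maps (restriction to $\Gamma$, and below restriction to $\Gamma_r$) are $\rE^\times$‑equivariant because every $\Gamma_i$ is scaling‑stable. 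Since $\Gamma_r$ is open and dense in $\Gamma$ with closed complement $\Gamma_{<r}=\bigsqcup_{i<r}\Gamma_i$, and $\Hom_{\oG(U)}(\CS(\Gamma_{<r})\otimes\omega_U^\circ,1_U)=0$ by Lemma \ref{vsmall} and dévissage as above, restriction gives an $\rE^\times$‑equivariant \emph{injection} $\Hom_{\oG(U)}(\CS(\Gamma)\otimes\omega_U^\circ,1_U)\hookrightarrow\Hom_{\oG(U)}(\CS(\Gamma_r)\otimes\omega_U^\circ,1_U)$. By Lemma \ref{homdl} every element of the target satisfies $a\cdot(-)=|a|_\rE^{\,c}(-)$ with
\[
  c:=2r\delta_\rD^2(\rho_r-\rho_1)=r(r-1)\delta_\rD^2
\]
(using $\rho_r-\rho_1=(r-1)/2$); by injectivity and equivariance the same weight $c$ then holds for $\bar\lambda$ and $\widehat{\bar\lambda}$, and hence for $\lambda$ and $\widehat\lambda$ (the descent map $\Hom_{\oG(U)}(\CS(\Gamma)\otimes\omega_U^\circ,1_U)\hookrightarrow\Hom_{\oG(U)}(\CS(U^{d-1})\otimes\omega_U^\circ,1_U)$ is $\rE^\times$‑equivariant and injective).

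Finally I would read off a \emph{second} homogeneity weight for $\widehat\lambda$ from \eqref{inva} and compare. Unwinding the conventions and using $|a^\iota|_\rE=|a|_\rE$, for $a\in\rE^\times$ one has
\begin{align*}
  (a\cdot\widehat\lambda)(\phi\otimes\phi')
  &=\lambda\bigl(\CF_{U^{d-1}}(a^{-1}\!\cdot\phi)\otimes\phi'\bigr)\\
  &=|a|_\rE^{\dim_\rE U^{d-1}}\,\lambda\bigl((a^\iota\!\cdot\CF_{U^{d-1}}(\phi))\otimes\phi'\bigr)\\
  &=|a|_\rE^{\,\dim_\rE U^{d-1}-c}\,\widehat\lambda(\phi\otimes\phi'),
\end{align*}
where the last equality uses the weight‑$c$ homogeneity of $\lambda$ from the previous step. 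Thus $\widehat\lambda$ has weight $\dim_\rE U^{d-1}-c=(d-1)\dim_\rE U-c=(2r-1)(2r\delta_\rD^2)-c$, whereas it also has weight $c$; since
\[
  \bigl((d-1)\dim_\rE U-c\bigr)-c=(d-1)\dim_\rE U-2c=2r^2\delta_\rD^2\neq 0
\]
(here $r\ge 1$ and $\delta_\rD\ge 1$), picking $a$ with $|a|_\rE\neq 1$ forces $\widehat\lambda=0$, and therefore $\lambda=0$ since $\CF_{U^{d-1}}$ is invertible. The only delicate point — and the place where the argument could break — is this final numerical comparison: Lemmas \ref{transo}, \ref{homd0} and \ref{homdl} exist precisely to pin down the constant $c$ exactly, and one must check that the Fourier weight $(d-1)\dim_\rE U-c$ is genuinely distinct from $c$; the computation above shows the gap equals $2r^2\delta_\rD^2$, which is nonzero exactly because $U\neq 0$.
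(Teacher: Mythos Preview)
Your proof is correct and follows essentially the same route as the paper's: dispose of the non-split case via Lemma~\ref{vsmall}, then in the split case use Lemmas~\ref{vsmall} and~\ref{homdl} to force both $\lambda$ and $\widehat\lambda$ to be $\rE^\times$-homogeneous of weight $c=2r\delta_\rD^2(\rho_r-\rho_1)$, compute a second weight $\dim_\rE U^{d-1}-c$ for $\widehat\lambda$ from \eqref{inva}, and observe that the discrepancy $2r^2\delta_\rD^2$ is nonzero. Your write-up is in fact more careful than the paper's in making explicit the d\'evissage (the injection into $\Hom_{\oG(U)}(\CS(\Gamma_r)\otimes\omega_U^\circ,1_U)$) and in tracking the $\rE^\times$-equivariance of all the intermediate maps.
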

\begin{proof}
If $U$ is not split, then the lemma follows from Lemma \ref{vsmall}. Now assume that $U$ is split. Then Lemma \ref{vsmall}  and Lemma \ref{homdl} imply that
\be \label{homgla}
  a \cdot \lambda=\abs{a}_\rE^{2r \delta_\rD^2\,(\rho_r-\rho_1)}\lambda, \qquad a\in \rE^\times.
\ee
It is easily checked that \eqref{homgla} and \eqref{inva} imply that
\be \label{homgla3}
  a \cdot \widehat \lambda=\abs{a}_\rE^{\delta_\rD^2 \,d(d-1)-2r \delta_\rD^2\,(\rho_r-\rho_1)}\widehat \lambda, \qquad a\in \rE^\times.
\ee
Note that \eqref{invg} implies that $\widehat \lambda\in \Hom_{\oG(U)}(\CS(U^{d-1})\otimes\omega_U^\circ, 1_U)$. Similarly to \eqref{homgla}, we have that
\be \label{homgla4}
  a \cdot \widehat \lambda=\abs{a}_\rE^{2r \delta_\rD^2\,(\rho_r-\rho_1)}\widehat \lambda, \qquad a\in \rE^\times.
\ee
Since $d=2r$ and $\rho_r-\rho_1= \frac{r-1}{2}$, we will have
\[
  \delta_\rD^2 \,d(d-1)-2r \delta_\rD^2\,(\rho_r-\rho_1)\neq 2r \delta_\rD^2\,(\rho_r-\rho_1),
\]
and so we conclude that $\lambda=0$ by comparing \eqref{homgla3} and \eqref{homgla4}.
\end{proof}

\begin{corl}\label{homfcor}
If $U$ is a symplectic space of dimension $2$, then $\Hom_{\oG(U)}(\CS(U)\otimes\omega_U^\circ, 1_U)=0$.
\end{corl}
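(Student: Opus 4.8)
The plan is to derive this as a direct consequence of Lemma \ref{homf}. Here $U$ is a $2$-dimensional symplectic space, so in the notation of this section $d := \dim U = 2$ and $U^{d-1} = U$; note that $U$ is split, since every symplectic space admits a complete polarization, and that $\omega_U^\circ$ is viewed as a representation of $\oG(U) = \Sp(U)$ as explained above (it descends from $\oGb(U)$ because $\varepsilon_U$ acts trivially, $\dim V^\circ$ being even).

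The key point I would use is that the form $\la\,,\,\ra_U$ is alternating, so $\la v,v\ra_U = 0$ for every $v \in U$. Hence the null cone
\[
  \Gamma = \{\, v \in U \mid \la v,v\ra_U = 0 \,\} \subset U^{d-1} = U
\]
is all of $U$, and therefore $U^{d-1}\setminus\Gamma = \emptyset$, so that $\CS(U^{d-1}\setminus\Gamma) = 0$ and the subspace $\CS(U^{d-1}\setminus\Gamma)\otimes\omega_U^\circ$ of $\CS(U)\otimes\omega_U^\circ$ is the zero space.

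Consequently, for any $\lambda \in \Hom_{\oG(U)}(\CS(U)\otimes\omega_U^\circ, 1_U)$, both $\lambda$ and its Fourier transform $\widehat\lambda$ vanish (trivially) on $\CS(U^{d-1}\setminus\Gamma)\otimes\omega_U^\circ$. The hypothesis of Lemma \ref{homf} is thus satisfied, and that lemma yields $\lambda = 0$. This gives $\Hom_{\oG(U)}(\CS(U)\otimes\omega_U^\circ, 1_U) = 0$, as claimed. I do not expect any obstacle here: the only substantive input is the elementary fact that a symplectic space is totally isotropic, which makes the ``off the null cone'' part of $\CS(U)$ empty and lets Lemma \ref{homf} apply vacuously; all the work has already been done in establishing Lemma \ref{homf}.
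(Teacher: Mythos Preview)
Your proof is correct and follows exactly the same approach as the paper: since the symplectic form is alternating, the null cone $\Gamma$ equals all of $U$, so the hypothesis of Lemma \ref{homf} is vacuously satisfied and the result follows immediately.
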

\begin{proof}
In this case, $\Gamma=U$. Therefore this is a special case of Lemma \ref{homf}.
\end{proof}

\subsection{Reduction to the null cone and conclusion of the proof}
\label{Sub5.3}

By Lemma \ref{dim1} and Corollary \ref{homfcor}, Proposition \ref{vahom} holds when $d=1$, or when $U$ is  a symplectic space and $d=2$. We prove Proposition \ref{vahom} by induction on $d$. So assume that $d\geq 4$ when $U$ is a symplectic space, and $d\geq 2$ in all other cases, and assume that Proposition \ref{vahom} holds when $d$ is smaller.

Let $U_0$ be a non-zero non-degenerate subspace of $U$ of dimension $d_0$, where $d_0=2$ if $U$ is a symplectic space, and $d_0=1$ otherwise. Denote by $U_0^\perp$ the orthogonal complement of $U_0$ in $U$. Put
\[
  B_0:=\left\{
         \begin{array}{ll}
           \{(v_1,v_2)\in (U_0)^2\mid \textrm{$v_1, v_2$ is a basis of $U_0$}\}, & \hbox{if $U$ is a symplectic space;} \\
           U_0\setminus \{0\}, & \hbox{otherwise.}
         \end{array}
       \right.
\]
Then
\[
  S_0:=B_0\times U^{d-d_0-1}\subset U^{d-1}
\]
is stable under $\oG(U_0^\perp)\subset \oG(U)$, and the map
\begin{equation}\label{subm1}
    \oG(U)\times S_0\rightarrow U^{d-1}, \quad (g, \mathbf v)\mapsto g \cdot \mathbf v
\end{equation}
is  $\oG(U)\times \oG(U_0^\perp)$-equivariant, where $\oG(U)\times \oG(U_0^\perp)$ acts on $\oG(U)\times S_0$ by
 \[
   (g,h)\cdot (x, \mathbf v):=(gxh^{-1}, h \cdot \mathbf v),
 \]
 and acts on $U^{d-1}$ by
 \[
   (g,h)\cdot \mathbf v:=g\cdot \mathbf v.
 \]

 \begin{lem}\label{vhompl}
 One has that
 \begin{equation}\label{vhomp}
   \Hom_{\oG(U)\times \oG(U_0^\perp)}(\CS(\oG(U)\times S_0)\otimes \omega_U^\circ, 1_U)=0,
   \end{equation}
   where the representations $\omega_U^\circ$ and $1_U$ of $\oG(U)$  are extended to the group $\oG(U)\times \oG(U_0^\perp)$ by the trivial action of $\oG(U_0^\perp)$.
  \end{lem}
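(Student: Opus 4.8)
The plan is to reduce \eqref{vhomp} to an instance of Proposition \ref{vahom} for the strictly smaller space $U_0^\perp$, which is available by the induction hypothesis on $d = \dim U$ since $\dim U_0^\perp = d - d_0 < d$. Throughout write $G := \oG(U)$ and $H := \oG(U_0^\perp)$.

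First I would untwist the free $G$-action. On the $G$-factor (the subgroup $G \times \{1\}$), the action of \eqref{subm1}'s domain $\oG(U)\times S_0$ is just left translation on the first coordinate $\oG(U)$ together with the trivial action on $S_0$, which is free. Using $\CS(\oG(U)\times S_0) = \CS(\oG(U))\otimes\CS(S_0)$ and the standard isomorphism $\CS(\oG(U))\otimes\omega_U^\circ \xrightarrow{\ \sim\ }\CS(\oG(U),\omega_U^\circ)$, $f\otimes w\mapsto\bigl(x\mapsto f(x)\,\omega_U^\circ(x^{-1})w\bigr)$, one passes to a model in which $G$ acts only by left translation on the $\CS(\oG(U))$-factor, while $H$ acts by right translation on $\CS(\oG(U))$, by the restriction $\omega_U^\circ|_H$ on $\omega_U^\circ$, and by its natural action on $\CS(S_0)$. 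Now $\Hom_G(\CS(\oG(U)),\C)$ is one-dimensional, spanned by a Haar integral, and since $\oG(U)$ is unimodular the residual right-translation action of $H$ on this line is trivial. Taking $\Hom_G$ and then $H$-invariants therefore gives a canonical isomorphism
\[
  \Hom_{\oG(U)\times\oG(U_0^\perp)}\bigl(\CS(\oG(U)\times S_0)\otimes\omega_U^\circ,\,1_U\bigr)\;\cong\;\Hom_{\oG(U_0^\perp)}\bigl(\omega_U^\circ|_{\oG(U_0^\perp)}\otimes\CS(S_0),\,1_{U_0^\perp}\bigr).
\]

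Next I would peel off the tensor factors carrying a trivial $\oG(U_0^\perp)$-action. Since $\oG(U_0^\perp)$ fixes $U_0$ pointwise, the splitting $U = U_0\oplus U_0^\perp$ identifies $U^{d-d_0-1}\cong U_0^{\,d-d_0-1}\times(U_0^\perp)^{d-d_0-1}$ with trivial action on the first factor, and $\oG(U_0^\perp)$ also acts trivially on $B_0$; hence $\CS(S_0)\cong \CS(B_0)\otimes\CS(U_0^{\,d-d_0-1})\otimes\CS((U_0^\perp)^{d-d_0-1})$ with the first two factors trivial. Moreover, restricting the oscillator representation along $U = U_0\oplus U_0^\perp$ (equation \eqref{rdec}) gives $\omega_U^\circ|_{\oJb_{U_0}(V^\circ)\times\oJb_{U_0^\perp}(V^\circ)}\cong\omega_{U_0}^\circ\otimes\omega_{U_0^\perp}^\circ$ with $\widehat{\mathrm r}^U_{U_0^\perp}(\sigma_U^\circ) = \sigma_{U_0^\perp}^\circ$, because restriction preserves the (anisotropic) underlying space $V^\circ$ and carries $\mathbf t_U^\circ$ to $\mathbf t_{U_0^\perp}^\circ$ (both being the image of $\mathbf t_\infty^\circ$). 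Thus $\omega_U^\circ|_{\oG(U_0^\perp)}$ is a direct sum of copies of $\omega_{U_0^\perp}^\circ$, and peeling off all trivial factors leaves exactly
\[
  \Hom_{\oG(U_0^\perp)}\bigl(\CS\bigl((U_0^\perp)^{\dim U_0^\perp - 1}\bigr)\otimes\omega_{U_0^\perp}^\circ,\,1_{U_0^\perp}\bigr),
\]
which is the statement of Proposition \ref{vahom} for $U_0^\perp$ in place of $U$, hence is zero by the induction hypothesis. This proves \eqref{vhomp}.

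The main obstacle is the bookkeeping in the untwisting step: one must check that after the change of model the residual $\oG(U_0^\perp)$-action on $\omega_U^\circ$ is the \emph{nontrivial} restriction of the $\oG(U)$-action (not the trivial extension used in the statement), and that unimodularity of $\oG(U)$ prevents a spurious modulus character from attaching to the Haar line; likewise in the peeling step one must confirm that the relevant summand of $\omega_U^\circ|_{\oG(U_0^\perp)}$ is precisely the anti-split oscillator representation $\omega_{U_0^\perp}^\circ$, so that Proposition \ref{vahom} applies verbatim. An alternative to the global untwisting is an orbit-by-orbit application of Frobenius reciprocity for $\oG(U)\times\oG(U_0^\perp)$ acting on $\oG(U)\times S_0$ — whose orbits correspond to the $\oG(U_0^\perp)$-orbits on $S_0$, with stabilizers the diagonally embedded $\Stab_{\oG(U_0^\perp)}(\mathbf v)$ — but this version forces a stratification over the non-open orbits and a more delicate tracking of modulus characters, so I would prefer the untwisting argument above.
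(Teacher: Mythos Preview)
Your proof is correct and follows essentially the same route as the paper's: both reduce first to $\Hom_{\oG(U_0^\perp)}(\CS(S_0)\otimes\omega_U^\circ|_{\oG(U_0^\perp)},1_{U_0^\perp})$ and then factor $\CS(S_0)$ and $\omega_U^\circ$ to invoke the induction hypothesis on $U_0^\perp$. The only cosmetic difference is that the paper obtains the first reduction by citing Frobenius reciprocity \cite[Chapter I, Proposition 2.29]{BZ} for the induced space $G\times S_0\cong(G\times H)\times_{\Delta H}S_0$, whereas you unpack that same reciprocity by hand via the untwisting isomorphism $\CS(\oG(U))\otimes\omega_U^\circ\cong\CS(\oG(U),\omega_U^\circ)$ together with the one-dimensionality of $\Hom_G(\CS(G),\C)$.
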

 \begin{proof}
 Frobenius reciprocity \cite[Chapter I, Proposition 2.29]{BZ} implies that the left hand side of \eqref{vhomp} equals
  \begin{equation}\label{frob1}
   \Hom_{\oG(U_0^\perp)}(\CS(S_0)\otimes (\omega_U^\circ)|_{\oG(U_0^\perp)}, 1_{U_0^\perp}).
  \end{equation}
 Note that
 \[
   \CS(S_0)=\CS(B_0\times U_0^{d-d_0-1})\otimes \CS((U_0^\perp)^{d-d_0-1}),
 \]
 and
 \[
   \omega_U^\circ=\omega_{U_0}^\circ\otimes \omega_{U_0^\perp}^\circ.
 \]
 By the induction assumption, we have
 \[
 \Hom_{\oG(U_0^\perp)}(\CS((U_0^\perp)^{d-d_0-1})\otimes
   \omega_{U_0^\perp}^\circ , 1_{U_0^\perp})=0,
 \]
 and therefore the space \eqref{frob1} vanishes as well.
 \end{proof}

\begin{lem}\label{reducn}
One has that
\[
  \Hom_{\oG(U)}(\CS(U^{d-1}\setminus \Gamma)\otimes
   \omega_U^\circ , 1_U)=0.
\]
\end{lem}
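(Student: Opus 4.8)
The plan is to deduce this from Lemma \ref{vhompl} by covering $U^{d-1}\setminus \Gamma$ with $\oG(U)$-stable open subsets on each of which the relevant $\Hom$-space already vanishes, and then to patch using a partition of unity. Recall that $\Gamma$ is stable under the $\GL_{d-1}(\rD)$-action of \eqref{guglact} (the strata $\Gamma_i$ are $\oG(U)\times\GL_{d-1}(\rD)$-orbits), and that this action commutes with the diagonal action of $\oG(U)$.

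First I would analyze the map \eqref{subm1} for a fixed non-degenerate $d_0$-dimensional subspace $U_0\subset U$, with $B_0$, $S_0$, $U_0^\perp$ as in Lemma \ref{vhompl}. Its image is the $\oG(U)$-stable subset
\[
  \mathcal I_{U_0}:=\{\mathbf w\in U^{d-1}\mid (\text{the first }d_0\text{ coordinates of }\mathbf w)\in \oG(U)\cdot B_0\},
\]
which is open in $U^{d-1}$ (the condition on the first coordinates cuts out a union of open norm/square classes and removes the isotropic locus) and is contained in $U^{d-1}\setminus\Gamma$. A Witt-type computation of pointwise stabilizers shows that the fibers of \eqref{subm1} are copies of $\Stab_{\oG(U)}(U_0)=\oG(U_0)\times\oG(U_0^\perp)$, a closed unimodular subgroup; so \eqref{subm1} presents $\mathcal I_{U_0}$ as the associated bundle $\oG(U)\times_{\Stab_{\oG(U)}(U_0)}S_0$, and integration along the fibers yields a surjective, $\oG(U)\times\oG(U_0^\perp)$-equivariant map $\CS(\oG(U)\times S_0)\twoheadrightarrow \CS(\mathcal I_{U_0})$. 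Dualizing, and using that the target of \eqref{subm1} carries the trivial $\oG(U_0^\perp)$-action, I get
\[
  \Hom_{\oG(U)}(\CS(\mathcal I_{U_0})\otimes \omega_U^\circ, 1_U)\hookrightarrow
  \Hom_{\oG(U)\times\oG(U_0^\perp)}(\CS(\oG(U)\times S_0)\otimes \omega_U^\circ, 1_U)=0
\]
by Lemma \ref{vhompl}. Since $\GL_{d-1}(\rD)$ commutes with the diagonal $\oG(U)$-action, each translate $h\cdot\mathcal I_{U_0}$ ($h\in\GL_{d-1}(\rD)$) is again an $\oG(U)$-stable open subset of $U^{d-1}\setminus\Gamma$, $\oG(U)$-isomorphic to $\CS(\mathcal I_{U_0})$, so the same $\Hom$-space vanishes for it.

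Next I would verify the covering $U^{d-1}\setminus\Gamma=\bigcup_{U_0}\bigcup_{h\in\GL_{d-1}(\rD)} h\cdot\mathcal I_{U_0}$, where $U_0$ runs over a finite set of representatives for the isometry classes of non-degenerate $d_0$-dimensional subspaces of $U$ (finitely many by the classification of forms over a local field; note $\mathcal I_{U_0}$ depends only on the isometry class of $U_0$, by Witt's extension theorem). Given $\mathbf w=(w_1,\dots,w_{d-1})\notin\Gamma$, the $\rD$-span $W$ of the $w_i$ is not totally isotropic; since $\rF$ has characteristic $0$, $W$ contains a non-degenerate subspace of dimension $d_0$ (for non-symplectic $U$, a polarization argument produces an anisotropic vector in $W$; for symplectic $U$, non-isotropy of $W$ directly gives a non-degenerate plane). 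Expressing a spanning $d_0$-tuple of this subspace as $\rD$-linear combinations of the $w_i$ and completing the corresponding matrix to an element of $\GL_{d-1}(\rD)$, I move this tuple into the first $d_0$ coordinates; applying a further $g\in\oG(U)$ carrying one of the fixed representatives onto that subspace lands $\mathbf w$ in $h\cdot\mathcal I_{U_0}$ for suitable $U_0$ and $h$.

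Finally, for $\lambda\in\Hom_{\oG(U)}(\CS(U^{d-1}\setminus\Gamma)\otimes\omega_U^\circ, 1_U)$ and $\phi\in\CS(U^{d-1}\setminus\Gamma)$, the compact support of $\phi$ is covered by finitely many of the opens $h\cdot\mathcal I_{U_0}$, so $\phi$ is a finite sum of functions each supported in one $h\cdot\mathcal I_{U_0}$; by $\oG(U)$-equivariance of the extension-by-zero inclusions and the vanishing above, $\lambda$ annihilates each summand, hence $\lambda=0$ and the $\Hom$-space is zero. I expect the main technical obstacle to be the first step: checking that \eqref{subm1} is genuinely an open map exhibiting $\mathcal I_{U_0}$ as the associated bundle $\oG(U)\times_{\Stab_{\oG(U)}(U_0)}S_0$. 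This rests on the Witt-type stabilizer computation together with a local-section argument for the $\oG(U)$-action near $U_0$ (so that integration along fibers is well defined and surjective on Schwartz spaces); the bookkeeping is slightly heavier in the symplectic case, where $d_0=2$ and $B_0$ consists of ordered bases of non-degenerate planes rather than anisotropic vectors.
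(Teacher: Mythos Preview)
Your proposal is correct and follows essentially the same approach as the paper: both deduce the vanishing on $\oG(U)\cdot S_0$ from Lemma \ref{vhompl} via a surjection $\CS(\oG(U)\times S_0)\twoheadrightarrow \CS(\oG(U)\cdot S_0)$, then propagate this to the $\GL_{d-1}(\rD)$-translates, and finally verify that these translates cover $U^{d-1}\setminus\Gamma$. The only notable difference is in justifying the surjection: where you set up the associated-bundle picture and integrate along the compact fibers $\oG(U_0)\times\oG(U_0^\perp)$, the paper simply observes that \eqref{subm1} is a submersion of locally analytic manifolds over $\rF$ and invokes push-forward of measures, which sidesteps the local-section bookkeeping you flagged as the main obstacle.
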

\begin{proof}
Note that $\oG(U)\cdot S_0$ is open in $U^{d-1}$, and the push-forward of measures through the map \eqref{subm1} induces a $\oG(U)\times \oG(U_0^\perp)$-equivariant surjective linear map
\[
  \CS(\oG(U)\times S_0)\,(\mu_{\oG(U)}\otimes \mu_{S_0}) \rightarrow \CS(\oG(U)\cdot S_0)\, \mu_{\oG(U)\cdot S_0},
\]
where $\mu_{\oG(U)}$ is a Haar measure on $\oG(U)$, $\mu_{S_0}$ is the restriction of a Haar measure on $U_0^{d_0}\times U^{d-d_0-1}$ to $S_0$, and $\mu_{\oG(U)\cdot S_0}$ is the restriction of a Haar measure on $U^{d-1}$ to $\oG(U)\cdot S_0$. (This is because that the map \eqref{subm1} is a submersion between locally analytic manifolds over $\rF$ \cite{Sc}.) Consequently, there exists  a $\oG(U)\times \oG(U_0^\perp)$-equivariant surjective linear map
\[
  \CS(\oG(U)\times S_0) \rightarrow \CS(\oG(U)\cdot S_0).
\]
Therefore Lemma \ref{vhompl} implies that
\[
 \Hom_{\oG(U)}(\CS(\oG(U)\cdot S_0)\otimes \omega_U^\circ, 1_U)=\Hom_{\oG(U)\times
 \oG(U_0^\perp)}(\CS(\oG(U)\cdot S_0)\otimes \omega_U^\circ, 1_U)=0.
\]
This further implies that
\[
  \Hom_{\oG(U)}(\CS(\oG(U)\cdot (h\cdot S_0))\otimes \omega_U^\circ, 1_U)=0.
\]
for all $h\in \GL_{d-1}(\rD)$.  The lemma then follows by noting
that
\[
  \bigcup_{U_0,\,h} \oG(U)\cdot (h\cdot S_0)=U^{d-1}\setminus \Gamma,
\]
where $U_0$ runs through all non-degenerate subspaces of $U$ of
dimension $d_0$, and $h$ runs through all elements of
$\GL_{d-1}(\rD)$.
\end{proof}

Finally, Proposition \ref{vahom} follows by combining Lemma \ref{homf} and  Lemma \ref{reducn}.






\section{The archimedean case}\label{secarch}

\subsection{The generalized Witt-Grothendieck groups}

In the non-archimedean case, we work with the class of smooth representations of totally disconnected locally compact topological groups. For the archimedean case, we shall replace this by moderate growth smooth Fr\'{e}chet representations of almost linear Nash groups. Recall that a Nash group is said to be almost linear if it has a Nash representation with finite kernel. See \cite{Su2} for details on almost linear Nash groups. For the definition of moderate growth smooth Fr\'{e}chet representations of almost linear Nash groups, see \cite[Definition 1.4.1]{du} or \cite[Section 2]{Su3}.

Let $(\rF, \rD, \epsilon)$, $U$ and $V$ be as in Section \ref{sub1.1}. Recall that $\psi: \rF\rightarrow \C^\times$ is a fixed non-trivial unitary character. In this section, assume that $\rF$ is archimedean. Then the groups $\oGb(U)$, $\oGb(V)$, $\oH(U\otimes_\rD V)$, $\oJb_U(V)$ and $\oJb(U,V)$  are all naturally almost linear Nash groups. Denote by $\Irr(\oGb(U))$ the set of all isomorphism classes of irreducible Casselman-Wallach representations of $\oGb(U)$. Recall that a moderate growth smooth Fr\'{e}chet representation of $\oGb(U)$ is called a Casselman-Wallach representation if its Harish-Chandra module has finite length.  The reader may consult \cite{Cass} and \cite[Chapter 11]{Wa2} for more information about Casselman-Wallach
representations.

Replacing smooth representations in the non-archimedean case by  moderate growth smooth Fr\'{e}chet representations, we define in the archimedean case the notion of smooth oscillator representations as in Definition \ref{defso}, and then enhanced oscillator representations of $\oGb(U)$ as in Definition \ref{defenh}. The monoid $\widehat \CW_U^+$, the groups $\widehat \CW_U$ and $\CW_U$, and the inverse limits $\widehat \CW_\infty^+$, $\widehat \CW_\infty$ and $\CW_\infty$, are defined exactly as in the non-archimedean case.

Also when $U$ is split and non-zero, define the group $\Hom( \oGb(U)_{\mathrm{split}}, \C^\times)$ as in Section \ref{k1}. Then there is a natural isomorphism
\[
  \Hom( \oGb(U)_{\mathrm{split}}, \C^\times)\cong \RK^*,
\]
where
\[
  \RK:=\left\{
                  \begin{array}{ll}
                   \R^\times/\R^\times_+, &\hbox{if $U$ is a real symmetric bilinear space};\\
                   \{1\}, &\hbox{if $U$ is a complex symmetric bilinear space};\\
                  \oHil(\R)\cong \Z/4\Z, &\hbox{if $U$ is a real symplectic space};\\
                   \{\pm 1\},&\hbox{if $U$ is a complex symplectic space};\\
                   \RE^\times/\R^\times_+, & \hbox{if $\rD$ is a quadratic extension;}\\
                  \{1\},&\hbox{if $\rD$ is a  quaternion algebra.}
                  \end{array}
                \right.
\]
Here and as before, $\RE$ denotes the center of $\rD$,  and $\oHil(\R)$ is defined as in \eqref{atat}.

Similar to Theorem \ref{cwinf}, for $\rF=\R$, the group $\widehat \CW_\infty$ is canonically isomorphic to the group of Table \ref{tablecwir}.

\begin{table}[h]
\caption{The group $\widehat \CW_\infty$ (for $\rF=\R$)}\label{tablecwir}
\centering 
\begin{tabular}{c c c c c c c} 
\hline
$\rD$ & \vline & $\rF$ & quadratic extension & quaternion algebra\\
\hline
 $\epsilon=1$ & \vline & $2\Z \times \left(\frac{\R^\times}{\R_+^\times}\right)^*\times \{\pm 1\}^*$ & $\left(\bigoplus_{\varpi\in \frac{\rE_-^\times}{\R^\times_+}}\Z\varpi\right)\times_{{\Z}/{2\Z}}\left(\frac{\rE^\times}{\R_+^\times}\right)^*$  & $\phantom{\frac{\overbrace a}{\underbrace a}}\Z\phantom{\frac{\overbrace a}{a}}$\\ 
\hline
$\epsilon=-1$ & \vline &$\bigoplus_{\varpi\in \frac{\R^\times}{\R^\times_+}}\Z\varpi$ & $\left(\bigoplus_{\varpi\in \frac{\R^\times}{\R^\times_+}}\Z\varpi\right)\times_{{\Z}/{2\Z}} \left(\frac{\rE^\times}{\R_+^\times}\right)^*$  & $\phantom{\frac{\overbrace a}{\underbrace a}}\bigoplus_{\varpi\in \frac{\R^\times}{\R^\times_+}}\Z\varpi \phantom{\frac{\overbrace a}{a}}$  \\
\hline  
\end{tabular}
\label{table:nonlin} 
\end{table}

In Table \ref{tablecwir}, $\rE_-^\times:=\{a\in \rE^\times\mid a^\iota=-a\}$; and for the data in the definitions of the fiber products, we are given the homomorphisms $\bigoplus_{\varpi\in \rE_-^\times/\R^\times_+}\Z\varpi\rightarrow \Z/2\Z$ and  $\bigoplus_{\varpi\in \R^\times/\R^\times_+}\Z\varpi\rightarrow \Z/2\Z$ which map the free generators to $1+2\Z$, and the homomorphism $\left(\frac{\rE^\times}{\R_+^\times}\right)^*\rightarrow \Z/2\Z$  whose kernel equals $\left(\frac{\rE^\times}{\R^\times}\right)^*\subset \left(\frac{\rE^\times}{\R^\times_+}\right)^*$.

Identify $\widehat \CW_\infty$ with the group of  Table \ref{tablecwir}. Then the homomorphism $\widehat \xi_\infty: \widehat \CW_\infty\rightarrow \rK^*$ (as in \eqref{kcw}) is identical to the obvious projection map  except for the case when  $\epsilon=-1$ and $\rD=\rF$. In this exceptional case, the homomorphism
\be\label{homspr}
\widehat \xi_\infty: \widehat \CW_\infty=\bigoplus_{\varpi\in \R^\times/\R^\times_+}\Z\varpi\rightarrow \rK^*
\ee
maps the free generator $\varpi_a:=a \R^\times_+$ ($a\in \R^\times$) to $\gamma_{\psi_a}$, where $\psi_a$ denotes the character
\[
\rF=\R\rightarrow \C^\times,\quad x\mapsto \psi(ax),
\]
and $\gamma_{\psi_a}$ is the character on $\oHil(\R)$ defined as in \eqref{gammap}. By the explicit calculation of the Weil indices of real quadratic spaces \cite[Section 26]{Weil}, we know that the kernel of the homomorphism \eqref{homspr} equals
\be\label{kersyr}
  \{a \varpi_1+b\varpi_{-1}\mid a,b\in \Z,\,a-b\in 4\Z\}.
\ee

For $\rF\cong \C$ (then $\rD =\rF$), it is easy to see that there is a canonical isomorphism:
\begin{equation*}
  \widehat \CW_\infty \cong \left\{
  \begin{array}{ll}
  2\Z\times \{\pm 1\}^*, \ \ \ \ &\hbox{if $\epsilon=1$;}\smallskip \\
  \Z, \ \ \ \ &\hbox{if $\epsilon=-1$,} \\
  \end{array}
  \right.
\end{equation*}
and $\widehat \xi_\infty: \widehat \CW_\infty\rightarrow \rK^*$ is the unique surjective homomorphism.

In all cases (for $\rF=\R$ or $\rF \cong \C$), the Kudla homomorphism $\xi_\infty:
\CW_\infty\rightarrow \RK^*$ (as in Section \ref{kk}) is surjective.

\subsection{Conservations relations}
The archimedean analogue of the following basic results remain true: the smooth version of Stone-von Neumann Theorem \cite{du2}, Howe Duality Conjecture \cite{Ho3}, non-vanishing of theta lifting in the stable range \cite{PP}, and Kudla's persistence principle. For each $\mathbf t\in \CW_U$ and for each $\pi\in \Irr(\oGb(U))$ which is genuine (as in Section \ref{firsoc}) with respect to $\mathbf t$, define the first occurrence index $\operatorname n_\mathbf t(\pi)$ as in \eqref{Symp-index}.

On the first occurrences, three different phenomena occur in the archimedean case. As in the non-archimedean case, we will need to use some results on the structure of degenerate principal series of $\oGb(U)$ for $U$ split. We refer the reader to Proposition \ref{quotient0} for the relevant notations.

\vsp
\noindent {\bf Case 1}: $U$ is a real or complex symmetric bilinear space. Then the kernel of the Kudla homomorphism $\xi_\infty: \CW_\infty\rightarrow \RK^*$  has order $2$. Define the anti-split Witt tower  $\mathbf t_U^\circ\in \CW_U$ as in the non-archimedean case. It corresponds to the sign character of the orthogonal group $\oO(U)$. The same results as Proposition \ref{quotient0} and Proposition \ref{trivialv-ind} also hold in this case (see \cite[Section 4]{LZ2}, \cite[Theorem 1]{LZ3}, and \cite[Appendix C]{Pr2}).  Then the argument as in the non-archimedean case shows that the same
conservation relations hold:

\begin{thm}\label{main2}
Let $U$ be a real or complex symmetric bilinear space. Let $\mathbf t_1$ and $\mathbf t_2$ be
two Witt towers in $\CW_U$ with difference $\mathbf t_U^\circ$. Then for every $\pi\in \Irr(\oG(U))$ one has that
\[
   \operatorname n_{\mathbf t_1}(\pi)+\operatorname n_{\mathbf t_2}(\pi)=2\dim U.
\]
\end{thm}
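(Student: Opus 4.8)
The plan is to run the non-archimedean argument of Sections~\ref{sub1.6}, \ref{dpsdm} and \ref{NOtrivial}, with only functional-analytic modifications, in the category of Casselman--Wallach representations of the orthogonal groups $\oG(U)$ (here $\oGb(U)=\oG(U)$, as $U$ is not symplectic), feeding in the two archimedean facts quoted above: the structure of the degenerate principal series $\operatorname I(\chi_{\sigma,X})$ for $U$ split --- the analogue of Proposition~\ref{quotient0}, taken from \cite[Section~4]{LZ2}, \cite[Theorem~1]{LZ3} and \cite[Appendix~C]{Pr2} --- and the non-occurrence bound $\operatorname n_{\mathbf t_U^\circ}(1_U)\geq 2\dim U$, the analogue of Proposition~\ref{trivialv-ind}. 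In the present case the anti-split Witt tower $\mathbf t_U^\circ$ is the sign character of $\oO(U)$, of anisotropic degree $0$, so the invariant $\mathrm d_{\rD,\epsilon}$ appearing in the non-archimedean statements is to be replaced throughout by $0$; this is why the right-hand side is $2\dim U$. Since $\mathbf t_U^\circ$ has order dividing $2$ in $\CW_U$, the difference of $\mathbf t_2$ and $\mathbf t_1$ equals $\mathbf t_U^\circ$, as does that of $-\mathbf t_1$ and $-\mathbf t_2$; this makes all four inequalities below simultaneously available.

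For the upper bound $\operatorname n_{\mathbf t_1}(\pi)+\operatorname n_{\mathbf t_2}(\pi)\leq 2\dim U$ I would first record the archimedean doubling identity: for $\pi\in\Irr(\oG(U))$ and a character $\chi$ of $\oPb(U^\triangle)$ of the relevant type, the archimedean local doubling zeta integral furnishes a nonzero element of $\Hom_{\oG(U)}(\operatorname I(\chi),\pi)$, the analogue of Lemma~\ref{zeta}, with $\operatorname I(\chi)$ now denoting smooth normalized induction. Lemma~\ref{cn0}, which concerns only restriction of oscillator representations along $\oGb(U)\times\oGb(U^-)\rightarrow\oGb(U^\square)$ and passage to coinvariants, carries over unchanged. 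Then, given $\sigma_1,\sigma_2\in\widehat\CW_U$ whose difference represents $\mathbf t_U^\circ$ and with $\dim\sigma_1+\dim\sigma_2=2\dim U-2$, I would lift them to $\sigma_1^\square,\sigma_2^\square\in\widehat\CW_{U^\square}$ as in Lemma~\ref{decw}, apply the archimedean analogue of Proposition~\ref{quotient0} to obtain (after possibly interchanging the two indices) a short exact sequence
\[
0\longrightarrow\RQ_{\sigma_1^\square}\longrightarrow\operatorname I(\chi_{\sigma_1^\square,U^\triangle})\longrightarrow\RQ_{\sigma_2^\square}\longrightarrow 0
\]
of Casselman--Wallach representations, and conclude that $\Hom_{\oG(U)}(\RQ_{\sigma_i^\square},\pi)\neq 0$ for one of $i\in\{1,2\}$, whence $\pi\in\CR_{\sigma_1}\cup\CR_{\sigma_2}$ by Lemma~\ref{cn0}. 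This is the analogue of Proposition~\ref{dict0}, and the analogue of Corollary~\ref{upper} follows from it together with the Witt-group bookkeeping of Proposition~\ref{conserv0}, just as in the non-archimedean text.

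For the lower bound the archimedean analogue of Proposition~\ref{trivialv-ind} gives $\operatorname n_{\mathbf t_U^\circ}(1_U)\geq 2\dim U$, and then the argument of Section~\ref{secstra} yields $\operatorname n_{\mathbf t_1}(\pi)+\operatorname n_{-\mathbf t_2}(\pi^\vee)\geq 2\dim U$: taking $\sigma_1=(V_1,\omega_1)\in\mathbf t_1$ with $\dim\sigma_1=\operatorname n_{\mathbf t_1}(\pi)$ and $\sigma_2=(V_2,\omega_2)\in-\mathbf t_2$ with $\dim\sigma_2=\operatorname n_{-\mathbf t_2}(\pi^\vee)$, we have $\Hom_{\oG(U)}(\omega_1,\pi)\neq 0$ and $\Hom_{\oG(U)}(\omega_2,\pi^\vee)\neq 0$, hence $\Hom_{\oG(U)}(\omega_1\,\widehat{\otimes}\,\omega_2,1_U)\neq 0$ via the continuous trace pairing on $\pi\times\pi^\vee$, so $1_U\in\CR_{\sigma_1+\sigma_2}$ and $\dim(\sigma_1+\sigma_2)\geq\operatorname n_{\mathbf t_U^\circ}(1_U)$. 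Applying this inequality to the pairs $(\mathbf t_1,\mathbf t_2)$ and $(\mathbf t_2,\mathbf t_1)$, and the upper bound to $(\mathbf t_1,\mathbf t_2,\pi)$ and $(-\mathbf t_1,-\mathbf t_2,\pi^\vee)$, I obtain
\[
\left\{
\begin{array}{rl}
\operatorname n_{\mathbf t_1}(\pi)+\operatorname n_{-\mathbf t_2}(\pi^\vee) &\geq\ 2\dim U, \\
\operatorname n_{\mathbf t_2}(\pi)+\operatorname n_{-\mathbf t_1}(\pi^\vee) &\geq\ 2\dim U, \\
\operatorname n_{\mathbf t_1}(\pi)+\operatorname n_{\mathbf t_2}(\pi) &\leq\ 2\dim U, \\
\operatorname n_{-\mathbf t_1}(\pi^\vee)+\operatorname n_{-\mathbf t_2}(\pi^\vee) &\leq\ 2\dim U
\end{array}
\right.
\]
and adding the two lower bounds and comparing with the sum of the two upper bounds forces all four to be equalities; in particular $\operatorname n_{\mathbf t_1}(\pi)+\operatorname n_{\mathbf t_2}(\pi)=2\dim U$.

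I expect the main obstacle to be functional-analytic rather than structural: one must check that the steps which are routine for smooth representations of $\ell$-groups survive in the Casselman--Wallach category --- that $\RQ_\sigma$ is a \emph{closed} subrepresentation of $\operatorname I(\chi_{\sigma,X})$, so that the displayed sequence really is an exact sequence of Fr\'{e}chet representations; that the identification of $\RQ_\sigma$ with the $\oGb(V)$-coinvariants of the oscillator representation (the analogue of Proposition~\ref{coinv}) persists; and that the archimedean doubling zeta integral is nonvanishing for \emph{every} irreducible $\pi$ after meromorphic continuation. These points are by now standard; the two genuinely deep ingredients --- the degenerate principal series structure and the non-occurrence of $1_U$ before the stable range --- are imported from \cite{LZ2,LZ3,Pr2}.
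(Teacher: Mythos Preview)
Your proposal is correct and follows essentially the same approach as the paper: the paper's own proof is simply the assertion that the analogues of Proposition~\ref{quotient0} and Proposition~\ref{trivialv-ind} hold in this case (citing \cite{LZ2,LZ3,Pr2}), and that ``the argument as in the non-archimedean case shows that the same conservation relations hold''. You have spelled out exactly that argument, including the four-inequality deduction of Section~\ref{secstra}, the doubling step of Section~\ref{dpsdm}, and the correct identification $\mathrm d_{\rD,\epsilon}=0$ and $\mathbf t_U^\circ=\sgn$; your closing remarks on the functional-analytic checks are appropriate caveats but do not diverge from the paper's intended route.
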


\vsp

\noindent {\bf Case 2}: $U$ is a complex symplectic space or a real quaternionic Hermitian space. Then $\oG(U)$ is a perfect group,  and $\CW_U$ has two elements, namely the Witt tower of even dimensional enhanced oscillator representations, and the Witt tower of odd dimensional enhanced oscillator representations. Concerning degenerate principal series, we have
\begin{prpt}\label{quotient2} (\cite[Theorem 1, Case I]{LZ3} and \cite[Corollary 10.5, (2)]{Ya})
Let $U$ be a complex symplectic space or a real quaternionic Hermitian space. Assume that $U$ is split and let $X$ be a Lagrangian subspace of $U$. Then $\RQ_\sigma=\operatorname I(\chi_{\sigma,X})$ for all $\sigma\in \widehat \CW_U$ such that $\dim \sigma\geq \rank\, U$.
\end{prpt}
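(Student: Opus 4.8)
The plan is to deduce $\RQ_\sigma=\operatorname I(\chi_{\sigma,X})$ from the known submodule structure of the Siegel degenerate principal series of $\oGb(U)$, exactly in the spirit of the non-archimedean identity \eqref{stable0}; the essential difference is that for the two families of groups in question the degenerate principal series is ``large enough'' that one needs only $\dim\sigma\ge\rank\,U$ rather than the full stable range $\rank\,\sigma\ge\rank\,U$.

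First I would reduce to $\sigma=(V,\omega)\in\widehat\CW_U^+$: by the archimedean description of $\widehat\CW_\infty\cong\widehat\CW_U$ in these two cases, $\widehat\CW_U^+$ is precisely $\{\sigma:\dim\sigma\ge0\}$, so the hypothesis $\dim\sigma\ge\rank\,U\ge0$ forces $\sigma\in\widehat\CW_U^+$ (and if $\rank\,U=0$ then $U=0$ and there is nothing to prove). Then $\RQ_\sigma$ is a \emph{nonzero} $\oGb(U)$-subrepresentation of $\operatorname I(\chi_{\sigma,X})$, the map \eqref{embedding} being nonzero because $\lambda_{X\otimes_\rD V}\ne0$ (Lemma \ref{haaro}). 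Since $U$ is split with $\dim U=2r$, $\oP(X)$ is the Siegel parabolic with Levi $\GL(X)$ and $\chi_{\sigma,X}=\kappa_{\sigma,X}\,\abs{\,\cdot\,}_X^{\dim\sigma/2}$ with $\abs{\,\cdot\,}_X$ positive and $\kappa_{\sigma,X}$ unitary (see \eqref{lambdax}, \eqref{absx}); in the two cases at hand $\RK$ is $\{\pm1\}$ (complex symplectic) or $\{1\}$ (real quaternionic Hermitian), so $\kappa_{\sigma,X}$ is at most a sign character and the family $\{\operatorname I(\chi_{\sigma,X})\}_\sigma$ is governed, up to such a sign twist, by the single real parameter $\dim\sigma$ measured against the modulus exponent $2\rho_r$ of $\oP(X)$ (Remark (b) after Proposition \ref{quotient0}).

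The main step is to feed in the determination of the submodule lattice of these degenerate principal series: for $\oG(U)=\Sp_{2n}(\C)$ this is \cite[Theorem 1, Case I]{LZ3}, and for $\oG(U)=\Sp(n,n)$ (the split real quaternionic Hermitian case) it is \cite[Corollary 10.5, (2)]{Ya}. For these two families the reducibility of the family $\{\operatorname I(\chi_{\sigma,X})\}$ is confined to parameters with $\dim\sigma<\rank\,U$; hence for $\dim\sigma\ge\rank\,U=r$ the representation $\operatorname I(\chi_{\sigma,X})$ is irreducible, and since $\RQ_\sigma\ne0$ it equals $\operatorname I(\chi_{\sigma,X})$.

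The delicate point, and the main obstacle, is the bookkeeping matching ``$\dim\sigma\ge\rank\,U$'' in the present normalization with the reducibility thresholds recorded in \cite{LZ3} and \cite{Ya}: one must carry along the half-sum $\rho_r$, the factor $\delta_\rD$ built into $\abs{\,\cdot\,}_X$ (which equals $2$ in the quaternionic case), and the sign twist by $\kappa_{\sigma,X}$, and in particular one must verify the boundary value $\dim\sigma=r$, which is where irreducibility is least transparent and is genuinely the content supplied by the cited structure theorems. With the two reducibility pictures so aligned with the oscillator parameter, the proposition follows.
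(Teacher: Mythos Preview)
The paper does not supply its own proof of this proposition; it is quoted directly from \cite[Theorem~1, Case~I]{LZ3} and \cite[Corollary~10.5,(2)]{Ya}. Your proposal is a correct unpacking of that citation---$\RQ_\sigma$ is a nonzero subrepresentation and the cited structure theorems give irreducibility of $\operatorname I(\chi_{\sigma,X})$ in the range $\dim\sigma\ge\rank\,U$---and your caveats about matching normalizations and the boundary parameter $\dim\sigma=r$ are exactly where the work lies.
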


It turns out that there is no conservation relation in the case under consideration. Instead, using Proposition \ref{quotient2}, the argument as in Section \ref{dpsdm} yields the following:

\begin{thm}\label{main2}
Let $U$ be a complex symplectic space or a real quaternionic Hermitian space.  Let $\sigma=(V,\omega)\in \widehat \CW_U^+$ and let $\pi\in \Irr(\oGb(U))$. Assume that $\pi$ is genuine with respect to $\sigma$. If $\dim \sigma\geq \dim U$, then
\[
  \Hom_{\oGb(U)}(\omega, \pi)\neq 0.
\]
Consequently, for each $\mathbf t\in\CW_U$ such that $\pi$ is genuine with respect to $\mathbf t$, one has that
\[
   \operatorname n_{\mathbf t}(\pi)\leq\left\{
                                         \begin{array}{ll}
                                           \dim U, & \hbox{if $\dim U\in \dim \mathbf t$;} \\
                                          \dim U+1, & \hbox{otherwise.}
                                         \end{array}
                                       \right.
 \]
Here $\dim \mathbf t\in \Z/2\Z$ denotes the parity of the dimension of an element of $\mathbf t$.
\end{thm}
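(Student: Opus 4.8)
The plan is to run the doubling‑method argument of Section~\ref{dpsdm} almost verbatim, using Proposition~\ref{quotient2} in place of Proposition~\ref{quotient0}. Because $\oG(U)$ is perfect and, in the cases at hand, the degenerate principal series $\operatorname I(\chi_{\sigma,X})$ already coincides with its subrepresentation $\RQ_\sigma$ as soon as $\dim\sigma\ge\rank\,U$, the ``anti‑split'' bookkeeping of the non‑archimedean proof disappears, and what remains is a single application of local zeta integrals.

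For the non‑vanishing assertion $\Hom_{\oGb(U)}(\omega,\pi)\ne 0$, I would form $U^\square:=U\oplus U^-$, which is split, with Lagrangian $U^\triangle:=\{(u,u)\mid u\in U\}$, so that $\rank\,U^\square=\dim U$. Let $\sigma^\square=(V,\omega^\square)$ be the enhanced oscillator representation of $\oGb(U^\square)$ on $V$; since $\oG(U)$ is perfect, the fibre of $\widehat\CW_U^+$ over $V$ is a single point (Lemma~\ref{actst}, as $(\oG(U))^*$ is trivial), so $\widehat{\mathrm r}^{U^\square}_U(\sigma^\square)=\sigma$, and in particular $\dim\sigma^\square=\dim\sigma\ge\dim U=\rank\,U^\square$. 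Proposition~\ref{quotient2} then gives $\RQ_{\sigma^\square}=\operatorname I(\chi_{\sigma^\square,U^\triangle})$ as representations of $\oGb(U^\square)$. The archimedean doubling theory of local zeta integrals (the Casselman-Wallach analogue of Lemma~\ref{zeta}, whose genuineness hypothesis, relevant when $U$ is complex symplectic, coincides with the one imposed on $\pi$) produces a nonzero element of
\[
  \Hom_{\oGb(U)}\bigl(\operatorname I(\chi_{\sigma^\square,U^\triangle}),\pi\bigr)=\Hom_{\oGb(U)}(\RQ_{\sigma^\square},\pi).
\]
Finally, $\RQ_{\sigma^\square}$ is a quotient of $\omega^\square|_{\oGb(U^\square)}$, and $\omega^\square|_{\oGb(U)}\cong\omega|_{\oGb(U)}\,\widehat\otimes\,H$ with $H$ carrying the trivial $\oGb(U)$‑action; hence the archimedean analogue of Lemma~\ref{cn0} yields $\Hom_{\oGb(U)}(\omega,\pi)\ne 0$, i.e.\ $\pi\in\CR_\sigma$.

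For the bound on the first occurrence index, recall that $\CW_U=\CW_0$ (since $\oG(U)$ is perfect) has exactly two elements, the Witt tower of even‑dimensional and that of odd‑dimensional enhanced oscillator representations, each of anisotropic degree at most $1$. Hence $\mathbf t$ contains an element $\sigma=(V,\omega)$ with $\dim\sigma=\dim U$ when $\dim U\in\dim\mathbf t$ and with $\dim\sigma=\dim U+1$ otherwise. As $\pi$ is genuine with respect to $\mathbf t$ it is genuine with respect to $\sigma$, and $\dim\sigma\ge\dim U$; so the first part gives $\pi\in\CR_\sigma$, whence $\operatorname n_{\mathbf t}(\pi)\le\dim\sigma$, which is the asserted inequality.

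The substance of the proof lies not in this (now short) argument but in its archimedean inputs: the Casselman-Wallach version of the doubling local zeta integral non‑vanishing (Lemma~\ref{zeta}) and of Lemma~\ref{cn0} (which involves completed tensor products of Fr\'{e}chet spaces), and the verification that Proposition~\ref{quotient2} holds precisely at the threshold $\dim\sigma^\square=\rank\,U^\square=\dim U$ that the doubling construction forces. These are supplied by the cited references (\cite{LZ3,Ya} for the structure of the degenerate principal series) or proved by the same arguments as their non‑archimedean counterparts, so the remaining task is essentially transcription.
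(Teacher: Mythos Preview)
Your proposal is correct and follows exactly the route the paper indicates (``using Proposition~\ref{quotient2}, the argument as in Section~\ref{dpsdm}''): double $U$, note that $\dim\sigma^\square\ge\rank\,U^\square$ so Proposition~\ref{quotient2} gives $\RQ_{\sigma^\square}=\operatorname I(\chi_{\sigma^\square,U^\triangle})$, apply the archimedean analogue of Lemma~\ref{zeta}, and finish with the archimedean analogue of Lemma~\ref{cn0}. One small clarification: when you invoke uniqueness of $\sigma^\square$ over $V$ you need $(\oG(U^\square))^*$ trivial, not just $(\oG(U))^*$; but $U^\square$ is of the same type as $U$, so this holds for the same reason.
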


\vsp
\noindent {\bf Case 3}: $U$ is a real symplectic space, a complex Hermitian or skew-Hermitian space, or a real quaternionic skew-Hermitian space. When $U$ is a complex Hermitian space, let $\varpi_+$ and $\varpi_{-}$ be the two different elements of the set $\rE_-^\times/\R^\times_+$; otherwise,  let $\varpi_+$ and $\varpi_{-}$ be the two different elements of the set $\R^\times/\R^\times_+$.  Identify $\widehat \CW_\infty$ with the group of Table \ref{tablecwir}. Then
\be\label{kerwxi}
  \ker \widehat \xi_\infty=\{\,a \varpi_+ + b\varpi_{-}\mid a,b\in \Z,\,a-b\in \mathrm d_{\rD,\epsilon} \, \Z\,\},
\ee
where $\mathrm d_{\rD, \epsilon}$ is as in \eqref{drhov0}.
Denote by $\H_\infty$ the hyperbolic plane in $\widehat \CW_\infty$, namely the element of $\widehat \CW_\infty$ whose image under the natural homomorphism $\widehat \CW_\infty\rightarrow \widehat \CW_U$ equals $\H_U$, for every $\epsilon$-Hermitian right $\rD$-vector space $U$. Here the hyperbolic plane $\H_U\in \widehat \CW_U$ is defined as in the non-archimedean case. Then under the identification of $\widehat \CW_\infty$ with the group of Table \ref{tablecwir}, we have that $\H_\infty=\varpi_+ + \varpi_{-}$. Therefore, \eqref{kerwxi} implies that
\[
\ker \xi_\infty \cong \mathrm d_{\rD,\epsilon}\,\Z.
 \]
Denote by $\mathcal K_U\subset \CW_U$ the image of $\ker \xi_\infty$ under the natural homomorphism $\CW_\infty\rightarrow \CW_U$, which is also isomorphic to $\mathrm d_{\rD,\epsilon}\,\Z$.

For degenerate principal series representations, we have the following two propositions:

\begin{prpt}\label{quotient4} (\cite[Introduction]{LZ1}, \cite[Section 4]{LZ2}, \cite[Section 6]{LZ4})
Let $U$ be a real symplectic space, or a complex Hermitian or skew-Hermitian space. Assume that $U$ is split and let $X$ be a Lagrangian subspace of $U$. Then for all $\sigma\in \widehat \CW_U$ such that $\dim \sigma \leq \frac{\dim U+\mathrm d_{\rD, \epsilon}-2}{2}$, one has that
\[
  \frac{\operatorname I(\chi'_{\sigma,X})}{\sum_{\sigma'} \RQ_{\sigma'}}\cong \RQ_\sigma
\]
as representations of $\oGb(U)$, where $\sigma'$ in the summation runs through all elements of $\widehat \CW_U$ such that
\be\label{quoarch}
 \left\{
   \begin{array}{l}
      \textrm{$\sigma'-\sigma$ represents a non-zero element of $\CK_U$; and}\smallskip \\
     \dim \sigma'+\dim \sigma=\dim U+\mathrm d_{\rD, \epsilon}-2,
   \end{array}
 \right.
\ee
and $\chi'_{\sigma,X}:=\chi_{\sigma',X}$ for an arbitrary element $\sigma'\in \widehat \CW_U$ satisfying \eqref{quoarch}.
\end{prpt}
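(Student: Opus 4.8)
The statement concerns only the submodule structure of the degenerate principal series of $\oGb(U)$, and the plan is to deduce it from the complete submodule analysis of those modules for the three families in question (the metaplectic groups $\Mp_{2n}(\R)$, the groups $\mathrm{U}(p,q)$, and the real quaternionic orthogonal groups), carried out case by case in \cite{LZ1,LZ2,LZ4}.

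First I would check that $\chi'_{\sigma,X}$ is well defined. Any two $\sigma'_1,\sigma'_2\in\widehat\CW_U$ satisfying \eqref{quoarch} have the common dimension $\dim U+\mathrm d_{\rD,\epsilon}-2-\dim\sigma$; moreover $\sigma'_1-\sigma'_2$ represents an element of $\CK_U=\ker\xi_U$, and since $(\bar\nu_U)_*$ in \eqref{muinverse} is an isomorphism and $\kappa$ descends to $\CW_U$, this forces $\kappa_{\sigma'_1,X}=\kappa_{\sigma'_2,X}$, hence $\chi_{\sigma'_1,X}=\chi_{\sigma'_2,X}$. The hypothesis $\dim\sigma\le\frac{\dim U+\mathrm d_{\rD,\epsilon}-2}{2}$ puts $\sigma$ at or below the center of symmetry of the family (\cf the remarks following Proposition \ref{quotient0}) and its dual point $\sigma'$ at or above it, so $\operatorname I(\chi'_{\sigma,X})$ is the series at the ``large'' point; through the map \eqref{embedding} (Proposition \ref{coinv}, whose archimedean analogue holds, using the archimedean Stone--von Neumann theorem \cite{du2}) each $\sigma'$ appearing in the sum yields a submodule $\RQ_{\sigma'}\subseteq\operatorname I(\chi'_{\sigma,X})$.

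Next I would introduce the normalized standard intertwining operator $M\colon\operatorname I(\chi'_{\sigma,X})\to\operatorname I(\chi_{\sigma,X})$ realizing the functional equation relating the series at $\sigma'$ to the one at the reflected point $\sigma$ (the two characters being Weyl-conjugate with respect to $\oPb(X)$, as \eqref{quoarch} is set up to guarantee), which after normalization is holomorphic and nonzero at this integral point, and identify its image with $\RQ_\sigma\subseteq\operatorname I(\chi_{\sigma,X})$. This last identification is the archimedean analogue of the Kudla--Rallis/Kudla--Sweet/Yamana input to Proposition \ref{quotient0}, and is proved by matching the $\oPb(X)$-semiinvariant functionals produced on either side by the local zeta integrals and by the doubling construction of Section \ref{dm}, together with the uniqueness in Lemma \ref{haaro}. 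It then follows that $\operatorname I(\chi'_{\sigma,X})/\ker M\cong\RQ_\sigma$, so it suffices to prove $\ker M=\sum_{\sigma'}\RQ_{\sigma'}$. For $\sum_{\sigma'}\RQ_{\sigma'}\subseteq\ker M$ it is enough that $M$ kill each $\RQ_{\sigma'}$ with $\sigma'\ne\sigma$ in $\CK_U$: precomposing $M$ with the map $\omega'\to\operatorname I(\chi'_{\sigma,X})$ of \eqref{embedding}, which has image $\RQ_{\sigma'}$, gives a $\oGb(U)$-map $\RQ_{\sigma'}\to\RQ_\sigma$, and this must vanish because $\RQ_{\sigma'}$ and $\RQ_\sigma$ are theta modules attached to distinct Witt towers and therefore share no irreducible constituent; a nonzero map would, after reinstating the actions of $\oGb(V')$ and $\oGb(V)$ as in the doubling argument of Lemma \ref{cn1}, force an isomorphism between oscillator representations whose underlying Witt towers differ, contradicting Lemma \ref{uniqueo}. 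The reverse inclusion $\ker M\subseteq\sum_{\sigma'}\RQ_{\sigma'}$ is where the quantitative input enters: by the finite-length composition-series description of $\operatorname I(\chi'_{\sigma,X})$ supplied by \cite{LZ1,LZ2,LZ4}, its composition factors are exhausted by those of $\RQ_\sigma$ together with those of the finitely many nonzero $\RQ_{\sigma'}$, so $\operatorname I(\chi'_{\sigma,X})/\sum_{\sigma'}\RQ_{\sigma'}$ has the same composition factors as $\RQ_\sigma$ and the induced surjection onto $\RQ_\sigma$ is forced to be an isomorphism.

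I expect the main obstacle to be precisely this last step — excluding any ``spurious'' constituent of $\operatorname I(\chi'_{\sigma,X})$ beyond those coming from $\RQ_\sigma$ and the $\RQ_{\sigma'}$ — which amounts to the complete determination of the composition series and submodule lattice of these degenerate principal series at all the relevant reducibility points. This is exactly the content of \cite{LZ1,LZ2,LZ4}, established there by a family-by-family analysis of $K$-types and intertwining operators; a self-contained treatment would have to reproduce those computations.
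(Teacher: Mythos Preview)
The paper does not prove this proposition; it is stated purely as an import from \cite{LZ1,LZ2,LZ4}, exactly as the non-archimedean analogue (Proposition~\ref{quotient0}) is imported from Kudla--Rallis, Kudla--Sweet and Yamana. Your outline is therefore already more ambitious than what the paper attempts, and your check that $\chi'_{\sigma,X}$ is well defined is correct and worth recording.

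There is, however, a genuine gap in your argument for the inclusion $\sum_{\sigma'}\RQ_{\sigma'}\subseteq\ker M$. You claim that a nonzero $\oGb(U)$-map $\RQ_{\sigma'}\to\RQ_\sigma$ would, ``after reinstating the actions of $\oGb(V')$ and $\oGb(V)$,'' force an isomorphism of the underlying oscillator representations and hence contradict Lemma~\ref{uniqueo}. This does not work: by Proposition~\ref{coinv}, $\RQ_{\sigma'}$ and $\RQ_\sigma$ are the $\oGb(V')$- and $\oGb(V)$-coinvariants of two different oscillator representations, and a $\oGb(U)$-map between these quotients neither lifts to a map of oscillator representations nor implies anything about their isomorphism type; Lemma~\ref{uniqueo} speaks only about full oscillator representations. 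Nor can you simply assert that $\RQ_{\sigma'}$ and $\RQ_\sigma$ share no irreducible constituent: that disjointness is itself part of the composition-series information contained in \cite{LZ1,LZ2,LZ4}, so invoking it here makes both inclusions equally dependent on those references. The intertwining-operator framework you set up is indeed the one used in the Lee--Zhu papers, but neither direction of $\ker M=\sum_{\sigma'}\RQ_{\sigma'}$ can be obtained without their full $K$-type and constituent analysis; your sketch does not reduce the dependence on \cite{LZ1,LZ2,LZ4} beyond the paper's bare citation.
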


\begin{prpt}\label{quotient5} (\cf \cite[Corollary 10.5]{Ya})
Let $U$ be a real quaternionic skew-Hermitian space. Assume that $U$ is split and let $X$ be a Lagrangian subspace of $U$. Then for all integers $m, m'$ such that $m+m'=\dim U-1$ and $m'\geq m$, one has that
\[
  \frac{\operatorname I(\chi_{m',X})}{\sum_{\sigma' \in \widehat \CW_U,\, \dim \sigma'=m'} \RQ_{\sigma'}}\cong \sum_{\sigma \in\widehat \CW_U,\, \dim \sigma=m} \RQ_{\sigma}
\]
as representations of $\oG(U)$, where  $\chi_{m',X}:=\chi_{\sigma',X}$ for an arbitrary element $\sigma'\in \widehat \CW_U$ of dimension $m'$.
\end{prpt}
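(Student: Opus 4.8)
The assertion is, up to notation, Corollary 10.5 of Yamana \cite{Ya}, and the plan is to deduce it along the lines of Kudla--Rallis \cite{KR2} and Kudla--Sweet \cite{KS}, fitting the argument into the framework of this article. Here $\oG(U)$ is a quaternionic orthogonal group over $\R$; write $\dim U=2r$, so that $\oP(X)$ is a Siegel parabolic with Levi subgroup isomorphic to $\GL_r(\rD)$, of modulus character $\abs{\,\cdot\,}_X^{2\rho_r}$ in the normalisation \eqref{absx} (see the remark after Proposition \ref{quotient0}). The first observation is that, since $\rD$ is a quaternion algebra over $\R$, the group $\RK$ of the present section is trivial; hence $\kappa_{\sigma,X}=1$ for every $\sigma$, and $\chi_{\sigma,X}=\abs{\,\cdot\,}_X^{\dim\sigma/2}$ depends only on $\dim\sigma$. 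This makes $\chi_{m',X}$ unambiguous and, more importantly, exhibits \emph{all} the subrepresentations $\RQ_{\sigma'}$ with $\dim\sigma'=m'$ as subrepresentations of the \emph{one} degenerate principal series $\operatorname I(\chi_{m',X})$ (recall from Proposition \ref{coinv} the description of $\RQ_{\sigma'}$ as a coinvariant quotient of the oscillator representation attached to $\sigma'=(V',\omega')$). With $\mathrm d_{\rD,\epsilon}=1$ for this triple (cf.\ \eqref{drhov0}), the hypothesis $m+m'=\dim U-1$ is the condition of Proposition \ref{quotient0}, i.e.\ $\frac{m}{2}+\frac{m'}{2}=2\rho_r$: the critical, self-dual point.

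The plan is to produce the isomorphism from the standard intertwining operator. Because $m'\ge m$ forces $m'\ge r$ and hence $\frac{m'}{2}>\rho_r$, the standard intertwining integral converges absolutely and defines a nonzero $\oG(U)$-homomorphism
\[
  \mathcal M\colon \operatorname I(\chi_{m',X})\longrightarrow\operatorname I(\chi_{m,X}),\qquad m=\dim U-1-m'.
\]
I would then show that $\ker\mathcal M=\sum_{\dim\sigma'=m'}\RQ_{\sigma'}$ and $\operatorname{Im}\mathcal M=\sum_{\dim\sigma=m}\RQ_{\sigma}$, whereupon $\mathcal M$ descends to the asserted isomorphism $\operatorname I(\chi_{m',X})/\sum_{\sigma'}\RQ_{\sigma'}\xrightarrow{\sim}\sum_{\sigma}\RQ_{\sigma}$. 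The elementary half is the inclusion $\RQ_{\sigma'}\subseteq\ker\mathcal M$: composing the oscillator embedding \eqref{embedding} for $\sigma'=(V',\omega')$ with $\mathcal M$ produces an $\oG(U)$-map $\omega'\to\operatorname I(\chi_{m,X})$ which, by the uniqueness of the invariant Lagrangian functional (Lemma \ref{haaro}) applied to the complementary Lagrangian $X'\otimes_\rD V'$, must be a scalar multiple of the matrix-coefficient map attached to $\lambda_{X'\otimes_\rD V'}$; the scalar is a ratio of local zeta integrals (a local $\gamma$-factor), and an explicit computation in the Schr\"{o}dinger model --- the archimedean counterpart of the one underlying Proposition \ref{quotient0} --- shows that it vanishes precisely when $m+m'=\dim U-1$. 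The inclusion $\RQ_{\sigma}\subseteq\operatorname{Im}\mathcal M$ is obtained by the analogous analysis of the intertwining operator in the opposite direction (which now requires analytic continuation), together with the relation expressing the composite of the two operators as a scalar.

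The remaining, and in my view principal, difficulty is to show that $\ker\mathcal M$ and $\operatorname{Im}\mathcal M$ are \emph{no larger} than these sums, equivalently that the socle of $\operatorname I(\chi_{m',X})$ is $\bigoplus_{\dim\sigma'=m'}\RQ_{\sigma'}$ with semisimple quotient $\bigoplus_{\dim\sigma=m}\RQ_{\sigma}$. This requires the full determination of the $\oG(U)$-module structure of the degenerate principal series $\operatorname I(\abs{\,\cdot\,}_X^s)$ at all of its reducibility points: one computes the $K$-spectrum, uses intertwining operators to control the lattice of subrepresentations, and invokes the unitarisability of the $\RQ_\sigma$ with $V$ anisotropic together with a multiplicity-one property of the relevant $K$-types to rule out anything further. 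This is exactly the content of \cite[Section 10]{Ya}, which we invoke rather than reprove: there is no conceptual shortcut through the general machinery of this article, as the analysis is intrinsically representation-theoretic and specific to the quaternionic orthogonal groups. The same remarks apply, mutatis mutandis, to Propositions \ref{quotient2} and \ref{quotient4}.
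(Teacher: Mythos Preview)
The paper does not prove this proposition: it is simply stated with the citation ``(\cf \cite[Corollary 10.5]{Ya})'' and then used as input to Theorem \ref{realsp}. So there is nothing to compare against beyond the bare citation.

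Your sketch is a correct and informative expansion of what that citation entails. You correctly observe that for $\rD$ the real quaternion algebra the group $\RK$ is trivial, so all $\kappa_{\sigma,X}$ vanish and $\chi_{\sigma,X}$ depends only on $\dim\sigma$; this is indeed why all $\RQ_{\sigma'}$ with $\dim\sigma'=m'$ live in the same degenerate principal series, in contrast to the cases of Propositions \ref{quotient0} and \ref{quotient4}. Your identification of $\mathrm d_{\rD,\epsilon}=1$ and of $m+m'=\dim U-1$ as the self-dual point is right, and your outline (intertwining operator, vanishing of the local factor on the oscillator image, then invoking the full module-structure analysis of \cite{Ya}) is the standard route. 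One small caution: in the last paragraph you pass from $\sum$ to $\bigoplus$ when describing the socle and cosocle; whether the sum of the $\RQ_\sigma$ is genuinely direct depends on the fine structure worked out in \cite{Ya}, and is not needed for the application in Theorem \ref{realsp}, so it is safer to keep the notation $\sum$ as in the statement.
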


On the first occurrences, we have the following
\begin{thm}\label{realsp}
 Let $U$ be a real symplectic space,  a complex Hermitian or skew-Hermitian space, or a real quaternionic skew-Hermitian space.  Let $\mathcal T\subset \CW_U$ be a $\mathcal K_U$-coset. Let $\pi\in \Irr(\oGb(U))$ which is genuine with respect to some (and hence all)
elements of $\mathcal T$. Then there are two different elements
$\mathbf t_1, \mathbf t_2\in \mathcal T$ such that
\be\label{consarch}
  \operatorname n_{\mathbf t_1}(\pi)+\operatorname n_{\mathbf
  t_2}(\pi)=2\dim U+\mathrm d_{\rD,\epsilon};
\ee
and for all different elements $\mathbf t_3, \mathbf t_4\in
\mathcal T$, one has that
\be\label{geqn}
   \operatorname n_{\mathbf t_3}(\pi)+\operatorname n_{\mathbf
  t_4}(\pi)\geq 2\dim U+\mathrm d_{\rD,\epsilon}\,\abs{\mathbf t_3-\mathbf
  t_4},
\ee
where for $\mathbf t\in \mathcal K_U$, $\abs{\mathbf t}$ denotes the non-negative integer so
that $\mathbf t$ is $\abs{\mathbf t}$-multiple of a generator of $\mathcal K_U$. Consequently
the following conservation relations hold:
\[
 \sum_{\mathcal Q\in \mathcal T/2\mathcal K_U} \min\{\operatorname n_{\mathbf
 t}(\pi)\mid \mathbf t\in \mathcal Q\}=2\dim U+\mathrm d_{\rD,\epsilon}.
\]
\end{thm}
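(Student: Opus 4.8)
The plan is to run, in the archimedean setting, the same two-sided argument as in the non-archimedean case (Sections~\ref{secstra}, \ref{dpsdm}, \ref{NOtrivial}), with Propositions~\ref{quotient4} and \ref{quotient5} replacing Proposition~\ref{quotient0}, and with an archimedean substitute for Proposition~\ref{trivialv-ind}. Fix a generator $g$ of $\mathcal K_U\cong\mathrm d_{\rD,\epsilon}\,\Z$; then $\deg(jg)=\mathrm d_{\rD,\epsilon}\abs{j}$, and $\mathcal T$ is a torsor under $\mathcal K_U$ which splits into exactly two cosets $\mathcal Q_1,\mathcal Q_2$ of $2\mathcal K_U$. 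Throughout I work with the split space $U^\square=U\oplus U^-$ and its diagonal Lagrangian $U^\triangle$, using the archimedean analogues of Lemma~\ref{zeta} (valid by the archimedean theory of doubling zeta integrals, \cite{LR}) and of Lemmas~\ref{cn0} and \ref{cn1}; the archimedean Stone--von Neumann and Howe duality theorems quoted above supply the rest of the foundational input, and Lemma~\ref{decw} lets us lift elements of $\widehat\CW_U$ to $\widehat\CW_{U^\square}$ exactly as in the proof of Proposition~\ref{dict0}. Note that $\widehat{\mathrm r}^{U^\square}_U$ restricts to an isomorphism $\mathcal K_{U^\square}\cong\mathcal K_U$, since both are images of $\ker\xi_\infty$ and each is infinite cyclic.

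\emph{Occurrence (upper bound).} For $\mathbf t\in\mathcal T$ and an integer $m\le\dim U+\tfrac{\mathrm d_{\rD,\epsilon}-2}{2}$ with $m\equiv\deg\mathbf t\pmod 2$, choose $\sigma^\square\in\widehat\CW_{U^\square}$ whose restriction $\sigma=\widehat{\mathrm r}^{U^\square}_U(\sigma^\square)$ to $U$ lies in the tower $\mathbf t$ with $\dim\sigma=m$. Applied to $U^\square$ (whose middle dimension is $\dim U+\tfrac{\mathrm d_{\rD,\epsilon}-2}{2}$), Proposition~\ref{quotient4} or \ref{quotient5} gives an exact sequence
\[
 0\longrightarrow\sum_{(\sigma^\square)'}\RQ_{(\sigma^\square)'}\longrightarrow\operatorname I(\chi'_{\sigma^\square,U^\triangle})\longrightarrow\RQ_{\sigma^\square}\longrightarrow 0
\]
of representations of $\oGb(U^\square)$, where each $(\sigma^\square)'$ differs from $\sigma^\square$ by a non-zero element of $\mathcal K_{U^\square}$ and has dimension $2\dim U+\mathrm d_{\rD,\epsilon}-2-m$. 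Since $\Hom_{\oGb(U)}(\operatorname I(\chi'_{\sigma^\square,U^\triangle}),\pi)\neq 0$ by Lemma~\ref{zeta}, left-exactness of $\Hom$ forces $\Hom_{\oGb(U)}(\RQ_{\sigma^\square},\pi)\neq 0$ or $\Hom_{\oGb(U)}(\RQ_{(\sigma^\square)'},\pi)\neq 0$ for some $(\sigma^\square)'$. By Lemma~\ref{cn0}, in the first case $\pi\in\CR_\sigma$, so $\operatorname n_{\mathbf t}(\pi)\le m$; in the second case $\pi\in\CR_{\sigma'}$ for $\sigma'=\widehat{\mathrm r}^{U^\square}_U((\sigma^\square)')$, so $\operatorname n_{\mathbf t'}(\pi)\le 2\dim U+\mathrm d_{\rD,\epsilon}-2-m$ for the tower $\mathbf t'\in\mathcal T\setminus\{\mathbf t\}$ of $\sigma'$. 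When $m$ lies below the dimension range of $\mathbf t$, the first alternative is vacuous and the second must hold.

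\emph{Non-occurrence (lower bound).} The key estimate is $\operatorname n_{\mathbf k}(1_U)\ge 2\dim U+\deg\mathbf k=2\dim U+\mathrm d_{\rD,\epsilon}\abs{\mathbf k}$ for all $\mathbf k\in\mathcal K_U$. For $U$ a symmetric bilinear space this is \cite[Section 4]{LZ2}, \cite[Theorem 1]{LZ3}, \cite[Appendix C]{Pr2}; in the remaining subcases of Case~3 I would re-run Rallis's argument of Section~\ref{NOtrivial} over $\R$ (reduction to the null cone, vanishing on small orbits via the archimedean analogue of Lemma~\ref{vtrivial1}, the homogeneity computation with the $\rho$-shifts, and the Fourier transform), the base case being settled by the doubling method and Propositions~\ref{quotient4}--\ref{quotient5} as in Lemma~\ref{dim1}, and the real quaternionic skew-Hermitian subcase circumventing the absence of MVW-involutions by taking $V$ anisotropic, exactly as in Lemma~\ref{cn1}. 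Granting this, the argument of Section~\ref{secstra} (if $\pi\in\CR_{\sigma_3}$ with $\sigma_3$ in the tower $\mathbf t_3$ and $\pi^\vee\in\CR_{\sigma_4}$ with $\sigma_4$ in the tower $-\mathbf t_4$, then $1_U\in\CR_{\sigma_3+\sigma_4}$ with $\sigma_3+\sigma_4$ in $\mathbf t_3-\mathbf t_4\in\mathcal K_U$) yields $\operatorname n_{\mathbf t_3}(\pi)+\operatorname n_{-\mathbf t_4}(\pi^\vee)\ge 2\dim U+\mathrm d_{\rD,\epsilon}\abs{\mathbf t_3-\mathbf t_4}$; together with the identification $\operatorname n_{-\mathbf t_4}(\pi^\vee)=\operatorname n_{\mathbf t_4}(\pi)$ (from MVW-involutions in the non-quaternionic subcases, and from Lemma~\ref{cn1} with $V$ anisotropic in the quaternionic one) this gives \eqref{geqn}.

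\emph{Conclusion.} Put $m_i=\min_{\mathbf t\in\mathcal Q_i}\operatorname n_{\mathbf t}(\pi)$, attained since $\operatorname n_{\mathbf t}(\pi)\ge\deg\mathbf t\to\infty$ along $\mathcal Q_i$; say $m_1\le m_2$ and let $\mathbf t_*\in\mathcal Q_1$ attain $m_1$. Taking $m$ maximal subject to $m\le\dim U+\tfrac{\mathrm d_{\rD,\epsilon}-2}{2}$ and $m\equiv\deg\mathbf t\pmod 2$ in the occurrence step shows $m_1=\min_{\mathcal T}\operatorname n_{\mathbf t}(\pi)\le\dim U+\tfrac{\mathrm d_{\rD,\epsilon}}{2}$, so the occurrence step applies to $\mathbf t_*$ with $m=m_1-2$: its first alternative is impossible, so there is $\mathbf t'\in\mathcal T\setminus\{\mathbf t_*\}$ with $\operatorname n_{\mathbf t'}(\pi)\le 2\dim U+\mathrm d_{\rD,\epsilon}-m_1$. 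If $\mathbf t'\in\mathcal Q_1$ then $\abs{\mathbf t'-\mathbf t_*}\ge 2$ and \eqref{geqn} forces $\operatorname n_{\mathbf t'}(\pi)\ge 2\dim U+2\mathrm d_{\rD,\epsilon}-m_1$, a contradiction; hence $\mathbf t'\in\mathcal Q_2$, so $m_1+m_2\le\operatorname n_{\mathbf t_*}(\pi)+\operatorname n_{\mathbf t'}(\pi)\le 2\dim U+\mathrm d_{\rD,\epsilon}$. The reverse inequality is immediate from \eqref{geqn} applied to minimizing towers of $\mathcal Q_1$ and $\mathcal Q_2$, so $m_1+m_2=2\dim U+\mathrm d_{\rD,\epsilon}$, which is the asserted conservation relation; moreover $\operatorname n_{\mathbf t_*}(\pi)+\operatorname n_{\mathbf t'}(\pi)\le 2\dim U+\mathrm d_{\rD,\epsilon}\le 2\dim U+\mathrm d_{\rD,\epsilon}\abs{\mathbf t_*-\mathbf t'}$ combined with \eqref{geqn} gives equality and $\abs{\mathbf t_*-\mathbf t'}=1$, so $\mathbf t_1:=\mathbf t_*$, $\mathbf t_2:=\mathbf t'$ satisfy \eqref{consarch}. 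The main obstacle is the non-occurrence input in the genuinely new real quaternionic skew-Hermitian subcase, where the small-orbit vanishing, homogeneity, and Fourier-transform steps of Section~\ref{NOtrivial} must be re-derived over $\R$ without recourse to MVW-involutions; a secondary subtlety is that Propositions~\ref{quotient4}--\ref{quotient5} determine the tower $\mathbf t'$ produced by the occurrence step only up to membership in $\mathcal T$, and it is the lower bound \eqref{geqn} that pins $\mathbf t'$ down to the complementary $2\mathcal K_U$-coset.
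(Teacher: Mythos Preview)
Your overall architecture matches the paper's: establish the lower bound \eqref{geqn} from non-occurrence of $1_U$ plus an identity $\operatorname n_{-\mathbf t}(\pi^\vee)=\operatorname n_{\mathbf t}(\pi)$, then use the degenerate principal series structure (Propositions~\ref{quotient4}, \ref{quotient5}) via doubling to produce the upper bound, and combine. Your conclusion step, which uses \eqref{geqn} to force the auxiliary tower $\mathbf t'$ into the complementary $2\mathcal K_U$-coset, is organized a bit differently from the paper's two-step argument but is correct and arguably cleaner.

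There are, however, two genuine problems in your non-occurrence input. First, you propose to re-run Section~\ref{NOtrivial} over $\R$ to obtain $\operatorname n_{\mathbf k}(1_U)\ge 2\dim U+\mathrm d_{\rD,\epsilon}\abs{\mathbf k}$. The paper does not do this: it simply quotes the bound from the literature (\cite[Appendix~C]{Pr2}, \cite[Lemma~3.1]{Pa}, \cite[Proposition~3.38]{LPTZ}, \cite[Theorem~1.2.1]{LL}), where the relevant archimedean computations (via lowest weight modules and Howe quotients) are already done. Transporting Rallis's $p$-adic argument to the smooth Fr\'echet setting is not routine---the Schwartz-space framework, the orbit-filtration and Frobenius-reciprocity steps of Lemma~\ref{vsmall}, and the homogeneity argument all require genuine reworking---and your sketch does not supply these. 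As you yourself flag, this is the main obstacle, and it remains unresolved in your proposal.

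Second, you are mistaken about MVW-involutions: they \emph{do} exist for real quaternionic groups (see \cite{LST} and Remark~(b) following this theorem in the paper; non-existence is a strictly non-archimedean phenomenon). The paper therefore obtains $\operatorname n_{-\mathbf t}(\pi^\vee)=\operatorname n_{\mathbf t}(\pi)$ uniformly from MVW-involutions in all four subcases of Case~3. Your proposed substitute, Lemma~\ref{cn1}, applies only when $V$ is anisotropic and yields a statement about $\RQ_{\sigma^\square}$; it does not by itself produce the identity $\operatorname n_{-\mathbf t}(\pi^\vee)=\operatorname n_{\mathbf t}(\pi)$ for arbitrary first occurrences, and without that identity you cannot pass from the $\pi^\vee$-inequality to \eqref{geqn}.
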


\begin{proof}
Summarizing the results in \cite[Appendix C]{Pr2}, \cite[Lemma 3.1]{Pa}, \cite[Proposition
3.38]{LPTZ}, and \cite[Theorem 1.2.1]{LL}, we know that the trivial representation $1_U$ does not occur before stable range in every non-split Witt tower in $\CK_U$, that is,
\be\label{nonta}
   \operatorname n_{\mathbf
 t}(1_U)\geq 2\dim U+\mathrm d_{\rD,\epsilon}\,\abs{\mathbf t},\quad \mathbf t\in \CK_U\setminus \{0\}.
\ee
As in Section \ref{secstra}, \eqref{nonta} implies that
\be \label{geqn22}
   \operatorname n_{\mathbf t_3}(\pi)+\operatorname n_{-\mathbf
  t_4}(\pi^\vee)\geq 2\dim U+\mathrm d_{\rD,\epsilon}\,\abs{\mathbf t_3-\mathbf
  t_4},
\ee
for all different elements $\mathbf t_3, \mathbf t_4\in \CT$.

On the other hand, using MVW-involutions on archimedean metaplectic
groups and classical groups (\cf \cite{MVW, Pr1, Su1, LST}), one knows that for all $\mathbf t\in \CT$,
\be\label{mvw}
  \operatorname n_{\mathbf
 t}(\pi)=\operatorname n_{-\mathbf
 t}(\pi^\vee).
\ee
Therefore the inequality \eqref{geqn} is implied by \eqref{geqn22}.

To prove the first assertion of the theorem, we first assume that $U$ is a real symplectic space, or a complex Hermitian or skew-Hermitian space. Then
there is a unique pair $(m_1, m_2)$ of integers so that
\[
   \left\{
     \begin{array}{l}
       m_1, m_2\in \{\dim \sigma\mid \sigma\in \widehat \CW_U, \,\sigma \textrm{ represents an element of $\CT$}\},  \\
       m_1+m_2=2\dim U+\mathrm d_{\rD, \epsilon}-2, \textrm{ and}\\
       m_1-m_2=0\textrm{ or } 2.
     \end{array}
   \right.
\]

As a first step, we show that there exists $\mathbf t_1\in \CT$ such
that $\operatorname n_{\mathbf t_1}(\pi)\leq m_1$. We pick any
$\mathbf t\in \CT$. If $\operatorname n_{\mathbf t}(\pi)\leq m_1$, we are done. Otherwise $\operatorname n_{\mathbf t}(\pi)\geq m_1+2\geq m_2+2$, and so $\pi\notin \CR_{\sigma_{\mathbf t, m_2}}$, where $\sigma_{\mathbf t, m_2}$ is the element of $\widehat \CW_U$ which represents $\mathbf t$ and has dimension
$m_2$. By applying
Proposition \ref{quotient4} to $\sigma_{\mathbf t, m_2}$, the same
proof as in Proposition \ref{dict0} shows that there exists an
element $\sigma'\in \widehat \CW_U$ such that
\[
 \left\{
   \begin{array}{l}
      \textrm{$\sigma'-\sigma_{\mathbf t, m_2}$ represents a non-zero element of $\CK_U$;}\smallskip \\
     \dim \sigma'=m_1; \textrm{ and}\\
     \pi\in \CR_{\sigma'}.
   \end{array}
 \right.
\]
Consequently, we have
\[
  \min\{\operatorname n_{\mathbf t'}(\pi)\mid \mathbf t'\in \CT, \,
  \mathbf t'\neq \mathbf t\}\leq  m_1.
\]
We may thus find some $\mathbf t_1\in \CT$ such that $\operatorname
n_{\mathbf t_1}(\pi)\leq m_1$.

Write $k=\operatorname n_{\mathbf t_1}(\pi)$, and consider $\sigma_{\mathbf t_1,
k-2}$, the element of $\widehat \CW_U$ which represents $\mathbf
t_1$ and has dimension $k-2$. Then $\pi\notin \CR_{\sigma_{\mathbf
t_1, k-2}}$, and
\[
 k-2\leq m_1-2\leq m_2\leq \frac{2\dim U+\mathrm d_{\rD, \epsilon}-2}{2}.
\]
Similarly, applying Proposition \ref{quotient4} to $\sigma_{\mathbf
t_1, k-2}$, the same  proof as in Proposition \ref{dict0} shows that
there exists an element $\sigma'\in \widehat \CW_U$ such that
\[
 \left\{
   \begin{array}{l}
      \textrm{$\sigma'-\sigma_{\mathbf t_1, k-2}$ represents a non-zero element of $\CK_U$;}\smallskip \\
     \dim \sigma'+(k-2) =2\dim U+\mathrm d_{\rD, \epsilon}-2; \textrm{ and}\\
     \pi\in \CR_{\sigma'}.
   \end{array}
 \right.
\]
Consequently, we have
\[
  \min\{\operatorname n_{\mathbf t'}(\pi)\mid \mathbf t'\in \CT, \,
  \mathbf t'\neq \mathbf t_1\}\leq  2 \dim U+\mathrm d_{\rD,\epsilon}-k.
\]
In other words, there is an element $\mathbf t_2\in \CT\setminus \{\mathbf t_1\}$ such that
\[
 \operatorname n_{\mathbf t_1}(\pi)+\operatorname n_{\mathbf
  t_2}(\pi)\leq 2\dim U+\mathrm d_{\rD,\epsilon}.
\]
In view of the inequality \eqref{geqn}, this proves the first assertion of the theorem, in the case when $U$ is a real symplectic space, or a complex Hermitian or skew-Hermitian space.

Now assume that $U$ is a real quaternionic skew-Hermitian space. Then $\CT=\CW_U$. Recall that for each $\mathbf t\in \CW_U$, $\dim \mathbf t\in \Z/2\Z$ denotes the parity of the dimension of an element of $\mathbf t$. Put
\[
 \operatorname n_{0}(\pi):=\min\{\operatorname n_{\mathbf t}(\pi)\mid \mathbf t\in\CW_U, \dim \mathbf t \textrm{ is even} \}
\]
and
\[
 \operatorname n_{1}(\pi):=\min\{\operatorname n_{\mathbf t}(\pi)\mid \mathbf t\in\CW_U, \dim \mathbf t \textrm{ is odd} \}.
\]
In view of the inequality \eqref{geqn}, for the  first assertion of
the theorem, it suffices to show that
\[
   \operatorname n_{0}(\pi)+ \operatorname n_{1}(\pi)\leq 2\dim U+1 \qquad (\mathrm d_{\rD,\epsilon}=1).
\]
Assume by contradiction that
\[
   \operatorname n_{0}(\pi)+ \operatorname n_{1}(\pi)\geq 2\dim U+3.
\]
Then there are integers $m$, $m'$ such that
\be\label{inemm}
 \left\{
   \begin{array}{l}
      \textrm{$m$ is even and $m'$ is odd;}\smallskip \\
       m+m'=2\dim U-1;\\
      m<\operatorname n_{0}(\pi);\,\, \textrm{ and}\\
  m'<\operatorname n_{1}(\pi);
    \end{array}
 \right.
\ee

Using Proposition \ref{quotient5}, the same proof as Proposition \ref{dict0} shows that there exists an element $\sigma\in \widehat \CW_U$ such that
\[
 \left\{
   \begin{array}{l}
      \dim \sigma=m \textrm{ or } m'; \textrm{ and}\\
     \pi\in \CR_{\sigma}.
   \end{array}
 \right.
\]
Therefore either $\operatorname n_{0}(\pi)\leq m$ or $\operatorname n_{1}(\pi)\leq m'$. This contradicts the two inequalities of \eqref{inemm}.

The last assertion of the Theorem is easily implied by
\eqref{consarch} and \eqref{geqn}.
\end{proof}

\vsp

\noindent {\bf Remarks}: (a) When $U$ is a complex symmetric bilinear space, the conservation relations were proved by Adams-Barbasch \cite{AB}
using the explicit duality correspondence. A. Paul proved the conservation relations for archimedean unitary-unitary dual pair correspondence \cite[Theorem 1.4]{Pa}, for a discrete series representation, or a representation
irreducibly induced from a discrete series representation.

(b)  Let $G$ be a topological group. An involutive continuous automorphism $\tau$ of $G$ is called an MVW-involution if $\tau(g)$ and $g^{-1}$ are conjugate in $G$, for all $g$ in an open dense subset of $G$.  MVW-involutions of $\oGb(U)$ do not exist in general when $U$  is a non-archimedean quaternionic Hermitian space or a non-archimedean quaternionic skew Hermitian space \cite[Proposition 1.3]{LST} (MVW-involutions of $\oGb(U)$ do exist in all the other cases). Nonetheless the equality \eqref{mvw} is still
valid for this case, in view of the equalities in \eqref{enequ}.
\vsp

\end{document}